\DeclareMathOperator{\card}{card}
\DeclareMathOperator{\id}{Id}
\DeclareMathOperator{\R}{\mathbb{R}}
\DeclareMathOperator{\C}{\mathbb{C}}
\DeclareMathOperator{\N}{\mathbb{N}}
\DeclareMathOperator{\Reel}{\mathcal{R}e}
\DeclareMathOperator*{\essinf}{ess inf}
\DeclareMathOperator*{\esssup}{ess sup}
\DeclareMathOperator*{\limessinf}{lim ess inf}
\DeclareMathOperator*{\limesssup}{lim ess sup}
\DeclareMathOperator*{\limess}{lim ess}
\DeclareMathOperator{\supp}{supp}
\DeclareMathOperator{\spec}{Sp}
\newcommand{\atom}{\mathfrak{A}}
\newcommand{\atomsc}{\mathfrak{A}^{\mathrm{sup}}}
\newcommand{\anz}{\mathfrak{A}^*}
\newcommand{\cc}{\mathcal{C}}
\newcommand{\cf}{\mathcal{F}}
\newcommand{\cg}{\mathcal{G}}
\newcommand{\cv}{\mathcal{V}}
\newcommand{\norm}[1]{\left\lVert\,#1\,\right\rVert}
\newcommand{\ind}[1]{\mathbb{1}_{#1}}
\newcommand{\expp}[1]{\mathrm{e}^{#1}}
\newcommand{\rd}{\mathrm{d}}
\newcommand{\un}{\mathbb{1}}
\newcommand{\zero}{\mathbb{0}}
\newcommand{\priv}[1]{\backslash\{#1\}}
\newcommand{\eq}{\mathcal{E}}
\newcommand{\TT}{\mathrm{T}}
\newcommand{\ti}{T'}
\newcommand{\tgi}{T_{1/\gamma}}
\newcommand{\tdi}{T_{1/\delta}}
\renewcommand{\tt}{\tilde {T}}
\newcommand{\tp} {\hat{T}}
\newcommand{\tg}[1] {\tp_{1/#1}}
\newcommand{\tgp} {\tp_{1/\gamma}}
\renewcommand{\r}{\mathbf{r}}
\newcommand{\sac}{\mathfrak{S}}
\newcommand{\scal}[1]{\left\langle #1\right\rangle}
\begin{document}
\title{Infinite dimensional metapopulation SIS model with generalized incidence rate}

\author{Jean-Fran\c{c}ois Delmas}
\address{Jean-Fran\c{c}ois Delmas,  CERMICS, \'{E}cole des Ponts, France}
\email{jean-francois.delmas@enpc.fr}

\author{Kacem Lefki}
\address{Kacem Lefki, Univ.\ Gustave Eiffel, Univ.\
    Paris Est Creteil, CNRS, F-77454 Marne-la-Vall\'ee, France}
  \email{kacem.lefki@univ-eiffel.fr}

\author{Pierre-Andr\'{e} Zitt}
\address{Pierre-Andr\'{e} Zitt, Univ.\ Gustave Eiffel, Univ.\
    Paris Est Creteil, CNRS, F-77454 Marne-la-Vall\'ee, France}
  \email{Pierre-Andre.Zitt@univ-eiffel.fr}

\date{\today}

\thanks{This work is partially supported by Labex B\'ezout reference ANR-10-LABX-58}

\subjclass[2020]{47B65, 
47B38, 
34D05, 
35R15, 
37D35, 
92D30 
}

\keywords{SIS model, endemic equilibria, general incidence rate,
  positive operator, atomic decomposition}, 

\begin{abstract}
  We  consider an  infinite-dimension  SIS model  introduced by  Delmas,
  Dronnier and  Zitt, with a more  general incidence rate, and study its
  equilibria. Unsurprisingly, there exists at
  least one  endemic equilibrium if  and only if the  basic reproduction
  number is larger than 1.  When the pathogen transmission exhibits one
  way propagation, it is possible  to observe different possible endemic
  equilibria.  We characterize  in a general setting  all the equilibria,
  using a decomposition of the space into  atoms, given by the transmission  operator.
  We also
  prove that  the proportion of  infected individuals  converges
  to  an equilibrium,  which is  uniquely determined  by the
  support of the initial condition.

  We  extend those results to 
  infinite-dimensional SIS models with reservoir or  with immigration. 
\end{abstract}

\maketitle

\theoremstyle{theorem}
\newtheorem{theo}{Theorem}[section]
\newtheorem{theoI}{Theorem}
\newtheorem{propI}[theoI]{Proposition}
\newtheorem{prop}[theo]{Proposition}
\newtheorem{defi}[theo]{Definition}
\newtheorem{lemme}[theo]{Lemma}
\newtheorem{lem}[theo]{Lemma}
\newtheorem{coro}[theo]{Corollary}
\newtheorem{cor}[theo]{Corollary}
\newtheorem{nota}[theo]{Notation}
\newtheorem{rap}[theo]{Reminder}
\newtheorem{conj}[theo]{Conjecture}
\newtheorem{ques}[theo]{Question}
\newtheorem{ans}[theo]{Answer}
\newtheorem{defth}[theo]{Theorem-Definition}
\newtheorem{assum}{Assumption}

\theoremstyle{remark} 
\newtheorem{ex}[theo]{Example}
\newtheorem{rqe}[theo]{Remark}
\newtheorem{rem}[theo]{Remark}
\newtheorem{cex}[theo]{Counter-example}
\maketitle

\section{Introduction}

\subsection{Model and relations with existing models}\label{subsec:intro:model}

We consider  an inhomogeneous SIS epidemic model, where
individuals are either susceptible or infected. The homogeneous
model was introduced  by Kermack and McKendrick~\cite{kermack32}, we
refer    to    the    monograph   of    Brauer,    Castillo-Chavez    et
Feng~\cite{brauer19}  for an  analysis of this homogeneous  SIS model  and some  of its
variants. Let us  recast  the  model from~\cite{kermack32}   in the  constant
population case: let
$I(t)$ and   $S(t)$ denote respectively the  number of the infected  and  susceptible)
individuals  at  time $t \geq  0$, in a population  of constant size  $N=S(t)+I(t) >
0$. The evolution of the number of infected is given by:
\begin{equation}\label{eq:SIS_1D}
I' = k \,\frac{SI}{N} - \gamma I,
\end{equation}
where  $k \geq 0$ is   the infection  rate and  $\gamma  > 0$  the
recovery rate.

\medskip

The assumption of homogeneity of  the population is not always satisfied
in practice, see for example: Trauer et al.~\cite{trauer19} for a review
on  tuberculosis, \cite{popkin20}  on  the impact  of health  condition,
\cite{britton07}  on  the  number  of  sexual  partners  in  a  sexually
transmissible  infection,  or  the review~\cite{vanderwaal16}  for  more
possible sources  of heterogeneity.  The inhomogeneous  SIS model from
Lajmanovich     and    Yorke~\cite{lajmanovich76}     generalizes    the
Kermack-McKendrick  model to  a  population divided  in $n$  sub-groups;
the same equation appears also when studying network of communities
linked by dispersal, see Mouquet and Loreau~\cite{mouquet02} and more
generally~\cite{cantrell17}.
Later,     Thieme~\cite{thieme11}    and     Delmas,    Dronnier     and
Zitt~\cite{ddz-sis}  introduced   a  variant  allowing  an infinite
number (possibly uncountable)   of
sub-groups  or  features.

\medskip

We  follow  the model  given  by~\cite{ddz-sis}  where the  transmission
operator    can   be    non-irreducible,   see    the   discussion    in
Section~\ref{intro:sec:unique_ee} below and  allowing furthermore a more
general  incidence   rate,  see   Section~\ref{sec:intro_KMK_LMA}.   The
heterogeneity   of    the   population    is   described    as   follow:
$(\Omega, \cg, \mu)$ is a measured space with a non-zero $\sigma$-finite
measure  $\mu$: an  element $x\in  \Omega$ corresponds  to a  particular
\emph{feature}  (or  \emph{trait})  of   individuals.   We  assume  that
individuals with  the same feature behave  in the same way with  respect to
the  epidemic, and that  features stay 
constant during the whole infection process.   We also assume that for a
given feature $x \in \Omega$, the  size of the population $\mu(\rd x)$ of feature $x$
remains constant over time. 

\medskip

Let $u(t,x)$ denote the proportion of individuals with feature $x\in \Omega$ that are
infected at time $t\geq 0$ among the population  of individuals with
feature  $x$.  Let $\Delta$ be the set of measurable functions defined
on $\Omega$ taking values in $[0, 1]$. The heterogeneous SIS dynamics
is given, for an initial condition $h \in \Delta$, by the evolution
equation on  the  Banach  space
 $L^\infty  $  of measurable  bounded  real-valued  functions defined  on
 $\Omega$ by:
\begin{align}
  \label{eq:intro_gen_SIS}
\left\{\begin{array}{l}
u' = F(u),
\\ 
u(0) = h\in \Delta,
\end{array}\right.
\end{align}
with
\begin{equation}
   \label{eq:def-F}
  F(u)= \varphi(u)\, Tu - \gamma u,
 \end{equation}
 where $F$ depends on: a bounded linear \emph{transmission operator} $T$
 on  $L^\infty $,  a bounded  real-valued positive  \emph{recovery rate}
 function  $\gamma$  defined on  $\Omega$,  and  a real-valued  function
 $\varphi$   defined  on   $\R$   encoding  the   non-bilinearity  of   the
 \emph{incidence    rate}.    The    hypotheses    on   the    parameter
 $(T,   \gamma,   \varphi)$   are  given   in   Assumptions~\ref{assum:1}
 and~\ref{assum:2}. Let us stress that the usual \emph{law of mass
   action}  $\varphi = 1-\id$, with $\id $ the identity map on $\R$,
 satisfies the corresponding hypothesis from  Assumption~\ref{assum:2}
 summarized in Condition~\eqref{eq:hyp-phi}. 

 \begin{rem}[The kernel model from \cite{ddz-sis}]
   \label{rem:sis-ddz}
 Let $k : \Omega^2
\rightarrow \R_+$ be a kernel, that is a nonnegative measurable
function. The associated kernel operator $T_k$ is defined as follow. For
$h\in L^\infty $ and  $x \in \Omega$,  we define:
\[
  T_k (h) (x) = \int_\Omega k(x,y) h(x) \,  \mu( \rd y).
\]
The quantity $k(x,y)$ represents  the transmission rate from individuals
with feature  $y\in \Omega$  to those with  feature $x\in  \Omega$.  The
heterogeneous    SIS   model    from~\cite{ddz-sis}   is    then   given
by~\eqref{eq:intro_gen_SIS}  and~\eqref{eq:def-F}  with  $T=T_k$  (under
some integral hypothesis on the kernel) and the usual law of mass action
$\varphi =  1-\id$.  This  in particular  encompasses the  Lajmanovich and
Yorke model.
 \end{rem}

 In     epidemiology,     \emph{equilibria}      are     constant     solutions
 of~\eqref{eq:intro_gen_SIS}, that is, functions $g\in \Delta$ such that:
\begin{equation}
  \label{eq:gen_equilibre}
F(g) = 0. 
\end{equation}
They play a  significant role in the long-time behavior  of the dynamics
of an  outbreak, see Theorem~\ref{thI:cv_eq_max} below.   Obviously, the
\emph{disease-free equilibrium}  (DFE) $g  = 0$  is an  equilibrium. Any
other  equilibrium  is  called  \emph{endemic  equilibrium}  (EE).   The
\emph{basic reproduction number} denoted $R_0$ is defined by Heesterbeek
and  Dietz~\cite{heesterbeek96} as  ``the expected  number of  secondary
cases  produced  by a  typical  infected  individual during  its  entire
infectious period,  in a  population consisting of  susceptibles only''.
Following~\cite{ddz-sis}  (see also  the method  of the  next-generation
operator in Diekmann, Heesterbeek and Metz~\cite{diekmann90}), the basic
reproduction  number $R_0$  for  the SIS  model~\eqref{eq:intro_gen_SIS}
with the usual incidence rate associated to $\varphi=1- \id$ is
defined as the  spectral radius of the operator  $T M_{1/\gamma}$, where
the operator $M_{1/\gamma}$ is the multiplication by $1/\gamma$.
There usually is a threshold behavior  for the existence of EE according
to  the value  of $R_0$:  for $R_0  \leq 1$  only the  DFE exists  as an
equilibrium, and  for $R_0 > 1$  there exists an EE.
This is not universal: for example, models with imperfect  vaccines
or exogenous  re-infections might
lead to backward  bifurcation and produce multiple  EE even  in the regime
$R_0 < 1$, see~\cite{gumel12}  and more specifically~\cite{brauer04} for
a SIS model. Nevertheless, we check that threshold behavior holds
    for    the     SIS    model~\eqref{eq:intro_gen_SIS},    see
Theorem~\ref{thI:bij_antichains_eq}   below.    A  discussion   of   the
uniqueness of EE is given in Section~\ref{intro:sec:unique_ee}.

\subsection{A taste of the  main results in the finite setting}
\label{intro:sec:unique_ee}

Except in the trivial case where the population may be split in subpopulations
that do not interact at all, the existence of multiple equilibria is fundamentally
linked to \emph{asymmetry} in the transmission dynamics. In this section,
we first explain this
phenomenon, and a related crucial decomposition of the space, in the simple
case where $\Omega$ is finite, to give a taste of the general results stated below.

We consider a finite set   $\Omega$, let  $\cg$  be the set of  subsets of $\Omega$,
and $\mu$ a  finite measure
with support $\Omega$.   The transmission  operator $T$ is
identified  with a  matrix  $K=\left(  K_{x,y} \right)_{x,y\in  \Omega}$
where $K_{x,y}$ is the infection  rate from individuals with feature $y$
to  those with  feature $x$;  in particular  it takes  into account  the
relative size of the sub-populations.   When $\Omega$ is a singleton and
$\varphi = 1-\id$, we recover Equation~\eqref{eq:SIS_1D} (with $u = I/N$
and $K=k$).   When $\Omega$  is finite, we  recover the  Lajmanovich and
Yorke~\cite{lajmanovich76} model, and the same  framework can be used to
describe households models~\cite{ball99} and multi-host and vector-borne
diseases~\cite{mccormack07}.

\medskip

In this finite case, the  study of the non-uniqueness for equilibria
relies   on  the   properties   of  the   oriented  transmission   graph
$G_K=(\Omega,      E_K)$      with       the      set      of      edges
$E_K=\{(y,x)\in \Omega^2\, \colon\, K_{x,y}>0\}$ given by the support of
the  transmission matrix  $K$.   An  edge from  $y$  to  $x$ models  the
possibility of infection from the sub-population with feature $y$ to the
sub-population  with  feature $x$;  in  particular  the graph  may  have
self-loops.    For   transmission   graph   models   see   for   example
\cite{guo08,arino09}.

Strongly connected components of $G_K$ will be called \emph{atoms} ---
the notion will be generalized in the infinite case. 
An atom is \emph{non-zero}  unless it is a singleton with no self
loop.   Notice the  transmission matrix/operator  is irreducible  if and
only if the  graph $G_K$ is strongly connected (that  is, $\Omega$ is an
atom), and  is said monatomic  if there is  a unique non-zero  atom. For
further  result on  monatomic  operators in the general case see  \cite{dlz} and  references
therein;   we  refer   also   to   Corollary~\ref{cor:monoatom}  for   a
characterization of  monatomic transmission  matrix using the  number of
EE.

\medskip

In many examples  the transmission graph $G_K$ is  symmetric (even though
the  transmission  $K$  might  not  be symmetric).  In  this  case,  all
(strongly) connected  components behave independently and  one can study
each connected components separately.
Cases where~$G_K$  is not strongly connected occur
less frequently in  the literature; it has been mentioned  for example in a
multi-type SIR model by~\cite{knipl14, magal18}.

Let us mention two examples of non symmetric transmission graphs.

\begin{enumerate}[(i)]
\item\label{it:nvm} The  West Nile Virus,  presented in~\cite{bowman05},
  infects three species,  birds (B), humans (H) and  mosquitoes (M).  It
  is a vector-borne disease where  birds and mosquitoes serve as vectors
  for a transmission to humans.  In this model, mosquitoes infects birds
  and  humans while  biting them  and mosquitoes  get infected  by birds
  while biting them, and we assume  there is no infection from humans to
  mosquitoes, nor  between birds and  humans.  The graph $G_K$  given in
  Fig.~\ref{fig:WNV_graph} has  only one non  zero atom $\{B,M\}$  and a
  zero atom $\{H\}$. In particular $K$ is monatomic. 
\item  \label{it:zm}  In  the   zoonosis  model  from~\cite{royce20},  a
  pathogen  exists  in wild  animals  (W),  is transmitted  to  domestic
  animals (D) that  transmit themselves the pathogen to  humans (H). The
  graph $G_K$ given in Fig.~\ref{fig:RF_graph} has three non-zero atoms:
  $\{W\}, \{D\}$ and $\{H\}$.
\end{enumerate}

In such cases where  $G_K$ is not symmetric,  the picture is
richer: many endemic equilibria may exist, they may be entirely characterized
by the atoms contained in their support, and their basins of attraction
may be described explicitly.

\medskip

Let us give a few additional definitions to state these
results more precisely, before giving the general statements below in
Theorem~\ref{thI:bij_antichains_eq} and~\ref{thI:cv_eq_max}.  Define
the \emph{future} of a set $A\subset \Omega$ as the set $\cf(A)$ of
all the vertices in $G_K$ reachable from $A$ by a (possibly empty)
path using edges in $E_K$.  For two atoms $A$ and $B$ of $G_K$, we
write $B \preccurlyeq A$ if $ B\subset \cf(A)$; the relation
$\preccurlyeq$ is a partial order.  An \emph{antichain} of atoms is a
set of atoms which are pairwise unordered.  The future of an antichain
is the future of the union of its elements.

In   the  West   Nile  Virus
model~\ref{it:nvm}, the antichains of non-zero atoms are $\emptyset$ and
$\{B,M\}$; in the zoonosis model~\ref{it:zm}, the antichains of non-zero
atoms are: $\emptyset$, $\{H\}$, $\{D\}$  and $\{W\}$.

Finally, an atom is
\emph{supercritical} if the  basic reproduction of the  SIS-model restricted to
the  atom is  larger  than  1; in  particular  a  supercritical atom  is
non-zero   and    an   atom   $A$   is    trivially   supercritical   if
$K_{x,x}/\gamma(x)>1$ for all $x\in A$, where $\gamma$ is the recovery rate
function,  and  the function   $\varphi$  satisfies the  regularity
Condition~\eqref{eq:hyp-phi}     below.

\medskip

Our first main result, Theorem~\ref{thI:bij_antichains_eq},
states (in the general possibly infinite setting) that
each equilibrium  is characterized
by a (different) antichain of supercritical atoms, and the DFE is associated to the
antichain $\emptyset$.   For example,  assuming for simplicity  that all
non-zero atoms are supercritical, we deduce  that in the West Nile Virus
model~\ref{it:nvm}  there  is only  one  EE  and  that in  the  zoonosis
model~\ref{it:zm} there are  three EE.

Let us mention that a similar result  on the existence of multiple EE is
obtained  in Waters  et al.~\cite{waters16}  for a  waterborne parasites
that infect both humans and animals, such as \emph{Giardia} infection in
rural Australia.  In  this model the animals and the  humans can be seen
as  non-zero   atoms  for  the   transmission,  and  the  water   as  an
environmental  reservoirs.    This  model  does  not   fit  exactly  the
metapopulation SIS  model~\eqref{eq:intro_gen_SIS}-\eqref{eq:def-F}, nor
the    model   with    external   disease    reservoir   presented    in
Section~\ref{sec:intro-SISk} because the reservoir is between the animal
population and the human population.

Theorem~\ref{thI:bij_antichains_eq} also states  also that the support  of an
equilibrium is  given by  the future of  its corresponding  antichain of
supercritical atoms.   Furthermore, Corollary~\ref{cor:supp-equi=} asserts
that for two equilibria $g$ and $g'$,  we have $g\leq g'$ if and only if
$\supp(g)\subset \supp(g')$.  This in  particular allows to  recover the
existence of a maximal equilibrium $g^*$, in the sense that if $g$ is an
other equilibrium, then $g\leq g^*$.

For example,  assuming again for simplicity  that all
non-zero atoms are supercritical, we deduce  that  in  the  zoonosis
model~\ref{it:zm}, denoting by $g_x$ the equilibrium characterized by
the antichain $\{x\}$, we have:
\[
  \supp (g_H)=\{H\}\subset \supp (g_D)=\{D, H\}\subset \supp(g_W)=\{W,D,H\}
\quad\text{and}\quad
0\neq g_H\lneqq  g_D\lneqq  g_W=g^*.
\]

\medskip

Our second main result is a full characterization of basins
of attraction of the various equilibria: we show in 
Theorem~\ref{thI:cv_eq_max} that,   starting   with  an   initial   condition
$u(0)=h$, the epidemics  converges in long time  towards the equilibrium
associated to the maximal antichain in $\cf(\supp(h))$ the future of the
support of the initial condition.

In the example  of  the West Nile  Virus model~\ref{it:nvm}, assuming
that all non-zero atoms are  supercritical, we deduce that starting with
an initial  condition where only  the human population $H$  is infected,
the epidemic converges  to the DFE and thus dies  out, but starting with
an initial condition where the populations $H$ and $M$ (or simply $M$ or
$B$) is infected,  the epidemic converges to the unique  EE $g^*$, whose
support is $\{B,M,H\}$. 

In the example  of the zoonosis model~\ref{it:zm}, assuming  again that all
the atoms are supercritical, we deduce from Theorem~\ref{thI:cv_eq_max},
starting with an initial condition  $u(0)=h$, the epidemics converges in
long  time  towards the  equilibrium    whose support  is  the
support of $h$.

\medskip

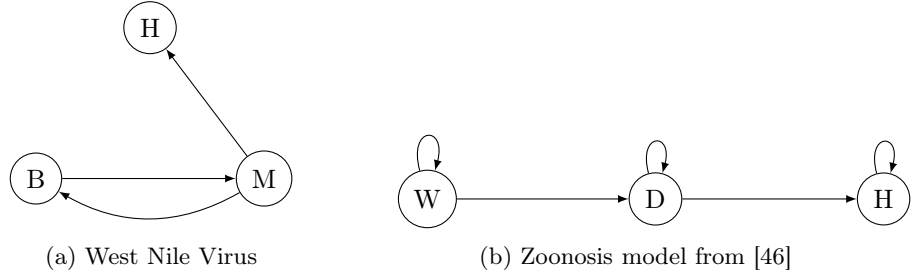
\begin{figure}
\centering
\begin{subfigure}[b]{.4\textwidth} \centering 
\begin{tikzpicture}
\node[draw,circle](1) at (0,0) {B};
\node[draw,circle](2) at (3,0) {M};
\node[draw,circle](3) at (1.5,2) {H};
\draw[->, >=latex] (2) to[out=210,in=330] (1);
\draw[->, >=latex] (1) to (2);
\draw[->, >=latex] (2) to (3);
\end{tikzpicture}
\caption{West Nile Virus}
\label{fig:WNV_graph}
\end{subfigure}
\begin{subfigure}[b]{.4\textwidth} \centering 
\begin{tikzpicture}
\node[draw,circle](1) at (0,0) {W};
\node[draw,circle](2) at (3,0) {D};
\node[draw,circle](3) at (6,0) {H};
\draw[->, >=latex] (1) to (2);
\draw[->, >=latex] (2) to (3);
\draw[->, >=latex] (1) to[loop above] (1);
\draw[->, >=latex] (2) to[loop above] (2);
\draw[->, >=latex] (3) to[loop above] (3);
\end{tikzpicture}
\caption{Zoonosis model from~\cite{royce20}}
\label{fig:RF_graph}
\end{subfigure}
\caption{Some examples of transmission graphs $G_K$}
\label{fig:T_and_others}
\end{figure}

\subsection{Assumptions and main results}\label{intro:sec:hp}

Recall the SIS model~\eqref{eq:intro_gen_SIS}-\eqref{eq:def-F} with
parameter $(T, \gamma,
\varphi)$.   We
shall 
consider  the following  assumptions  on the parameters.  For $p\in [1, +\infty ]$, let $L^p$ denote the usual Lebesque space
of    measurable    function    defined   on    the    measured    space
$(\Omega, \cg, \mu)$ endowed with the  $L^p$ norm $\norm{\cdot}_p$, and
$L^p_+$ the subset of $L^p$ of nonnegative functions. 

\begin{assum}\label{assum:1}
  The  measure  $\mu$ is  finite  and  non-zero;  the map $T$ is  a bounded
  linear map  on $L^\infty$ and there  exists $p \in (1,+\infty)$  and a
  finite constant  $C_p$ such  that for all $f\in L^\infty $:
\begin{equation}
   \label{eq:T-in-Linfty}
 \norm{Tf}_p\leq C_p  \, \norm{f}_p;
 \end{equation}  
the  function  $\gamma$
  belongs to  $L^\infty$ and $\gamma>0$ a.e.; and  the  function $\varphi : \R \rightarrow \R$
  is locally Lipschitz, nonnegative on $[0, 1]$ and $\varphi(1) = 0$.
\end{assum}

\begin{assum}\label{assum:2}
  Assumption~\ref{assum:1} holds; there exists  a finite constant $C'_p$
  such that for all
  $f\in L^\infty  $:
  \begin{equation}
   \label{eq:T/gamma-in-Lp}
 \norm{Tf}_\infty \leq C'_p  \, \norm{\gamma f}_p; 
 \end{equation}  
 and the map $\varphi$ is decreasing on $[0,1]$ with $\varphi(0) = 1$.
\end{assum}

The next two remarks are related to kernel operators. 
We also refer to Section~\ref{sec:T-and-others} for further  properties of the
operator $T$ induced by those two assumptions.

\begin{rem}[The operator $T$ is a kernel operator] 
  \label{rem:on-T}
  Assume  Condition~\eqref{eq:T/gamma-in-Lp} holds.   Since $\gamma$  is
  bounded, we  deduce that there  exists a finite constant  $C''_p$ such
  that   $\norm{Tf}_\infty   \leq   C''_p   \,   \norm{f}_p$   for   all
  $f\in  L^\infty  $,  and  also  that  Condition~\eqref{eq:T-in-Linfty}
  holds.  According to~\cite[Theorem~4.2]{schep}, we  deduce that $T$ is
  a kernel operator (and the kernel is indeed nonnegative by Theorem~1.3
  therein).
 \end{rem}

\begin{rem}[The SIS model from \cite{ddz-sis}]
  \label{rem:SIS-ddz}
  Recall the  definition of the kernel  operator $T_k$ for $k$  a kernel
  given   in  Section~\ref{subsec:intro:model}.    We  check   that  the
  SIS model from \cite{ddz-sis}, see Assumption~1 therein, satisfies our
  Assumption~\ref{assum:2}.  In~\cite{ddz-sis},  the measure $\mu$  is a
  probability measure on $\Omega$, the function $\gamma$ is positive and
  bounded,   and   the   mass-action   incidence  rate   is   associated
 to 
  $\varphi = 1-\id$. So the conditions on $\varphi$, $\gamma$ and $\mu$
  in Assumption~\ref{assum:2}  are clearly satisfied.  Therein,  we have
  $T = T_k$ for $k : \Omega^2 \rightarrow \R_+$ a kernel such that:
\begin{equation}
   \label{eq:hyp-k}
\sup_{x \in \Omega}\,  \int_\Omega \left(\frac{k(x,y)}{\gamma(y)}\right) ^q
\, \mu( \rd y) < +\infty,
\end{equation}
for some $q\in (1, +\infty )$.  It  is then elementary to check that the
conditions on  the operator $T=T_k$ from  Assumption~\ref{assum:2} are
satisfied  with $p \in (1,+\infty)$ given by $1/p + 1/q = 1$.
\end{rem}

\medskip

We now give our main results.  Recall that
$\Delta=\{f\in L_+^\infty \, \colon\, 1- f\in L^\infty _+\}$ is the
set of measurable functions taking their value a.e.\ in $[0, 1]$.
Proposition~\ref{prop:exists_max_sol} below asserts that under
Assumption~\ref{assum:1}, for any initial condition $h \in \Delta$,
Equation~\eqref{eq:intro_gen_SIS} has a unique global solution in
$L^\infty $ given by the semi-flow
$\left(\phi(t, h)\right)_{t \in \R_+}$ and that $\phi(t, h)$ belongs
to $\Delta$ for all $t\in \R_+$. The following result on long time convergence
appears below as~Theorem~\ref{th:cv_eq_max}
(see Section~\ref{sec:leb} below  for a precise definition of the convergence involved).
\begin{theoI}[Longtime behavior]
  \label{theo:cv-intro}
  Let $(T, \gamma, \varphi)$  satisfy Assumption~\ref{assum:2}.
  The semiflow $\phi$ always converges to an equilibrium: 
  for any initial condition  $h \in \Delta$, there exists
  $g\in \Delta$  such that $F(g) = 0$ and 
\begin{equation}
   \label{eq:cv-phi-g}
  \limess_{t \rightarrow + \infty} \phi(t, h) = g
  \quad\text{in $L^\infty $.}
\end{equation}
\end{theoI}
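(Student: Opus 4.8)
The plan is to build the proof on two order-theoretic features of the semiflow that follow at once from Assumption~\ref{assum:2}, and then to trap the trajectory issued from $h$ between two time-monotone trajectories sharing the same limit.

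First I would record the structural properties. The semiflow is \emph{order preserving}: since $T$ is a nonnegative kernel operator (Remark~\ref{rem:on-T}) and $\varphi\ge 0$ on $[0,1]$, the vector field $F$ is quasimonotone (for $x\ne y$ the dependence of $F(u)(x)$ on $u(y)$ is through $\varphi(u(x))\,(Tu)(x)\ge 0$), so $h_1\le h_2$ implies $\phi(t,h_1)\le\phi(t,h_2)$ for all $t$. The semiflow is also \emph{subhomogeneous}: for $\lambda\in(0,1)$ and $u\in\Delta$ one has $F(\lambda u)=\lambda\big(\varphi(\lambda u)\,Tu-\gamma u\big)\ge\lambda F(u)$ because $\varphi$ is decreasing and $\lambda u\le u$; a Gronwall comparison then yields $\phi(t,\lambda u)\ge\lambda\,\phi(t,u)$.

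Second, these two facts produce monotone-in-time trajectories. If $h$ is a sub-solution ($F(h)\ge0$) then $\phi(s,h)\ge h$ for small $s$, hence $\phi(t+s,h)=\phi(t,\phi(s,h))\ge\phi(t,h)$ by monotonicity, so $t\mapsto\phi(t,h)$ is nondecreasing; likewise a super-solution gives a nonincreasing trajectory, and a bounded monotone trajectory in $\Delta$ converges (first a.e., to be upgraded below) to an equilibrium. To handle an arbitrary $h$, fix $t_0>0$, set $w=\phi(t_0,h)$ and $R=\cf(\supp h)$; the support is confined, $\supp w\subset R$, and $w>0$ a.e.\ on $R$ since the infection reaches the whole future in positive time. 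Let $g^*_R$ be the maximal equilibrium supported in $R$. By subhomogeneity $F(\varepsilon g^*_R)\ge\varepsilon F(g^*_R)=0$, so $\varepsilon g^*_R$ is a sub-solution, and for $\varepsilon$ small enough $\varepsilon g^*_R\le w\le \un_R$. Monotonicity then sandwiches the trajectory: the increasing trajectory from $\varepsilon g^*_R$ and the decreasing trajectory from $\un_R$ converge to equilibria $g_-\le g_+$ with $g_-\le\liminf_t\phi(t,w)\le\limsup_t\phi(t,w)\le g_+$.

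The crux is to prove $g_-=g_+$. Here I would invoke the companion characterization of equilibria: equilibria are determined by their support (two equilibria with the same support coincide), and $g^*_R$ is the unique equilibrium of support $R$. Since the increasing limit satisfies $\supp(\varepsilon g^*_R)\subset\supp g_-\subset R$, one gets $\supp g_-=R$, whence $g_-=g^*_R=g_+$ and the trajectory converges to $g:=g^*_R$, an equilibrium. The genuine obstacles are infinite-dimensional: passing to the limit in the nonlinear term $\varphi(\phi)\,T\phi$ to certify that the monotone limits are equilibria, and upgrading a.e.\ convergence to the $\limess$ in $L^\infty$. For the latter I would use the smoothing estimate $\norm{Tf}_\infty\le C'_p\,\norm{\gamma f}_p$ of Assumption~\ref{assum:2} together with the equilibrium relation $\gamma g=\varphi(g)\,Tg$ to control the limit in $L^\infty$, and exploit the uniform-in-time bounds and equicontinuity of $\phi$ to transfer pointwise monotone convergence to the essential uniform topology. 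Establishing $g_-=g_+$ when $T$ is reducible with possibly infinitely many atoms — where no compactness is available and the only contraction is the part metric on each irreducible block, propagated along the acyclic order $\preccurlyeq$ of atoms — is where the main work lies.
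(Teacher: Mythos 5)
Your two structural facts are sound: monotonicity of the semiflow is the paper's Lemmas~\ref{lem:F_coop} and~\ref{prop:mono_sf}, and the subhomogeneity computation $F(\lambda u)\ge\lambda F(u)$ is correct. But the proof breaks at the step you treat as routine: \emph{``for $\varepsilon$ small enough $\varepsilon g^*_R\le w$''} with $w=\phi(t_0,h)$. This is automatic only when $\Omega$ is finite (a.e.\ positivity on $R$ then gives a positive minimum); it is exactly the step that fails in infinite dimension. Under Assumption~\ref{assum:2}, Lemma~\ref{lem:sf_pos} gives $w>0$ a.e.\ on $R$, but nothing prevents the ratio $g^*_R/w$ from being essentially unbounded, in which case no $\varepsilon>0$ works and the lower half of your sandwich disappears. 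Concretely, take $\Omega=(0,1]$, $\mu=\leb+\delta_1$, $\varphi=1-\id$, $\gamma(x)=x$ for $x<1$, $\gamma(1)=1$, and the kernel operator $Tf(x)=k(x,1)f(1)$ with $k(x,1)=x^{2}$ for $x<1$, $k(1,1)=2$, and all other transmissions zero. Assumption~\ref{assum:2} holds, $\{1\}$ is the unique atom, it is supercritical, and $R=\cf(\{1\})=\Omega$. The maximal equilibrium is $g^*_R(x)=x/(x+2)$ for $x<1$ and $g^*_R(1)=1/2$, so $g^*_R(x)\ge x/3$ on $(0,1]$. Starting from $h=\ind{\{1\}}$, the equation gives $\partial_t u(t,x)\le x^{2}$ for $x<1$, hence $w(x)=\phi(t_0,h)(x)\le t_0x^{2}$, and $g^*_R(x)/w(x)\ge 1/(3t_0x)$ is unbounded as $x\downarrow 0$: for every $\varepsilon>0$ the inequality $\varepsilon g^*_R\le w$ fails on a set of positive measure, while the conclusion of the theorem still holds. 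So it is the method, not the statement, that fails. A secondary inaccuracy: $\supp(g^*_R)=\cf(\cc_R)$ can be a proper subset of $R$, so ``$g^*_R$ is the unique equilibrium of support $R$'' is not right as stated; that part of your identification argument could be repaired via Corollary~\ref{cor:supp-equi=} and Lemma~\ref{lem:eq_eigenf}~\ref{lem:item:compare_supports2}.

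The paper circumvents precisely this obstacle by building the lower subsolution not as a multiple of $g^*_R$ but as a localized eigenfunction dominated by $h$ \emph{by construction}: in the proofs of Proposition~\ref{th:DDZ_cv_sf}~\ref{th:item:R0>1-irr-lim} and Theorem~\ref{th:cv_eq_max}, for each supercritical atom $B$ of the antichain of $\cf(\supp(h))$ one perturbs the operator to $U_\varepsilon=\varphi(\varepsilon)M_{\{h\ge\varepsilon\}}\tg{\gamma}$ (projected on $B$), uses collective compactness (Lemma~\ref{lem:coll-K}) to keep $\rho(U_\varepsilon)>1$ for small $\varepsilon$, and then applies Proposition~\ref{prop:rad_vpd} to produce $w_B$ with $F(w_B)\ge 0$, $\supp(w_B)\subset\{h\ge\varepsilon\}\cap B$ and $\norm{w_B}_\infty\le\varepsilon$, whence $w_B\le h$ with no minimum-type bound needed. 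Summing over the antichain gives an increasing trajectory below $\phi(\cdot,h)$, whose limit is identified with the maximal equilibrium of $\cf(\supp(h))$ through the antichain bijection (Theorem~\ref{th:bij_antichains_eq}); the decreasing trajectory from $\un_A$ is as in your proposal. The difficulties you flag at the end (passing to the limit in the nonlinear term, a.e.\ versus essential $L^\infty$ convergence, reducibility) are real but are handled by the paper's Lemmas~\ref{prop:mono_sf} and~\ref{lem:lim_eq}; they are not where your argument breaks.
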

Under Assumption~\ref{assum:2}, we define  the basic reproduction number
$R_0$ as the spectral radius  $\rho(\tgi)$ of the power compact operator
$\tgi$   on  $L^\infty   $   given  by   $\tgi   f=  T(f/\gamma)$,   see
Lemma~\ref{lem:T}.  It  comes at no  surprise that if $R_0\leq  1$, then
the zero function $\zero$ is the  only equilibrium, so that all epidemic
disappear in the long  run, see Proposition~\ref{th:DDZ_cv_sf}. However,
if  $R_0>1$,  then  there  exists a  maximal  endemic  equilibrium,  say
$g^*\neq  \zero$, see  Theorem~\ref{prop:eq_crit_vacc}.  If  furthermore
$T$ is irreducible (which is equivalent to the existence and uniqueness,
up to  a scaling factor,  of $v\in L^\infty _+\setminus  \{\zero\}$ such
that $Tv =R_0\, v$ and that $v$ is positive), the maximal equilibrium is
the only endemic equilibrium and  $g$ in~\eqref{eq:cv-phi-g} is equal to
$g^*$  as soon  as  the initial  condition $h$  is  non-zero, see  again
Proposition~\ref{th:DDZ_cv_sf}.   Those   results  appear   already  in
\cite{ddz-sis} in a slightly less general framework for 
$R_0\leq 1$ or $T$ irreducible (or  quasi-irreducible).

The  main  result  of  the  paper is the description of  all the
endemic equilibria and their domain of
attraction: for any equilibrium $g$ we give  all  the initial
conditions $h\in  \Delta$ such that~\eqref{eq:cv-phi-g} holds.
To do so, we shall rely on the decomposition of the state space in atoms
associated to the operator $T$ given by Schwartz~\cite{schwartz_61}, see
also   our    previous   work~\cite{dlz},    which   is    recalled   in
Section~\ref{sec:decom_pos_op},   and  Section~\ref{intro:sec:unique_ee}
for  the  elementary case  where $\Omega$ is finite.
To  summarize,  a  measurable  set
$A \in \cg$  is invariant if the  support of the function  $T\un_A$ is a
subset of $A$ (up  to a set of zero measure); a set  is admissible if it
belongs to the $\sigma$-field generated by the invariant sets; the atoms
are the minimal admissible sets with positive measure (that is $A$ is an
atom  if $A$  is  admissible  with $\mu(A)>0$  and  if  $B\subset A$  is
admissible  then  either  $\mu(B)=0$  or $\mu(B)=\mu(A)$).   If  $T$  is
irreducible, then  $\Omega$ is an  atom. For  a measurable set  $A$, its
future $\cf(A)$  is the smallest invariant  set containing $A$ (up  to a
set of zero measure).  The set  of atoms (identifying atoms which differ
by  a  set of  zero  measure)  can be  endowed  with  an order  relation
$\preccurlyeq$: $A  \preccurlyeq B$ when $\cf(A)  \subset \cf(B)$ (where
the   inclusion   holds  up   to   a   set   of  zero   measure).    See
Section~\ref{sec:decom_pos_op} for further details.  We say that an atom
$A$  is supercritical  if the  spectral  radius of  the operator  $\tgi$
restricted to  $A$, denoted $R_0(A)$,  is strictly larger than  $1$. The
number of supercritical  atoms is finite; it is positive  if and only if
$R_0>1$, see~\cite{dlz}.   A \emph{supercritical antichain} is  a finite
set of supercritical atoms which  are pairwise unordered with respect to
$ \preccurlyeq$; we define its future as  the future of the union of its
atoms.  For example,  with two supercrtical atoms, say $A$  and $B$, the
supercritical antichains are $\emptyset$, $\{A\}$ and $\{B\}$, with also
$\{A, B\}$  if $A$ and  $B$ are unordered.   Notice that $R_0>1$  if and
only if there exists a non empty supercritical antichain.

We give a complete characterization of equilibria, see
Theorem~\ref{th:bij_antichains_eq} for a more complete statement.

\begin{theoI}[Equilibria and supercritical antichains are in bijection]
  \label{thI:bij_antichains_eq}
  If $(T,  \gamma, \varphi)$ satisfy Assumption~\ref{assum:2},  then the
  set  of supercritical  antichains and  the  set of  equilibria are  in
  bijection. Furthermore the support of the equilibrium associated to a
  supercritical 
  antichain is given by its future.
\end{theoI}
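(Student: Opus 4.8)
The plan is to establish the bijection by constructing an explicit map in each direction and showing they are mutually inverse, with the support characterization emerging as a byproduct of the construction.

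**Setting up the construction.** First I would analyze the fixed-point equation $F(g)=0$, which under Assumption~\ref{assum:2} reads $\varphi(g)\,Tg=\gamma g$, equivalently $g = (\varphi(g)/\gamma)\,Tg$ on the set where $g>0$. The key structural observation is that the support of any equilibrium $g$ must be an invariant set: since $\varphi$ is positive on $[0,1)$ and $\gamma>0$ a.e., the equation forces $Tg$ and $g$ to have (essentially) the same support, and because $g\le \un_{\supp(g)}$ with $T$ positive, one checks $\supp(Tg)\subset\supp(T\un_{\supp(g)})$, so $\supp(g)$ absorbs its own future. Thus $\supp(g)=\cf(\supp(g))$ is invariant, which already ties every equilibrium to the atomic decomposition recalled in Section~\ref{sec:decom_pos_op}.

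**From an equilibrium to a supercritical antichain.** Given an equilibrium $g$, I would look at which atoms meet its support. The claim is that the minimal atoms (with respect to $\preccurlyeq$) contained in $\supp(g)$ form a supercritical antichain, and that $\supp(g)$ equals the future of this set. To see minimality forces supercriticality: if an atom $A$ is contained in $\supp(g)$ but lies at the ``bottom'' (no smaller atom in the support feeds into it), then the restriction of the equilibrium to $A$ is itself a nonzero equilibrium for the operator $\tgi$ restricted to $A$; by the threshold result already available (the restricted $R_0(A)$ exceeds $1$ is equivalent to existence of a nonzero restricted equilibrium, as in the $R_0>1$ case of Proposition~\ref{th:DDZ_cv_sf} and Theorem~\ref{prop:eq_crit_vacc}), this forces $R_0(A)>1$, i.e.\ $A$ is supercritical. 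Conversely, the future of the chosen minimal atoms recovers all of $\supp(g)$ precisely because $\supp(g)$ is invariant and every atom it contains is $\succcurlyeq$ some minimal one.

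**From a supercritical antichain to an equilibrium, and inverting.** In the other direction, given a supercritical antichain $\mathcal{S}$, I would build an equilibrium supported exactly on $\cf(\mathcal{S})$. The natural candidate is obtained by restricting the dynamics to the invariant set $\cf(\mathcal{S})$ and invoking the maximal-equilibrium existence from Theorem~\ref{prop:eq_crit_vacc} applied to the restricted operator, which is legitimate because restriction to an invariant set preserves Assumption~\ref{assum:2}; the supercriticality of the atoms in $\mathcal{S}$ guarantees the restricted $R_0>1$ so the equilibrium is nonzero, while the choice of $\mathcal{S}$ as the \emph{bottom} antichain inside $\cf(\mathcal{S})$ pins down the support to be exactly the future. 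The two maps are mutually inverse once I verify that the bottom minimal atoms of $\cf(\mathcal{S})$ are exactly $\mathcal{S}$ (an antichain/order-theoretic fact about futures) and that the equilibrium so constructed is uniquely determined by its support — this last uniqueness should follow from Corollary~\ref{cor:supp-equi=} together with the irreducible-case uniqueness within each atom.

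**Main obstacle.** The delicate point I expect to fight with is uniqueness: showing that an equilibrium is \emph{determined} by which supercritical atoms sit at the bottom of its support, rather than merely constrained by it. On a single irreducible (atomic) piece uniqueness of the nonzero equilibrium is known, but the equilibrium on $\cf(\mathcal{S})$ must be assembled consistently across atoms connected by the partial order, where ``downstream'' atoms receive forcing from upstream ones. The argument needs a monotone iteration or a careful layer-by-layer solve along the order $\preccurlyeq$, using that $\varphi$ is decreasing (Assumption~\ref{assum:2}) to guarantee that each downstream fixed-point problem, with an already-determined positive source term, has a unique solution with the prescribed support. Controlling that this layered solution neither creates support outside $\cf(\mathcal{S})$ nor collapses on some intended atom is the crux, and is where the positivity of $T$ and the strict monotonicity of $\varphi$ must be used most carefully.
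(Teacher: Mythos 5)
Your high-level architecture matches the paper's: associate to each equilibrium $g$ an antichain of supercritical atoms whose future is $\supp(g)$, obtain surjectivity by taking the maximal equilibrium of the model restricted to the invariant set $\cf(\cc)$, and obtain injectivity from Corollary~\ref{cor:supp-equi=} together with the fact that antichains are determined by their futures (Lemma~\ref{lem:antichain_fut_eq}). However, your equilibrium-to-antichain direction has a genuine gap. You define the antichain as the ``source'' atoms of $\supp(g)$, i.e.\ the atoms into which nothing else in the support feeds (note that with the paper's convention $A \preccurlyeq B \Leftrightarrow A \subset \cf(B)$ these are the \emph{maximal} atoms, not the minimal ones), and you justify $\supp(g) = \cf(\mathrm{sources})$ by the covering claim that every atom in the support is comparable to a source. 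In the infinite-dimensional setting this claim is unjustified: the atoms contained in $\supp(g)$ form an at most countable poset, so ascending chains need not terminate and atomic sources dominating everything need not exist a priori; moreover the support may contain non-atomic admissible pieces that lie in the future of no atom at all, and your argument says nothing about them. In fact this covering property is essentially the support characterization you are trying to prove, so the argument is circular at exactly this point.

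The paper breaks the circularity differently, and this is the idea your proposal is missing. It defines $\cc_g$ as the set of maximal elements of the \emph{supercritical} atoms contained in $\supp(g)$ --- well defined with no poset pathology because $\atomsc$ is finite, see \cite[Lemma~6.5]{dlz} --- and then kills everything else directly: the set $A = \supp(g) \cap \cf(\cc_g)^c$ is admissible and contains no supercritical atom (those were all swept into $\cf(\cc_g)$ by maximality), hence $R_0(A) \leq 1$ by Schwartz's formula~\eqref{eq:R0=max}; since $\cf(\cc_g)$ is invariant, $\un_{\cf(\cc_g)^c}\, g$ is an equilibrium of the restricted model on $A$ by Lemma~\ref{cor:inv-equi}~\ref{it:equi-equiAc}, and the subcritical threshold result (Proposition~\ref{th:DDZ_cv_sf}~\ref{th:item:R0<1}) forces it to vanish. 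This handles non-atomic parts and infinite chains of atoms automatically. Two smaller points. In your converse direction, ``restricted $R_0>1$ so the equilibrium is nonzero'' is not enough: you need the restricted maximal equilibrium to be positive on \emph{every} atom of the antichain, otherwise its support could miss $\cf(A)$ for some $A \in \cc$; the paper gets this from Lemma~\ref{lem:comparaison_eq_max} and the quasi-irreducible case, Proposition~\ref{th:DDZ_cv_sf}~\ref{th:item:R0>1-irr} (also, existence of maximal equilibria is Proposition~\ref{prop:exists_max_sol}, not Theorem~\ref{prop:eq_crit_vacc}). Finally, the ``layer-by-layer'' uniqueness construction you anticipate as the main obstacle is unnecessary: Corollary~\ref{cor:supp-equi=}, which you already cite, says an equilibrium is determined by its support, so uniqueness is immediate once supports are characterized as futures of supercritical antichains.
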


The empty supercritical antichain corresponds  to the DFE $g=\zero$. 
We  deduce  from  this  result  that if  $g$  and  $g'$  are  two
equilibria, then $g\leq g'$ if  and only if $\supp(g) \subset \supp(g')$
(up to a  set of zero measure) and $\supp(h)=\{h>0\}$  is the support of
the function $h$, see Corollary~\ref{cor:supp-equi=}.

To complete this theorem we fully describe basins of attraction. To state
the result, we denote by $T_A$  the projection  of $T$ on  a measurable set  $A$,
that is, the operator on  $L^\infty $ defined  by $T_Af=  \ind{A} T(\ind{A}  f)$ for
$f\in L^\infty $. Notice that if $T$ satisfies Assumption~\ref{assum:2},
so does  $T_A$. When this is  the case, we  say that $g$ is  the maximal
equilibrium  of $A$ when $g$ is the maximal equilibrium of the SIS model
with $T$ replaced by $T_A$.
Intuitively, 
from an  initial condition  $h\in \Delta$, the  epidemic  converges  to an
 equilibrium  which  depends only  on the  support of  the initial
condition; it is the same as the one starting from the  ``worst possible
case'' where the whole population in  $\cf(\supp(h))\subset \Omega$ is infected. 
 \begin{theoI}[Basins of attraction of equilibria]\label{thI:cv_eq_max}
  The limiting  equilibrium $g$  of an  epidemic with  initial condition
  $h\in   \Delta$  from   Theorem~\ref{theo:cv-intro}  is   the  maximal
  equilibrium of $\cf(\supp(h))$.
\end{theoI}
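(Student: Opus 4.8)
The plan is to combine the monotonicity of the semiflow with an invariance (confinement) argument for the future, and then to use the atomic decomposition to pin down the limit from both sides. Write $A=\cf(\supp h)$ and let $g^*_A$ denote the maximal equilibrium of $A$; by Theorem~\ref{theo:cv-intro} the essential limit $g=\limess_{t\to+\infty}\phi(t,h)$ exists and is an equilibrium, and the goal is to identify $g=g^*_A$.

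First I would record two structural facts. \emph{Confinement:} since $A$ is by definition the smallest invariant set containing $\supp h$, the positive kernel operator $T$ (Remark~\ref{rem:on-T}) maps functions supported in $A$ to functions supported in $A$; hence $\supp\phi(t,h)\subset A$ for all $t\ge 0$, and on such functions $F$ agrees with the field $F_A$ built from $T_A$. Consequently the limit $g$, which is an equilibrium of the full model and is supported in $A$, is an equilibrium of the $A$–restricted model, so by maximality $g\le g^*_A$, and Corollary~\ref{cor:supp-equi=} gives $\supp g\subset\supp g^*_A$. \emph{Monotonicity:} the semiflow is order preserving (comparison principle for the quasimonotone field $F$), and more precisely, for any measurable $B$ one has $\ind{B}\,\tfrac{\rd}{\rd t}\phi\ge F_B(\ind{B}\phi)$, because $\ind{B}\,T\phi\ge T_B(\ind{B}\phi)$ by positivity of $T$ and $\varphi\ge0$ on $[0,1]$; thus $\ind{B}\phi(t,h)$ is a supersolution of the isolated $B$–dynamics.

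The reverse inequality $g\ge g^*_A$ is the heart of the matter. By Theorem~\ref{thI:bij_antichains_eq}, $\supp g^*_A=\cf(\mathcal{M})$ is the future of the maximal supercritical antichain $\mathcal{M}$ of $A$, while $\supp g$ is itself a future, hence invariant. It therefore suffices to show that every $B\in\mathcal{M}$ satisfies $B\subset\supp g$: invariance of $\supp g$ then forces $\cf(\mathcal{M})=\supp g^*_A\subset\supp g$, and combined with $\supp g\subset\supp g^*_A$ and $g\le g^*_A$ this yields $\supp g=\supp g^*_A$, whence $g=g^*_A$ by Corollary~\ref{cor:supp-equi=}. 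Fix such a supercritical atom $B$. Since $B\subset A=\cf(\supp h)$, $B$ is reachable from $\supp h$ in the transmission graph of $T$, so the infection eventually seeds $B$: there is $t_1$ with $\ind{B}\phi(t_1,h)\neq\zero$, by propagation of positivity along a path (using $\varphi(0)=1$ and $Tu\ge\zero$). Applying the supersolution bound on $B$ and the comparison principle, $\ind{B}\phi(t,h)\ge\psi(t)$ for $t\ge t_1$, where $\psi$ solves the isolated $B$–model with $\psi(t_1)=\ind{B}\phi(t_1,h)\neq\zero$. As $B$ is irreducible (an atom) and supercritical ($R_0(B)>1$), and $T_B$ still satisfies Assumption~\ref{assum:2}, Proposition~\ref{th:DDZ_cv_sf} gives $\psi(t)\to g^*_B$, the unique endemic equilibrium of $B$, which is positive a.e.\ on $B$. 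Passing to the limit yields $g\ge g^*_B>\zero$ on $B$, i.e.\ $B\subset\supp g$, as required.

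The main obstacle is this last lower bound, and within it the seeding step: one must verify that reachability in the transmission graph of the positive kernel of $T$ translates into $\phi(t,h)$ becoming positive on $B$ in finite time, so that the supercritical atom is genuinely excited before the comparison with the isolated endemic dynamics can be invoked. The remaining verifications — order preservation of $\phi$, invariance of $A$ and of $\supp g$, and the fact that $T_B$ inherits Assumption~\ref{assum:2} so that Proposition~\ref{th:DDZ_cv_sf} applies on $B$ — are routine given the results already established.
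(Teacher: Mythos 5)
Your strategy is sound and its architecture genuinely differs from the paper's: you take the convergence of Theorem~\ref{theo:cv-intro} as given and identify the limit $g$ by a per-atom argument (confinement in the invariant set $A=\cf(\supp(h))$ gives $g\le g^*_A$; seeding plus a supersolution comparison with the isolated dynamics on each maximal supercritical atom $B$ gives $B\subset \supp(g)$, hence $\supp(g)=\supp(g^*_A)$ and $g= g^*_A$). The paper instead proves convergence and identification simultaneously: after reducing to $\supp(h)=A$, it builds a single subsolution $w=\sum_{B\in\cc_A} w_B\le h$ from small eigenfunction bumps supported in the maximal supercritical atoms, checks $F(w)\ge 0$ so that $\phi(\cdot,w)$ increases to an equilibrium identified with $g^*_A$ via the antichain equalities $\cc_A=\cc_g=\cc_{g^*_A}$, and then squeezes $\phi(t,w)\le\phi(t,h)\le\phi(t,\un_A)$, both ends converging to $g^*_A$. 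Your upper bound, the supersolution inequality $\ind{B}\,\phi'\ge F_B(\ind{B}\,\phi)$, the application of Proposition~\ref{th:DDZ_cv_sf} to the quasi-irreducible model $(T_B,\gamma,\varphi)$ with $R_0(B)>1$, and the final assembly via invariance of $\supp(g)$ and Corollary~\ref{cor:supp-equi=} are all correct and justified by results stated in the paper.

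The genuine gap is the seeding step, which you flag but do not prove: you assert that $B\subset\cf(\supp(h))$ forces $\ind{B}\,\phi(t_1,h)\neq\zero$ at some finite time $t_1$ ``by propagation of positivity along a path''. In this measured-space setting there are no paths: $\Omega$ may be uncountable, reachability is encoded only through invariant sets up to $\mu$-null sets, and no pointwise kernel evaluation is available, so this heuristic does not constitute a proof. The needed fact is exactly Lemma~\ref{lem:sf_pos} of the paper, namely $\supp(\phi(t,h))=\cf(\supp(h))$ a.e.\ for every $t>0$, and its proof is not routine: one picks $a,\eta\in(0,1)$ with $a<\varphi$ on $[0,\eta]$, compares $\phi(\cdot,h)$ from below on a short time interval with the linear flow $u(t)=\expp{t(aT-\gamma)}(\eta h/2)$ via the comparison principle of Proposition~\ref{prop:comp_th}, and invokes the support-propagation result \cite[Corollary~5.7]{dlz} for the positive semigroup generated by $Q=aT-\gamma+\norm{\gamma}_\infty$ to conclude that $\supp(u(t))=\cf(\supp(h))$ already for small $t>0$. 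Without this input your comparison with the isolated $B$-dynamics starts from the zero function and yields nothing, so the lower bound $g\ge g^*_A$ --- the heart of the theorem --- is not established. Once Lemma~\ref{lem:sf_pos} is granted, your argument closes correctly; note that the paper's own proof relies on the same lemma twice, both to reduce to the case $\supp(h)=A$ and to guarantee $h\ge\varepsilon$ on a positive-measure subset of each atom $B$ when constructing the bumps $w_B$.
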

This result  appears below as~Theorem~\ref{th:cv_eq_max}.  It  is a full
generalization of Theorem 4.13  in~\cite{ddz-sis}, which only covers the
irreducible case $T$  where, if $R_0>1$, the endemic  equilibrium $g$ is
unique and all epidemics with initial condition $h\neq \zero$ converge to
$g$ in large time.   Proposition~\ref{prop:cv-unif} states that $\gamma$
times  the  epidemic converges  uniformly  to  $\gamma g$.   Thus,  when
$\essinf \gamma > 0$, the epidemic  converges uniformly to $g$, see also
Remark~\ref{rem:R0<1}   when   furthermore    $R_0<1$.    This   uniform
convergence is no longer true \emph{a priori} when $\essinf \gamma = 0$,
see Remark~\ref{rem:non-unif-cv} and Example~\ref{ex:unif-cv-g=0}.

\subsection{Model with an external disease reservoir}
\label{sec:intro-SISk}

We consider an infinite-dimensional SIS model with an external disease
reservoir, called SIS$\kappa$ model  in~\cite{reservoir}; it can be seen
as  an  extension  of   the  SIS  model~\eqref{eq:intro_gen_SIS}.   An
external  disease  reservoir is  a  particular  case of  environmentally
transmitted  diseases   where  the   population  of  pathogens   in  the
environment   is   assumed  to   be   constant   over  time,   see   for
example~\cite{gerba09, lanzas19}  and references therein.  See  also the
example  of the  West  Nile Virus,  where birds  and  mosquitoes form  a
reservoir that is not infected  by humans, see~\cite{bowman05}.  It also
encompasses some  SIS models with immigration  from~\cite{brauer01}, see
Remark~\ref{rem:immig} below.

Recall   $I(t)  \geq 0$  and  $S(t) \geq 0$  denote
respectively  the   number  of  infected  individuals   and  suscpetible
individuals      at      time       $t      \geq      0$.      According
to~\cite[Eq.~(2.1-2)]{reservoir},     the     corresponding     ordinary
differential  equations model,  including  infection  from the  external
disease reservoir, is given by:
\begin{equation}
  \label{eq:I-reservoir}
\left\{
  \begin{aligned}
    S'&= \mu_0 N - \mu_0 S - k \,\frac{SI}{N}- \kappa S  +\gamma I,\\
    I'&= k\, \frac{SI}{N} + \kappa S - (\mu_0 +\gamma) I,
  \end{aligned}
    \right.
  \end{equation}
  where $N=I+S$  is the total  population, $\mu_0$ is the  healthy birth
  rate  and  the common  death  rate  of  the susceptible  and  infected
  populations, $\kappa S$  is the rate of disease  transmission from the
  reservoir, with $\kappa>0$.   Notice the total size  population $N$ is
  constant in time. 

In an inhomogeneous setting,
with the measured space of types  $(\Omega, \cg, \mu)$ and  $\mu(\Omega)\in (0,
+\infty )$, the proportion of infected individuals among the
individuals with feature $x$ is given by 
$u(t,x)=I(t,x)/N(x)$,  where  $I(t,x)$ denotes  the number  of
infected individuals  with feature  $x\in \Omega$  at time $t\in
\R_+$ and $N(x)$ the total size of the population with feature $x \in
\Omega$, assumed constant over time. In the inhomogeneous SIS$\kappa$ model
inspired by~\eqref{eq:I-reservoir},  the function 
$u=(u(t,x))_{t\in \R_+, x\in \Omega}$ is 
solution   in $L^\infty $ of  the ODE:
\begin{equation}
  \label{eq:intro:gen_SIS_immigration}
\left\{\begin{array}{l}
u' = F_\kappa(u), \\ 
u(0) = h,
\end{array}\right.
\end{equation}
with initial condition $h\in L^\infty $ and:
\begin{equation}\label{eq:intro:intuition_eq_immi}
F_\kappa(u)=  \varphi(u) (Tu+\kappa) - \gamma u,
\end{equation}
where    $\varphi$   is    a   continuous    function   on    $\R$   and
$\kappa\in    L^\infty    _+$. 
The particular case of SIS$\kappa$ model given by~\eqref{eq:I-reservoir}
corresponds to $\Omega=\{\omega\}$, $\mu$ a Dirac mass at $\omega$, $T$
the multiplication operator by $k$, $\gamma$ and $\kappa$ constant functions, and $\varphi=1-\id$.

\medskip

This model can be related to SIS model with immigration, see the following remark.

\begin{rem}[SIS model with immigration]
  \label{rem:immig}
  For   the   homogeneous   population,    we   link   the   SIS$\kappa$
  model~\eqref{eq:I-reservoir}  with  the  SIS  model  with  immigration
  of~\cite[Eq.~(1)]{brauer01}. Assume initially that the total population $N(t)$ is not necessarily constant over time $t$. Let $A \geq  0$ be the  immigration rate,  $p \in  [0,1]$ the
  proportion of infected  individuals among the immigrants, and  $d > 0$
  the death rate among the population.  All those parameters are assumed
  to be  constant over  time.  We  assume that  the epidemic  induces no
  death (that is $\alpha =  0$ in \cite[Eq.~(1)]{brauer01}) and that the
  incidence rate is the standard  mass-action.  Then, the SIS model with
  immigration given in~\cite[Eq.(11)]{brauer01} reduces to:
\begin{equation}\label{eq:brauer_immi}
\left\{
    \begin{aligned}
   I' &= pA + k\, \frac{(N-I)I}{N}   - (d+\gamma)I, \\
  N' &= A - d N.
    \end{aligned}
    \right.
\end{equation}
Since $\lim_{t\rightarrow\infty  } N(t)=A/d$, and  since we
are interested  in the long  time equilibrium,  it is natural  to assume
that $N$ start  at its equilibrium, that  is $N(0)=A/d$, so
that the population size is constant over time.
In this case, Equation~\eqref{eq:brauer_immi}  with $u(t) = I(t)/N(0)$
reduces to:
\begin{equation}
   \label{eq:imm-u}
u' = (1-u)(ku+pd) - (\gamma + (1-p)d)u.
\end{equation}
The same arguments applied to an inhomogeneous population would lead to
a similar multi/infinite-dimensional ODE with $u(t)$ replaced by a
function $u(t,x)$, with $x\in \Omega$ the set of features, and $ku$
replaced by $Tu$ with $T$ the transmission operator, so that
\eqref{eq:imm-u} becomes:
\[
u' = (1-u)(Tu+pd) - (\gamma + (1-p)d)u.
\]
This            corresponds to the SIS$\kappa$ model~\eqref{eq:intro:gen_SIS_immigration}-\eqref{eq:intro:intuition_eq_immi}     with
$\varphi= 1-\id$,  $\kappa = pd$ and $\gamma$ replaced by
$\gamma + (1-p)d$. In conclusion the SIS model with immigration and the
SIS model with an external disease reservoir lead to the same ODE.

\end{rem}

In  Proposition~\ref{prop:sol-reservoir} and
Corollary~\ref{cor:eq-reservoir}, we prove that the SIS$\kappa$ model
with reservoir of~\eqref{eq:intro:gen_SIS_immigration} can  be analyzed
using the classical SIS model~\eqref{eq:intro_gen_SIS} by adding a new
element $\r$ to the set of features $\Omega$ corresponding to the
reservoir. In particular we provide a full description of the equilibria and
their domain of attraction for the  SIS$\kappa$ model.

\subsection{Discussion on the incidence rate}\label{sec:intro_KMK_LMA}

In this section, we discuss different models for the infection rate, and
more precisely for the function $\varphi$ in~\eqref{eq:def-F}.

In an homogeneous population, Ross~\cite{ross1} considered the so called
law of mass action $\beta SI/N$ (which corresponds to $\varphi = 1-\id$ in
the  SIS  model):  the  incidence  rate  is
proportional to the product of the proportion of susceptible individuals
and the  proportion of  infected individuals.   According to  Wilson and
Worcester  \cite{wilson45},  it  corresponds   to  the  assumption  that
infected  individuals are  mixing  uniformly with  the susceptible  ones
throughout the population, see also Heesterbeek~\cite{heesterbeek05} for
an historical review.  Some epidemic models introduced in the literature
replace  the law  of  mass action  by various  incidence  rates, see  in
particular  the  survey  McCallum, Barlow  and  Homeo~\cite{mccallum01}.
Concerning    the    function    $\varphi$,    Assumptions~\ref{assum:1}
and~\ref{assum:2} below reduce to:
\begin{equation}
  \label{eq:hyp-phi}
  \text{$\varphi$ is locally Lipschitz on $\R$,
decreasing on $[0,1]$ with   $\varphi(1) =
0$ and $\varphi(0) = 1$.}
\end{equation}
  In the 
examples below from the literature, the set $\Omega$ is a singleton and the
transmission operator $T$ is thus a  constant, which is assumed to be
positive; so the condition  $\varphi(0) = 1$  is a normalization convention on
the (constant) operator $T$ and could be replaced here by the more relevant condition $\varphi(0)>0$. 

\begin{enumerate}[(i)]
\item London and Yorke~\cite{yorke73} consider the incidence rate $\beta
  SI(1 -cI)$, that is: 
  \[
    \varphi(u)  = (1-u)(1-au)\quad\text{with}\quad  a>0
  \]
  for measles  epidemic (in  New York  City and  Baltimore from  1928 to
  1972)  in  order  to  eliminate the  systematic  differences  on  data
  observed between years  with many cases and years  with relatively few
  cases; however they do not provide a biologic or physical argument for
  such modification.  Notice that by  considering $u/a$ instead  of $u$,
  one    can    assume   that    $a    \leq    1$,   in    which    case
  Condition~\eqref{eq:hyp-phi} holds.

\item \label{intro:incidence_rate:rose} We recall that in the SIR model,
  once  infected, the  individuals  recover with  a permanent  immunity.
  Rose et al.~\cite{rose21} incorporate in  the SIR model (with constant
  population  $N=S+I+R$)   a  population-level  heterogeneity   for  the
  infection susceptibility given by  the gamma probability distribution;
  in \cite[Section 4]{rose21} they   consider  the   incidence   rate
  $\beta I S^\alpha$. Using data  from the 2009 H1N1 influenza outbreak,
  they observe that the higher-order models are more consistent with the
  data  than the  case  $\alpha=1$.  In our  setting,  this model  would
  correspond  to   the  following  function  $\varphi$   with  satisfies
  Condition~\eqref{eq:hyp-phi}:
  \[
    \varphi(u) = (1-u)^\alpha
    \quad\text{with}\quad
    \alpha\geq 1.
  \]

\item Capasso and  Serio~\cite{capasso78} study a SIR  model (with
  constant population $N=S+I+R$)  taking into
  account saturation  and ``psychological'' effects.  They  consider the
  incidence rate $g(I)S$. 
In our  setting,  this model  would
  correspond  to:
\[
    \varphi(u) = (1-u)\, \frac{g(u)}{u},
  \]
  where the conditions  on $g$ translated into  our framework correspond
  to:  the function  $g$ is  defined on  $[0,1]$, nonnegative,  bounded,
  differentiable  with  $g'$ bounded and such  that $g(0) = 0$,  $g'(0)=1$ and
  $g(u) \leq u$  on $\R_+$.  Under those assumption,  the function 
  $\varphi$   is  Lipschitz   on   $[0,1]$,   with  $\varphi(1)=0$   and
  $\varphi(0)=1$.  However  the monotonicity  condition on  $\varphi$ on
  $[0,1]$, which amounts  to $g(u)\geq u(1-u) \, g'(u)$ on  $[0, 1]$, is
  not   satisfied   in   general.

  Notice   that
  Condition~\eqref{eq:hyp-phi} is indeed satisfied for the following
 functions $g$, where $c>0$:
 $u/(1+cu)$ in~\cite[Section~6]{capasso78},
$(1-\exp(-cu))/c$ in~\cite{kolokolnikov21} on SIR model for
Covid19     outbreak,  and $\log(1+cu)/c$ in Table 1 of the
survey~\cite{mccallum01} on pathogen transmission models. They
respectively correspond to:
\[
     \varphi(u) = \frac{1-u}{1+cu}, 
\quad 
\varphi(u) =  (1-u)\,  \frac{1-\exp(-cu)}{cu}
\quad\text{and}\quad
\varphi(u)=(1-u) \, \frac{\log(1+cu)}{cu}\cdot
  \]

 \item We recall that in the  SIRS model, once infected, the individuals
   recover with a temporary immunity. To exhibit qualitatively different
   dynamical behaviors,  Liu, Lewin and Iwasa~\cite{liu86}  introduced a
   SIRS model  (with constant population $N=S+I+R$)  where the incidence
   rate is  given by $  I H(I,S)$  for some differentiable  function $H$
   such that $H(I,0)=0$ and $\partial_S  H>0$ for all $I>0$.  The latter
   condition reflects  the biologically  intuitive requirement  that the
   incidence  rate   be  an  increasing   function  of  the   number  of
   susceptibles. In our setting, this model would correspond to:
   \[
     \varphi(u)= H(u, 1-u),
   \]
   with $\varphi$ differentiable and  $\varphi(1)=0$.
Notice that $\varphi$ is decreasing on $[0,1]$ if  $\partial_S H>
\partial_I H$. 
The authors consider the  particular case
 $\varphi(u) =  (1-u)^\alpha \, u^{\beta -1}$ with $\alpha, \beta >
  0$.  Condition~\eqref{eq:hyp-phi}  holds for $\beta = 1$ and
  $\alpha\geq 1$, which is already considered in
  Point~\ref{intro:incidence_rate:rose}.

\end{enumerate}

\section{Notations}\label{sec:notation}

\subsection{Ordered set}

Let $(E, \leq )$ be a (partially) ordered set.  Whenever it exists,
the \emph{supremum} of $A\subset E$, denoted by $\sup(A)$, is the
least upper bound of $A$: for all $x\in A$, $x \leq \sup(A)$ and if
for some $z\in E$ one has $x\leq z$ for all $x\in A$, then
$\sup(A) \leq z$.  A collection $(x_i)_{i \in \mathcal{I}}$ of
elements of $E$ is an \emph{antichain} if for all distinct
$i, j \in \mathcal{I}$, the elements $x_i$ and $x_j$ are not
comparable for the order relation.

\subsection{Banach space and Banach lattice}
\label{sec:banach}
Let $(X, \norm{\cdot})$ be a complex Banach space not reduced
to~$\{0\}$. An operator $T$ on $X$ is a  bounded  linear (and thus
continuous) map from $X$ to itself. If $Y \subset X$ is a subspace of $X$ such that $T(Y) \subset Y$, we denote $T|_Y$ the restriction of $T$ to the subspace $Y$, that is an operator on the Banach space $(Y, \norm{\cdot})$.
The  operator norm of $T$ is given by:
\begin{equation}
   \label{eq:def-norm-T}
  \norm{T}_X= \sup\left\{ \norm{Tx}\, \colon\, x \in X \text{ s.t. }
  \norm{x} = 1\right\},
\end{equation}
its                              spectrum                             by
$\spec(T) =  \{\lambda \in \C\, \colon\,  T - \lambda \id  \text{ has no
  bounded inverse}\}$, where $\id$ is  the identity operator on $X$.  If
$\lambda \in \C$  and $x\in X\priv{0}$ satisfy $Tx=\lambda  x$, then the
element $x$  is an eigenvector  of $T$  and $\lambda$, which  belongs to
$\spec(T) $,  is an eigenvalue  of $T$.  The  spectral radius of  $T$ is
defined by (see \cite[Theorem~18.9]{rudin87}):
\begin{equation}
   \label{eq:def-rho}
  \rho(T) = \sup\left\{ |\lambda| \,\colon\, \lambda \in \spec
    (T)\right\} =\lim_{n\rightarrow \infty } \norm{T^n}_X^{1/n}.
\end{equation}
By convention, we set $T^0=\id$.
The spectral radius
is commutative in the sense that if $T$ and $S$ are two operators on
$X$, we have:
\begin{equation}\label{eq:spec_rac_commute}
\rho(TS) = \rho(ST).
\end{equation}
We define the spectral bound of the operator $T$ by:
\begin{equation}
  \label{eq:def-s(T)}
 s(T) = \sup \{\Reel(\lambda) \,\colon\, \lambda \in \spec(T)\}.
\end{equation}

\medskip

Let $X^\star$ denote  the (continuous or topological)  dual Banach space
of $X$, that is the set of  all the continuous linear forms on $X$.  For
$x\in X$, $x^\star\in X^\star$, let  $\langle x^\star, x \rangle$ denote
the duality product and the norm of $x^\star$ in $X^\star$ is defined by
$\norm{x^\star}=\sup\{\langle    x^\star,     x    \rangle\,    \colon\,
\norm{x}=1\}$.   For an  operator $T$,  the dual  operator $T^\star$  on
$X^\star$                 is                  defined                 by
$\langle T^\star x^\star, x \rangle=\langle x^\star, Tx \rangle$ for all
$x\in   X$,    $x^\star\in   X^\star$.    It   is    well   known   that
$\norm{T^\star}_{X^\star}=\norm{T}_X$ and $\spec(T^\star) = \spec(T)$.

\medskip

An ordered real Banach space $(X, \norm{\cdot}, \leq )$ is a real
Banach space $(X, \norm{\cdot})$ with an order relation $\leq$.  For
any $x \in X$, we define $|x| = \sup(\{x, -x\})$ the supremum of $x$
and $-x$ whenever it exists.  Following \cite[Section~2]{schaefer_74},
the ordered Banach space $(X, \norm{\cdot}, \leq)$ is a \emph{Banach
  lattice} if:
\begin{enumerate}
\item For  any $x, y, z  \in X, \lambda \geq  0$ such that $x  \leq y$, we
  have $x + z \leq y + z$ and $\lambda x \leq \lambda y$.
\item For any $x, y \in X$, there exists  a supremum of $x$ and $y$ in $X$.
\item For any $x, y \in X$ such that $|x| \leq |y|$, we have
  $\norm{x} \leq \norm{y}$.
\end{enumerate}

Let  $(X, \norm{\cdot},  \leq)$ be  a  real Banach  lattice.  We  denote
$X_+  =  \{x  \in X  \,\colon\,  x  \geq  0  \}$ the  positive  cone  of
$X$.  Recall it is a closed
set.
We shall also consider the
dual cone $X^\star_+=\{x^\star\in  X^\star\, \colon\, \langle x^\star, x
\rangle\geq 0 \text{ for all } x\in X_+\}$. A  linear map  $T$ on  $X$ is  \emph{positive} if
$T(X_+)  \subset  X_+$.   According  to~\cite[Theorem~4.3]{aliprantis06}
positive  linear maps  on  Banach  lattices are  bounded  (and thus  are
operators).

If  $S$ and $T$ are  two operators on $X$,
we write  $T \leq S$ if the operator $S - T$ is positive.
If the operators $T, S$ and $S-T$ are positive, then we have, see \cite[Theorem 4.2]{marek70}:
\begin{equation}\label{eq:spec_rad_croissant}
\rho(T) \leq \rho(S).
\end{equation}

\medskip

Any real Banach  lattice $X$  and any  operator $T$ on  $X$ admits  a natural
complex  extension.  The  spectrum  of  $T$ will  be  identified as  the
spectrum of its complex extension and denoted by $\spec(T)$, furthermore
by \cite[Lemma  6.22]{abramovich02}, the spectral radius  of the complex
extension             is            also             given            by
$\lim_{n\rightarrow \infty } \norm{T^n}_X^{1/n}$, with $\norm{\cdot}_X$
still defined by~\eqref{eq:def-norm-T}. 
Moreover, by \cite[Corollary 3.23]{abramovich02}, if $T$ is positive
(seen as an operator on the real Banach lattice $X$), then $T$ and its complex
extension have the same norm.

\subsection{Lebesgue spaces and essential limits}
\label{sec:leb}

Let  $(\Omega, \cg,  \mu)$  be   a  measured  space  with $\mu$ a
$\sigma$-finite measure.       For      any
$\mathcal{A}  \subset \cg$,  we denote  by $\sigma(\mathcal{A})$
the  $\sigma$-field  generated by  $\mathcal{A}$.   If  $f, g$  are  two
real-valued  measurable   functions  defined   on  $\Omega$,   we  write
$f  \leq g$  a.e.\ (resp.   $f =  g$ a.e.)   when $\mu(\{f  > g\})  = 0$
(resp. $\mu(\{f \neq g  \}) = 0$), and denote $\supp(f  )= \{f \neq 0\}$
the support of  $f$.    We  say that  a  real-valued
measurable function $f$ is nonnegative when  $f \geq 0$ a.e., we say
that    $f$   is    positive,    denoted   $f    >    0$   a.e.,    when
$\mu(\{f \leq 0\}) = 0$, and we say that $f$ is bounded if there exists $M \geq 0$ such that $|f| \leq M$ a.e.. 
If  $A, B \subset \Omega$ are measurable sets,
we write $A \subset B$ a.e.\ (resp $A = B$ a.e.) when $\un_A \leq \un_B$
a.e.\ (resp.  $\un_A = \un_B$ a.e.). Let $L^0(\Omega, \cg, \mu)$, simply
denoted $L^0$,   be the
set of $[-\infty , + \infty ]$-valued
measurable  functions defined  on  $\Omega$, where  functions which  are
a.e.\  equal  are   identified. 
The elements  $\un$ and $\zero$ of $L^0$ denote the
functions which  are a.e.\ equal  respectively to 1  and to 0.   For the
sake of clarity, we will omit to write a.e.\ in the proofs.

\medskip

Let $(f_t)_{t\in T}$ be a family of measurable functions defined on
$\Omega$ taking values in $[-\infty , +\infty ]$. We recall that
$f^*=\esssup_{t\in T} f_t$ is a measurable function such that
$f_t\leq f^*$ a.e.\ for all $t\in T$ and if $f$ is measurable function
such that $f_t\leq f$ a.e.\ for all $t\in T$ then $f ^*\leq f$ a.e.\
(if $T$ is at most countable, then one can take
$\esssup_{t\in T} f_t=\sup_{t\in T} f_t$).  We now consider $T=\R_+$.
Let $(f_t)_{ t\in \R_+}$ be a non-decreasing sequence, in the sense
that for all $t\leq s$ we have $f_t\leq f_s$ a.e., then if
$(t_n)_{ n\in \N}$ is a sequence converging to $+\infty $, we have
that the sequence $(f_{t_n})_{ n\in \N}$ converges a.e.\ towards
$f^*=\esssup_{t\in \R_+} f_t$, and thus we shall simply write
$f^*=\limess_{t\rightarrow +\infty } f_t$. We leave to the reader the
definition of $\essinf$ and the limit of a non-increasing sequence of
measurable functions.  For the family $(f_t)_{ t\in \R_+}$ , we
consider $f^*_t=\esssup_{s\geq t} f_s$ for all $t\in \R_+$, and get
that the sequence $(f^*_t)_{t\in \R^+}$ is non-increasing and write
$\limesssup_{t\rightarrow \infty } f_t= \limess_{t\rightarrow \infty }
f^*_t$.  We define in a similar way
$\limessinf_{t\rightarrow \infty } f_t$.  Notice that if $g_t=f_t$
a.e.\ for all $t\in \R_+$, then the essential supremum/infimum limits
of $(f_t)_{t\in \R^+}$ and $(g_t)_{t\in \R^+}$ are a.e.\ equal.
Therefore, the essential supremum/infimum limits of sequences is well
defined on the space $L^0$.  We say the sequence of functions
$(f_t)_{ t\in \R_+}$ in $L^0$ essentially converges if
$\limesssup_{t\rightarrow \infty } f_t=\limessinf_{t\rightarrow \infty
} f_t$ in $L^0$, and write $\limess_{t\rightarrow +\infty } f_t$ for
this common limit (which is an element of $L^0$).  When considering
$T=\N$ instead of $T=\R_+$, the analog of the essential convergence is
the a.e.\ convergence, that is the usual convergence in $L^0$.

\medskip

For a measurable function $f$, we write
$\mu(f)=\int f\, \rd \mu=\int_\Omega f(x)\, \mu(\rd x)$ the integral
of $f$ with respect to $\mu$ when it is well defined. When $f$ is
measurable and a.e.\ finite and nonnegative, we denote $f \mu$ the
measure on $(\Omega, \cg)$ defined by $f \mu(A) = \mu(\un_A f)$ for
any measurable set $A$.  For $p \in [1, +\infty]$, the Lebesgue space
$L^p(\Omega, \cg, \mu)$ is the set of all real-valued measurable
functions $f\in L^0$ defined on $\Omega$ whose $L^p$-norm,
$\norm{f}_p = \mu(|f|^p)^{1/p}$ if $p<+\infty $ and
$\norm{f}_\infty = \esssup(|f|)$ if $p=+\infty $, is finite.  When
there is no ambiguity we shall simply write $L^p(\Omega)$, $L^p(\mu)$
or $L^p$ for $L^p(\Omega, \cg, \mu)$.  The Banach space $L^p$ endowed
with the usual order $f\leq g$, that is $\mu(\{f>g\})=0$, is a Banach
lattice.  The positive cone $L^p_+$ is the subset of $L^p$ of
nonnegative functions; it is normal (as the norm $\norm{\cdot}$ is
monotonic, that is, $0\leq f\leq g$ implies
$\norm{f}_p\leq \norm{g}_p$, see~\cite[Proposition~19.1]{deimling85})
and reproducing (that is, $L^p_+ -L^p_+=L^p$).  Since the supports of
two functions which are a.e.\ equal are also a.e.\ equal, we get that
the support of $f\in L^p$ is well defined up to the a.e.\ equality; it
will still be denoted by $\supp(f)$.  For $p\in [1, +\infty )$, the
dual of $L^p$ is $L^q$ where $1/p+1/q=1$, with the duality product
$\langle g, f \rangle = \int fg\, \rd \mu$ for $f \in L^p$ and
$ g \in L^q$ (for $p=1$, we use that the measure $\mu$ is
$\sigma$-finite).

\medskip

For any
$f \in L^\infty$, we denote by  $M_f$ the multiplication by $f$, which
can be seen as 
an operator on $L^p$ for $p \in [1,+\infty]$.  For $A \in \cg$  a
measurable set, we denote:
\begin{equation}
  \label{eq:def-MA}
  M_A=M_{\un_A}.
\end{equation}
Let $T$ be an operator on $L^p$.  
The projection 
 of  $T$  on
$A$, denoted $T_A$, is the operator defined  by:
\begin{equation}
   \label{eq:def-TA}
   T_A=M_A \, T\,  M_A,
\end{equation}
and, if $\mu(A)>0$, we denote by  $T|_A$ the restriction of the operator
$T_A$ to $L^p(A)$, where the set $A$  is endowed with the trace of $\cg$
on $A$ and the measure $\mu|_A (\cdot)=\mu(A\cap \cdot)$.

We now assume that $\mu(\Omega)>0$, so that $L^p(\Omega)$ is not reduced
to  a singleton.  When there  is no  ambiguity on  the operator  $T$, we
simply write $\rho(A)$  for the spectral radius of $T_A$  (and of $T|_A$
when  $\mu(A)>0$).  In  particular, we  have $\rho(\Omega)=\rho(T)$  and
$\rho(A) = 0$ if $\mu(A) = 0$.  If the operator $T$ is positive, we also
have that:
\[
  A \subset B \quad \Longrightarrow \quad \rho(A) \leq  \rho(B).
\]

\subsection{Decomposition of positive operators on $L^p$, with $p\in (1,
+\infty )$}\label{sec:decom_pos_op}
Recall the measure $\mu$ is $\sigma$-finite and non-zero.
We recall the atomic decomposition from Schwartz~\cite{schwartz_61} of a
positive operator $T$  on $L^p$, see also~\cite{dlz}.   A measurable set
$A \in  \cg$ is $T$-\emph{invariant}, or simply invariant when there is
no ambiguity,   if $M_{A^c} T  M_A = 0$, which, see~\cite[Eq.~(7)]{dlz}, is equivalent
to:
\begin{equation}\label{eq:inv_supp_plus_petit}
  \langle g , Tf   \rangle=0.
\end{equation}
for all $f \in L^p_+$ and $  g \in L^q_+$ such that $\supp(f) \subset A$
a.e.\  and   $\supp(g)  \subset   A^c$  a.e..    The  operator   $T$  is
\emph{irreducible}  if  its  only  invariant sets  are  a.e.\  equal  to
$\Omega$ or $\emptyset$.  A measurable  set $A$ with positive measure is
\emph{irreducible} if  the operator  $T|_A$ on $L^p(A)$  is irreducible.
The \emph{future}  of a set $A  \subset \Omega$, denoted $\cf(A)$,  is the
smallest invariant set  that contains $A$.  If $\cc$ is  an at most countable collection of subsets  of $\Omega$,  then  we denote  by $\cf(\cc)$  the
future of the union of the elements of $\cc$:
\begin{equation}
   \label{eq:F(C)}
   \cf(\cc)= \cf\left(  \bigcup_{A\in \cc}  A\right) =  \bigcup_{A\in \cc}  \cf(A),
 \end{equation}
 where the last equality is~\cite[Lemma 3.13]{dlz}.

A set
is \emph{admissible}  if it belongs  to the $\sigma$-field  generated by
the invariant  sets. An \emph{atom} is  a minimal admissible set  with a
positive  measure  (that is,  $A$  is  an  atom  if $A$  is  admissible,
$\mu(A)>0$ and if $B$ is an  admissible set such that $B\subset A$ a.e.\
then a.e.\ $B=\emptyset$  or $B=A$), and we identify two  atoms that are
a.e.\  equal.  In  particular, if  the set  $A$ is  an atom  and $B$  is
admissible then we have:
\[
  \mu(A \cap B)>0 \implies A\subset B \quad\text{a.e.}.
\]
According to~\cite[Theorem~1]{dlz}, 
a measurable set  is admissible and irreducible with positive measure if
and only if it is an atom. 
Since the  atoms have  positive measure,  we get that  the set  of atoms  (up to
the a.e.\ equality),  $\atom$,   is at
most countable.
We shall also consider the (at most countable) set of non-zero atoms:
\[
  \anz=\{ A \in \atom\, \colon\, \rho(A) >0\}.
\]

The relation  $\preccurlyeq$ on $\atom$,
defined  by  $A  \preccurlyeq  B$  if  $\cf(A)  \subset  \cf(B)$  a.e.\  (or
equivalently $A \subset \cf(B)$ a.e.), is  an order relation.  We end this
section by noticing that antichains  of atoms are characterized by their
future.
\begin{lem}[Antichains with  same future]\label{lem:antichain_fut_eq}
  Let $\cc$ and $\cc'$ be two antichains of atoms. Then, we have:
  \[
    \cf(\cc)=\cf(\cc')\quad  \Longleftrightarrow \quad \cc=\cc'.
  \]
\end{lem}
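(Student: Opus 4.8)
The forward implication is immediate, since $\cc=\cc'$ trivially gives $\cf(\cc)=\cf(\cc')$; so the plan is to concentrate on the converse. Assuming $\cf(\cc)=\cf(\cc')$, I would prove $\cc\subset\cc'$ and then conclude by the symmetric argument that $\cc=\cc'$.

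First I would fix an atom $A\in\cc$. Since $A\subset\cf(A)\subset\cf(\cc)=\cf(\cc')$ a.e.\ and, by~\eqref{eq:F(C)}, $\cf(\cc')=\bigcup_{B\in\cc'}\cf(B)$, the positive-measure set $A$ is (up to a null set) covered by this union. Because $\cc'$ is a subset of $\atom$ and hence at most countable, and $\mu(A)>0$, at least one term of the union must meet $A$ in positive measure: there exists $B\in\cc'$ with $\mu(A\cap\cf(B))>0$.

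The crucial step is then to invoke the minimality of atoms. The set $\cf(B)$ is invariant, hence admissible, and $A$ is an atom with $\mu(A\cap\cf(B))>0$; the implication $\mu(A\cap\cf(B))>0\Rightarrow A\subset\cf(B)$ a.e.\ recalled just before the statement therefore gives $A\subset\cf(B)$ a.e., that is $A\preccurlyeq B$. Running the identical argument with the roles of $\cc$ and $\cc'$ exchanged, starting now from $B\in\cc'\subset\cf(\cc')=\cf(\cc)$, yields some $A'\in\cc$ with $B\preccurlyeq A'$. By transitivity of $\preccurlyeq$ we get $A\preccurlyeq A'$ with $A,A'\in\cc$; since $\cc$ is an antichain this forces $A=A'$. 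Combining $A\preccurlyeq B$ and $B\preccurlyeq A'=A$ with the antisymmetry of the order relation $\preccurlyeq$ then gives $A=B\in\cc'$. This shows $\cc\subset\cc'$, and exchanging the two antichains gives the reverse inclusion, hence $\cc=\cc'$.

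I expect the only genuinely delicate point to be the passage from the a.e.\ inclusion $A\subset\cf(\cc')$ to the existence of a \emph{single} $B\in\cc'$ with $\mu(A\cap\cf(B))>0$: this is exactly where the countability of $\atom$ and the positivity of $\mu(A)$ enter. Once that element $B$ is produced, the remainder is purely order-theoretic bookkeeping on atoms (minimality to pass from positive intersection to inclusion, then transitivity, the antichain property, and antisymmetry), which I would carry out symmetrically in both directions.
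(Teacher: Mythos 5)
Your proof is correct and follows essentially the same route as the paper's: both extract, for a given atom $A\in\cc$, a single $B\in\cc'$ with $\mu(A\cap\cf(B))>0$, use atomicity to get $A\preccurlyeq B$, then apply the symmetric argument, transitivity, and the antichain property to conclude $A=B\in\cc'$. The only difference is that you spell out two steps the paper leaves implicit (the countability argument producing $B$, and the final appeal to antisymmetry), which is fine.
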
  

\begin{proof}
  Assume that $  \cf(\cc)=\cf(\cc')$. 
  Consider an atom  $A\in \cc$. Since
  $A\subset \cf(\cc) = \cf(\cc')$, 
  there exists $A' \in \cc'$ such that we have
  $\mu(A \cap \cf(A')) > 0$, which implies $A \preccurlyeq A'$ as $A$ is
  an atom. Conversely there exists $B\in \cc$ such that
  $A'\preccurlyeq B$, and by transitivity $A\preccurlyeq B$.  Since
  $\cc$ is an antichain, we obtain  $A=B$ and thus $A=A'$ is  an element
  of $\cc'$.
  The reverse implication is trivial by~\eqref{eq:F(C)}. 
\end{proof}

  \subsection{Power compact operators on $L^p$}

A linear map  $T$ on a Banach space is \emph{compact} if the image of the
unit ball is relatively compact; it is then bounded. 
An  operator $T$  on a  Banach space  is \emph{power  compact} if  there
exists  $k  \in \N^*$  such  that  $T^k$  is  compact.  We  recall  some
well-known properties of power compact  operators, see
\cite{dunford88,konig86} for  instance.

\begin{lem}[Spectrum of power compact operators]\label{lem:op_pc}
Let $T$ be an operator on a Banach space. 
\begin{enumerate}[(i)]
\item \label{th:item:T^*_pc}
  The operator  $T^\star$ is power compact if
  and only if the operator $T$ is power compact.
\item \label{th:item:spT_countable}
  If $T$  is power compact,  then the
  set $\spec(T)$  is at  most countable and  has no  accumulation points
  except possibly $0$ (it is thus totally disconnected).
\end{enumerate}
\end{lem}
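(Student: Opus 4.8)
The plan is to reduce both assertions to standard facts about genuinely compact operators, using two elementary identities: the adjoint identity $(T^\star)^k=(T^k)^\star$, and the polynomial spectral mapping theorem, which gives $\spec(T^k)=\{\lambda^k\,\colon\,\lambda\in\spec(T)\}$. Throughout, fix $k\in\N^*$ such that $T^k$ is compact.

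For the first equivalence, I would simply invoke Schauder's theorem: a bounded operator on a Banach space is compact if and only if its adjoint is compact. Combined with $(T^\star)^k=(T^k)^\star$, this yields that $T^k$ is compact if and only if $(T^\star)^k$ is compact, so that $T$ is power compact (with exponent $k$) exactly when $T^\star$ is power compact (with the same exponent $k$). This is precisely the claimed equivalence.

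For the spectrum statement, the starting point is the Riesz--Schauder theory of compact operators, which tells us that $\spec(T^k)$ is at most countable with no accumulation point other than possibly $0$. It then remains to transfer this structure to $\spec(T)$ through the relation $\spec(T)^k=\spec(T^k)$. Countability is immediate, since every $w\in\spec(T^k)$ has at most $k$ preimages under $z\mapsto z^k$, so $\spec(T)$ is a countable union of finite sets. For the accumulation points, I argue by contradiction: suppose distinct $\lambda_n\in\spec(T)$ converge to some $\lambda\neq0$. Then $\lambda_n^k\to\lambda^k\neq0$, and because $z\mapsto z^k$ is at most $k$-to-one, infinitely many of the values $\lambda_n^k$ are distinct; extracting a subsequence, we obtain distinct elements of $\spec(T^k)$ converging to $\lambda^k\neq0$, contradicting the Riesz--Schauder structure of $\spec(T^k)$. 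Hence $0$ is the only possible accumulation point of $\spec(T)$. Finally, a countable subset of $\C$ can contain no connected subset with more than one point, so it is totally disconnected, which gives the parenthetical remark.

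The only genuinely delicate point is the last step: one must use that $z\mapsto z^k$ is finite-to-one to ensure that a \emph{nonzero} accumulation point of $\spec(T)$ produces a \emph{nonzero} accumulation point of $\spec(T^k)$, rather than a single value attained infinitely often. Everything else is a direct citation of Schauder's theorem, the polynomial spectral mapping theorem, and the Riesz--Schauder theory, all of which are contained in the references~\cite{dunford88,konig86} already mentioned.
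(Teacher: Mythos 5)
Your proof is correct. Note, however, that the paper offers no proof of this lemma at all: it is stated as a recollection of well-known facts, with the reader sent to the references~\cite{dunford88,konig86}. Your argument is therefore not an alternative to the paper's proof but a self-contained reconstruction of the standard one, and it is the right reconstruction: part~(i) follows from Schauder's theorem (compactness of an operator is equivalent to compactness of its adjoint, in both directions) together with the identity $(T^\star)^k=(T^k)^\star$; part~(ii) follows from the Riesz--Schauder description of the spectrum of the compact operator $T^k$, transferred back through the polynomial spectral mapping theorem $\spec(T^k)=\{\lambda^k\,\colon\,\lambda\in\spec(T)\}$. You correctly identify and handle the one delicate point, namely that the map $z\mapsto z^k$ is at most $k$-to-one, so that a sequence of \emph{distinct} spectral values of $T$ accumulating at a nonzero $\lambda$ produces infinitely many \emph{distinct} spectral values of $T^k$ accumulating at $\lambda^k\neq 0$, rather than a single value repeated; without this observation the contradiction with Riesz--Schauder theory would not follow. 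The concluding remark that a countable subset of $\C$ is totally disconnected is also correct, since any connected set with more than one point has the cardinality of the continuum. The only implicit hypothesis worth making explicit is that the spectral mapping theorem is applied on the complex Banach space (or the complexification, as the paper sets up in Section~\ref{sec:banach}), which is exactly the setting in which the paper defines $\spec(T)$.
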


It  is well  known  that the  spectral radius  (and  more generally  the
spectra) is a  continuous function on the set of  compact operators with
respect         to         the        operator         norm,         see
\cite[Theorem~11]{newburgh1951varspectra}.   We  shall   however  use  a
weaker  result   from  Anselone~\cite{anselone}.   We  say a  family  $\cv$  of
operators on $X$  is \emph{collectively compact} if $\bigcup  _{V\in \cv} V(B)$
is relatively compact, where $B$ is the unit ball of $X$.  The following
result  is  a direct  consequence  of  Proposition~4.1 and  Theorem~4.16
in~\cite{anselone}.
 
\begin{lem}[Collectively compact operators]
  \label{lem:coll-K}
  Let $I$  be an interval  of $\R$  and $(V_t)_{t\in  I}$ be a  family of
  collectively compact operators on a Banach  space $X$.  If $t\in I$ is
  such  that  $\lim_{s\rightarrow  t}   \norm{(V_s  -V_t)x}=0$  for  all
  $x\in X$, then we have $\lim_{s\rightarrow t}\rho(V_s)=\rho(V_t)$.
\end{lem}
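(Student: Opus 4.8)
The plan is to invoke the collectively compact approximation theory of Anselone, after reducing to sequences: since the index set $I$ is an interval of $\R$, it suffices to prove $\rho(V_{s_n}) \to \rho(V_t)$ for every sequence $s_n \to t$ in $I$. I would first record that each $V_s$ is compact, since $V_s(B)$ is a subset of the relatively compact set $\bigcup_{r \in I} V_r(B)$ (where $B$ is the unit ball of $X$) and is therefore relatively compact. Consequently each spectrum $\spec(V_s)$ is at most countable with $0$ as its only possible accumulation point, so that $\rho(V_s) = \max\{|\lambda| \colon \lambda \in \spec(V_s)\}$, the maximum being attained at a nonzero eigenvalue whenever $\rho(V_s) > 0$, and $\spec(V_s) = \{0\}$ when $\rho(V_s) = 0$. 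The goal then splits into the lower bound $\liminf_n \rho(V_{s_n}) \ge \rho(V_t)$ and the upper bound $\limsup_n \rho(V_{s_n}) \le \rho(V_t)$.

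For the lower bound, if $\rho(V_t) = 0$ there is nothing to prove. Otherwise I would fix an eigenvalue $\lambda_0 \in \spec(V_t)$ with $|\lambda_0| = \rho(V_t)$. Anselone's spectral approximation result (Theorem~4.16 in~\cite{anselone}), valid under collective compactness together with the strong convergence $\norm{(V_{s_n}-V_t)x} \to 0$, guarantees that $\lambda_0$ is a limit of spectral values $\lambda_n \in \spec(V_{s_n})$; hence $\rho(V_{s_n}) \ge |\lambda_n| \to |\lambda_0| = \rho(V_t)$, which is the lower bound.

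The delicate direction is the upper bound, and this is where I expect the main obstacle, since upper semicontinuity of the spectrum generally fails for merely strongly convergent families (spectral pollution); collective compactness is precisely the hypothesis that rules this out. First, the uniform boundedness principle applied to the strongly convergent family gives $\sup_n \norm{V_{s_n}}_X < \infty$, so all relevant spectral values stay in a fixed compact annulus. I would then argue by contradiction: if $\limsup_n \rho(V_{s_n}) = R > \rho(V_t)$, extract eigenvalues $\mu_n \in \spec(V_{s_n})$ with $|\mu_n| \to R$ and, passing to a subsequence, $\mu_n \to \mu$ with $|\mu| = R > \rho(V_t) \ge 0$, so $\mu \neq 0$. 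The engine is Proposition~4.1 in~\cite{anselone}, which upgrades strong convergence to the norm convergences $\norm{(V_{s_n}-V_t)V_t}_X \to 0$ and $\norm{(V_{s_n}-V_t)V_{s_n}}_X \to 0$ (uniform convergence of $V_{s_n}-V_t$ to $0$ on the relatively compact set $\bigcup_{r} V_r(B)$). Feeding this into Theorem~4.16 in~\cite{anselone} yields the absence of pollution: a nonzero limit $\mu$ of spectral values $\mu_n \in \spec(V_{s_n})$ must belong to $\spec(V_t)$, forcing $|\mu| \le \rho(V_t)$ and contradicting $|\mu| = R > \rho(V_t)$. Hence $\limsup_n \rho(V_{s_n}) \le \rho(V_t)$, which also covers the case $\rho(V_t)=0$. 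Combining the two bounds gives $\rho(V_{s_n}) \to \rho(V_t)$ for every sequence $s_n \to t$, and therefore $\lim_{s \to t} \rho(V_s) = \rho(V_t)$.
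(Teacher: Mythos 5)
Your proposal is correct and follows essentially the same route as the paper: the paper's entire proof is the remark that the lemma is a direct consequence of Proposition~4.1 and Theorem~4.16 of Anselone, and your argument simply makes explicit how those two results combine (Proposition~4.1 upgrading strong convergence to the norm convergences needed, Theorem~4.16 giving both the approximation of spectral values and the absence of spectral pollution). The sequential reduction and the compactness of each $V_s$ via $V_s(B)\subset\bigcup_{r\in I}V_r(B)$ are fine, so this is a faithful, more detailed version of the paper's citation-style proof.
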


We give a result on compact operators in Lebesgue space. 
Recall that  $\mu$ is a non-zero $\sigma$-finite  measure    on
$\Omega$,  and that   $L^p$    denote
$L^p(\Omega, \cg, \mu)$.

\begin{lem}[On compactness]
  \label{lem:hp_2_cpct}
  Let  $p \in  (1,+\infty)$.
  A positive operator on $L^p$  which 
 is dominated by a compact operator  is  compact.
\end{lem}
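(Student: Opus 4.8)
The plan is to recognize this statement as the \emph{Dodds--Fremlin domination theorem} specialized to the pair $(L^p,L^p)$, and to organize the proof around the two order-continuity properties that single out the range $p\in(1,+\infty)$. Write the hypothesis as $0\le T\le S$ with $S$ compact; since $S=T+(S-T)$ is a sum of positive operators, $S$ is itself positive, so I may assume $S\ge 0$ throughout. The key structural inputs I would record first are that $L^p$ has order continuous norm whenever $p<+\infty$, while its dual $L^q$ (with $1/p+1/q=1$) has order continuous norm if and only if $q<+\infty$, that is, if and only if $p>1$. This last equivalence is exactly the reason the statement is restricted to $p>1$: for $p=1$ the dual $L^\infty$ is not order continuous and the conclusion genuinely fails (one can build a positive integral operator dominated by a rank-one operator which is not compact).

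The first, elementary, step is to show that $T$ sends the closed unit ball $B$ of $L^p$ into an \emph{almost order bounded} set. Since $S$ is compact, $\{S|f|\colon f\in B\}$ is relatively compact, and any relatively compact subset of a Banach lattice is almost order bounded: covering it by finitely many $\varepsilon$-balls around $h_1,\dots,h_k$ and setting $u=\bigvee_i|h_i|$, norm monotonicity gives $\norm{(S|f|-u)^+}_p\le\varepsilon$ uniformly in $f\in B$. Positivity of $T$ yields $|Tf|\le T|f|\le S|f|$, and since $t\mapsto(t-u)^+$ is nondecreasing we get $(|Tf|-u)^+\le(S|f|-u)^+$, hence $\sup_{f\in B}\norm{(|Tf|-u)^+}_p\le\varepsilon$. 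Writing $Tf$ as its truncation into the order interval $[-u,u]$ plus a remainder of norm at most $\varepsilon$, this shows $T(B)\subset[-u,u]+\varepsilon B$.

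The main obstacle is the passage from almost order boundedness to actual relative compactness. One cannot simply invoke compactness of the order interval: in $L^p$ the interval $[-u,u]$ is \emph{not} norm compact (a Rademacher-type family stays inside $[0,\un]$ yet has no convergent subsequence), so the naive argument fails, and this is precisely the delicate point handled by the Dodds--Fremlin machinery. The remaining content is therefore to control the oscillatory part of the truncated family $\{(Tf)\wedge u\vee(-u)\colon f\in B\}$; this is where order continuity of the dual $L^q$ enters: through duality it forces disjointly supported pieces to have uniformly small norm, ruling out fine oscillations and upgrading the almost order bounded set to a totally bounded one. I would either reproduce this argument or, more economically, invoke the Dodds--Fremlin theorem directly (see Aliprantis--Burkinshaw, \emph{Positive Operators}, or Meyer--Nieberg, \emph{Banach Lattices}) once the two order-continuity hypotheses have been checked, concluding that $T(B)$ is relatively compact and hence that $T$ is compact.
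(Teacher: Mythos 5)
Your proposal is correct and takes essentially the same route as the paper: both proofs verify that $L^p$ has order continuous norm and that its dual $L^q$ (with $1/p+1/q=1$) does as well — which is exactly where the restriction $p\in(1,+\infty)$ enters — and then conclude by invoking the Dodds--Fremlin domination theorem (Theorem~5.20 of Aliprantis--Burkinshaw, the paper's cited reference). Your extra sketch of the almost-order-boundedness step is sound but becomes redundant once the theorem is cited, and your remark that the conclusion fails for $p=1$ correctly explains the hypothesis.
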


\begin{proof}
   Notice    that   $L^p$    has   an    order   continuous    norm   for
  $p\in [1, \infty )$, see~\cite[Definition~4.7]{aliprantis06}, that is,
  according to~\cite[Theorem~4.9]{aliprantis06}, if $(f_n)_{n\in \N}$ is
  a     non-decreasing     sequence     of     $L^p_+$     such     that
  $\sup_{n\in          \N}          f_n=f\in         L^p$,          then
  $\lim_{n\rightarrow \infty } \norm{f_n-f}_p=0$.
  In particular, when $p \in (1,+\infty)$, the dual of $L^p$, that is isomorphic to $L^q$ with $1/p + 1/q = 1$, also has an order continuous norm.
 The lemma is then  a
  direct consequence of~\cite[Theorem~5.20]{aliprantis06}.
\end{proof}

We recall  in our framework some classical
results, see~\cite[Theorem~6.2 and Lemma~6.5]{dlz}.

\begin{theo}
  \label{theo:rappel}
Let $T$ be a positive power compact operator on $L^p$ with $p \in
(1,+\infty)$. 
\begin{enumerate}[(i)]
\item \textbf{Krein-Rutman.} \label{thm:KR}
  If $\rho(T)$  is positive then $\rho(T)$ is
  an  eigenvalue of
  $T$, and there exists  a corresponding nonnegative right eigenfunction
  denoted 
  $v_T$.\label{th:item:KR}
  \item \textbf{de Pagter.} If $T$ is irreducible then $\rho(T)$ is positive unless $T=0$
    and $\dim(L^p)=1$, that is, if $A$ is measurable then either
    $\mu(A)=0$ or $\mu(A^c)=0$.\label{th:item:dP}
    \item \textbf{Schwartz.}
      We have that for any admissible set $A$:
    \begin{equation}
 \label{eq:R0=max}
  \rho(A)=\max_{B\in \anz, \, B\subset A} \rho(B).
\end{equation}
    \end{enumerate}
  \end{theo}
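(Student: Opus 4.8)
The three statements are classical, and the plan is to deduce each from the standard theory of positive power compact operators together with the atomic decomposition of \cite{dlz}, rather than to reprove everything. Throughout, I fix $k\in\N^{*}$ with $T^{k}$ compact, and use the spectral mapping identity $\spec(T^{k})=\{\lambda^{k}\,\colon\,\lambda\in\spec(T)\}$, which gives $\rho(T^{k})=\rho(T)^{k}$.

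For the Krein--Rutman part~\ref{th:item:KR}, I would first reduce to the compact case: when $\rho(T)>0$, the number $\rho(T)^{k}=\rho(T^{k})$ is a nonzero point of $\spec(T^{k})$, hence by Riesz theory an eigenvalue of the compact operator $T^{k}$, and the classical Krein--Rutman theorem applied to the positive compact operator $T^{k}$ on the Banach lattice $L^{p}$ yields a nonnegative eigenfunction $w\geq 0$, $w\neq 0$, with $T^{k}w=\rho(T)^{k}w$. To pass back to $T$ itself, set $v=\sum_{j=0}^{k-1}\rho(T)^{k-1-j}\,T^{j}w$; positivity of $T$ gives $v\geq \rho(T)^{k-1}w>0$ on $\supp(w)$, so $v\neq 0$, and a telescoping computation gives $Tv-\rho(T)v=T^{k}w-\rho(T)^{k}w=0$, so $v=v_{T}$ is the desired eigenfunction. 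The only point needing care is $v\neq 0$, which is exactly why the $j=0$ summand is kept.

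For de Pagter's part~\ref{th:item:dP}, the obstacle is that irreducibility of $T$ does \emph{not} transfer to $T^{k}$ (a cyclic permutation is irreducible with reducible square), so one cannot simply apply the classical de Pagter theorem to $T^{k}$. I would instead invoke the power compact version of de Pagter's theorem established in \cite{dlz}: the idea is that $\rho(T)=0$ forces $T$ to be quasi-nilpotent, which, combined with irreducibility, is incompatible with $L^{p}$ having dimension at least $2$; the surviving case is precisely $T=0$ with $\dim(L^{p})=1$, i.e.\ every measurable set is null or co-null.

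The Schwartz formula \eqref{eq:R0=max} is the structural core, and its reverse inequality is the step I expect to be the main obstacle. The easy inequality $\rho(A)\geq\max_{B\in\anz,\,B\subset A}\rho(B)$ is immediate from the monotonicity $B\subset A\Rightarrow\rho(B)\leq\rho(A)$ recalled in Section~\ref{sec:leb}; note also that the maximum is attained over a finite subfamily, since by Lemma~\ref{lem:op_pc} the nonzero spectrum of the power compact operator can accumulate only at $0$, so only finitely many atoms $B$ satisfy $\rho(B)>\varepsilon$ for each $\varepsilon>0$. For the reverse inequality, after restricting to $L^{p}(A)$ (so that the atoms in play are exactly the atoms of $T$ contained in $A$, and $T|_{A}$ remains positive and power compact), I would use the block--triangular decomposition furnished by the admissible $\sigma$-field, the infinite-dimensional analogue of the Frobenius normal form of a nonnegative matrix: up to a null set $A$ splits into the atoms $B\subset A$ together with a non-atomic admissible remainder, the off-diagonal blocks respect the order $\preccurlyeq$ (infection propagates along $\preccurlyeq$), the diagonal blocks are the irreducible restrictions $T|_{B}$, and the remainder carries spectral radius $0$. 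Reading the spectral radius off this triangular form gives $\rho(A)=\max\bigl(\,0,\ \max_{B\subset A}\rho(T|_{B})\,\bigr)=\max_{B\in\anz,\,B\subset A}\rho(B)$. The delicate part is justifying in the operator setting that the non-atomic remainder contributes nothing to the spectrum and that the countable family of diagonal blocks creates no spectrum beyond their supremum; this is precisely the content of Schwartz~\cite{schwartz_61} and \cite[Lemma~6.5]{dlz}, on which I would rely.
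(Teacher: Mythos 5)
Your proposal is correct and is in substance the same as the paper's treatment: the paper gives no proof of this theorem at all, stating it as a reminder justified entirely by the citation to \cite[Theorem~6.2 and Lemma~6.5]{dlz} (and Schwartz~\cite{schwartz_61}), which are precisely the references you yourself defer to for the de Pagter part and for the hard inequality in the Schwartz formula. Your additional self-contained derivation of the Krein--Rutman part---applying the classical compact Krein--Rutman theorem to $T^{k}$ and returning to $T$ via $v=\sum_{j=0}^{k-1}\rho(T)^{k-1-j}T^{j}w$, with the $j=0$ summand guaranteeing $v\neq 0$---is the standard argument and is correct.
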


Following   \cite[Lemma~4.2]{ddz_vacc_crit},  we   now   state   a 
technical result based on Collatz-Wielandt inequality and the
Krein-Rutman theorem,   giving a bound on  the spectral radius given  a strict
supersolution  of  the  eigenvalue  equation.
\begin{lem}[Supersolutions and spectral radius]
  \label{lem:rad>1}
  Let $S$ be a positive operator on $L^p$ with $p \in (1, + \infty)$.
  If there exists $\lambda>0$ and a non-null nonnegative function $v \in L^p_+$ such that
  $Sv \geq \lambda v$, then:
  \[
    \rho(S)\geq \rho(S_A) \geq
    \lambda,
  \]
   where $A=\supp(v)$. Furthermore, if $S$ is power compact, then we have:
  \begin{enumerate}[(i)]
  \item \label{lem:item:superequal}
    If $Sv= \lambda v$ on $A$, then $\rho(S_A) = \lambda$.
  \item \label{lem:item:superstrict}
    If  $Sv - \lambda v$ is positive on $A$, then  $\rho(S_A) >
    \lambda$. 
  \end{enumerate}
\end{lem}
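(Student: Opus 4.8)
The plan is to push everything onto the projected operator $S_A$ and combine a Collatz--Wielandt iteration with Krein--Rutman applied to the dual. First I would note that, since $A=\supp(v)$, we have $M_Av=v$; applying the monotone positive projection $M_A$ to $Sv\ge\lambda v$ gives $S_Av=M_ASM_Av=M_ASv\ge\lambda M_Av=\lambda v$, so $v$ is a non-null nonnegative supersolution for $S_A$. As $S_A$ is positive, an immediate induction yields $(S_A)^nv\ge\lambda^nv\ge0$, and the monotonicity of the $L^p$-norm gives $\lambda^n\norm{v}_p\le\norm{(S_A)^nv}_p\le\norm{(S_A)^n}\,\norm{v}_p$; dividing by $\norm{v}_p>0$ and letting $n\to\infty$ produces $\lambda\le\rho(S_A)$. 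The remaining inequality $\rho(S)\ge\rho(S_A)$ is exactly the monotonicity of the spectral radius for positive operators applied to $A\subset\Omega$. This settles the first assertion, which needs no compactness.

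For parts (i) and (ii) I would first establish that $S_A$ is power compact. Using $0\le M_A\le\id$ and $S\ge0$ one checks $0\le S_A\le S$, whence $0\le(S_A)^k\le S^k$ for all $k$ (for positive operators $0\le B\le C$ implies $B^k\le C^k$, by iterating monotonicity). Choosing $k$ with $S^k$ compact, Lemma~\ref{lem:hp_2_cpct} makes the positive operator $(S_A)^k$ compact, so $S_A$ is power compact, and hence so is $S_A^\star$ by Lemma~\ref{lem:op_pc}. Because $\rho(S_A)\ge\lambda>0$, Theorem~\ref{theo:rappel} (Krein--Rutman) applied to the positive power compact operator $S_A^\star=M_AS^\star M_A$ on $L^q$ furnishes a non-null $w\in L^q_+$ with $S_A^\star w=\rho(S_A)\,w$; the outer factor $M_A$ forces $\supp(w)\subset A$.

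Pairing $w$ with the supersolution and using the definition of the dual operator, $\scal{w,\,S_Av-\lambda v}=(\rho(S_A)-\lambda)\,\scal{w,v}$. The key point is that $\scal{w,v}>0$: indeed $v>0$ a.e.\ on $A=\supp(v)$ while $w\ge0$ is non-null with $\supp(w)\subset A$, so $wv>0$ on the positive-measure set $\{w>0\}$. In case (i), ``$Sv=\lambda v$ on $A$'' means $S_Av=\lambda v$, hence $(\rho(S_A)-\lambda)\scal{w,v}=0$ and therefore $\rho(S_A)=\lambda$. In case (ii), $S_Av-\lambda v>0$ a.e.\ on $A$ together with $w$ non-null supported in $A$ give $\scal{w,\,S_Av-\lambda v}>0$, so $(\rho(S_A)-\lambda)\scal{w,v}>0$; combined with $\rho(S_A)\ge\lambda$ and $\scal{w,v}\ge0$ this forces the strict inequality $\rho(S_A)>\lambda$.

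The main obstacle I anticipate is the positivity bookkeeping around the dual eigenfunction: one must guarantee that the Krein--Rutman eigenfunction $w$ of $S_A^\star$ is genuinely nonnegative and supported in $A$, and then that $\scal{w,v}$ is strictly positive rather than merely $\ge0$ --- without this, neither the equality in (i) nor the strict inequality in (ii) follows. Verifying the operator-order domination $(S_A)^k\le S^k$ that delivers power-compactness of $S_A$ is the other step requiring care, since it is what lets me invoke Krein--Rutman for $S_A^\star$ in the first place.
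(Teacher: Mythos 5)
Your proof is correct and follows essentially the same route as the paper's: project onto $S_A$ (using $M_Av=v$), establish power compactness of $S_A$ by domination, apply Krein--Rutman to the dual $S_A^\star$ to get a nonnegative eigenfunction $w$ supported in $A$ with $\scal{w,v}>0$, and conclude via the pairing $\scal{w,\,S_Av-\lambda v}=(\rho(S_A)-\lambda)\scal{w,v}$. The only differences are cosmetic: where the paper cites the Collatz--Wielandt inequality for $\rho(S_A)\geq\lambda$, you derive it directly from $(S_A)^nv\geq\lambda^n v$, norm monotonicity and Gelfand's formula, and you spell out the domination $(S_A)^k\leq S^k$ that the paper leaves implicit.
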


\begin{proof}
  We       first      note       that      $\rho(S)\geq       \rho(S_A)$
  by~\eqref{eq:spec_rad_croissant}.     Multiplying    the    inequality
  $Sv\geq \lambda  v$ by $\ind{A}$ yields  that $S_A v \geq  \lambda v$.
  The     fact     that     $\rho(S_A)\geq    \lambda$,     and     thus
  $\rho(S)  \geq  \lambda$,   is  then  a  direct   consequence  of  the
  Collatz-Wielandt     inequality     \cite[Propositions     2.1     and
  2.2]{forsternagy89}.

  We    now    assume    that    $S$   is    power    compact.     Since
  $(S_A)^\star  =   (S^\star)_A$,  we   shall  denote  them   simply  by
  $S^\star_    A$.     Since    $S_A\leq     S$,    we    deduce    from
  Lemma~\ref{lem:hp_2_cpct}   that   the   operator  $S_A$,   and   thus
  $S_A^\star$,  is power  compact.   We apply  the Krein-Rutman  theorem
  (Theorem~\ref{theo:rappel}~\ref{th:item:KR})  to   the  power  compact
  operator   $S_A^\star$,  which   has   a   positive  spectral   radius
  $\rho(S_A^\star)  = \rho(S_A)  >  0$:  there exists  $w  \in L^q_+$  a
  non-null  eigenfunction  of  $S_A^\star$  related  to  the  eigenvalue
  $\rho(S_A)$.                                                     Since
  $\rho(S_A) w  = S^\star_ A  w= \ind{A} S^\star(\ind{A} w)$,  we deduce
  that $\supp(w) \subset A$  and thus $\langle w,v \rangle>0$.
  
We now consider the following duality product:
\begin{equation}
  \label{eq:magicproduct}
  \scal{w,S_Av - \lambda v} = \scal{S_A^\star w,v} - \lambda\scal{w,v} = (\rho(S_A) - \lambda)\scal{w,v}.
\end{equation}
Under the hypothesis that $Sv = \lambda v$ on $A=\supp(v)$, the left
hand side of~\eqref{eq:magicproduct} is zero, and thus
$\rho(S_A)=\lambda$. This gives Point~\ref{lem:item:superequal}.

To prove Point~\ref{lem:item:superstrict}, since $S_Av-\lambda v$ is by hypothesis
positive on $A$, and $w\ind{A} = w$ is nonnegative and not identically
equal to zero, we get $\scal{w,S_Av - \lambda v}>0$. Then
use~\eqref{eq:magicproduct} again to get  that
 $\rho(S_A)>\lambda$.
\end{proof}

\subsection{Operators related to the SIS model}
\label{sec:T-and-others}
Let    $(\Omega,    \cg,    \mu)$    be   a    measured    space    with
finite non-zero  measure $\mu$, that is,  
$\mu(\Omega)\in  (0,   +\infty  )$.  Recall we simply write $L^p$ for
$L^p(\Omega, \cg, \mu)$. 
Observe              that
$L^\infty  \subset  L^p$ for $p \in (1,+\infty)$  and  that  the
identity map $\iota$  from $L^\infty $ to $L^p$ is a  bounded injection.
\begin{lem}[On compactness]
  \label{lem:cpct-2}
Let  $p \in  (1,+\infty)$.
Let  $T$ be an  operator  from $L^p$  to
  $L^\infty $. The  linear map  $\iota T$ is a  compact operator on
  $L^p$.
\end{lem}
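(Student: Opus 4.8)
The plan is to pass to the adjoint and invoke Schauder's theorem (an operator is compact if and only if its adjoint is, see~\cite{aliprantis06}): it suffices to prove that $(\iota T)^\star$ is a compact operator on $L^q$, where $1/p+1/q=1$ and we use $(L^p)^\star=L^q$. Since $\mu$ is finite and $q\in(1,+\infty)$, we may also use throughout the continuous inclusion $j\colon L^q\hookrightarrow L^1$, as well as the reflexivity of $L^q$. The reason for going to the adjoint is that the codomain of $\iota T$ is built from $L^\infty$, which behaves badly, whereas after dualizing the relevant operator will have domain $L^1$, on which the Dunford--Pettis property is available.

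First I would identify the adjoint. For $g\in L^q=(L^p)^\star$ and $f\in L^\infty$ one has $\langle \iota^\star g, f\rangle=\langle g,\iota f\rangle=\int fg\,\rd\mu$, so that $\iota^\star g$ is the functional on $L^\infty$ represented by $g\in L^q\subset L^1$ under the canonical embedding $L^1\subset(L^\infty)^\star$. Hence $\iota^\star$ is, up to this embedding, exactly the inclusion $j$. Setting $S=T^\star|_{L^1}\colon L^1\to L^q$, the restriction to $L^1$ of the bounded operator $T^\star\colon(L^\infty)^\star\to L^q$ (which is bounded, with $\norm{S}\leq\norm{T}$), we obtain the factorization $(\iota T)^\star=T^\star\iota^\star=S\,j$ as an operator on $L^q$.

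The key step is to show that $Sj$ is compact, and the essential ingredient is the Dunford--Pettis property of $L^1$. Since $L^q$ is reflexive, bounded subsets of $L^q$ are relatively weakly compact, so the bounded operator $S\colon L^1\to L^q$ is weakly compact; because $L^1$ has the Dunford--Pettis property, $S$ is then completely continuous, i.e.\ it maps weakly convergent sequences to norm-convergent ones. To conclude, take a bounded sequence $(g_n)$ in $L^q$; by reflexivity a subsequence converges weakly, $g_{n_k}\rightharpoonup g$, whence $j g_{n_k}\rightharpoonup j g$ weakly in $L^1$ (any bounded operator being weak-to-weak continuous), and therefore $S j g_{n_k}\to S j g$ in $L^q$-norm. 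Thus $Sj$ maps bounded sequences to sequences admitting norm-convergent subsequences, so $Sj=(\iota T)^\star$ is compact, and Schauder's theorem gives that $\iota T$ is compact on $L^p$.

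I expect the main obstacle to be the correct identification of $(\iota T)^\star$ as $S\,j$ --- in particular pinning down that $\iota^\star$ is the inclusion $L^q\hookrightarrow L^1$ sitting inside $(L^\infty)^\star$, and that $T^\star$ restricts to a bounded operator $L^1\to L^q$ --- together with the clean invocation of the Dunford--Pettis property. That property is the one structural fact doing the real work: it rules out the oscillatory phenomena (such as $\sin(2\pi n\,\cdot)$, which is bounded in $L^\infty$ and weak-$*$ null, yet does not converge in $L^p$) that would otherwise obstruct compactness, and which is precisely why a naive argument on the primal side of $\iota T$ fails. The hypotheses $p\in(1,+\infty)$ and $\mu(\Omega)<+\infty$ are exactly what make the reflexivity of $L^q$, the weak compactness of $S$, and all the inclusions available.
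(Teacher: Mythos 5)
Your proof is correct, and it is essentially the transpose of the paper's argument. The paper works on the primal side: since $L^p$ is reflexive, both $T\colon L^p \to L^\infty$ and $\iota\colon L^\infty \to L^p$ are weakly compact, and since $L^\infty$ is an AM-space it enjoys the Dunford--Pettis property, so the composition $\iota T$ of two weakly compact operators through a Dunford--Pettis space is compact (\cite[Theorems~5.85 and~5.87]{aliprantis06}). You instead dualize, identify $(\iota T)^\star = S\,j$ with $j\colon L^q \hookrightarrow L^1$ the inclusion and $S = T^\star|_{L^1}\colon L^1 \to L^q$, run the same mechanism (weak compactness from reflexivity of $L^q$, plus the Dunford--Pettis property of the intermediate space, here $L^1$) to get compactness of $S\,j$, and finish with Schauder's theorem. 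Both proofs thus rest on identical structural facts; yours trades the Dunford--Pettis property of the AM-space $L^\infty$ for the more classical one of the AL-space $L^1$, at the price of the adjoint bookkeeping and one extra appeal to Schauder, while the paper's version is shorter and needs neither. One small correction to your closing commentary: the claim that a Dunford--Pettis-type argument ``on the primal side of $\iota T$ fails'' is not accurate --- $L^\infty$, being an AM-space, has the Dunford--Pettis property just as $L^1$ does, and the primal-side argument is precisely the paper's proof. Your oscillation example (a weak-$*$ null bounded sequence in $L^\infty$ not converging in $L^p$) only shows that $\iota$ alone is not compact, which neither argument requires.
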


\begin{proof}
  Since $L^p$  is reflexive, see~\cite[Corollary  IV.8.2]{dunford88}, we
  get by~\cite[Corollary  VI.4.3]{dunford88} that  $T$ and $\iota  $ are
  weakly  compact.  By~\cite[Theorem  5.85]{aliprantis06}, as  the space
  $L^\infty$ is  an AM-space, it satisfies  the Dunford-Pettis property,
  thus, by~\cite[Theorem 5.87]{aliprantis06}, the  operator $\iota T$ on
  $L^p$ is then compact. 
\end{proof}

Let $p\in (1, +\infty )$ and  $\gamma\in L^\infty $ such that $\gamma>0$
a.e..    Let  $T$   be  a   positive  operator   on  $L^\infty   $  such
that~\eqref{eq:T-in-Linfty}  holds   for  all  $f\in  L^\infty   $.   In
particular, as  $L^\infty $ is dense  in $L^p$ (for the  $L^p$-norm), we
can extend  $T$ by  density into  a bounded linear  map $\tp$  on $L^p$.
Recall that $M_{1/\gamma}$  denotes the multiplication by  $1/\gamma$.  Notice
that  under Assumption~\ref{assum:2},  see~\eqref{eq:T/gamma-in-Lp}, the
linear map $TM_{1/\gamma}$,  denoted by $\tgi$, can be seen  as a bounded
linear map  on $L^\infty$, and it  can also be  extended by density
into   a    bounded   linear   map    $\tgp$   on   $L^p$,    see   also
Fig.~\ref{fig:T_and_others}.

\begin{lem}[Properties of operators related to $T$]
  \label{lem:T}
  Let $(T, \gamma, \varphi)$ satisfy
Assumption~\ref{assum:1}. Then  $\tp$ is an operator  on
  $L^p$. If Assumption~\ref{assum:2} holds, then we have:
\begin{enumerate}[(a)]
\item\label{item:T}
  $\tp$ is a  compact operator on 
  $L^p$;
\item\label{item:ti}
   $T^2$  is a  compact operator on 
  $L^\infty $;
\item  \label{item:tg}
  $\tg{\gamma}$  is a  compact operator on 
  $L^p$;
\item \label{item:tgi} $\tgi$ is an  operator
  on   $L^\infty   $ and  $\tgi^2$ is compact;
 \item \label{item:spec}  $  \spec(T)=\spec(\tp)$
and $  \spec(\tgi)=\spec(\tgp)$. 

\end{enumerate}
\end{lem}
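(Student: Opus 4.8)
The plan is to realize every operator appearing in the statement as a composition of the inclusion $\iota\colon L^\infty\to L^p$ with a bounded operator going the other way, $L^p\to L^\infty$, and then to read off compactness from Lemma~\ref{lem:cpct-2} and the spectral identities from the classical fact that $AB$ and $BA$ share the same non-zero spectrum. Under Assumption~\ref{assum:1}, \eqref{eq:T-in-Linfty} together with the density of $L^\infty$ in $L^p$ (the measure being finite) lets one extend $T$ by density to the bounded operator $\tp$ on $L^p$, as already observed before the statement.

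Assume now Assumption~\ref{assum:2}. By Remark~\ref{rem:on-T}, \eqref{eq:T/gamma-in-Lp} yields a bound $\norm{Tf}_\infty\leq C''_p\norm{f}_p$; hence $T$ extends by density to a bounded operator $\tilde T\colon L^p\to L^\infty$, and by continuity $\tp=\iota\tilde T$ on $L^p$ while $T=\tilde T\iota$ on $L^\infty$. Lemma~\ref{lem:cpct-2} applied to $\tilde T$ shows at once that $\iota\tilde T=\tp$ is compact on $L^p$, which is point~\ref{item:T}; and then $T^2=\tilde T(\iota\tilde T)\iota=\tilde T\,\tp\,\iota$ factors through the compact operator $\tp$, hence is compact on $L^\infty$, which is point~\ref{item:ti}.

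For points~\ref{item:tg} and~\ref{item:tgi} the same scheme applies to $TM_{1/\gamma}$, the one subtlety being that $1/\gamma$ need not be bounded, so the operator must be constructed with care. I would define it first on the dense subspace $\cd=\{g\in L^p\colon g/\gamma\in L^\infty\}=M_\gamma(L^\infty)$ by $g\mapsto T(g/\gamma)$; the substitution $f=g/\gamma$ in \eqref{eq:T/gamma-in-Lp} gives $\norm{T(g/\gamma)}_\infty\leq C'_p\norm{g}_p$, a bound in the $L^p$-norm. Density of $\cd$ in $L^p$ follows because, for $g\in L^\infty$, the truncations $g\,\ind{\{\gamma>1/n\}}$ lie in $\cd$ and converge to $g$ in $L^p$ by dominated convergence (using $\gamma>0$ a.e.\ and $\mu(\Omega)<\infty$), while $L^\infty$ is dense in $L^p$. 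Extending by density produces a bounded operator $\tilde S\colon L^p\to L^\infty$ whose restriction $\tilde S\iota$ to $L^\infty$ is exactly $\tgi$ (this is the rigorous meaning of ``$TM_{1/\gamma}$ seen as an operator on $L^\infty$'') and whose composition $\iota\tilde S$ with the inclusion is $\tgp$. Lemma~\ref{lem:cpct-2} then gives $\tgp=\iota\tilde S$ compact, which is point~\ref{item:tg}, and $\tgi^2=\tilde S(\iota\tilde S)\iota=\tilde S\,\tgp\,\iota$ is compact by the same factorisation, giving point~\ref{item:tgi}.

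Finally, for the spectral identities~\ref{item:spec} I would separate the non-zero spectrum from $0$. With $A=\iota$ and $B=\tilde T$ one has $T=BA$ and $\tp=AB$, and the resolvent identity $(\lambda\,\id-AB)^{-1}=\lambda^{-1}(\id+A(\lambda\,\id-BA)^{-1}B)$ shows $\lambda\in\spec(T)\Leftrightarrow\lambda\in\spec(\tp)$ for every $\lambda\neq0$; the same computation with $B=\tilde S$ handles $\tgi$ and $\tgp$. For $\lambda=0$: if $L^p$ is infinite-dimensional (equivalently $L^\infty$ is, being dense), then $\tp$ and $\tgp$ are compact on an infinite-dimensional space so $0$ lies in their spectra, while $T^2$ and $\tgi^2$ being compact forces $0\in\spec(T)$ and $0\in\spec(\tgi)$ (invertibility of $T$ or $\tgi$ would make their squares invertible); if instead $L^p$ is finite-dimensional then $L^\infty=L^p$ with $T=\tp$ and $\tgi=\tgp$, so the spectra coincide trivially. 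Combining the two cases yields~\ref{item:spec}. I expect the construction of $\tilde S$ in the unbounded-$1/\gamma$ case, together with the bookkeeping identifying $\tilde S\iota$ with $\tgi$ and $\iota\tilde S$ with $\tgp$, to be the only genuinely delicate step; everything else is a mechanical application of Lemma~\ref{lem:cpct-2} and the $AB$--$BA$ spectral identity.
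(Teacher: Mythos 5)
Your proof is correct, and for points~\ref{item:T}--\ref{item:tgi} it follows essentially the paper's own route: extend $T$ (resp.\ $TM_{1/\gamma}$) by density to a bounded map $\tilde T$ (resp.\ your $\tilde S$, the paper's $\tt_{1/\gamma}$) from $L^p$ to $L^\infty$, factor $\tp=\iota\tilde T$, $T=\tilde T\iota$, $\tgp=\iota\tilde S$, $\tgi=\tilde S\iota$, and invoke Lemma~\ref{lem:cpct-2}; your explicit construction of $\tilde S$ on the dense subspace $M_\gamma(L^\infty)$, with the truncation argument for density, fills in a step the paper passes over in one sentence (``we can also extend $TM_{1/\gamma}$''). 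Where you genuinely diverge is point~\ref{item:spec}. The paper introduces compatible Banach spaces and consistent operators and applies \cite[Theorem~4.2.15]{davies07}, which it must first argue extends from compact to power compact operators (via countability of the spectrum and finite rank of the spectral projections). You instead use the elementary identity $\spec(AB)\setminus\{0\}=\spec(BA)\setminus\{0\}$, justified by the explicit resolvent formula, with $A=\iota$ and $B=\tilde T$ (resp.\ $\tilde S$), and then settle $\lambda=0$ by hand: in infinite dimension, compactness of $\tp$ and $\tgp$ puts $0$ in their spectra, while compactness of $T^2$ and $\tgi^2$ (your points~\ref{item:ti} and~\ref{item:tgi}) rules out invertibility of $T$ and $\tgi$, so $0$ lies in all four spectra; in finite dimension $L^\infty=L^p$ and the operators coincide outright. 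Your route buys self-containedness --- nothing is needed beyond the resolvent computation and the fact that a compact operator on an infinite-dimensional space is not invertible --- whereas the paper's route is a one-line black box once the power compact extension of Davies' theorem is accepted; both yield equality of the full spectra, including the point $0$.
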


\begin{proof}
  Suppose      Assumption~\ref{assum:2}      holds.       We      deduce
  from~\eqref{eq:T/gamma-in-Lp} that there exists  a finite constant $C$
  such   that  $\norm{Tf}_\infty   \leq   C  \,   \norm{f}_p$  for   all
  $f\in L^\infty $. By density, we can extend $T$ into an operator $\tt$
  from  $L^p$ to  $L^\infty$. This  gives that  $\tp=\iota\,  \tt$  and
  Point~\ref{item:T} on   the
  compactness    property    of    $\tp$    is    a    consequence    of
  Lemma~\ref{lem:cpct-2}. As $T= \tt\, \iota$ and thus $T^2=  \tt \,
  \tp \, \iota$, we also get that $T^2$ is compact, that is Point~\ref{item:ti}. 
  Using~\eqref{eq:T/gamma-in-Lp}, we can also
 extend $TM_{1/\gamma}$ into an operator $\tt_{1/\gamma}$
  from  $L^p$ to  $L^\infty$. Arguing as above gives
  Points~\ref{item:tg} and~\ref{item:tgi}. 

\medskip

We    now    prove Point~\ref{item:spec}.      Two   complex    Banach    spaces
$(E,  \norm{\cdot})$   and  $(E',  \norm{\cdot}')$  are   compatible  if
$(E'',  \norm{\cdot}+ \norm{\cdot}')$,  with $E''=E\cap  E'$, is  a Banach
space, and $E''$ is dense in $E$ and in $E'$. Given two compatible
spaces $E$ and $E'$, two operators $A$  on $E$ and $A'$ on $E'$ are said
to  be   consistent  if  $A(E'')\subset  E''$,
$A'(E'')\subset E''$ and $Ax=A'x$ for all $x\in E''$. If furthermore $A$
and $A'$  are compact,  then \cite[Theorem~4.2.15]{davies07}  gives that
$\spec(A)=\spec(A')$.  The proof therein relies on the spectrum to be at
most countable and  with no accumulation points except  possibly $0$ and
that the spectral projections have   finite rank, see Theorems 5 and 6 p.579 in~\cite{dunford88}. Since this also holds
for power compact operators, the results can be extended to $A$ and $A'$
being power compact operators.

As $\mu$  is a finite measure,  the
spaces $L^p$ and  $L^\infty $ are pairwise compatible.   Notice also the
operators  $T$   and  $\tp$,   as  well  as   $\tgi$  and   $\tgp$,  are
consistent. Then the equalities follow.
\end{proof}

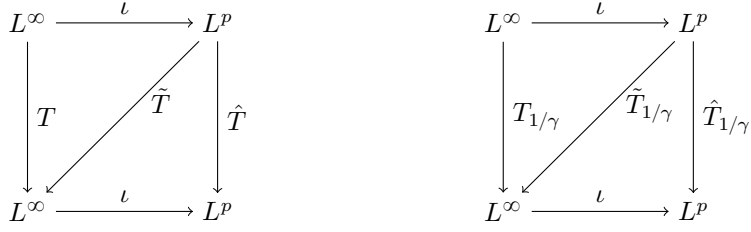
\begin{figure}
\centering\hspace{2cm}
\begin{subfigure}[b]{.4\textwidth} \hspace{0cm} 
\begin{tikzpicture}
\node[](4) at (0,-2.5) {$L^\infty$};
\node[](5) at (2.5,-2.5) {$L^p$};
\node[](7) at (0,-5) {$L^\infty$};
\node[](8) at (2.5,-5) {$L^p$};
\draw[->] (4) to node[midway, above]{$\iota$} (5);
\draw[->] (7) to node[midway, above]{$\iota$} (8);
\draw[->] (5) to node[midway, right]{$\tp$} (8);
\draw[->] (5) to node[near start, below]{$\tt$} (7);
\draw[->] (4) to node[midway, right]{$T$} (7);
\end{tikzpicture}
\label{fig:T1}
\end{subfigure}
\begin{subfigure}[b]{.4\textwidth}
\begin{tikzpicture}
\node[](4) at (0,-2.5) {$L^\infty$};
\node[](5) at (2.5,-2.5) {$L^p$};
\node[](7) at (0,-5) {$L^\infty$};
\node[](8) at (2.5,-5) {$L^p$};
\draw[->] (4) to node[midway, above]{$\iota$} (5);
\draw[->] (7) to node[midway, above]{$\iota$} (8);
\draw[->] (5) to node[midway, right]{$\tgp$} (8);
\draw[->] (5) to node[near start, below]{$\quad \tt _{1/\gamma}$} (7);
\draw[->] (4) to node[midway, right]{$\tgi$} (7);
\end{tikzpicture}
\label{fig:T2}
\end{subfigure}
\caption{Operators related to $T$ in the SIS model $(T, \gamma,
  \varphi)$.}
\label{fig:T_and_others}
\end{figure}

\section{Equilibria and restriction}
\label{sec:equilibre}

We      consider     the      SIS
model~\eqref{eq:intro_gen_SIS}-\eqref{eq:def-F}  on
$L^\infty =L^\infty  (\Omega, \cg,  \mu)$
with parameter $(T, \gamma, \varphi)$ such that
Assumption~\ref{assum:1} holds. In particular the measure $\mu$ is
finite and non-zero. 
We consider the following subset of $L^\infty $:
\[
  \Delta=\{f \in L^\infty \,\colon\,  \zero \leq f \leq \un\}.
\]
Recall  that $g$  is  an  equilibrium if  $g$  belongs  to $\Delta$  and
solves~\eqref{eq:gen_equilibre}, that is:
\[
  F(g)=0.
\]
 In particular,  the   function
$\zero$
 is  an equilibrium. We  say that $g^*\in  L^\infty $  is the
\emph{maximal  equilibrium} if  $g^*$ is  an equilibrium  and all  other
equilibrium $g\in L^\infty $ are such that $g\leq g^*$.

The  existence   result  of  the  semi-flow  and  the   maximal  equilibrium
follows~\cite[Propositions  2.7 and  2.15]{ddz-sis}  with slightly  more
general hypothesis  on $T$  and is  obtained similarly,  see a proof in 
Section~\ref{sec:exists_max_sol} for  completeness. We shall refer to
this section for notations and definitions/properties of the semi-flow. 

\begin{prop}[Existence of a global solution and of the maximal equilibrium]
  \label{prop:exists_max_sol}
  Let $(T, \gamma,  \varphi)$ be parameters of the  SIS model satisfying
  Assumption~\ref{assum:1}.  The following properties hold. 
\begin{enumerate}[(i)]
\item \label{prop:item:unique_sol}
 Equation~\eqref{eq:intro_gen_SIS}  in $L^\infty $ with
  initial condition $h\in \Delta$ has a unique global
  solution given by the semi-flow
  $\phi(\cdot, h)=(\phi(t, h))_{t\in \R_+}$. The semi-flow belongs to
  $\cc^1(\R_+)$. 
\item \label{prop:item:delta_inv}
  For all $t \geq 0$ and $h \in \Delta$, we have
  $\phi(t, h) \in \Delta$.
\item \label{prop:item:g*_lim}
  The sequence
  $(\phi(t, \un))_{t\in \R_+}$ is non-increasing and converges
  essentially to a limit,  $ g^*$, which is the maximal equilibrium:
  \[
    \limess_{t\rightarrow+\infty } \phi(t, \un)= g^*.
  \]
\end{enumerate}
\end{prop}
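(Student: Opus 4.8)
The plan is to prove the three assertions of Proposition~\ref{prop:exists_max_sol} in order, relying on the structure of $F$ as a locally Lipschitz vector field on $L^\infty$ together with the invariance of $\Delta$ and a monotonicity argument for the maximal equilibrium.

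For Point~\ref{prop:item:unique_sol}, I would first observe that under Assumption~\ref{assum:1} the map $F(u)=\varphi(u)\,Tu-\gamma u$ is locally Lipschitz on $L^\infty$: indeed $T$ is a bounded operator, $\gamma\in L^\infty$, and $u\mapsto \varphi(u)$ is locally Lipschitz as a Nemytskii operator on $L^\infty$ because $\varphi$ is locally Lipschitz on $\R$ (so on any ball $\{\norm{u}_\infty\le R\}$ one controls $\norm{\varphi(u)-\varphi(u')}_\infty$ by the Lipschitz constant of $\varphi$ on $[-R,R]$ times $\norm{u-u'}_\infty$). The Cauchy--Lipschitz (Picard--Lindel\"of) theorem on the Banach space $L^\infty$ then yields a unique maximal $\cc^1$ solution on some interval $[0,\tau)$. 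To upgrade this to a \emph{global} solution I would establish an a priori bound guaranteeing no blow-up: the natural route is to prove Point~\ref{prop:item:delta_inv} first, since once $\phi(t,h)$ is trapped in $\Delta$ it stays bounded in $L^\infty$ by $1$, and a locally Lipschitz ODE with solution confined to a bounded set extends to all of $\R_+$.

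For Point~\ref{prop:item:delta_inv}, the invariance of $\Delta$ is the technical heart and the step I expect to be the main obstacle, because $\Delta$ is defined by the two-sided a.e.\ constraint $\zero\le u\le\un$ and one must show the vector field points ``inward'' on both faces. The key is to check the boundary behavior of $F$: on the lower face, when a component reaches $0$ one has $F(u)=\varphi(u)\,Tu\ge 0$ there, using $\varphi\ge 0$ on $[0,1]$, $T$ positive, and $u\ge 0$ so $Tu\ge 0$; on the upper face, when a component reaches $1$ one has $\varphi(1)=0$, so $F(u)=-\gamma u=-\gamma<0$ there. I would make this rigorous either by a comparison/sub- and super-solution argument (noting $\zero$ and $\un$ are respectively a sub- and super-solution of the equation, since $F(\zero)=\zero\ge\zero$ and $F(\un)=-\gamma\le\zero$) combined with a monotonicity property of the semi-flow, or by a direct Gronwall-type estimate controlling the negative part $(\phi(t,h))_-$ and the excess $(\phi(t,h)-\un)_+$ and showing each stays $\zero$. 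One must be careful that these comparison and invariance arguments are the ones proved in Section~\ref{sec:exists_max_sol}, following \cite[Propositions~2.7 and~2.15]{ddz-sis}, so I would invoke the order-preserving structure of $\phi$ there.

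For Point~\ref{prop:item:g*_lim}, I would start from the initial condition $h=\un$. Since $\un$ is a super-solution ($F(\un)=-\gamma\le\zero$), the solution is non-increasing in $t$: formally $\frac{\rd}{\rd t}\phi(t,\un)=F(\phi(t,\un))\le\zero$ at $t=0$, and the order-preservation of the semi-flow propagates this, giving $\phi(s,\un)\le\phi(t,\un)$ for $s\ge t$. Being non-increasing and bounded below by $\zero$ (by invariance of $\Delta$), the family $(\phi(t,\un))_{t\in\R_+}$ converges essentially, in the sense recalled in Section~\ref{sec:leb}, to some limit $g^*\in\Delta$. I would then identify $g^*$ as an equilibrium by passing to the limit in the integrated form $\phi(t,\un)=\un+\int_0^t F(\phi(s,\un))\,\rd s$, using dominated convergence (the integrand is bounded in $L^\infty$ and $F$ is continuous) to conclude $F(g^*)=\zero$. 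Finally, maximality follows by comparison: for any other equilibrium $g\le\un=\phi(0,\un)$, monotonicity of the semi-flow gives $g=\phi(t,g)\le\phi(t,\un)$ for all $t$, whence $g\le g^*$ in the limit.
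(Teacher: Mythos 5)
Your proposal is correct and follows the same overall architecture as the paper: local Lipschitz continuity of $F$ plus Picard--Lindel\"of for local existence and uniqueness (the paper's Lemma~\ref{lem:F_loc_lip}), forward invariance of $\Delta$ via the sub/super-solution pair $\zero,\un$ and the cooperativeness/comparison machinery (Lemmas~\ref{lem:F_coop} and~\ref{lem:D-inv}, Proposition~\ref{prop:comp_th}), globality from confinement to the bounded set $\Delta$ (Lemma~\ref{lem:global_sol}), then monotone decrease of $\phi(\cdot,\un)$ from $F(\un)=-\gamma\leq\zero$, essential convergence, and maximality by order preservation --- all exactly as in Section~\ref{sec:exists_max_sol}.

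The one step where you genuinely diverge is the identification of the limit $g^*$ as an equilibrium. The paper (Lemma~\ref{lem:lim_eq}) uses the semigroup property $\phi(t+s,\un)=\phi(s,\phi(t,\un))$ together with continuity of the semi-flow with respect to essential convergence (Lemma~\ref{prop:mono_sf}~\ref{prop:item:sf_cont}), whose proof is the technically heaviest part of the section. You instead pass to the limit in the integrated form $\phi(t,\un)=\un+\int_0^t F(\phi(s,\un))\,\rd s$, which is more direct and avoids invoking the full flow-continuity statement. This route does work, but your parenthetical justification (``the integrand is bounded in $L^\infty$ and $F$ is continuous'') hides the real issue: $F$ is \emph{not} continuous for a.e.\ convergence of arbitrary bounded sequences, since $T$ is only a bounded operator on $L^\infty$. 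What saves you is precisely what saves the paper: the sequence $\phi(s,\un)$ is \emph{monotone}, so it converges to $g^*$ in $L^p$ by dominated convergence ($\mu$ being finite), and $F$ is Lipschitz on $\Delta$ for the $L^p$-norm (Lemma~\ref{lem:F_loc_lip}~\ref{it:Lip-F-p}); one then takes the limit of $\phi(t+1,\un)-\phi(t,\un)=\int_t^{t+1}F(\phi(s,\un))\,\rd s$ in $L^p$ to get $F(g^*)=\zero$. So your shortcut is legitimate and arguably more elementary for this particular point, provided you route the convergence through $L^p$ rather than through a naive pointwise dominated-convergence argument; note, though, that the paper still needs Lemma~\ref{prop:mono_sf}~\ref{prop:item:sf_cont} later (e.g.\ for limits of the semi-flow from general initial conditions), so the saving is local to this proposition.
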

The    convergence     in
Point~\ref{prop:item:g*_lim} is not uniform  in general,
see Remarks~\ref{rem:R0<1} and~\ref{rem:non-unif-cv} and  
Example~\ref{ex:unif-cv-g=0}.

The proof of the monotonicity of the maximal equilibrium in  the parameters is
given in Section~\ref{sec:exists_max_sol} for consistency of the
arguments. 

\begin{lem}[Monotonicity of the maximal equilibrium]
  \label{lem:monot_eq_max_para}
  For $i=1,2$, let $(T_i, \gamma_i, \varphi_i)$ be parameters of the SIS
  model         satisfying        Assumption~\ref{assum:1} and denote
  $g^*_i$ the corresponding maximal equilibrium.  If 
   $T_1 \geq  T_2$,
 $\varphi_1  \geq   \varphi_2$ and $\gamma_1   \leq  \gamma_2$, 
  then we have $g^*_1 \geq g^*_2$.
\end{lem}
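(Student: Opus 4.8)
The plan is to compare the two semi-flows started from the common ``fully infected'' initial condition $\un\in\Delta$, and then pass to the essential limit using Proposition~\ref{prop:exists_max_sol}\,\ref{prop:item:g*_lim}, which gives $g^*_i=\limess_{t\to+\infty}\phi_i(t,\un)$ for $i=1,2$. Writing $F_i(u)=\varphi_i(u)\,T_iu-\gamma_i u$, the first step is the pointwise inequality between the two vector fields on $\Delta$. For $u\in\Delta$, that is $\zero\le u\le\un$, a regrouping gives
\[
F_1(u)-F_2(u)=\varphi_1(u)\,(T_1-T_2)u+\bigl(\varphi_1(u)-\varphi_2(u)\bigr)\,T_2u+(\gamma_2-\gamma_1)\,u\ \ge\ \zero,
\]
since $\varphi_1(u)\ge\varphi_2(u)\ge 0$ on $[0,1]$ (Assumption~\ref{assum:1}), the operators $T_2$ and $T_1-T_2$ are positive and $u\ge\zero$, and $\gamma_2-\gamma_1\ge 0$ a.e. Hence $F_1\ge F_2$ on $\Delta$.

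Second, I would use a comparison principle for the $F_1$-dynamics. The curve $w(t)=\phi_2(t,\un)$ stays in $\Delta$ by Proposition~\ref{prop:exists_max_sol}\,\ref{prop:item:delta_inv} and satisfies $w'=F_2(w)\le F_1(w)$, so it is a subsolution of $u'=F_1(u)$, while $z(t)=\phi_1(t,\un)$ is the exact solution, and $w(0)=z(0)=\un$. The property that turns this into $w(t)\le z(t)$ is the quasi-monotonicity (cooperativity) of $F_1$: if $u\le v$ in $\Delta$ and $u(x)=v(x)$ at a point $x$, then, because $\varphi_1(u(x))=\varphi_1(v(x))\ge 0$ and $\gamma_1(x)u(x)=\gamma_1(x)v(x)$, one has
\[
F_1(v)(x)-F_1(u)(x)=\varphi_1(u(x))\,\bigl(T_1(v-u)\bigr)(x)\ \ge\ 0,
\]
using the positivity of $T_1$ and $v-u\ge\zero$. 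This is exactly the cooperativity underlying the order-preservation of the semi-flow established in Section~\ref{sec:exists_max_sol} (local Lipschitzness coming from $\varphi$), and the same argument, applied to a subsolution against a solution of the quasi-monotone field $F_1$, yields $\phi_2(t,\un)\le\phi_1(t,\un)$ for all $t\ge0$.

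Finally, taking the essential limit as $t\to+\infty$ and using the monotonicity of $\limess$ gives
\[
g^*_2=\limess_{t\to+\infty}\phi_2(t,\un)\ \le\ \limess_{t\to+\infty}\phi_1(t,\un)=g^*_1,
\]
which is the claim. Note that only Assumption~\ref{assum:1} is used, consistently with the hypotheses of the statement.

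I expect the main obstacle to be the precise justification of the comparison principle in the $L^\infty$ setting: turning the pointwise quasi-monotonicity of $F_1$ together with the subsolution inequality $w'\le F_1(w)$ into the ordering $w(t)\le z(t)$ for all times, with a merely locally Lipschitz nonlinearity. In finite dimension this is the classical M\"uller--Kamke theorem; here I would either invoke the order-preservation lemma already proved for the semi-flow in Section~\ref{sec:exists_max_sol} (applied to $F_1$ with $w$ as a subsolution), or, for a self-contained argument, approximate $F_1$ by strictly cooperative vector fields, obtain strict ordering of the perturbed flows, and pass to the limit.
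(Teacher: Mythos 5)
Your proof is correct, but it takes a genuinely different route from the paper's. Both arguments open with the same parameter-monotonicity computation, yet the paper only needs it at the single point $g^*_2$: since $F_1(g^*_2)\geq F_2(g^*_2)=0$, Lemma~\ref{prop:mono_sf}~\ref{prop:item:mono_F} shows that $t\mapsto\phi_1(t,g^*_2)$ is non-decreasing, Lemma~\ref{lem:lim_eq} shows that its essential limit is an equilibrium for $(T_1,\gamma_1,\varphi_1)$, and the maximality of $g^*_1$ then gives $g^*_2\leq \limess_{t\rightarrow+\infty}\phi_1(t,g^*_2)\leq g^*_1$; no comparison of the two semi-flows is ever made. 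You instead compare the flows $\phi_1$ and $\phi_2$ launched from $\un$ and pass to the essential limit via Proposition~\ref{prop:exists_max_sol}~\ref{prop:item:g*_lim}. This is legitimate, and the ``main obstacle'' you flag is already resolved by tools stated in the paper: the sub/supersolution comparison you need is exactly Proposition~\ref{prop:comp_th} (applicable here since $L^\infty_+$ has non-empty interior) combined with the cooperativity of $F_1$ on $\Delta\times L^\infty$ from Lemma~\ref{lem:F_coop}; note that the order-preservation point Lemma~\ref{prop:mono_sf}~\ref{prop:item:order_pre_flow}, which you mention, compares two exact solutions and is not by itself the right tool for a subsolution, so the correct citation is the pair (Proposition~\ref{prop:comp_th}, Lemma~\ref{lem:F_coop}), which also makes your pointwise-at-$x$ verification of quasi-monotonicity unnecessary (the paper's notion of cooperativity is the functional one, via $\nu\in X^\star_+$). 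As for what each approach buys: the paper's argument is shorter, bypasses the comparison principle entirely, and does not even require knowing that $g^*_i=\limess_{t\rightarrow+\infty}\phi_i(t,\un)$, only the definition of the maximal equilibrium; your argument costs this extra machinery but proves more, namely the dynamic domination $\phi_2(t,h)\leq\phi_1(t,h)$ for all $t\geq 0$ and any common initial condition $h\in\Delta$, of which the inequality $g^*_2 \leq g^*_1$ is just the limiting case for $h=\un$.
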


Let  $A$ be  a measurable  set.   Since $(T,  \gamma, \varphi)$  satisfy
Assumption~\ref{assum:1},  so  does   $(T_A,  \gamma,  \varphi)$,  where
$T_A= T_A =  M_A T M_A$ is the  projection of $T$ on $A$.   We shall now
focus on this restricted $(T_A, \gamma, \varphi)$-SIS model.  We set for
$u\in L^\infty$:
\begin{equation}\label{eq:def_F_A}
F_A(u) = \varphi(u) T_A(u) - \gamma u,
\end{equation}
and call  $g\in \Delta$  an equilibrium  of $A$  if $F_A(g)=0$. In this
case, notice that $\supp(g) \subset A$ a.e..  We also
denote   $\phi_A$   the   corresponding  semi-flow   and   $g^*_A$   the
corresponding maximal equilibrium of $A$ given by Proposition~\ref{prop:exists_max_sol}.

\begin{lem}[Maximal equilibria]\label{lem:comparaison_eq_max}
Let $(T, \gamma, \varphi)$  satisfy Assumption~\ref{assum:1}.
  Let $A\subset B$ a.e.\ be measurable sets. We have  $g^*_A\leq g^*_B$. 
\end{lem}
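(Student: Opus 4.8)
The plan is to reduce the statement to the monotonicity lemma for maximal equilibria, Lemma~\ref{lem:monot_eq_max_para}, by comparing the two restricted SIS models $(T_A, \gamma, \varphi)$ and $(T_B, \gamma, \varphi)$. These two triples share the same recovery rate $\gamma$ and the same incidence function $\varphi$, and both satisfy Assumption~\ref{assum:1} (as recorded in the paragraph preceding the statement, since $A$ and $B$ are measurable and $(T,\gamma,\varphi)$ satisfies Assumption~\ref{assum:1}), so the maximal equilibria $g^*_A$ and $g^*_B$ are well defined via Proposition~\ref{prop:exists_max_sol}. Consequently the only thing left to establish is the ordering of the transmission operators, namely $T_A \leq T_B$ in the sense that $T_B - T_A$ is positive. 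Once this is known, Lemma~\ref{lem:monot_eq_max_para} applied with $(T_1, \gamma_1, \varphi_1) = (T_B, \gamma, \varphi)$ and $(T_2, \gamma_2, \varphi_2) = (T_A, \gamma, \varphi)$ (so that $\varphi_1 = \varphi_2$ and $\gamma_1 = \gamma_2$) immediately yields $g^*_B \geq g^*_A$, which is the claim.

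To prove $T_A \leq T_B$, I would use the hypothesis $A \subset B$ a.e., which gives $\un_A \leq \un_B$ a.e.\ and $\un_A \un_B = \un_A$ a.e., together with the positivity of $T$. For a nonnegative $f \in L^\infty_+$, I expand $T_A f = M_A T M_A f = \un_A\, T(\un_A f)$ and $T_B f = \un_B\, T(\un_B f)$, and chain two elementary monotonicity steps. First, $\un_A f \leq \un_B f$ and the positivity of $T$ give $T(\un_A f) \leq T(\un_B f)$. Second, multiplying by $\un_A \geq 0$ preserves this inequality, and since $\un_A \leq \un_B$ while $T(\un_B f) \geq 0$, one obtains $\un_A\, T(\un_A f) \leq \un_A\, T(\un_B f) \leq \un_B\, T(\un_B f)$. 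Hence $T_A f \leq T_B f$ for every $f \in L^\infty_+$, that is, $T_B - T_A$ is a positive operator.

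I do not expect any real obstacle: the argument is entirely order-theoretic and relies only on the positivity of the transmission operator $T$ and on the defining formula $T_A = M_A T M_A$. The single point to state carefully is the invocation of Lemma~\ref{lem:monot_eq_max_para}, for which one must check its hypotheses are met, but here they reduce to the already-proved inequality $T_A \leq T_B$ together with the equalities $\gamma_1 = \gamma_2 = \gamma$ and $\varphi_1 = \varphi_2 = \varphi$. With $T_A \leq T_B$ in hand the conclusion $g^*_A \leq g^*_B$ is immediate.
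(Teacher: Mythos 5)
Your proof is correct and takes essentially the same route as the paper: the paper's entire proof is the one-line application of Lemma~\ref{lem:monot_eq_max_para} with $T_1 = T_B \geq T_A = T_2$, $\varphi_1 = \varphi_2 = \varphi$ and $\gamma_1 = \gamma_2 = \gamma$. Your explicit order-theoretic verification that $T_B - T_A$ is positive (using the positivity of $T$ and $\un_A \leq \un_B$) merely spells out the inequality the paper states without proof.
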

\begin{proof}
  Apply Lemma~\ref{lem:monot_eq_max_para} with  $T_1 = T_B\geq T_A=T_2$,
  $\varphi_1 = \varphi_2 = \varphi$ and $\gamma_1 = \gamma_2 = \gamma$.
\end{proof}

We now provide results on  equilibria and  semi-flows 
associated to $T$ and $T_A$.

\begin{lem}[Equilibrium and restriction]
  \label{cor:inv-equi}
  Let $(T,  \gamma, \varphi)$  satisfy Assumption~\ref{assum:1}. Let
  $A$ be a
  measurable set and $g \in \Delta$.
\begin{enumerate}[(i)]
\item \label{it:equi-equiA}
    If    $g$ is  an
  equilibrium and $\supp(g)\subset A  $  a.e., then $g$ is an equilibrium
  of  $A$.
\item \label{it:equiA-equi}
  If  $A$ is invariant and $g$ is an equilibrium
  of  $A$, then  $g$ is an equilibrium (and  $\supp(g)\subset A  $  a.e.).
\item \label{it:equi-equiAc}
  If $A$ is invariant and  $g$ is an equilibrium, then $\un_{A^c} g$
  is an equilibrium of $A^c$ and of $A^c \cap \supp(g)$.
\end{enumerate}
\end{lem}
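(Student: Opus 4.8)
The plan is to handle the three points in order, leaning throughout on two elementary reformulations of the equilibrium equations: $F(g)=0$ reads $\varphi(g)\,Tg=\gamma g$, while $F_A(g)=0$ reads $\varphi(g)\,\ind{A}T(\ind{A}g)=\gamma g$. Since $\gamma>0$ a.e., in either case the support of the right-hand side, hence of $g$, coincides with that of the corresponding left-hand side; this will immediately produce the support statements. Two routine manipulations will recur: multiplication operators commute, so the indicators $\ind{A}$, $\ind{A^c}$ move freely through the pointwise products with $\varphi(g)$ and $\gamma$; and the invariance hypothesis, written as $\ind{A^c}T(\ind{A}f)=0$ for all $f$, allows me to insert or delete $\ind{A}$ in front of $T$ whenever its argument is supported in $A$.

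For Point~\ref{it:equi-equiA} I would start from $\supp(g)\subset A$, so that $\ind{A}g=g$ and $T(\ind{A}g)=Tg$. Then $F_A(g)=\varphi(g)\,\ind{A}Tg-\gamma g=\ind{A}\big(\varphi(g)Tg\big)-\gamma g=\ind{A}(\gamma g)-\gamma g$, using $F(g)=0$; as $\ind{A}(\gamma g)=\gamma\ind{A}g=\gamma g$, this vanishes. No invariance is used here, only the support condition. For Point~\ref{it:equiA-equi} I would first recover the support: from $F_A(g)=0$ the function $\gamma g=\varphi(g)\ind{A}T(\ind{A}g)$ carries the factor $\ind{A}$, so it is supported in $A$, and $\gamma>0$ a.e.\ forces $\supp(g)\subset A$ a.e. Hence $\ind{A}g=g$ and $Tg=T(\ind{A}g)$. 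Now invariance enters: $\ind{A^c}T(\ind{A}g)=0$ gives $T(\ind{A}g)=\ind{A}T(\ind{A}g)$, so $Tg=\ind{A}T(\ind{A}g)$ and therefore $F(g)=\varphi(g)Tg-\gamma g=\varphi(g)\ind{A}T(\ind{A}g)-\gamma g=F_A(g)=0$.

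Point~\ref{it:equi-equiAc} is the only place needing extra care, and I expect the bookkeeping of $\varphi\circ g$ versus $\varphi\circ(\ind{A^c}g)$ to be the main (still mild) obstacle. Writing $\tilde g=\ind{A^c}g$, which clearly lies in $\Delta$, I would decompose $g=\ind{A}g+\ind{A^c}g$ and use invariance to get $\ind{A^c}Tg=\ind{A^c}T(\ind{A^c}g)=\ind{A^c}T\tilde g$. Multiplying $F(g)=0$ by $\ind{A^c}$ yields $\varphi(g)\,\ind{A^c}T\tilde g=\gamma\tilde g$. Because the factor $\ind{A^c}$ localizes everything to $A^c$, where $g$ and $\tilde g$ agree and hence $\varphi(g)=\varphi(\tilde g)$, I may replace $\varphi(g)$ by $\varphi(\tilde g)$, obtaining $\varphi(\tilde g)\,\ind{A^c}T(\ind{A^c}\tilde g)=\gamma\tilde g$, that is $F_{A^c}(\tilde g)=0$. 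This identification of $\varphi(g)$ with $\varphi(\tilde g)$ inside the $\ind{A^c}$-localized product is the one step to justify carefully.

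Finally, to see that $\tilde g$ is an equilibrium of $B=A^c\cap\supp(g)$, I would observe that $\supp(\tilde g)=B$ a.e.\ and that $(T_{A^c})_B=T_B$, since $B\subset A^c$ gives $M_BM_{A^c}=M_B=M_{A^c}M_B$. I would then apply Point~\ref{it:equi-equiA}, now to the SIS model with transmission operator $T_{A^c}$ (which satisfies Assumption~\ref{assum:1}), to the equilibrium $\tilde g$ whose support is contained in $B$: this yields $(T_{A^c})_B$-equilibrium, i.e.\ $F_B(\tilde g)=0$, as claimed.
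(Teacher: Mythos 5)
Your proposal is correct and follows essentially the same route as the paper's proof: Point~(i) by multiplying the equilibrium identity by $\un_A$ and using the support condition, Points~(ii) and~(iii) by using invariance ($\un_{A^c}T(\un_A\,\cdot)=0$) to identify $F_A$ with $F$ on functions supported in $A$ and $F_{A^c}(\un_{A^c}\,\cdot)$ with $\un_{A^c}F(\cdot)$ (including the same localization argument replacing $\varphi(g)$ by $\varphi(\un_{A^c}g)$ under the factor $\un_{A^c}$), and finally by reapplying Point~(i) to the model with operator $T_{A^c}$. The only cosmetic difference is that you project onto $B=A^c\cap\supp(g)$ directly where the paper projects onto $\supp(g)$; both yield the same operator and conclusion.
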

\begin{proof}
  If $\supp(g)\subset A  $ and  $g$ is  an
  equilibrium, then we get that $g$ is an equilibrium of $A$ as:
  \[
    F_A(g)=\varphi(g) \un_A T (\un_A g) - \gamma g=
    \un_A \big( \varphi(g) T ( g) - \gamma  g\big)=0. 
  \]

\medskip

Let $A$ be an invariant set and $h\in \Delta$. Since $\un_{A^c}
T(\un_{A} h) = 0$, we deduce that:
\begin{equation}
  \label{eq:FAc-inv}
    F_{A^c} (\un_{A^c} h)= \un_{A^c} F(h).
  \end{equation}
  If furthermore $\supp(h)\subset A$, then we have:
\begin{equation}
  \label{eq:FA-inv}
    F_{A} (h)=  F(h) - \varphi(h) \un_{A^c} T (\un_A h)=  F(h).
  \end{equation}
  
  If $g$ is an equilibrium of  $A$, then we get $\supp(g)\subset A$ and,
  since $A$ is  invariant, we deduce from~\eqref{eq:FA-inv} that  $g$ is an
  equilibrium.    If  $g$   is  an   equilibrium,  we   deduce
  from~\eqref{eq:FAc-inv}  that  $\un_{A^c}  g$  is  an  equilibrium  of
  $A^c$. Then use~\ref{it:equi-equiA} with $T$ replaced by $T_{A^c}$ and $A$
  by $\supp(g)$ to  deduce that $\un_{A^c} g$ is also  an equilibrium of
  $A^c \cap \supp(g)$.
\end{proof}

\begin{lem}[Semi-flow and restriction]\label{lem:sf_rest}
Let $(T, \gamma, \varphi)$  satisfy Assumption~\ref{assum:1}.
Let $A$ be a measurable set and $h \in \Delta$.
The following properties hold:
\begin{enumerate}[(i)]
\item \label{lem:item:CV_sf_unA}
  $ \limess_{t\rightarrow+\infty } \phi_A(t, \un_A) = g^*_A$.
\item  \label{lem:item:rest_coinv}
  If  $A$  is invariant,  then we  have
  $\un_{A^c}\,  \phi(\cdot,h)  =  \phi_{A^c}\left(\cdot,  \un_{A^c}\,  h
  \right)$.
\item    \label{lem:item:rest_inv}
  If    $A$   is    invariant    and
  $\supp(h)      \subset      A$       a.e.,      then      we      have
  $\phi(\cdot,h) = \phi_A\left(\cdot, h \right)$.
\end{enumerate}
\end{lem}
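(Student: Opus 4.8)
My plan is to prove all three parts by a single scheme: for each identity I would exhibit an explicit candidate function, check that it solves the relevant evolution equation with the correct initial datum while remaining in $\Delta$, and then conclude by the uniqueness of the global solution from Proposition~\ref{prop:exists_max_sol}~\ref{prop:item:unique_sol}. The algebraic engine is provided by the two identities already recorded in the proof of Lemma~\ref{cor:inv-equi}: for $A$ invariant one has $F_{A^c}(\un_{A^c}f)=\un_{A^c}F(f)$ (Equation~\eqref{eq:FAc-inv}) and, when moreover $\supp(f)\subset A$, $F_A(f)=F(f)$ (Equation~\eqref{eq:FA-inv}). Both are purely algebraic, so they may be applied along a whole trajectory $f=\phi(t,\cdot)$.

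For Part~\ref{lem:item:CV_sf_unA} I would first record that in the restricted model the $A^c$-component merely decays: since $\un_{A^c}T_Au=\un_{A^c}\un_A T(\un_A u)=0$, one has $\un_{A^c}F_A(u)=-\gamma\,\un_{A^c}u$, whence $\un_{A^c}\phi_A(t,\un)=\un_{A^c}\expp{-\gamma t}$. I would then set $z(t)=\un_A\phi_A(t,\un)=\phi_A(t,\un)-\un_{A^c}\expp{-\gamma t}$, which lies in $\Delta$ and satisfies $z(0)=\un_A$; a short computation using $\un_A z=z$ and $\un_A\varphi(z)=\un_A\varphi(\phi_A(t,\un))$ gives $z'=F_A(z)$, so uniqueness yields $z=\phi_A(\cdot,\un_A)$. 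Applying Proposition~\ref{prop:exists_max_sol}~\ref{prop:item:g*_lim} to the model $(T_A,\gamma,\varphi)$ gives $\limess_{t\to+\infty}\phi_A(t,\un)=g^*_A$, and since $\gamma>0$ a.e.\ the correction $\un_{A^c}\expp{-\gamma t}$ decreases to $\zero$; subtracting it then gives $\limess_{t\to+\infty}\phi_A(t,\un_A)=g^*_A$.

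For Part~\ref{lem:item:rest_coinv} I would take $w(t)=\un_{A^c}\phi(t,h)$, note $w(0)=\un_{A^c}h\in\Delta$, and differentiate: using~\eqref{eq:FAc-inv} with $f=\phi(t,h)$ gives $w'(t)=\un_{A^c}F(\phi(t,h))=F_{A^c}(w(t))$, so $w$ solves the $(T_{A^c},\gamma,\varphi)$-equation with initial condition $\un_{A^c}h$, and uniqueness identifies it with $\phi_{A^c}(\cdot,\un_{A^c}h)$. For Part~\ref{lem:item:rest_inv} I would feed $\supp(h)\subset A$ into Part~\ref{lem:item:rest_coinv}: then $\un_{A^c}h=\zero$, so $\un_{A^c}\phi(t,h)=\phi_{A^c}(t,\zero)=\zero$ (as $\zero$ is an equilibrium), proving $\supp(\phi(t,h))\subset A$ for all $t$. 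Identity~\eqref{eq:FA-inv} applied to $f=\phi(t,h)$ then gives $F(\phi(t,h))=F_A(\phi(t,h))$, so $\phi(\cdot,h)$ also solves $u'=F_A(u)$ with initial condition $h$, and uniqueness yields $\phi(\cdot,h)=\phi_A(\cdot,h)$.

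The one point requiring genuine care is the essential limit in Part~\ref{lem:item:CV_sf_unA}: I must make sure that subtracting the monotonically decaying term $\un_{A^c}\expp{-\gamma t}$ does not disturb convergence. This is fine because both $\phi_A(\cdot,\un)$ (non-increasing, by Proposition~\ref{prop:exists_max_sol}~\ref{prop:item:g*_lim}) and $\un_{A^c}\expp{-\gamma t}$ converge pointwise a.e., so their difference converges pointwise a.e.\ to $g^*_A$, which is exactly the essential convergence claimed. Everything else is the routine bookkeeping of verifying that each constructed candidate stays in $\Delta$, so that the uniqueness statement of Proposition~\ref{prop:exists_max_sol} genuinely applies.
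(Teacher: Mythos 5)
Your proofs of Parts~\ref{lem:item:rest_coinv} and~\ref{lem:item:rest_inv} coincide with the paper's: both rest on identity~\eqref{eq:FAc-inv} (resp.~\eqref{eq:FA-inv}) applied along the trajectory, followed by uniqueness of the semi-flow. For Part~\ref{lem:item:CV_sf_unA}, however, you take a genuinely different route. The paper's proof is a two-line squeeze: since $g^*_A$ is an equilibrium of $A$ with $g^*_A\leq \un_A\leq \un$, monotonicity of the semi-flow in the initial condition (Lemma~\ref{prop:mono_sf}~\ref{prop:item:order_pre_flow}) gives $g^*_A=\phi_A(t,g^*_A)\leq \phi_A(t,\un_A)\leq \phi_A(t,\un)$ for all $t$, and Proposition~\ref{prop:exists_max_sol}~\ref{prop:item:g*_lim} applied to $(T_A,\gamma,\varphi)$ closes the sandwich. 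You instead establish the exact identity $\phi_A(t,\un_A)=\un_A\,\phi_A(t,\un)=\phi_A(t,\un)-\un_{A^c}\expp{-\gamma t}$ by an ODE-uniqueness argument; your key computation is correct, since $\un_A F_A(u)=F_A(\un_A u)$ holds precisely because $\un_A\varphi(u)=\un_A\varphi(\un_A u)$ and $\un_{A^c}T_A=0$, and the $A^c$-component of the restricted flow is indeed the solution of the linear equation $w'=-\gamma w$. You then pass to the essential limit using $\gamma>0$ a.e. Both arguments are valid. The paper's squeeze is shorter and never needs to track the $A^c$-component; yours yields strictly more information --- an explicit decomposition of the restricted flow into its $A$-part and a pure exponential decay on $A^c$ --- at the cost of invoking positivity of $\gamma$ explicitly and of the (harmless, and correctly handled) bookkeeping that the difference of two essentially convergent monotone families essentially converges to the difference of the limits.
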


\begin{proof}
  As $g^*_A$ is  an equilibrium of $A$, we get  $\supp(g^*_A) \subset A$
  and thus we have $g^*_A \leq  \un_A \leq \un$.  By the monotonicity of
  the semi-flow, see Lemma~\ref{prop:mono_sf}~\ref{prop:item:order_pre_flow},
  we                                                                have
  $g^*_A = \phi_A(t,g^*_A) \leq \phi_A(t,\un_A) \leq \phi_A(t, \un)$ for
  all             $t             \in             \R_+$.             Then
  Proposition~\ref{prop:exists_max_sol}~\ref{prop:item:g*_lim}     gives
  Point~\ref{lem:item:CV_sf_unA}.

\medskip
Let $A$ be invariant. 
For simplicity, we  write $\phi$ instead of $\phi(t, h)$ and $\phi'$ for
its derivative.
By definition of the semi-flow $\phi$, we deduce from~\eqref{eq:FAc-inv}
that:
\[
  \un_{A^c} \phi' = \un_{A^c} F(\phi)=  F_{A^c}
  (\un_{A^c} \phi)  . 
\]
The map $t \mapsto \un_{A^c} \phi(t, h)$ is thus a solution of
Equation~\eqref{eq:intro_gen_SIS}, with  $F$ replaced by $F_{A^c}$ and initial
condition $\un_{A^c}\,  h$. By the uniqueness of
the semi-flow,  we get 
$\un_{A^c} \phi(\cdot,h) = \phi_{A^c}\left(\cdot, \un_{A^c} h \right)$,
that is Point~\ref{lem:item:rest_coinv}.

We now assume that $\supp(h) \subset A$.
By Point~\ref{lem:item:rest_coinv},  we have $\supp( \phi)\subset A$.
We deduce from~\eqref{eq:FA-inv} that:
\[
  \phi'= F(\phi)=F_A(\phi). 
\]
Then, the same argument as above yields Point~\ref{lem:item:rest_inv}.
\end{proof}

\section{Characterization of  equilibria}\label{subsec:carac_eq}

In this  section, we assume that  Assumption~\ref{assum:2} holds for the
SIS model $(T, \gamma, \varphi)$. In particular  the map $\varphi$ restricted to $[0,1]$ is a decreasing bijection
  onto $[0,1]$. 
Recall the operators related to $T$ defined in
Section~\ref{sec:T-and-others} and their properties.

\subsection{Equilibria, supports and spectral radius}

For any $g \in \Delta$, let $L_g$ denote the compact  operator on
$L^p$ defined by:
\begin{equation}
  \label{eq:def_Lg}
  L_g = M_{\varphi(g)} \, \tg{\gamma}.
\end{equation}
This operator is associated to the linearization $M_{\varphi(g)} T - \gamma$ of the dynamics near $g$ in the same
way as $\tg{\gamma}$ is associated to the linearization $T -
\gamma$ of the dynamics  near $0$ (as $\varphi(0)=1$).
Notice that when $(T, \gamma, \varphi)$ satisfy
Assumption~\ref{assum:2}, then $(M_{\varphi(g)} T, \gamma, \varphi)$ also does.
It is immediate to check that for $g\in \Delta$:
\begin{equation}
   \label{eq:equiv-equi-Lg}
  g\text{ is an equilibrium}
\quad  \Longleftrightarrow\quad
L_g \, (\gamma g)=\gamma g.
\end{equation}

\begin{lem}[Equilibria as nonnegative eigenfunctions]
  \label{lem:eq_eigenf}
Let $(T, \gamma, \varphi)$  satisfy Assumption~\ref{assum:2}.
Let $g\in \Delta$ be an equilibrium. We have the following properties.
\begin{enumerate}[(i)]
	\item $\varphi(g) > 0$ a.e.. \label{lem:item:phi(g)>0}
	\item \label{lem:item:Tg_atoms}
          The operators $ \tp$, $\tg{\gamma}$ and $L_g$ have the
          same invariant sets,  irreducible sets, atoms and non-zero
          atoms. 
	\item The set $\supp(g)$ is invariant.
          \label{lem:item:supp_inv}
        \end{enumerate}
        If  $g\neq  \zero$,  then the  following  additional  properties
        hold. Let $h\in \Delta$ be an equilibrium. Set $A= {\supp(g)}$.
        \begin{enumerate}[resume*]
        \item   \label{lem:item:rho_TgA=1}
          $\rho(L_g)  \geq  \rho((L_g)_A) = 1$.
        \item\label{lem:item:compare_supports}
          If  $h\leq g$, then either
          $\rho((L_h)_A) = 1$ and $h=g$, or $\rho((L_h)_A) >1$
          and $\supp(h)\neq \supp(g)$ a.e..
        \item \label{lem:item:compare_supports2}
       $  h\leq g\,  \Longleftrightarrow\, \supp(h) \subset \supp(g)$ a.e..
\end{enumerate}
\end{lem}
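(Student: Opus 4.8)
The forward implication $h\le g\Rightarrow\supp(h)\subset\supp(g)$ is immediate: on $\{h>0\}$ one has $g\ge h>0$, so $\{h>0\}\subset\{g>0\}$ a.e. The whole content lies in the converse, and my plan is to reduce it to the single structural fact that \emph{a non-zero equilibrium coincides with the maximal equilibrium of its own support}, i.e.\ $g=g^*_A$ with $A=\supp(g)$. Granting this, if $\supp(h)\subset A$ a.e., then Lemma~\ref{cor:inv-equi}~\ref{it:equi-equiA} turns $h$ into an equilibrium of $A$, and maximality of $g^*_A$ among equilibria of $A$ gives $h\le g^*_A=g$, as wanted. (The case $h=\zero$ is trivial.)

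To prove $g=g^*_A$ I would argue as follows. By Point~\ref{lem:item:supp_inv} the set $A$ is invariant, so $g^*_A$ is a genuine (global) equilibrium by Lemma~\ref{cor:inv-equi}~\ref{it:equiA-equi}. Since $\supp(g)=A\subset A$, Lemma~\ref{cor:inv-equi}~\ref{it:equi-equiA} shows $g$ is an equilibrium of $A$, hence $g\le g^*_A$ by definition of the maximal equilibrium. Passing to supports, $A=\supp(g)\subset\supp(g^*_A)\subset A$, so $\supp(g^*_A)=A$. Now I invoke the already-established dichotomy of Point~\ref{lem:item:compare_supports}, but applied with the roles reversed: take $g^*_A$ as the distinguished non-zero equilibrium (its support is $A$) and $g$ as the comparison equilibrium satisfying $g\le g^*_A$. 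Since $\supp(g)=A=\supp(g^*_A)$, the branch ``$\rho((L_g)_A)>1$ and $\supp(g)\neq\supp(g^*_A)$'' is impossible, so the dichotomy collapses to the other branch, yielding $\rho((L_g)_A)=1$ and $g=g^*_A$.

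The only subtle point, and the step I expect to need the most care, is this reversed use of Point~\ref{lem:item:compare_supports}: that statement is phrased around a fixed equilibrium $g$ with $A=\supp(g)$, so I must check it genuinely applies to the pair $(g^*_A,g)$ with $g^*_A$ on top, which it does because $g^*_A$ is a non-zero equilibrium (as $g\neq\zero$ and $g\le g^*_A$) of support exactly $A$, and $g\le g^*_A$. Everything else is bookkeeping with the restriction lemmas; the genuinely new input beyond the earlier parts is simply that equality of supports forces the spectral-radius dichotomy to degenerate into equality of equilibria.
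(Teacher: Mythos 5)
Your proposal only addresses Point~\ref{lem:item:compare_supports2}: Points~\ref{lem:item:phi(g)>0}--\ref{lem:item:compare_supports} are consumed as inputs (you need \ref{lem:item:supp_inv} for the invariance of $\supp(g)$ and \ref{lem:item:compare_supports} for the final dichotomy) but never proved. As a proof of the full statement it is therefore incomplete --- the positivity of $\varphi(g)$, the invariance of the support, and the spectral bounds \ref{lem:item:rho_TgA=1}--\ref{lem:item:compare_supports}, which rest on the Collatz--Wielandt/Krein--Rutman comparison of Lemma~\ref{lem:rad>1}, all require separate arguments. Within its declared scope, however, your reduction is correct and non-circular: every external ingredient you use (Lemma~\ref{cor:inv-equi}, the existence of the maximal equilibrium $g^*_A$ of the restricted model $(T_A,\gamma,\varphi)$ from Proposition~\ref{prop:exists_max_sol}) is established before this lemma and independently of it.

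For Point~\ref{lem:item:compare_supports2} itself, your route is genuinely different from the paper's. The paper builds the auxiliary dominating equilibrium \emph{dynamically}: it observes that $F(\max(g,h))\geq \zero$, so by Lemmas~\ref{prop:mono_sf}~\ref{prop:item:mono_F} and~\ref{lem:lim_eq} the semi-flow started at $\max(g,h)$ increases to an equilibrium $\tilde g$, whose support is trapped in $\supp(g)$ by invariance and Lemma~\ref{lem:sf_rest}~\ref{lem:item:rest_inv}; it then applies the dichotomy of Point~\ref{lem:item:compare_supports} reversed, with $\tilde g$ on top, to get $\tilde g=g$ and hence $h\leq g$. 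You replace this flow construction by a \emph{static} one: the auxiliary equilibrium is $g^*_A$ with $A=\supp(g)$, promoted to a global equilibrium by Lemma~\ref{cor:inv-equi}~\ref{it:equiA-equi}, and maximality in the restricted model does the work of monotone convergence; the concluding step is the same reversed use of Point~\ref{lem:item:compare_supports}, which you correctly identify as the delicate point and correctly justify (the pair $(g^*_A,g)$ satisfies all its hypotheses, with $\supp(g^*_A)=A$). Two remarks on what each approach buys. Yours is more modular (the dynamics are invoked once, inside Proposition~\ref{prop:exists_max_sol}) and it proves en route the structural fact $g=g^*_{\supp(g)}$ for any non-zero equilibrium; the paper obtains that fact only later (last assertion of Theorem~\ref{th:bij_antichains_eq}, via Lemma~\ref{lem:ex_eq_support}) by a chain of results that itself relies on the present Point~\ref{lem:item:compare_supports2} through Corollary~\ref{cor:supp-equi=}, so your argument is a legitimate, non-circular shortcut to it. The paper's proof, on the other hand, works directly with $h$ and $g$ and never needs to compare equilibria of the restricted and unrestricted models; the two arguments are of comparable length and depth.
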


As a consequence of Point~\ref{lem:item:compare_supports2}
we directly get the following corollary.
\begin{cor}[Equilibria and their support]
  \label{cor:supp-equi=}
Let $(T, \gamma, \varphi)$ that satisfy Assumption~\ref{assum:2}. Two equilibria with the same support are equal.
\end{cor}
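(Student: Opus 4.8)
The plan is to read the result off directly from the equivalence in Lemma~\ref{lem:eq_eigenf}\ref{lem:item:compare_supports2}, applied symmetrically to the two equilibria. Let $g$ and $h$ be equilibria with $\supp(g)=\supp(h)$ a.e.; the goal is $g=h$.

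First I would dispatch the degenerate case. If $g=\zero$, then $\supp(g)=\emptyset$ a.e., so the hypothesis $\supp(g)=\supp(h)$ forces $\supp(h)=\emptyset$ a.e., hence $h=\zero=g$; the same argument applies if $h=\zero$. So we may assume both $g$ and $h$ are non-zero, which is exactly the regime in which the additional properties \ref{lem:item:rho_TgA=1}--\ref{lem:item:compare_supports2} of Lemma~\ref{lem:eq_eigenf} are available.

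Now I would invoke Point~\ref{lem:item:compare_supports2} twice. Taking $g$ as the distinguished non-zero equilibrium and $h$ as the auxiliary equilibrium, the inclusion $\supp(h)\subset\supp(g)$ a.e.\ (which holds since the two supports are a.e.\ equal) yields $h\leq g$. Swapping the roles of $g$ and $h$ --- legitimate since both are non-zero equilibria --- the inclusion $\supp(g)\subset\supp(h)$ a.e.\ yields $g\leq h$. Combining the two inequalities gives $g=h$, as wanted.

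There is essentially no obstacle here: the whole substance has been pushed into Lemma~\ref{lem:eq_eigenf}, and in particular into the non-trivial implication $\supp(h)\subset\supp(g)\Rightarrow h\leq g$ of its Point~\ref{lem:item:compare_supports2}. The only point that requires a word of care is the symmetry of the argument, since that equivalence is phrased with a fixed non-zero equilibrium $g$ and an arbitrary auxiliary equilibrium; this is precisely why the case where one of the two equilibria vanishes must be treated separately before the biconditional can be applied in both directions.
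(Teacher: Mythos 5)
Your proof is correct and follows exactly the paper's route: the paper also deduces the corollary directly from Lemma~\ref{lem:eq_eigenf}~\ref{lem:item:compare_supports2}, applying the equivalence $h\leq g \Leftrightarrow \supp(h)\subset\supp(g)$ in both directions. Your explicit treatment of the case where one equilibrium is $\zero$ (needed because that point of the lemma assumes $g\neq\zero$) is a detail the paper leaves implicit, and it is handled correctly.
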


\begin{proof}[Proof of Lemma~\ref{lem:eq_eigenf}]
Since $\varphi(g) T(g) = \gamma g$, $\varphi(g)$ does not vanish on
$\supp(g)$. On the complement set, $g=0$ so $\varphi(g)=1$ does not
vanish either.  Since $\varphi$ is nonnegative, we get
Point~\ref{lem:item:phi(g)>0}.

\medskip

Point~\ref{lem:item:Tg_atoms}   is   a   direct   consequence   of   the
characterization         of         invariant         sets         given
by~\eqref{eq:inv_supp_plus_petit}  as  $1/\gamma$ and  $\varphi(g)$  are
positive          by          Point~\ref{lem:item:phi(g)>0}          and
Thereom~\ref{theo:rappel}~\ref{th:item:dP}     on    non-zero     atoms.
Point~\ref{lem:item:supp_inv}      is      a     direct      consequence
of~\eqref{eq:equiv-equi-Lg}, \cite[Lemma 3.6]{dlz} (which state that the
support  of  the  eigenfunction   $\gamma  g$  is  $L_g$-invariant)  and
Point~\ref{lem:item:Tg_atoms}.

\medskip
Point~\ref{lem:item:rho_TgA=1} follows directly from Lemma~\ref{lem:rad>1}
with $S=L_g$, $v=\gamma g$ 
and $\lambda = 1$. 

The proof of Point~\ref{lem:item:compare_supports} follows similar
lines. Let $h\leq g$ be two equilibria with $h\neq \zero$.
The two eigenvalue equations written for $h$ and $g$ yield
$ (L_g-L_h) (\gamma g) + L_h(\gamma(g-h)) = \gamma (g-h)$, 
which we rewrite as:
\[
  L_h (\gamma(g-h)) = \gamma (g-h) +  (\varphi(h)-\varphi(g)) Tg.
\]
On  the  right  hand  side,  $\varphi(h)  -  \varphi(g)$  and  $Tg$  are
nonnegative,  and by  strict  monotonicity of  $\varphi$  they are  both
positive on  $B = \supp(g-h) \subset  \supp(g)$.  If $B$ is  empty, then
$g=h$ and  we are  back to  Point~\ref{lem:item:rho_TgA=1}.  If  not, we
apply  Lemma~\ref{lem:rad>1}~\ref{lem:item:superstrict}  to $S  =  L_h$,
$\lambda   =    1$   and   $v=\gamma (g-h)\neq   \zero$    which   is   non-negative:
$\rho((L_h)_B)> 1$.  If $B$ was a  subset of $\supp(h)$ this would imply
$\rho((L_h)_{\supp(h)})       >1$,       a      contradiction       with
Point~\ref{lem:item:rho_TgA=1}. So $B$ is not a subset of $\supp(h)$, or
in other words $\supp(h) \neq \supp(g)$.

\medskip            Finally            let           us            prove
Point~\ref{lem:item:compare_supports2}. Clearly  $h\leq g$  implies that
$\supp(h) \subset  \supp(g)$. To prove  the reverse implication,  let us
assume that $\supp(h)\subset\supp(g)$.  Since $F( \max(g,h)) \geq 0$, by
Lemmas~\ref{prop:mono_sf}~\ref{prop:item:mono_F}   and~\ref{lem:lim_eq},
the semi-flow starting from  $\max(g,h)$ is non-decreasing and converges
to    an   equilibrium    $\tilde{g}$,    which   therefore    satisfies
$\max(g,h)  \leq \tilde{g}$.   Since  $\supp(\max(g,h))  = \supp(g)$  is
invariant    by    Point~\ref{lem:item:supp_inv},   we    deduce    from
Lemma~\ref{lem:sf_rest}~\ref{lem:item:rest_inv}                     that
$\supp(\tilde         g)\subset        \supp(g)$         and        thus
$\supp(\tilde{g}) = \supp(g)$.  Since $g\leq \tilde{g}$, by the previous
point,   the  functions   $g$  and   $\tilde{g}$  must   be  equal,   so
$g = \max(g,h)$, or in other words $h\leq g$.
\end{proof}

\subsection{Maximum equilibria and critical vaccination}
\label{subsec:max_eq}
Recall the  notations of  Section~\ref{sec:T-and-others}.  Let $A$  be a
measurable set.  Notice the linear map $(\tg{\gamma})_A = \widehat{(T_A)}_{1/\gamma}$ is an operator
on  $L^p$.  Following~\cite{ddz-sis},  we  then  define the  \emph{basic
  reproduction number}  of $A$ as  the spectral radius of  this operator
$R_0(A)   =  \rho((\tg{\gamma})_A)$,   and   simply   write  $R_0$   for
$R_0(\Omega)$.  Notice  that, by~\eqref{eq:spec_rad_croissant},  the map
$A \mapsto R_0(A)$ is non-decreasing, that is, for any $A, B$ measurable
sets with $A \subset B$ a.e., we have $R_0(A) \leq R_0(B)$.

\medskip

The following result generalizes~\cite[Theorems 4.7 and 4.13]{ddz-sis},
and is proved similarly, 
see Section~\ref{sec:DDZ_cv_sf} for details. 
\begin{prop}\label{th:DDZ_cv_sf}
Let $(T, \gamma, \varphi)$  satisfy Assumption~\ref{assum:2}.
Then we have the following properties. 
\begin{enumerate}[(i)]
\item \label{th:item:R0<1}
  If  $R_0 \leq 1$, then we have $g^* = \zero$, and for all $h\in \Delta$:
  \[
    \limess_{t\rightarrow +\infty } \phi(t, h)=\zero.
  \]
  
\item \label{th:item:R0>1} If $R_0 >  1$, then the maximal equilibrium $g^*$  is non-null,
 (that  is, $\mu(\supp(g^*)) >  0$).
\item \label{th:item:R0>1-irr}
  If $R_0 >  1$ and  $T$ is
  quasi-irreducible, that  is, $T  = T_A$ with  $A$ an  irreducible set,
  then we  have $\supp(g^*) = A$  a.e.\ and $g^*$ is  the unique non-null
  equilibrium.
\item \label{th:item:R0>1-irr-lim}
  If $R_0 >  1$, $T$ is
  quasi-irreducible, that  is, $T  = T_A$ with  $A$ an  irreducible set,
  and $h\in \Delta$, then we
  have $\limess_{t\rightarrow +\infty } \phi(t, h)=\zero$ if
  $\supp(h) \cap A=\emptyset$ a.e.\ and:
  \[
    \limess_{t\rightarrow +\infty } \phi(t, h)=g^*
    \quad\text{if}\quad
    \mu(\supp(h) \cap A)>0. 
  \]
\end{enumerate}
\end{prop}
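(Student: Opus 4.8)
The plan is to handle the four items in order, using the eigenfunction description of equilibria (Lemma~\ref{lem:eq_eigenf}), the spectral criteria of Lemma~\ref{lem:rad>1} and Theorem~\ref{theo:rappel}, and the monotonicity of the semiflow together with $\phi(t,\un)\searrow g^*$ from Proposition~\ref{prop:exists_max_sol}\ref{prop:item:g*_lim}; I would \emph{not} invoke the general convergence Theorem~\ref{theo:cv-intro}, since this proposition is a building block for it, and instead prove the convergence statements directly by squeezing. For item~\ref{th:item:R0<1}, I would first show $g^*=\zero$ by contradiction. If $g^*\neq\zero$, set $A=\supp(g^*)$; then $\varphi(g^*)T g^*=\gamma g^*$ shows $\varphi(g^*)>0$ and $Tg^*=\gamma g^*/\varphi(g^*)>0$ on $A$, while $\varphi(g^*)<1$ on $A$ since $g^*>0$ there and $\varphi$ is a decreasing bijection of $[0,1]$ with $\varphi(0)=1$. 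Hence $\tgi(\gamma g^*)-\gamma g^*=Tg^*-\gamma g^*=\gamma g^*\,(1/\varphi(g^*)-1)>0$ on $A$, and Lemma~\ref{lem:rad>1}\ref{lem:item:superstrict} applied with $S=\tgi$, $\lambda=1$, $v=\gamma g^*$ gives $R_0(A)=\rho((\tgi)_A)>1$, contradicting $R_0(A)\leq R_0\leq 1$. Thus $g^*=\zero$, and for $h\in\Delta$ the bound $\zero\leq\phi(t,h)\leq\phi(t,\un)$ (monotonicity, Lemma~\ref{prop:mono_sf}) together with $\phi(t,\un)\searrow\zero$ yields $\limess_{t\to\infty}\phi(t,h)=\zero$.

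For item~\ref{th:item:R0>1} the natural idea is a non-zero subsolution built from the Krein--Rutman eigenfunction: since $\rho(\tgi)=R_0>1$, Theorem~\ref{theo:rappel}\ref{th:item:KR} gives $v\geq\zero$, $v\neq\zero$, with $T(v/\gamma)=R_0v$, and $v\in L^\infty$ by~\eqref{eq:T/gamma-in-Lp}. The candidate $g_s=s\,v/\gamma$ satisfies $F(g_s)=s\,v\,(\varphi(g_s)R_0-1)$, which is nonnegative on $\supp(v)$ for small $s$ by continuity of $\varphi$ at $0$. \textbf{The main difficulty here is boundedness}: when $\essinf\gamma=0$ the function $v/\gamma$ need not lie in $L^\infty$, so $g_s\notin\Delta$. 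I would circumvent this by restricting to $\Omega_\eta=\{\gamma\geq\eta\}$. The operators $(\tgi)_{\Omega_\eta}$ are dominated by the compact $\tgp$, hence collectively compact, and converge strongly to $\tgi$ as $\eta\to0^+$, so Lemma~\ref{lem:coll-K} gives $R_0(\Omega_\eta)\to R_0>1$; fix $\eta$ with $R_0(\Omega_\eta)>1$. On $\Omega_\eta$ the eigenfunction divided by $\gamma\geq\eta$ is bounded, the subsolution argument goes through for the restricted model $(T_{\Omega_\eta},\gamma,\varphi)$, and the semiflow started from it increases to a non-zero equilibrium by Lemma~\ref{lem:lim_eq}; Lemma~\ref{lem:comparaison_eq_max} then gives $g^*\geq g^*_{\Omega_\eta}>\zero$.

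For item~\ref{th:item:R0>1-irr}, write $T=T_A$ with $A$ irreducible. Every equilibrium is supported in $A$, and by Lemma~\ref{lem:eq_eigenf}\ref{lem:item:supp_inv} its support is invariant; irreducibility forces the only invariant subsets of $A$ to be $\emptyset$ and $A$ a.e.. Since $g^*\neq\zero$ by item~\ref{th:item:R0>1}, we get $\supp(g^*)=A$; the same reasoning shows any non-zero equilibrium $g$ has $\supp(g)=A=\supp(g^*)$, whence $g=g^*$ by Corollary~\ref{cor:supp-equi=}. For item~\ref{th:item:R0>1-irr-lim}, note that $T=T_A$ makes both $A$ and $A^c$ invariant (directly from $M_AM_{A^c}=0$), so Lemma~\ref{lem:sf_rest} decouples the dynamics: $\un_{A^c}\phi(\cdot,h)=\phi_{A^c}(\cdot,\un_{A^c}h)$ solves $u'=-\gamma u$ (because $T_{A^c}=0$) and decays to $\zero$, while $\un_{A}\phi(\cdot,h)=\phi_A(\cdot,\un_A h)$ evolves under the irreducible model on $A$. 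When $\supp(h)\cap A=\emptyset$ only the decaying part survives, giving the limit $\zero$. When $\mu(\supp(h)\cap A)>0$ the problem reduces to: for $T$ irreducible and $h\neq\zero$, show $\phi(t,h)\to g^*$.

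For this reduced claim, the upper bound $\limesssup_{t}\phi(t,h)\leq g^*$ is immediate from $h\leq\un$ and $\phi(t,\un)\searrow g^*$. For the lower bound, observe that for every $\theta\in(0,1)$ the function $\theta g^*$ is a genuine subsolution, since $F(\theta g^*)=\theta\,\gamma g^*\,(\varphi(\theta g^*)/\varphi(g^*)-1)\geq\zero$ by monotonicity of $\varphi$; hence the flow from $\theta g^*$ increases (Lemma~\ref{lem:lim_eq}) to the unique non-zero equilibrium $g^*$. It then suffices to find $t_0>0$ and $\theta\in(0,1)$ with $\phi(t_0,h)\geq\theta g^*$, after which $\phi(t_0+\cdot,h)=\phi(\cdot,\phi(t_0,h))\geq\phi(\cdot,\theta g^*)\nearrow g^*$ gives $\limessinf_t\phi(t,h)\geq g^*$, and squeezing finishes. \textbf{This domination step is the crux of the proposition and the step I expect to be hardest}: it is a uniform-persistence statement, and I would derive it from irreducibility, which forces the support of $\phi(t,h)$ to grow to all of $A$, combined with the smoothing $T\colon L^p\to L^\infty$ from~\eqref{eq:T/gamma-in-Lp} to upgrade a.e.\ positivity into the uniform lower bound $\phi(t_0,h)\geq\theta g^*$.
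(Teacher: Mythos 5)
Your items~(i) and~(iii) are essentially the paper's own arguments (the paper applies Lemma~\ref{lem:rad>1} to the $L^p$-operator $S=\tg{\gamma}$ rather than to $\tgi$ on $L^\infty$, a purely cosmetic difference given Lemma~\ref{lem:T}~\ref{item:spec}), and the decoupling part of item~(iv) --- invariance of both $A$ and $A^c$, pure exponential decay on $A^c$ --- is also the paper's proof. Your item~(ii) takes a genuinely different route: the paper handles the degeneracy $\essinf \gamma=0$ through Proposition~\ref{prop:rad_vpd}, which replaces $\gamma$ by $\gamma+\lambda$ and tunes $\lambda$ so that $\rho\bigl(\tg{(\gamma+\lambda)}\bigr)=1$, whereas you truncate the space to $\Omega_\eta=\{\gamma\geq \eta\}$, where $1/\gamma$ is bounded, and let $\eta\to 0^+$. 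This can be made to work, but your justification of collective compactness is incorrect as stated: positive domination by a compact operator gives compactness of each $(\tg{\gamma})_{\Omega_\eta}$ (Lemma~\ref{lem:hp_2_cpct}), \emph{not} collective compactness of the family, since the solid hull of a relatively compact subset of $L^p$ need not be relatively compact. The family is nevertheless collectively compact because $(\tg{\gamma})_{\Omega_\eta}(B)\subset M_{\Omega_\eta}\bigl(\tg{\gamma}(B)\bigr)$ and the sets $\Omega_\eta$ are monotone in $\eta$; this is the same kind of hands-on verification the paper performs for the family $(V_a)_{a\in\R_+}$ inside the proof of Proposition~\ref{prop:rad_vpd}.

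The genuine gap is the step you yourself single out as the crux of item~(iv): the existence of $t_0>0$ and $\theta\in(0,1)$ with $\phi(t_0,h)\geq \theta g^*$. This cannot be obtained from irreducibility plus the smoothing~\eqref{eq:T/gamma-in-Lp}, because it is false in general under Assumption~\ref{assum:2}. Take $\Omega=\N$, $\mu(\{n\})=2^{-n}$, $\gamma=\un$, $\varphi=1-\id$, and the kernel operator acting as $Tf(n)=a_nf(n-1)+b_nf(n)+c_nf(n+1)$ with $b_0=2$, $a_n=b_n=2^{-n/p}$ for $n\geq 1$ and $c_n=a_{n+1}$; since $q-1=q/p$, each term $a_n^q\,\mu_{n-1}^{1-q}$, $b_n^q\,\mu_n^{1-q}$, $c_n^q\,\mu_{n+1}^{1-q}$ is bounded, so~\eqref{eq:hyp-k} and hence Assumption~\ref{assum:2} hold; moreover $T$ is irreducible and $R_0\geq R_0(\{0\})=2>1$. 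Let $h=\ind{\{0\}}$. By the comparison principle (Lemma~\ref{lem:F_coop} and Proposition~\ref{prop:comp_th}) we have $\phi(t_0,h)\leq \expp{t_0T}h$; since $[T^mh](n)=0$ for $m<n$, and every length-$m$ path from $0$ to $n$ must cross each edge $k-1\to k$ and thus carries the factor $\prod_{k=1}^n a_k$, one gets $[T^mh](n)\leq 6^m\prod_{k=1}^n a_k$ and therefore $\phi(t_0,h)(n)\leq \prod_{k=1}^n a_k\,\frac{(6t_0)^n}{n!}\,\expp{6t_0}$. On the other hand, $\norm{T}_{L^\infty}\leq 3$ forces $g^*\leq 3/4$, so the equilibrium equation gives $g^*(n)=(1-g^*(n))\,(Tg^*)(n)\geq \frac14\,a_n\,g^*(n-1)$, whence $g^*(n)\geq 4^{-n}\,g^*(0)\prod_{k=1}^n a_k$ with $g^*(0)>0$ by item~(iii). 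Consequently $\phi(t_0,h)(n)/g^*(n)\leq (24\,t_0)^n\,\expp{6t_0}/\bigl(n!\,g^*(0)\bigr)\to 0$ as $n\to\infty$: for no finite $t_0$ and no $\theta>0$ does $\phi(t_0,h)\geq \theta g^*$ hold, even though the essential convergence to $g^*$ is true (this is the same mechanism behind the non-uniformity noted in Remark~\ref{rem:non-unif-cv}). Your observation that $\theta g^*$ is a subsolution is correct, but it can never be initialized below the flow. The paper's proof avoids any finite-time comparison with $g^*$: after reducing to $\supp(h)=A$ via Lemma~\ref{lem:sf_pos}, it builds a \emph{small} subsolution $w\leq \varepsilon\,\ind{\{h\geq \varepsilon\}}\leq h$ as an eigenfunction (via Proposition~\ref{prop:rad_vpd}) of the truncated operator $\varphi(\varepsilon)\ind{\{h\geq\varepsilon\}}T$, whose spectral radius remains $>1$ for small $\varepsilon$ by the collectively compact continuity argument (Lemma~\ref{lem:coll-K}); then $\phi(\cdot,w)$ is non-decreasing, its limit is a non-zero equilibrium, hence equals $g^*$ by the uniqueness of item~(iii), and squeezing $\phi(t,w)\leq\phi(t,h)\leq\phi(t,\un)$ concludes. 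This construction is the essential content of item~(iv), and it is missing from your proposal.
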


In  the next  remarks  and  examples, we  explore  the  uniformity of  the
convergence                in               Point~\ref{th:item:R0<1}
and Proposition~\ref{prop:exists_max_sol}~\ref{prop:item:g*_lim}.

\begin{rem}[Exponential rate of convergence to $\zero$ when $R_0<1$ and $\essinf \gamma > 0$]
  \label{rem:R0<1}
  Assume  $(T,  \gamma,  \varphi)$  satisfies  Assumption~\ref{assum:2},
  $R_0<1$ and  $\essinf  \gamma >  0$.  By  Lemma~\ref{lem:rad_bound} we
  also have  $s(T - \gamma)  < 0$.  Then, mimicking  the proof of
  \cite[Theorem~4.6]{ddz-sis} and  using that $1-\varphi\geq 0$,  we get
  that   for  all   $c  \in   (0,   -s(T  -   \gamma))$,  there   exists
  $\theta \in \R_+$  such that, for all  $h \in \Delta$, $t  \geq 0$, we
  have:
  \[
    \norm{\phi(t,h)}_\infty \leq \theta \norm{h}_\infty \, \expp{-ct}.
  \]
\end{rem}

\begin{rem}[Non uniform convergence when $R_0\leq 1$ and $\essinf \gamma=0$]
   \label{rem:non-unif-cv}
   We assume  that $R_0\leq 1$ and  $\essinf \gamma = 0$.   Consider the
   function  $v$  defined by  $v(t)  =  \exp(-t\gamma) \in  \Delta$  for
   $t \geq 0$.  As $v'(t) - F(v(t))=  - \varphi(v(t)) \, T v(t) \leq 0$,
   we        deduce        from        Lemma~\ref{lem:F_coop}        and
   Proposition~\ref{prop:comp_th} that  $\phi(t,\un) \geq v(t)$  for all
   $t\geq                 0$.                  We                 obtain
   $1\geq  \norm{\phi(t,\un)}_\infty  \geq   \norm{v(t)}_\infty  =1$  as
   $\essinf \gamma = 0$.  Thus the semi-flow $(\phi(t,\un))_{t\in \R_+}$
   does not converge to $g^*=\zero$ in $L^\infty $.

   Notice  the  same  conclusion  holds (with  the  same  arguments)  if
   $R_0\leq   1$   is   replaced   by   the   more   general   condition
   $\esssup g^*<1$.
 \end{rem}

\begin{ex}[A uniform convergence when $\essinf \gamma=0$]
  \label{ex:unif-cv-g=0}
  If $\esssup  g^*=1$ and $\essinf  \gamma = 0$  it is possible  for the
  semi-flow  $(\phi(t,\un))_{t\in   \R_+}$  to  converge  to   $g^*$  in
  $L^\infty  $.  Consider  the  particular case:  $\Omega=(0,  1]$  with
  $\mu=\nu  +  \delta_1$,  where  $\nu$  is  the  Lebesgue  measure  and
  $\delta_1$ the Dirac  mass at $1$; $Tf= f(1) \un$  for all $f\in L^p$;
  $\gamma(x)=x/2$  and $\varphi(r)=1-r$.  In this  case, $\{1\}$  is the
  only  atom,  and  $R_0(\{1\})=2$.  We get  $g^*(x)=1/(1+x)$  and  thus
  $\esssup g^*=1$. (Notice that $g^*(0+)=\esssup g^*$ and
  $\gamma(0+)=\essinf \gamma$.)
  Elementary     calculus     give      that,     for     $t\geq     0$,
  $\phi(t,\un)(1) = 1/(2-\expp{-t/2})$ and for $x\in (0, 1)$:
 \[
   \phi(t,\un)(x) =  \frac{2+ (x-1) \expp{-(x+1)t/2}
   }{x+1} \, \phi(t,\un)(1) , 
 \]
 so that  $\norm{\phi(t, \un) -  g^*}_\infty \leq \expp{-t/2}$.   So the
 semi-flow $(\phi(t,\un))_{t\in \R_+}$ converges  to $g^*$  in $L^\infty$.
\end{ex}

We  now  focus on  critical  vaccination.   Let $\varphi_0$  defined  by
$\varphi_0(r)=1   -r$    and   $\TT_k$   the   kernel    operator   form
Remark~\ref{rem:SIS-ddz}        with       $k$        and       $\gamma$
satisfying~\eqref{eq:hyp-k}   so  that   $(\TT_k,  \gamma,   \varphi_0)$
satisfies  Assumption~\ref{assum:2}. Let  $\eta  \in \Delta$  seen as  a
perfect vaccination strategy: the SIS model  $(\TT_k M_\eta, \gamma, \varphi_0)$
(which  indeed satisfies  Assumption~\ref{assum:2})  corresponds to  the
initial SIS model,  where for $x \in \Omega$, a
proportion $1-\eta(x)$ of the population is vaccinated and thus does not
spread the disease, see  \cite{ddz21targeted} and references therein. In
this setting vaccinating  the population amounts to  replace the measure
$\mu$ by $\eta \mu$.

Motivated by this example, we shall consider the effective
reproduction               number               defined
by:
\[
  R_e(\eta)  
  =  \rho( \tg{\gamma}  M_\eta) ,
\]
for $\eta\in \Delta$ (notice that $(TM_\eta, \gamma, \varphi)$ satisfies
Assumption~\ref{assum:2} and $\tg{\gamma}  M_\eta=\widehat{(TM_n)}_{1/\gamma}$). 
Following  \cite{ddz21targeted},  we  shall be  interested  in  critical
vaccination  $\eta$  for which  $R_e(\eta)  =  1$.   It is  observed  in
\cite{ddz_vacc_crit}        that         for        the        SIS model
$(\TT_k,     \gamma,    \varphi_0)$,     the    vaccination     strategy
$\eta=\varphi_0(g^*)$ is  critical.  We  generalize this result  (with a
shorter  proof  based  on  the  fact  that  $R_e(\varphi(g))=\rho(L_g)$,
see~\eqref{eq:def_Lg}) for more general  operators $T$ and functions 
$\varphi$.

\begin{theo}[Equilibria and critical vaccination]\label{prop:eq_crit_vacc}
  Let $(T, \gamma, \varphi)$ that satisfy Assumption~\ref{assum:2}. Let $h \in \Delta$ be an equilibrium.  Then we
  have $h = g^* \, \Longleftrightarrow \, R_e(\varphi(h)) \leq 1$.  If
  furthermore  $R_0 > 1$, then we have:
  \[
    h = g^* \, \Longleftrightarrow \, R_e(\varphi(h)) = 1.
  \]
\end{theo}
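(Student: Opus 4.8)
The plan is to convert the statement about $R_e$ into one about the operators $L_g$, since this is where all the earlier lemmas apply. For any $g\in\Delta$ one has $R_e(\varphi(g))=\rho(\tg{\gamma}M_{\varphi(g)})$, and the commutativity of the spectral radius \eqref{eq:spec_rac_commute} gives $\rho(\tg{\gamma}M_{\varphi(g)})=\rho(M_{\varphi(g)}\tg{\gamma})=\rho(L_g)$. Hence $R_e(\varphi(g))=\rho(L_g)$, and the two asserted equivalences become $h=g^*\Longleftrightarrow\rho(L_h)\le 1$ and, when $R_0>1$, $h=g^*\Longleftrightarrow\rho(L_h)=1$. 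I would prove the first equivalence by its two implications and then deduce the second.

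For the implication $\rho(L_h)\le 1\Rightarrow h=g^*$, I would first note that $h\le g^*$ by maximality of $g^*$. If $g^*=\zero$ this forces $h=\zero=g^*$. If $g^*\neq\zero$, I would apply Point~\ref{lem:item:compare_supports} of Lemma~\ref{lem:eq_eigenf} to the pair $h\le g^*$ with $A=\supp(g^*)$: either $\rho((L_h)_A)=1$ and $h=g^*$, or $\rho((L_h)_A)>1$; since $\rho((L_h)_A)\le\rho(L_h)\le 1$ the second alternative is excluded, so $h=g^*$. This direction is essentially bookkeeping on top of Lemma~\ref{lem:eq_eigenf}.

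The converse $h=g^*\Rightarrow\rho(L_{g^*})\le 1$ is the main point. When $R_0\le 1$ we have $g^*=\zero$ and $\rho(L_{\zero})=\rho(\tg{\gamma})=R_0\le 1$ (using $\varphi(0)=1$). When $R_0>1$, so $g^*\neq\zero$, Point~\ref{lem:item:rho_TgA=1} already gives $\rho(L_{g^*})\ge 1$, and it remains to prove $\rho(L_{g^*})\le 1$. Writing $A=\supp(g^*)$, which is invariant by Point~\ref{lem:item:supp_inv}, I would apply the Schwartz formula \eqref{eq:R0=max} of Theorem~\ref{theo:rappel} to the positive compact operator $L_{g^*}$, whose non-zero atoms coincide with those of $T$ by Point~\ref{lem:item:Tg_atoms}, obtaining $\rho(L_{g^*})=\max_B\rho((L_{g^*})_B)$ over non-zero atoms $B$. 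Each such $B$ is, up to a null set, either contained in $A$ or disjoint from $A$. If $B\subset A$, then $\rho((L_{g^*})_B)\le\rho((L_{g^*})_A)=1$ by Point~\ref{lem:item:rho_TgA=1}. If $B$ is disjoint from $A$, then $\varphi(g^*)=1$ on $B$, so $(L_{g^*})_B=(\tg{\gamma})_B$ and $\rho((L_{g^*})_B)=R_0(B)$. Thus the bound $\rho(L_{g^*})\le 1$ reduces to the claim that no atom disjoint from $\supp(g^*)$ is supercritical.

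I expect this claim to be the real obstacle, and I would settle it by maximality. Let $B$ be a supercritical atom, i.e.\ $R_0(B)>1$. Since $B$ is irreducible, Proposition~\ref{th:DDZ_cv_sf}~\ref{th:item:R0>1-irr} applied to the quasi-irreducible model $(T_B,\gamma,\varphi)$ produces a non-null maximal equilibrium $g^*_B$ with $\supp(g^*_B)=B$. As $B\subset\cf(B)$, Lemma~\ref{lem:comparaison_eq_max} gives $g^*_B\le g^*_{\cf(B)}$; and since $\cf(B)$ is invariant, Lemma~\ref{cor:inv-equi}~\ref{it:equiA-equi} shows $g^*_{\cf(B)}$ is an equilibrium of the full model, whence $g^*_{\cf(B)}\le g^*$ by maximality. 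Chaining these inclusions yields $B=\supp(g^*_B)\subset\supp(g^*)$, contradicting $B$ being disjoint from $\supp(g^*)$ and proving the claim; in fact this shows $\rho(L_{g^*})=1$ when $R_0>1$. The second equivalence then follows immediately: $h=g^*$ forces $\rho(L_{g^*})=1$ by the previous computation, while $\rho(L_h)=1$ gives $\rho(L_h)\le 1$ and hence $h=g^*$ by the first equivalence. The delicate interaction of maximality with the atomic decomposition in the last paragraph is where the genuine content lies; everything else is a consequence of $R_e(\varphi(g))=\rho(L_g)$ and Lemma~\ref{lem:eq_eigenf}.
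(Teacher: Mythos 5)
Your proposal is correct, and for the crucial implication it takes a genuinely different route from the paper. The shared part: the reduction $R_e(\varphi(h))=\rho(L_h)$ via~\eqref{eq:spec_rac_commute}, and the implication $\rho(L_h)\le 1 \Rightarrow h=g^*$ obtained from Lemma~\ref{lem:eq_eigenf}~\ref{lem:item:rho_TgA=1}--\ref{lem:item:compare_supports} with $A=\supp(g^*)$, are exactly the paper's steps. The difference lies in proving $h=g^*\Rightarrow \rho(L_{g^*})\le 1$. The paper argues by contraposition with a dynamical construction: if $\rho(L_h)>1$, a Krein--Rutman eigenvector of $L_h$ must charge $\supp(h)^c$, forcing $\rho\big((L_h)_{\supp(h)^c}\big)=\rho(L_h)>1$; Proposition~\ref{prop:rad_vpd} then yields a small function $w$ supported in $\supp(h)^c$ with $F(h+w)\ge 0$, and the monotone semi-flow started at $h+w$ climbs to an equilibrium strictly above $h$, so $h\ne g^*$. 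You instead argue directly and structurally: Schwartz's formula~\eqref{eq:R0=max} applied to the positive compact operator $L_{g^*}$ (whose atoms coincide with those of $\tp$ by Point~\ref{lem:item:Tg_atoms}) reduces the bound to the non-zero atoms; atoms inside $\supp(g^*)$ contribute at most $\rho\big((L_{g^*})_{\supp(g^*)}\big)=1$ by Point~\ref{lem:item:rho_TgA=1}, atoms disjoint from $\supp(g^*)$ contribute $R_0(B)$ since there $(L_{g^*})_B=(\tg{\gamma})_B$, and the latter are subcritical because every supercritical atom satisfies $B=\supp(g^*_B)\subset\supp(g^*)$ by the chain $g^*_B\le g^*_{\cf(B)}\le g^*$ (Proposition~\ref{th:DDZ_cv_sf}~\ref{th:item:R0>1-irr}, Lemma~\ref{lem:comparaison_eq_max}, Lemma~\ref{cor:inv-equi}~\ref{it:equiA-equi}). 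Both routes are sound, and yours is non-circular: everything you invoke precedes the theorem and is proved independently of it. What each buys: the paper's perturbation argument needs no atomic decomposition and gives the constructive extra information that any equilibrium $h$ with $\rho(L_h)>1$ has an equilibrium strictly above it (the instability mechanism reused in the convergence proofs); your argument front-loads the structural fact that $\supp(g^*)$ contains all supercritical atoms --- a fact the paper only establishes later, inside Lemma~\ref{lem:ex_eq_support} --- and in exchange replaces the $\delta$-estimates of the perturbation step by pure spectral bookkeeping.
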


\begin{proof}
  First, remark that $\rho(L_g) = R_e(\varphi(g))$, where $L_g$ is
  defined by~\eqref{eq:def_Lg}.

If  $R_0 \leq 1$, then by Proposition~\ref{th:DDZ_cv_sf}, we have
  $g^* = \zero$. As $R_e(\varphi(0)) = R_0 \leq 1$ by
  Assumption~\ref{assum:2}, we directly get  the equivalence
  $h = g^* \, \Longleftrightarrow\,  R_e(\varphi(h)) \leq 1$.

  If    $R_0    >     1$,    then    we    have     $g^*\neq    \zero$    by
  Proposition~\ref{th:DDZ_cv_sf}.
According to Lemma~\ref{lem:eq_eigenf}~\ref{lem:item:rho_TgA=1}, if
$h\neq \zero$,  we have
$R_e(\varphi(h))=\rho(L_h)\geq 1$, and by~\eqref{eq:spec_rad_croissant} 
and~\ref{lem:item:compare_supports} that if $h\neq g^*$ then $\rho(L_h)
\geq  \rho\big((L_h)_A\big) >1$ with $A={\supp(g^*)}$. 

\medskip

To complete the  proof, that is, $R_e(\varphi(g^*))=1$,  we shall assume
that $\rho(L_h)>1$ and  show that $h\neq g^*$.   Informally the idea
is to follow the unstable direction  at the equilibrium $h$ to construct
a trajectory leading to another  equilibrium.  Since $L_h$ is a positive
compact    operator,     thanks    to    the     Krein-Rutman    theorem
(Theorem~\ref{theo:rappel}~\ref{thm:KR}), we can consider an eigenvector
$u\in L^p_+\priv{\zero}$ of $L_h$ related to $\rho(L_h)$.  Since the set
$A=\supp(h)$                is               invariant                by
Lemma~\ref{lem:eq_eigenf}~\ref{lem:item:supp_inv}, we have:
\begin{equation}
   \label{eq:Lhu}
 \rho(L_h) u  = L_h u = (L_h)_A u + L_h( \ind{A^c}u).
 \end{equation}
If $u\ind{A^c}$ was equal to $\zero$, $\rho(L_h) > 1$ would be an eigenvalue of $(L_h)_A$,  
contradicting Lemma~\ref{lem:eq_eigenf}~\ref{lem:item:rho_TgA=1}. As $A$
is invariant and $u\ind{A^c}\neq \zero$, multiplying~\eqref{eq:Lhu}
by $\ind{A^c}$ gives $\rho(L_h) u\ind{A^c}  =  (L_h)_{A^c} (
  u\ind{A^c})$,  showing  that  $u\ind{A^c}$ is an eigenvector of $(L_h)_{A^c}$, so
$\rho((L_h)_{A^c})= \rho(L_h) >1$. 

Since
$(L_h)_{A^c}  =  M_{\varphi(h)}  (\tg{\gamma})_{A^c} =
\widehat{(T_{A^c})}_{1/\gamma}$   (as    $h=0$   on   $A^c$ and $\varphi(0)=1$),    we   may   apply
Proposition~\ref{prop:rad_vpd}   with   $T=T_{A^c}$:  there   exists   a
$\lambda>0$    and     $w\in    L^\infty_+\priv{\zero}$     such    that
$T_{A^c} w - \gamma w = \lambda  w$.  Without loss of generality, we can
assume  that  $\norm{w}_\infty   $  is  small  enough   to  ensure  that
$ w  \in \Delta$  and, as $\varphi$  is continuous  with $\varphi(0)=1$,
that $\varphi(  \norm{w}_\infty) \geq  1- \delta$ with  $\delta>0$ small
enough so  that $\delta(\lambda+  \norm{\gamma}_\infty )  \leq \lambda$.
Note that $\supp(w) \subset A^c$.  Since $h+ w = h\ind{A} + w\ind{A^c}$,
and        $A=\supp(h)$        is        invariant,        we        get
$ \varphi(h+  w) Th = \varphi(h)  Th$.  Since $h$ is  an equilibrium, we
obtain that:
\begin{align*}
  F(h +  w)
= \varphi(h+ w) T(h+ w) - \gamma (h+ w) 
  &=  \varphi(h+ w)  Tw - \gamma w \\
  &\geq   (1-\delta) T_{A^c}w - \gamma w \\
  &\geq  \left( \lambda - \delta(\lambda + \norm{\gamma}_\infty
    ) \right) \, 
    w\\
&  \geq 0.
\end{align*}
By Lemmas~\ref{prop:mono_sf}~\ref{prop:item:mono_F}
and~\ref{lem:lim_eq}, this implies that the trajectory starting from $h+ w$
converges monotonously to an equilibrium $g$. Since $h\leq h+
w \leq g\leq g^*$ and $w\neq \zero$, we get that 
$h\neq g^*$ as claimed. 
\end{proof}

\subsection{Equilibria and antichains of atoms}
\label{sec:antichain}

We  now  focus  on  the   characterization  of  equilibria.   We  recall
from~\eqref{eq:F(C)} that the the future  of an antichain $\cc$ of atoms
(which      is     at      most     countable)      is     given      by
$ \cf(\cc) = \cf \left( \bigcup_{A \in \cc} A \right)= \bigcup_{A \in \cc} \cf
\left( A \right)$.
The  set of
\emph{supercritical} atoms:
\[
  \atomsc=\{A\in  \atom\, \colon\,  R_0(A)>1\}
\]
is finite by \cite[Lemma 6.5]{dlz}.  We  say an antichain $\cc$ of atoms
is supercritical if  all its elements are supercritical  atoms, that is,
$\cc\subset  \atomsc$.  We   denote  by  $\sac$  the   (finite)  set  of
supercritical antichains.   For a set  $A$, let $\cc_A$ denote the
(possibly empty)  supercritical antichain given by  the maximal elements
of $\{B \in \atomsc\, \colon\, B \subset A \text{ a.e.}\}$.  Notice that
when  $A$ is  admissible, we  get by~\eqref{eq:R0=max}  that $\cc_A$  is
non-empty if and only if $R_0(A)>1$.  For $h\in \Delta$, we simply write
$\cc_h$ for $\cc_{\supp(h)}$.

The following theorem generalizes the uniqueness result of
Proposition~\ref{th:DDZ_cv_sf} when the operator $T$ is not
necessarily quasi-irreducible. Recall that, by Lemma~\ref{lem:eq_eigenf}~\ref{lem:item:compare_supports2},
equilibria are characterized by their support.

\begin{theo}[Equilibria and supercritical antichains are in bijection]
  \label{th:bij_antichains_eq}
  Let $(T, \gamma, \varphi)$  satisfy Assumption~\ref{assum:2}.  The set
  of  the equilibria  and the  set  of supercritical  antichains are  in
  bijection through the equivalent relations:
\begin{equation}
   \label{eq:FC=g}
  \supp(g)= \cf(\cc)
  \quad  \Longleftrightarrow \quad
 \cc_g= \cc ,
\end{equation}
where $g\in \Delta$ is an  equilibrium and $\cc\in \sac$ a supercritical
antichain. Furthermore,  if $g\neq \zero$,  then the equilibrium  $g$ is
the maximal equilibrium of $\cf(\cc_g)$.
\end{theo}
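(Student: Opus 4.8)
The plan is to exhibit two explicit inverse maps between equilibria and supercritical antichains, and to deduce both the equivalence~\eqref{eq:FC=g} and the final assertion from a single structural fact: for every nonzero equilibrium $g$ one has $\supp(g)=\cf(\cc_g)$, and $g$ is the maximal equilibrium of its own (invariant) support.

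\textbf{Step 1: a nonzero equilibrium is the maximal equilibrium of its support.} Let $g\neq\zero$ be an equilibrium and put $S=\supp(g)$, which is invariant by Lemma~\ref{lem:eq_eigenf}\ref{lem:item:supp_inv}. Then $(T_S,\gamma,\varphi)$ still satisfies Assumption~\ref{assum:2}, $g$ is an equilibrium of $S$ by Lemma~\ref{cor:inv-equi}\ref{it:equi-equiA}, and the operator $L_g$ attached by~\eqref{eq:def_Lg} to this restricted model is exactly $(L_g)_S$. Applying Theorem~\ref{prop:eq_crit_vacc} inside the restricted model, $g$ coincides with the maximal equilibrium $g^*_S$ as soon as $\rho((L_g)_S)\leq 1$; but Lemma~\ref{lem:eq_eigenf}\ref{lem:item:rho_TgA=1} gives precisely $\rho((L_g)_S)=\rho((L_g)_{\supp(g)})=1$. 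Hence $g=g^*_S$.

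\textbf{Step 2 (the crux): $\supp(g)=\cf(\cc_g)$.} Write $D=\cf(\cc_g)$. Since each atom of $\cc_g$ lies in the invariant set $S$ and $D$ is the smallest invariant set containing their union, $D\subset S$ holds for free. For the reverse inclusion I argue by contradiction, assuming $\mu(S\setminus D)>0$. As $D$ is invariant, Lemma~\ref{cor:inv-equi}\ref{it:equi-equiAc} shows that $g'=\un_{D^c}g$ is an equilibrium of $S\setminus D=D^c\cap\supp(g)$, and it is nonzero precisely because $\mu(S\setminus D)>0$. Since $S$ and $D$ are invariant, hence admissible, the set $S\setminus D$ is admissible; applying Proposition~\ref{th:DDZ_cv_sf}\ref{th:item:R0<1} to the restricted model on $S\setminus D$ (whose only equilibrium would be $\zero$ if $R_0(S\setminus D)\leq1$) forces $R_0(S\setminus D)>1$. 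The Schwartz identity~\eqref{eq:R0=max} then provides a supercritical atom $B\subset S\setminus D$. But $B$ is a supercritical atom contained in $S$, so it is dominated by a maximal one, i.e.\ $B\preccurlyeq A$ for some $A\in\cc_g$, whence $B\subset\cf(A)\subset\cf(\cc_g)=D$; together with $B\subset D^c$ and $\mu(B)>0$ this is absurd. Therefore $\mu(S\setminus D)=0$ and $\supp(g)=\cf(\cc_g)$.

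\textbf{Step 3: assembling the bijection.} A purely order-theoretic computation shows $\cc_{\cf(\cc)}=\cc$ for every supercritical antichain $\cc$: an atom $B$ meets $\cf(\cc)=\bigcup_{A\in\cc}\cf(A)$ only if $B\preccurlyeq A$ for some $A\in\cc$, and the maximality together with the antichain property of $\cc$ forces the maximal supercritical atoms of $\cf(\cc)$ to be exactly the elements of $\cc$. Set $\Phi(g)=\cc_g$ on equilibria and $\Psi(\cc)=g^*_{\cf(\cc)}$ on supercritical antichains. Step~2 applied to $g^*_{\cf(\cc)}$, which is nonzero when $\cc\neq\emptyset$ by Proposition~\ref{th:DDZ_cv_sf}\ref{th:item:R0>1}, gives $\supp(g^*_{\cf(\cc)})=\cf(\cc_{\cf(\cc)})=\cf(\cc)$, so $\Phi\circ\Psi=\mathrm{id}$; conversely $\supp(g)=\cf(\cc_g)$ together with Corollary~\ref{cor:supp-equi=} yields $\Psi\circ\Phi=\mathrm{id}$. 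The two relations in~\eqref{eq:FC=g} follow at once: $\supp(g)=\cf(\cc)$ gives $\cc_g=\cc_{\cf(\cc)}=\cc$, and $\cc_g=\cc$ gives $\supp(g)=\cf(\cc_g)=\cf(\cc)$ by Step~2. Finally, combining Steps~1 and~2 shows that a nonzero equilibrium $g$ is the maximal equilibrium of $\supp(g)=\cf(\cc_g)$, which is the last assertion. The main obstacle is the reverse inclusion of Step~2: one must manufacture, from the leftover mass $S\setminus D$, a genuine nonzero equilibrium whose nontrivial reproduction number exposes a supercritical atom escaping $\cf(\cc_g)$, and the care lies entirely in the bookkeeping (invariance of $D$, admissibility of $S\setminus D$ so that Schwartz applies, and the matching between the full and restricted decompositions); Steps~1 and~3 are comparatively routine.
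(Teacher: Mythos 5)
Your Steps 1 and 2 are sound. Step 2 is, in contrapositive form, exactly the paper's own argument for $\supp(g)=\cf(\cc_g)$ (invariance of the support, Lemma~\ref{cor:inv-equi}~\ref{it:equi-equiAc}, the Schwartz identity~\eqref{eq:R0=max} on the admissible set $\supp(g)\cap \cf(\cc_g)^c$, and Proposition~\ref{th:DDZ_cv_sf}~\ref{th:item:R0<1} on the restricted model). Step 1 is a legitimate alternative derivation of the ``maximal equilibrium'' claim via Theorem~\ref{prop:eq_crit_vacc}, resting on the correct observation that the operator~\eqref{eq:def_Lg} of the restricted model $(T_S,\gamma,\varphi)$ is $(L_g)_S$ and that $\rho((L_g)_{\supp(g)})=1$ by Lemma~\ref{lem:eq_eigenf}~\ref{lem:item:rho_TgA=1}; the paper instead obtains this as a by-product of its existence lemma.

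The genuine gap is in Step 3, in the direction $\Phi\circ\Psi=\mathrm{id}$ (surjectivity of $g\mapsto \cc_g$). What Step 2 gives for the equilibrium $g=g^*_{\cf(\cc)}$ is $\supp(g)=\cf(\cc_g)$, where $\cc_g$ consists of the maximal supercritical atoms contained in $\supp(g)$ --- not in $\cf(\cc)$. Your chain $\supp(g^*_{\cf(\cc)})=\cf(\cc_{\cf(\cc)})=\cf(\cc)$ silently replaces $\cc_g$ by $\cc_{\cf(\cc)}$, which is precisely what has to be proved: since $\supp(g)$ could a priori be a strict subset of $\cf(\cc)$, the maximal supercritical atoms of $\supp(g)$ need not be those of $\cf(\cc)$. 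Knowing $g\neq\zero$ (Proposition~\ref{th:DDZ_cv_sf}~\ref{th:item:R0>1}) only yields $\cc_g\neq\emptyset$; it does not exclude, say, $\cc=\{A\}$ with a second supercritical atom $B$ strictly below $A$ and $\supp(g)=\cf(B)\subsetneq \cf(A)$, i.e.\ $\cc_g=\{B\}\neq\cc$. The missing ingredient --- the heart of the paper's Lemma~\ref{lem:ex_eq_support} --- is to show that each $A\in\cc$ lies in $\supp(g^*_{\cf(\cc)})$: by Lemma~\ref{lem:comparaison_eq_max} one has $g^*_A\leq g^*_{\cf(\cc)}$, and by Proposition~\ref{th:DDZ_cv_sf}~\ref{th:item:R0>1-irr} (applied to the quasi-irreducible operator $T_A$, $A$ being an irreducible set with $R_0(A)>1$) one has $\supp(g^*_A)=A$, hence $A\subset \supp(g^*_{\cf(\cc)})$. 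Only with this in hand does your order-theoretic computation identify $\cc_{g^*_{\cf(\cc)}}$ with $\cc$. A very minor further point: before applying Step 2 to $g^*_{\cf(\cc)}$ you should invoke Lemma~\ref{cor:inv-equi}~\ref{it:equiA-equi} to know that it is an equilibrium of the full model.
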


We divide the proof in two lemmas. 

\begin{lem}[Support of an equilibrium and related supercritical antichain]
  \label{lem:supp=FC}
  If $g \in \Delta$ is an equilibrium, then  we have $\supp(g) =
  \cf\left(\cc_g\right)$ a.e.. 
  In particular if $g$ and $h$ are two equilibria, we get:
  \[
  \cc_g=\cc_h
    \Longleftrightarrow  g=h.
  \]
\end{lem}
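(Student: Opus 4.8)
The plan is to dispose of the trivial case $g=\zero$ first, where $\supp(g)=\emptyset$ contains no supercritical atom, so $\cc_g=\emptyset$ and $\cf(\cc_g)=\emptyset=\supp(g)$. For a nonzero equilibrium $g$, set $A=\supp(g)$ and $C=\cf(\cc_g)$. By Lemma~\ref{lem:eq_eigenf}~\ref{lem:item:supp_inv} the set $A$ is invariant, and $C$ is invariant as a future; hence both are admissible and so is $A\setminus C=A\cap C^c$. I would then prove the two inclusions separately. The inclusion $C\subset A$ is easy: by definition $\cc_g=\cc_A$ is made of supercritical atoms contained in $A$, and by~\eqref{eq:F(C)} one has $\cf(\cc_g)=\bigcup_{B\in\cc_g}\cf(B)$; since each $B\subset A$ and $A$ is invariant, the smallest invariant set $\cf(B)$ containing $B$ is contained in $A$, and the union gives $C\subset A$.

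For the reverse inclusion $A\subset C$, I would argue by contradiction, assuming $\mu(A\setminus C)>0$. Since $C$ is invariant and $g$ is an equilibrium, Lemma~\ref{cor:inv-equi}~\ref{it:equi-equiAc} shows that $\un_{C^c}g$ is an equilibrium of $C^c\cap\supp(g)=A\setminus C$; its support is exactly $A\setminus C$, so it is a nonzero equilibrium of the restricted model $(T_{A\setminus C},\gamma,\varphi)$. By the threshold statement of Proposition~\ref{th:DDZ_cv_sf}~\ref{th:item:R0<1}, the existence of a nonzero equilibrium forces $R_0(A\setminus C)>1$. As $A\setminus C$ is admissible, the Schwartz formula~\eqref{eq:R0=max} then produces a nonzero atom $B\subset A\setminus C$ with $R_0(B)>1$, that is a supercritical atom $B\in\atomsc$ with $B\subset A$.

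The contradiction now comes from the poset structure of $(\atomsc,\preccurlyeq)$. The finite set $\{B'\in\atomsc:\,B'\subset A\}$ admits $\cc_g=\cc_A$ as its family of maximal elements, so there is $B''\in\cc_g$ with $B\preccurlyeq B''$; by definition of $\preccurlyeq$ this means $B\subset\cf(B'')\subset\cf(\cc_g)=C$. But $B\subset A\setminus C$ forces $B\cap C=\emptyset$ a.e., contradicting $\mu(B)>0$. Hence $\mu(A\setminus C)=0$, i.e.\ $A\subset C$, and together with the first inclusion we obtain $\supp(g)=\cf(\cc_g)$. I expect this to be the crux of the argument: the delicate point is to turn the scalar inequality $R_0(A\setminus C)>1$ into an actual supercritical atom sitting inside $A\setminus C$ (which is where admissibility of $A\setminus C$ and the Schwartz decomposition are needed), and then to locate that atom in the poset so that it clashes with the maximality defining $\cc_g$.

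Finally, for the ``in particular'' statement, if $\cc_g=\cc_h$ then $\supp(g)=\cf(\cc_g)=\cf(\cc_h)=\supp(h)$, so Corollary~\ref{cor:supp-equi=} (two equilibria with the same support coincide) gives $g=h$; the converse is immediate since $\cc_g$ depends only on $\supp(g)$.
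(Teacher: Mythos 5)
Your proof is correct and uses essentially the same approach as the paper: the same ingredients appear in the same roles (invariance of $\supp(g)$ from Lemma~\ref{lem:eq_eigenf}~\ref{lem:item:supp_inv}, the restriction Lemma~\ref{cor:inv-equi}~\ref{it:equi-equiAc}, the threshold Proposition~\ref{th:DDZ_cv_sf}~\ref{th:item:R0<1}, the Schwartz formula~\eqref{eq:R0=max}, and the maximality in the definition of $\cc_g$), the only difference being that you run the argument by contradiction on the residual set $\supp(g)\cap\cf(\cc_g)^c$, whereas the paper directly shows $R_0$ of that set is at most $1$ and concludes that the restricted equilibrium vanishes.
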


\begin{proof}
  As the set $\supp(g)$ is invariant by
  Lemma~\ref{lem:eq_eigenf}~\ref{lem:item:supp_inv} and as every
  element of $\cc_g$ is included in $\supp(g)$, we have
  $\cf(\cc_g) \subset \supp(g)$.

By construction of $\cc_g$, every atom $B\subset \supp(g)$ with $R_0(B)>1$
is included in $\cf(\cc_g)$.
This implies by~\eqref{eq:R0=max} that, with $A=\supp(g) \cap \cf(\cc_g)^c$ an invariant (by Lemma~\ref{lem:eq_eigenf}~\ref{lem:item:supp_inv}) and thus admissible set:
\[
R_0(A) = \max_{ B\subset A, \, B\in \atom} R_0(B) \leq 1.
\]
Then Lemma~\ref{cor:inv-equi}~\ref{it:equi-equiAc}
gives that $\un_{\cf(\cc_g)^c} g$ is an equilibrium of $\cf(\cc_g)^c$ and
 of $A$. Then Proposition~\ref{th:DDZ_cv_sf}~\ref{th:item:R0<1} (with
 the SIS model
 $(T_A, \gamma, \varphi)$) implies
 that $\un_{\cf(\cc_g)^c}\,  g=\zero$, that is $\supp(g) \subset \cf(\cc_g)$. 
Thus, we get $\supp(g)=\cf(\cc_g) $. The second part of the lemma is
then a
direct consequence of Corollary~\ref{cor:supp-equi=} and
Lemma~\ref{lem:antichain_fut_eq}.  
\end{proof}

\begin{lem}\label{lem:ex_eq_support}
  For any supercritical antichain $\cc$, there exists an
  equilibrium $g\in \Delta $ such that  $\cc = \cc_g$. If $\cc$ is non
  empty, then  $g\neq \zero$ is the maximal equilibrium of $\cf(\cc)$.
\end{lem}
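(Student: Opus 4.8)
The plan is to produce the required equilibrium as the maximal equilibrium of the invariant set $\cf(\cc)$. Write $F=\cf(\cc)$; being a future, $F$ is invariant, so the restricted model $(T_F,\gamma,\varphi)$ still satisfies Assumption~\ref{assum:2} and admits a maximal equilibrium $g:=g^*_F$ by Proposition~\ref{prop:exists_max_sol}. Since $F$ is invariant, Lemma~\ref{cor:inv-equi}~\ref{it:equiA-equi} promotes $g$ to an equilibrium of the full model. The degenerate case $\cc=\emptyset$ is immediate: then $F=\emptyset$, $g=\zero$, and $\cc_g=\cc_{\supp(\zero)}=\emptyset=\cc$. So from now on I assume $\cc\neq\emptyset$ and set out to prove the three facts $g\neq\zero$, $\supp(g)=\cf(\cc)$, and $\cc_g=\cc$; the maximality assertion will then hold by the very construction of $g$.

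The heart of the argument — and the step I expect to be the main obstacle — is the support identity $\supp(g)=\cf(\cc)$. The inclusion $\supp(g)\subset F$ is free, since $g$ is an equilibrium of $F$. For the reverse inclusion I exploit supercriticality atom by atom. Fix $A\in\cc$: as an atom, $A$ is irreducible, and $R_0(A)>1$, so applying Proposition~\ref{th:DDZ_cv_sf}~\ref{th:item:R0>1-irr} to the quasi-irreducible model $(T_A,\gamma,\varphi)$ gives a non-null maximal equilibrium $g^*_A$ with $\supp(g^*_A)=A$ a.e.. Because $A\subset F$, the monotonicity of maximal equilibria in the transmission domain (Lemma~\ref{lem:comparaison_eq_max}) yields $g^*_A\leq g^*_F=g$, hence $A=\supp(g^*_A)\subset\supp(g)$. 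This already forces $g\neq\zero$ since $\cc\neq\emptyset$. Taking the union over $A\in\cc$ gives $\bigcup_{A\in\cc}A\subset\supp(g)$, and since $\supp(g)$ is invariant by Lemma~\ref{lem:eq_eigenf}~\ref{lem:item:supp_inv} it must contain the future of everything it contains, so $\cf(\cc)\subset\supp(g)$. Combined with the trivial inclusion this gives $\supp(g)=\cf(\cc)$. The delicate point here is precisely that each supercritical atom of $\cc$ sits inside $\supp(g)$: this relies on the full-support property of $g^*_A$ for an irreducible supercritical atom together with the domain-monotonicity of the maximal equilibrium, and neither can be dispensed with.

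It remains to identify the antichain. Applying Lemma~\ref{lem:supp=FC} to the equilibrium $g$ gives $\supp(g)=\cf(\cc_g)$, so that $\cf(\cc_g)=\cf(\cc)$. As $\cc_g$ and $\cc$ are both antichains of atoms, Lemma~\ref{lem:antichain_fut_eq} (antichains with the same future coincide) forces $\cc_g=\cc$. Finally, $g=g^*_{\cf(\cc)}$ is the maximal equilibrium of $\cf(\cc)$ by construction, which completes the proof.
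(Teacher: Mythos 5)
Your proof is correct and follows essentially the same route as the paper's own argument: define $g$ as the maximal equilibrium of the invariant set $\cf(\cc)$, promote it via Lemma~\ref{cor:inv-equi}~\ref{it:equiA-equi}, use Proposition~\ref{th:DDZ_cv_sf}~\ref{th:item:R0>1-irr} together with Lemma~\ref{lem:comparaison_eq_max} to place each supercritical atom inside $\supp(g)$, then conclude with the invariance of $\supp(g)$, Lemma~\ref{lem:supp=FC} and Lemma~\ref{lem:antichain_fut_eq}. No gaps; the only difference is presentational (you state the support identity as the explicit goal before proving it, which the paper does implicitly).
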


\begin{proof}
  If  $\cc$ is  empty, then  taking  the equilibrium  $g=\zero$, we  get
  $\cc=\cc_g$.   We   assume  now   that  $\cc$   is  not   empty.   Let
  $g$ be the maximal equilibrium on $\cf(\cc)$. It is also an
  equilibrium   by    Lemma~\ref{cor:inv-equi}~\ref{it:equiA-equi}   and
  $\supp(g)\subset \cf(\cc)$.  For  any $A\in\cc$, we have  $g^*_A \leq g$
  by Lemma~\ref{lem:comparaison_eq_max}, and $g^*_A$  is positive on $A$
  by Proposition~\ref{th:DDZ_cv_sf}~\ref{th:item:R0>1-irr}  since $A$ is
  a  supercritical atom  and thus  an irreducible  set with  $R_0(A)>1$.
  This implies that $A\subset \supp(g)$ and thus $\cf(A) \subset \supp(g)$
  as  $\supp(g)$   is  invariant.    Then  use~\eqref{eq:F(C)}   to  get
  $\cf(\cc)  \subset  \supp(g)$,  so   that  $\cf(\cc)=\supp(g)$,  and  thus
  $\cf(\cc)=\cf(\cc_g)$  by  Lemma~\ref{lem:supp=FC}.   The  two  antichains
  $\cc$  and  $\cc_g$  have  the  same future  and  are  thus  equal  by
  Lemma~\ref{lem:antichain_fut_eq}. The proof is then complete.
\end{proof}

Let $\eq\subset \Delta$ denote the set of equilibria of the SIS model
$(T, \gamma, \varphi)$. 
\begin{proof}[Proof of Theorem~\ref{th:bij_antichains_eq}]
The map $g \mapsto \cc_g$ from $\eq$ to $\sac$ is one-to-one by Lemma~\ref{lem:supp=FC} and onto by Lemma~\ref{lem:ex_eq_support}.
The equivalence given by~\eqref{eq:FC=g} is a direct consequence of
Lemmas~\ref{lem:antichain_fut_eq} and~\ref{lem:supp=FC}. Use the last
part of Lemma~\ref{lem:ex_eq_support} to get the last part of the theorem. 
\end{proof}

\subsection{Monatomicity and order relation via equilibria}

Let  $(T,  \gamma,   \varphi)$  that  satisfy  Assumption~\ref{assum:2}.
Consider the SIS model $(T, \lambda \gamma, \varphi)$ with recovery rate
$ \gamma$ multiplied  by a real parameter $\lambda>0$.   The
reproduction  number  of  a  measurable   set  $A$  for  this  model  is
$\rho((\tg{\lambda \gamma})_A)= \rho((\tg{ \gamma})_A\, 
M_{1/\lambda} )=R_0(A)/\lambda$. We  deduce from Theorem~\ref{th:bij_antichains_eq}  that the  number of
equilibria of the SIS model $(T, \lambda \gamma, \varphi)$ is decreasing
with $\lambda$.

\medskip

We say that the operator $T$ on $L^p$ for $p \in (1,
+\infty)$ is \emph{monatomic} if it has exactly one non-zero atom,
that is, $\card(\anz) = 1$. 
Monatomicity  is a natural extension of  (quasi-)irreducibility,
see~\cite[Remark~1.2]{dlz} and references therein. We complete the
characterization of monatomic  operator  $T$ given
in~\cite[Theorem~2]{dlz} using the number of equilibria of the SIS models $(T,
\lambda \gamma, \varphi)$. 

\begin{coro}[Criterium of monatomicity]
  \label{cor:monoatom}
Let $(T, \gamma, \varphi)$ satisfy Assumption~\ref{assum:2}.
The operator $T$ is monatomic if and only if the two following properties hold:
\begin{enumerate}[(i)]
\item \label{coro:item:card_leq_2}
  For  $\lambda > 0$, the
SIS model $(T, \lambda \gamma, \varphi)$  has at most one
  non-null equilibrium. 
\item  \label{coro:item:card=2}
  There   exists  $\lambda   >   0$   such  that   the   SIS model
  $(T,     \lambda     \gamma,      \varphi)$     has     a     non-null
  equilibrium. 
\end{enumerate}
\end{coro}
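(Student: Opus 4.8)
The plan is to translate the statement entirely into the language of Theorem~\ref{th:bij_antichains_eq} and reduce the count of non-null equilibria to a count of non-empty supercritical antichains. First I would record the two observations made just before the statement: the atoms and the order $\preccurlyeq$ of the model $(T, \lambda \gamma, \varphi)$ depend only on $T$, hence coincide with those of $(T, \gamma, \varphi)$, while the reproduction number of an atom $A$ for the model $(T, \lambda \gamma, \varphi)$ equals $R_0(A)/\lambda$. Consequently an atom $A$ is supercritical for $(T, \lambda \gamma, \varphi)$ if and only if $R_0(A) > \lambda$, so that the set of such atoms is contained in $\anz$ for every $\lambda > 0$ (since $R_0(A) > \lambda > 0$ forces $A \in \anz$, using $\anz = \{A \in \atom \colon R_0(A) > 0\}$, which follows from Lemma~\ref{lem:eq_eigenf}~\ref{lem:item:Tg_atoms} applied to the equilibrium $g = \zero$, for which $L_\zero = \tg{\gamma}$). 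Because non-null equilibria of $(T, \lambda \gamma, \varphi)$ are in bijection with its non-empty supercritical antichains, counting non-null equilibria amounts to counting non-empty antichains inside the finite set $\{A \in \atom \colon R_0(A) > \lambda\}$.

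For the direct implication, suppose $T$ is monatomic, say $\anz = \{A_0\}$ with $R_0(A_0) > 0$. For any $\lambda > 0$ the only atom that can be supercritical for $(T, \lambda \gamma, \varphi)$ is $A_0$, so the only possible non-empty supercritical antichain is $\{A_0\}$; by the bijection there is at most one non-null equilibrium, which is~\ref{coro:item:card_leq_2}. Choosing $\lambda \in (0, R_0(A_0))$ makes $A_0$ supercritical, so that $\{A_0\}$ is a non-empty supercritical antichain whose associated equilibrium is non-null, which is~\ref{coro:item:card=2}.

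For the converse I would argue by contraposition on $\card(\anz)$. Condition~\ref{coro:item:card=2} furnishes a $\lambda_0 > 0$ with a non-null equilibrium, hence a supercritical atom for $(T, \lambda_0 \gamma, \varphi)$ lying in $\anz$, so $\card(\anz) \geq 1$. If $\card(\anz) \geq 2$, choose two distinct non-zero atoms $A_1 \neq A_2$ and set $\lambda \in (0, \min(R_0(A_1), R_0(A_2)))$, a non-empty interval since both reproduction numbers are positive. Then $A_1$ and $A_2$ are both supercritical for $(T, \lambda \gamma, \varphi)$, so $\{A_1\}$ and $\{A_2\}$ are two distinct non-empty supercritical antichains (a singleton is vacuously an antichain), and Theorem~\ref{th:bij_antichains_eq} then yields two distinct non-null equilibria, contradicting~\ref{coro:item:card_leq_2}. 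Hence $\card(\anz) = 1$, i.e.\ $T$ is monatomic.

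The only delicate point is the bookkeeping identity $\anz = \{A \in \atom \colon R_0(A) > 0\}$, namely that an atom is non-zero for $T$ exactly when its restricted reproduction number is positive; I expect this to be the one step that needs care, but it is immediate from the fact that $\tp$ and $\tg{\gamma}$ share the same non-zero atoms. Everything else is a direct reading of the bijection together with the scaling $R_0(A) \mapsto R_0(A)/\lambda$, so no analytic estimate is required.
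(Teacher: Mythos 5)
Your proof is correct and takes essentially the same route as the paper: both reduce the statement to counting supercritical antichains of the scaled models via Theorem~\ref{th:bij_antichains_eq}, using the scaling identity $R_0(A)/\lambda$ so that every non-zero atom becomes supercritical for $\lambda$ small enough. The only cosmetic difference is that the paper counts all finite antichains of non-zero atoms at once (the two conditions being equivalent to there being exactly two such antichains, one of them empty), whereas you argue by contraposition with two singleton antichains; your explicit justification that the non-zero atoms are exactly the atoms with $R_0(A)>0$ (via Lemma~\ref{lem:eq_eigenf}~\ref{lem:item:Tg_atoms} applied to $g=\zero$) is a bookkeeping detail the paper leaves implicit.
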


\begin{proof}
 By  Theorem~\ref{th:bij_antichains_eq}, we  deduce  that if  $\cc$ is  a
finite    antichain     of    non-zero    atoms    then,     for    all
$\lambda\in (0, \min   _{A\in  \cc}   R_0(A))$  there   exists  an   equilibrium
$g^\lambda$  for the  model  $(T, \lambda  \gamma,  \varphi)$ such  that
$\cf(\cc)=\supp(g^\lambda)$.

Then, Point~\ref{coro:item:card_leq_2}  means that the number  of finite
antichains    of    non-zero    atoms    is    at    most    two,    and
Point~\ref{coro:item:card=2} that it is at  least two: so the two points
are equivalent to the number of  antichains being exactly two (one being
empty), that is $T$ is monatomic.
\end{proof}

\section{Convergence and attraction domains}

In this  section, we  are interested  in the  behavior of  the semi-flow
$(\phi(t,  h))_{t\in \R_+}$ of  Equation~\eqref{eq:intro_gen_SIS} when  $t$
goes to infinity for  an initial condition $h \in \Delta$.   If $T$ is a
quasi-irreducible   kernel   positive   operator,  then   according   to
Proposition~\ref{th:DDZ_cv_sf}~\ref{th:item:R0>1-irr-lim},    see   also
\cite[Theorem 4.13]{ddz-sis} when $T$ is an irreducible kernel operator,
the  semi-flow  $(\phi(t,  h))_{t\in \R_+}$  converges   essentially     to
$g^*$ if $\mu(\supp(h) \cap \supp(g^*)) > 0$ and $\zero$ otherwise.   We   generalize  this  result  to  general
operators, see Section~\ref{sec:sf_cv} for a proof.

\begin{theo}[Convergence to an equilibrium]\label{th:cv_eq_max}
  Let $(T, \gamma, \varphi)$ satisfy Assumption~\ref{assum:2}.  The semi-flow
  $(\phi(t,  h))_{t\in \R_+}$ with initial condition $h\in \Delta$
  converges essentialy to a limit, say $g\in \Delta$; and $g$ is an equilibrium and
  more precisely the maximal
  equilibrium of the set $\cf(\supp(h))$:
  \[
\limess_{t\rightarrow \infty } 
\phi(t, h) = g
\quad\text{and}\quad
\cc_g=\cc_{\cf(\supp(h))}
.
\]
\end{theo}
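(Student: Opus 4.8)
The plan is to prove essential convergence by sandwiching the orbit $(\phi(t,h))_{t}$ between a non-increasing trajectory and a non-decreasing one with the same limit. Set $A=\cf(\supp(h))$ and let $g:=g^*_A$ be the maximal equilibrium of $A$. Since $A$ is invariant and $\supp(h)\subset A$, Lemma~\ref{lem:sf_rest}\ref{lem:item:rest_inv} gives $\phi(\cdot,h)=\phi_A(\cdot,h)$; replacing $(T,\gamma,\varphi)$ by $(T_A,\gamma,\varphi)$ I may therefore assume $\cf(\supp(h))=\Omega$ (the future computed inside $A$ of $\supp(h)$ is again $A$) and $g=g^*$. For the upper bound, $h\le\un$ together with order-preservation of the semiflow (Lemma~\ref{prop:mono_sf}~\ref{prop:item:order_pre_flow}) gives $\phi(t,h)\le\phi(t,\un)$, and Proposition~\ref{prop:exists_max_sol}~\ref{prop:item:g*_lim} gives $\phi(t,\un)\searrow g^*$, so $\limesssup_{t}\phi(t,h)\le g^*$. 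If $R_0\le 1$ then $g^*=\zero$ by Proposition~\ref{th:DDZ_cv_sf}~\ref{th:item:R0<1}, and nonnegativity already forces $\limess_t\phi(t,h)=\zero=g^*$; all the remaining work is the lower bound when $R_0>1$.

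For $R_0>1$ I first establish finite-time support propagation. From $u'+\gamma u=\varphi(u)Tu\ge\zero$ one gets $\phi(t_2,h)\ge\expp{-\norm{\gamma}_\infty(t_2-t_1)}\phi(t_1,h)$, so $t\mapsto\supp(\phi(t,h))$ is non-decreasing; its value at any $t>0$ is invariant and contains $\supp(h)$, hence equals $\cf(\supp(h))=\Omega$. In particular, for every $t>0$ the flow meets each of the finitely many supercritical atoms $B\in\cc_\Omega$ in positive measure.

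The heart of the lower bound is to produce, at one finite time, a subsolution below the flow whose forward trajectory climbs to $g^*$. For each $B\in\cc_\Omega$, Proposition~\ref{prop:rad_vpd} yields $\lambda_B>0$ and $w_B\in L^\infty_+\setminus\{\zero\}$ with $\supp(w_B)=B$ and $T_Bw_B-\gamma w_B=\lambda_B w_B$. Because $\cc_\Omega$ is an antichain, $M_BTM_{B'}=0$ for distinct $B,B'$, so for small $c_B>0$ the function $w=\sum_{B\in\cc_\Omega}c_Bw_B\in\Delta$ is a subsolution: on each $B$ one computes $F(w)=c_Bw_B\big(\varphi(c_Bw_B)(\gamma+\lambda_B)-\gamma\big)\ge\zero$ once $c_B$ is small (as $\varphi(0)=1$ and $\gamma/(\gamma+\lambda_B)\le\norm{\gamma}_\infty/(\norm{\gamma}_\infty+\lambda_B)<1$), while off $\bigcup_BB$ one has $F(w)=Tw\ge\zero$. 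By Lemmas~\ref{prop:mono_sf}~\ref{prop:item:mono_F} and~\ref{lem:lim_eq} the trajectory $\phi(\cdot,w)$ is non-decreasing and converges to an equilibrium $\tilde g\ge w$; since $\tilde g>0$ on every $B$ and $\supp(\tilde g)$ is invariant (Lemma~\ref{lem:eq_eigenf}~\ref{lem:item:supp_inv}), it contains $\cf(\cc_\Omega)=\supp(g^*)$, while $w\le\un$ forces $\tilde g\le g^*$, whence $\tilde g=g^*$ by Lemma~\ref{lem:eq_eigenf}~\ref{lem:item:compare_supports2}; that is, $\phi(s,w)\uparrow g^*$. It then suffices to check $\phi(t_1,h)\ge w$ for some $t_1$: after igniting each $B$, the restricted flow $\phi_B$ grows along the principal eigendirection $w_B$ (a linear lower bound by $\varphi(\cdot)T_B-\gamma$, whose spectral bound is positive since $R_0(B)>1$ by Lemma~\ref{lem:rad_bound}), giving $\un_B\phi(t_1,h)\ge c_Bw_B$ at a common finite time $t_1$. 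Then $\phi(t_1+s,h)\ge\phi(s,w)\uparrow g^*$, so $\limessinf_t\phi(t,h)\ge g^*$.

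Combining the two bounds yields $\limess_t\phi(t,h)=g^*=g^*_A$, the maximal equilibrium of $A=\cf(\supp(h))$. For the antichain identification, Lemma~\ref{lem:supp=FC} gives $\supp(g)=\cf(\cc_g)$, while maximality of $g$ on $A$ gives $\supp(g)=\cf(\cc_A)$, so Lemma~\ref{lem:antichain_fut_eq} forces $\cc_g=\cc_A=\cc_{\cf(\supp(h))}$. I expect the main obstacle to be the quantitative ignition step, namely turning ``the flow meets $B$'' into ``$\un_B\phi(t_1,h)\ge c_Bw_B$ at one finite time'': this must be argued through the principal eigenfunction of the irreducible restricted operator rather than via a uniform estimate, since the convergence may be non-uniform when $\essinf\gamma=0$ (cf.\ Remark~\ref{rem:non-unif-cv} and Example~\ref{ex:unif-cv-g=0}), whereas the localization of the value bound to the atoms (the futures being filled automatically by the forward flow of $w$) is what makes the construction go through.
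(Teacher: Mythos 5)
Your proposal has the same skeleton as the paper's proof: reduce by support propagation so that the future of $\supp(h)$ is the whole space, build from the supercritical antichain a subsolution $w=\sum_{B}c_Bw_B$ lying below the orbit whose forward trajectory increases to the maximal equilibrium, then sandwich $\phi(t,w)\le\phi(t,h)\le\phi(t,\un_A)$ and identify the antichains. Several of your steps are sound: the computation $F(w)\ge 0$ (using that $M_BTM_{B'}=0$ for distinct atoms of an antichain), the identification $\tilde g=g^*$ via invariance of supports, and the final antichain argument. (Two side remarks: the invariance of $\supp(\phi(t,h))$ for $t>0$ is exactly Lemma~\ref{lem:sf_pos} and requires that lemma's comparison with $\expp{t(aT-\gamma)}f$, not the one-line monotonicity you give; and $\supp(w_B)=B$ needs the remark that the support of the eigenfunction is invariant inside the irreducible set $B$.) The genuine gap is the step you flag yourself: the ``ignition'' claim that $\un_B\phi(t_1,h)\ge c_Bw_B$ at some finite time $t_1$.

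This step fails in general, and it is precisely where the paper's argument differs. Knowing $\supp(\phi(t,h))\supset B$ only gives $\phi(t,h)>0$ a.e.\ on $B$; in infinite dimension this does not produce any $c>0$ with $\phi(t,h)\ge c\,w_B$, since the essential infimum of $\phi(t,h)/w_B$ over $B$ can be zero and remain zero for all finite times (two positive functions need not dominate multiples of each other). Your proposed repair through the linearization $aT_B-\gamma$ does not close this: first, the pointwise bound $\varphi(\phi(t,h))\ge a$ fails wherever $\phi(t,h)$ is close to $1$ (where $\varphi$ vanishes); second, even granting $u(t)\ge \expp{t(aT_B-\gamma)}u_0$ with positive spectral bound, a positive semigroup applied to a positive $u_0$ need not dominate any multiple of the principal eigenfunction at any finite time --- norm convergence of the rescaled semigroup toward the eigendirection (itself unavailable here, $T$ being only power compact) would not give a pointwise lower bound $\ge c\,w_B$, because $w_B$ may itself have essential infimum zero on $B$ when $\essinf\gamma=0$ or the kernel degenerates. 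The paper avoids ignition entirely by building the eigenfunction \emph{below $h$ from the start}: for each supercritical atom it considers $U_\varepsilon=\varphi(\varepsilon)M_{\{h\ge\varepsilon\}}\tg{\gamma}$, uses the collectively compact family argument (Lemma~\ref{lem:coll-K}) to get $\rho(U_\varepsilon)>1$ for small $\varepsilon$ from $\rho(U_0)>1$, and applies Proposition~\ref{prop:rad_vpd} to $\varphi(\varepsilon)\ind{\{h\ge\varepsilon\}}T$; the resulting eigenfunction is supported in $\{h\ge\varepsilon\}$ and can be normalized with $\norm{\cdot}_\infty\le\varepsilon$, hence is automatically $\le h$. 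This spectral-continuity construction (the content of the proof of Proposition~\ref{th:DDZ_cv_sf}~\ref{th:item:R0>1-irr-lim}, reused atom by atom in Section~\ref{sec:sf_cv}) is the ingredient your proof is missing; with it, the rest of your argument goes through.
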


We derive  directly the  next corollary,  where the  maximal equilibrium
$g^*$ is possibly equal to $\zero$.

\begin{cor}[Attraction domain of the maximum equilibrium]
   \label{cor:cv-g*}
   Let  $(T, \gamma,  \varphi)$  satisfy Assumption~\ref{assum:2}.   The
   semi-flow with  initial condition $h\in \Delta$  converges  essentially
   to      the     maximal      equilibrium     $g^*$,      that     is,
   $\limess_{t\rightarrow  \infty  }  \phi(t,  h)  =  g^*$,  if and only
   if  $\cf(\supp(h))$ contains all the supercritical atoms.
\end{cor}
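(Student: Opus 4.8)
The plan is to read off the corollary from the convergence result Theorem~\ref{th:cv_eq_max} together with the bijection between equilibria and supercritical antichains, so that no fresh analysis of the semi-flow is needed. First I would record that, by Theorem~\ref{th:cv_eq_max}, the essential limit $g$ of $\phi(t,h)$ is the (unique) equilibrium determined by $\cc_g = \cc_{\cf(\supp(h))}$. Applying the same theorem to the initial condition $h=\un$, for which $\cf(\supp(\un)) = \cf(\Omega) = \Omega$ and the limiting equilibrium is the maximal one $g^*$ by Proposition~\ref{prop:exists_max_sol}~\ref{prop:item:g*_lim}, I obtain $\cc_{g^*} = \cc_\Omega$, which by definition is the antichain of the $\preccurlyeq$-maximal supercritical atoms. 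Since equilibria are characterised by their associated antichain (second part of Lemma~\ref{lem:supp=FC}), the sought convergence $g = g^*$ is then equivalent to the equality of antichains $\cc_{\cf(\supp(h))} = \cc_\Omega$.

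It then remains to establish a purely order-theoretic equivalence: writing $W = \cf(\supp(h))$, one has $\cc_W = \cc_\Omega$ if and only if $W$ contains (a.e.) every supercritical atom. The easy direction assumes $W$ contains all supercritical atoms; then $\{B \in \atomsc \,\colon\, B \subset W \text{ a.e.}\} = \atomsc$, so $\cc_W$ and $\cc_\Omega$ are the maximal elements of the same finite set and therefore coincide. For the converse I would argue through the order $\preccurlyeq$: given an arbitrary supercritical atom $A$, the finiteness of $\atomsc$ (from \cite{dlz}) lets me choose a $\preccurlyeq$-maximal supercritical atom $B$ with $A \preccurlyeq B$; such $B$ belongs to $\cc_\Omega = \cc_W$, hence $B \subset W$ a.e., and because $W$ is invariant (being a future) we get $\cf(B) \subset W$ a.e.. As $A \preccurlyeq B$ means $\cf(A) \subset \cf(B)$ a.e., this yields $A \subset \cf(A) \subset W$ a.e., so $W$ indeed contains every supercritical atom.

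The main obstacle — in truth the only place where something beyond bookkeeping occurs — is this final lifting step. It relies on two structural facts that must be invoked carefully: that $W = \cf(\supp(h))$ is invariant, so that it is stable under passing to the future of any atom it contains, and that above every supercritical atom sits a $\preccurlyeq$-maximal one, which is exactly where the finiteness of the set of supercritical atoms enters. Everything else reduces to Theorem~\ref{th:cv_eq_max} and the already-proved bijection (Theorem~\ref{th:bij_antichains_eq}, Lemma~\ref{lem:supp=FC}, Corollary~\ref{cor:supp-equi=}), so the corollary follows without re-examining the dynamics.
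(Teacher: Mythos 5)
Your proof is correct and takes essentially the same route the paper intends: the paper states this corollary as an immediate consequence of Theorem~\ref{th:cv_eq_max}, and your argument just makes the implicit bookkeeping explicit (identifying $\cc_{g^*}=\cc_\Omega$ by applying the theorem to $h=\un$, invoking the bijection via Lemma~\ref{lem:supp=FC}, and checking that $\cc_{\cf(\supp(h))}=\cc_\Omega$ holds if and only if $\cf(\supp(h))$ contains every supercritical atom, using the finiteness of $\atomsc$ and the invariance of futures). No gaps.
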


In                 particular,                we                 recover
Proposition~\ref{th:DDZ_cv_sf} as $R_0\leq  1$ means
there  is no  supercritical atom, and $R_0>1$ and $T$ quasi-irreducible
means there is only one supercritical atom.   In  the previous  corollary, it  may however happen that  none  of   the  supercritical  atoms  is included  in
$\supp(h)$, see the next example.

\begin{ex}
  Let $\Omega  = \{a,b\}$  with the counting  measure, and  consider the
  SIS model $(T, \gamma,  \varphi)$ with $T$ identified  with the matrix
  $\begin{pmatrix}
    1 & 0 \\
    \star & 2
  \end{pmatrix}$  (with $\star  >  0$), $\gamma  =  \un$, $\varphi(r)  =
  1-r$. Notice that  $\{a\}$ and $\{b\}$ are non-zero  atoms, the former
  being  critical   with  $\cf(\{a\})=\Omega$   and  the   latter  being
  supercritical and invariant.  Thus,  there exists only two equilibria:
  $\zero=(0,0)$  and  $g^*=(0,1/2)$.  If  $h\neq  \zero$,  then we  have
  $\lim_{t\rightarrow \infty  } \phi(t,  h) =  g^*$.  For  $h=(0,1)$, we
  have   $\supp(h)=\supp(g^*)$,   but   for    $h   =   (1,0)$   we   have
  $\supp(h) \cap \supp(g^*) = \emptyset$.
\end{ex}

\begin{prop}[Uniform convergence to an equilibrium]
  \label{prop:cv-unif}
 Let  $(T, \gamma,  \varphi)$  satisfy 
  Assumption~\ref{assum:2}.  For
 $h \in \Delta$ and $g$ the maximal equilibrium of $\cf(\supp(h))$, we have   that:
\begin{equation}
   \label{eq:cv-unif-g>0}
\lim_{t\rightarrow \infty }     \norm{\big(\phi(t,h)-g\big)\, \gamma }_\infty =0. 
  \end{equation}  
\end{prop}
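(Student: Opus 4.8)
The plan is to upgrade the essential (i.e.\ a.e.) convergence already provided by Theorem~\ref{th:cv_eq_max} to the $\gamma$-weighted uniform convergence~\eqref{eq:cv-unif-g>0}. Write $w(t)=\phi(t,h)$ and let $g$ be the maximal equilibrium of the invariant set $\cf(\supp(h))$; by Lemma~\ref{cor:inv-equi}~\ref{it:equiA-equi} the function $g$ is an equilibrium of the full model, so $\gamma g = \varphi(g)\,Tg$ a.e. First I would extract quantitative $L^p$ control from the essential convergence: writing $\psi^*_t=\esssup_{s\ge t}w(s)$ and $\psi_{*}^{t}=\essinf_{s\ge t}w(s)$, one has $\psi_{*}^{t}\le w(t),g\le \psi^*_t$ a.e.\ with $\psi^*_t-\psi_{*}^{t}\searrow 0$ a.e., so $|w(t)-g|\le \psi^*_t-\psi_{*}^{t}$; since the measure is finite and $|w(t)-g|\le 1$, dominated convergence gives $\norm{w(t)-g}_p\to 0$. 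The crucial consequence, using the smoothing bound~\eqref{eq:T/gamma-in-Lp} of Assumption~\ref{assum:2}, is that
\[
\theta(t):=\norm{T(w(t)-g)}_\infty\le C'_p\,\norm{\gamma(w(t)-g)}_p\le C'_p\norm{\gamma}_\infty\,\norm{w(t)-g}_p\limt 0,
\]
and $\theta$ is bounded on $\R_+$. Thus $Tw(t)\to Tg$ \emph{uniformly}.

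The heart of the argument is then a pointwise-in-$x$ analysis of the scalar dynamics. For a.e.\ $x$ the function $t\mapsto w(t,x)$ is $C^1$ and solves $\dot w=\varphi(w)\,Tw(t)(x)-\gamma(x)w$, which I rewrite as $\dot w = G(w,x)+\varphi(w)\,\big(Tw(t)(x)-Tg(x)\big)$ with $G(y,x)=\varphi(y)\,Tg(x)-\gamma(x)y$, so that $G(g(x),x)=0$. Since $\varphi$ is nonincreasing on $[0,1]$ and $Tg\ge 0$, the map $G(\cdot,x)$ is dissipative around its zero: setting $D=w-g$ one checks
\[
D\,G(w,x)=Tg(x)\,[\varphi(w)-\varphi(g)]\,D-\gamma(x)D^2\le -\gamma(x)\,D^2,
\]
i.e.\ $\mathrm{sign}(D)\,G(w,x)\le -\gamma(x)\,|D|$. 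Combining this with $|\varphi(w)\,(Tw(t)(x)-Tg(x))|\le \theta(t)$ yields the Dini differential inequality $\tfrac{d^+}{dt}|D(t,x)|\le -\gamma(x)\,|D(t,x)|+\theta(t)$, valid for a.e.\ $x$.

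Integrating this inequality (comparison with the linear equation, integrating factor $e^{\gamma(x)t}$) gives, for a.e.\ $x$,
\[
|D(t,x)|\le e^{-\gamma(x)t}|D(0,x)|+\int_0^t e^{-\gamma(x)(t-s)}\theta(s)\,\rd s.
\]
Multiplying by $\gamma(x)$ and taking the supremum in $x$, the first term is controlled by $\sup_{\lambda\ge 0}\lambda e^{-\lambda t}=1/(\mathrm{e}t)\to 0$ (using $|D(0,x)|\le 1$). For the second term, given $\epsilon>0$ pick $S$ with $\theta(s)\le\epsilon$ for $s\ge S$ and split the integral: on $[S,t]$ one uses $\gamma(x)\int_S^t e^{-\gamma(x)(t-s)}\rd s\le 1$ to get a bound $\epsilon$, while on $[0,S]$ the factor $\gamma(x)e^{-\gamma(x)(t-S)}\le 1/(\mathrm{e}(t-S))$ and the boundedness of $\theta$ make this piece vanish uniformly as $t\to\infty$. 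Hence $\limsup_{t}\norm{\gamma(w(t)-g)}_\infty\le\epsilon$ for every $\epsilon$, which is~\eqref{eq:cv-unif-g>0}.

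I expect the main obstacle to be precisely the passage from a.e.\ to uniform convergence: because $\varphi$ is only (locally) Lipschitz and $\gamma$ may vanish, $\varphi(w(t))$ need not converge to $\varphi(g)$ in $L^\infty$, and a naive Duhamel--Gr\"onwall estimate produces a feedback term whose constant blows up like $1/\essinf\gamma$. The resolution is the smoothing property~\eqref{eq:T/gamma-in-Lp}, which forces $Tw(t)\to Tg$ uniformly and thereby reduces the problem to a family of scalar ODEs carrying a perturbation $\theta(t)\to 0$ that is \emph{uniform} in $x$; the dissipation rate of these ODEs is exactly $\gamma(x)$, so the dangerous $1/\gamma(x)$ factor cancels against the target weight $\gamma(x)$. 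The only routine care needed is justifying the pointwise-in-$x$ ODE and the Dini comparison, both standard for the semi-flow constructed in Section~\ref{sec:exists_max_sol}.
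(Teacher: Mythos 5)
Your proof is correct, and it rests on the same two pillars as the paper's argument: the essential convergence from Theorem~\ref{th:cv_eq_max}, upgraded to $L^p$ convergence by dominated convergence on the finite measure space, and the $L^p\to L^\infty$ smoothing of $T$ from Assumption~\ref{assum:2}, fed into a Gr\"onwall-type estimate with exponential weights. The implementation, however, is genuinely different. The paper never compares $\phi(t,h)$ with $g$ directly: it sandwiches $\phi(t+1,h)$ between the ordered trajectories issued from $h_1=\max(\phi(1,h),g)$ and $h_2=\min(\phi(1,h),g)$ (this is where Lemma~\ref{lem:sf_pos} is invoked, to guarantee that both auxiliary trajectories converge essentially to the same limit $g$), derives for their \emph{nonnegative} difference $f$ the linear inequality $f'\le Tf-\gamma f$ (the ordering is what makes the $[\varphi(\phi_1)-\varphi(\phi_2)]\,T\phi_2$ term harmless), and runs the Gr\"onwall argument at the Banach-lattice level on each level set $\{\gamma\ge m\}$, getting uniform convergence there for every $m>0$ and assembling the weighted statement at the end. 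You instead compare $w(t)$ with the constant trajectory $g$, exploiting the equilibrium identity $\varphi(g)Tg=\gamma g$ and the monotonicity of $\varphi$ to obtain a dissipativity inequality valid for the \emph{signed} difference, so no ordering, no sandwich and no appeal to Lemma~\ref{lem:sf_pos} are needed; and you run the comparison pointwise in $x$ with the exact rate $\gamma(x)$, so that the weight $\gamma$ emerges from $\sup_{\lambda\ge 0}\lambda\expp{-\lambda t}=1/(\mathrm{e}\,t)$ rather than from the family of cut-offs $m$. The price is the step you yourself flag: passing from the $L^\infty$-valued ODE to an a.e.-pointwise-in-$x$ absolutely continuous scalar ODE (and the Dini/AC comparison) requires a joint-measurability and Fubini argument, since evaluation at a point is not a functional on $L^\infty$; the paper sidesteps this entirely by staying with order inequalities and Bochner calculus, via~\eqref{eq:Bocner-norm} and~\eqref{eq:calculus}. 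This justification is standard but should be written out with the same care the paper devotes to its integral inequality; modulo that routine work, your argument is complete and in fact somewhat more direct than the paper's.
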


In particular, when $\essinf \gamma > 0$, the convergence given by Theorem~\ref{th:cv_eq_max} is uniform.
Notice we have a stronger result if furthermore  $R_0<1$ (and thus
$g=\zero$), see 
Remark~\ref{rem:R0<1}. 

\begin{proof}
  We start with a preliminary result.  Set
  $M$ the  norm of the operator  $\tt $ from $L^p$  to $L^\infty$, which
  coincide with $T$ on $L^\infty  $, see the proof of Lemma~\ref{lem:T};
  it  is  finite  by  Assumption~\ref{assum:2}.  Let $m>0$. Let  $h_1\geq  h_2$  be
  elements     of    $\Delta$,     and    for     $t\in    \R_+$     set
  $f(t) = \phi(t, h_1) - \phi(t, h_2)$. We claim that:
\begin{equation}
  \label{eq:fi<fp}
  \limsup_{t\rightarrow \infty }  \norm{f(t)\ind{\{\gamma\geq m\}} } _\infty  \leq
  \frac{M}{m}\, 
  \limsup_{t\rightarrow \infty }  \norm{f(t)}_p. 
\end{equation}
Indeed, by monotonicity of the semi-flow (see Lemma~\ref{prop:mono_sf}),
we have $\phi(t, h_1) \geq  \phi(t, h_2)$ and thus $f(t) \geq \zero$. We
also have, as $\varphi$ is decreasing  on $[0,1]$ and $T$ is positive
that for $t\in \R_+$:
\begin{align*}
  f'(t)
  &=  \varphi(\phi(t, h_1)) T\phi(t, h_1) - \varphi(\phi(t, h_2))
    T\phi(t, h_2) - \gamma f(t)\\
  &\leq  \varphi(\phi(t, h_1)) Tf(t) - \gamma f(t)\\
  &\leq  Tf(t) - \gamma f(t).    
\end{align*}
On $\{\gamma\geq  m\}$, we get for $v(t)=\expp{mt} f(t)$ that  for all $t \geq 0$:
\begin{equation}\label{proof:eq:v'_leq_Mv}
v'(t) \leq (m - \gamma)  v(t)+
T v(t) \leq \norm{\tt \, \iota  v(t)}_\infty  \leq M \norm{v(t)}_p .
\end{equation}
By~\eqref{eq:Bocner-norm}   and~\eqref{eq:calculus}   on   the   Bochner
integral,                 we                 deduce                 that
$v(t)  \leq v(0)  +  M \int_0^t  \norm{v(s)}_p\, \rd  s$ on $\{\gamma\geq  m\}$.  Since $f$  is
nonnegative, we get:
\[
  \norm{f(t)\ind{\{\gamma\geq m\}}}_\infty  \leq  \expp{-mt} \norm{h_1 -h_2}_\infty
  + M \int_0^t \expp{-m(t-s)} \norm{f(s)}_p\, \rd t
  \leq  2 \expp{-mt} 
  + M \int_0^t \expp{-m s} \norm{f(t-s)}_p\, \rd t. 
\]
This gives~\eqref{eq:fi<fp}.

\medskip

Let   $h  \in   \Delta$  and   $g$   be  the   maximal  equilibrium   of
$A=\cf(\supp(h))$.       By     Lemma~\ref{lem:sf_pos},      we     have
$\supp(\phi(1,h))   =  A$.    Let   $h_1  =   \max(\phi(1,h),  g)$   and
$h_2  =   \min(\phi(1,h),  g)$.    We  thus   have  $h_1\leq   h_2$  and
$\supp(g)=\supp(h_2)\subset \supp(h_1)=A$. By Theorem~\ref{th:cv_eq_max}
(and using  that $g$  is the  maximal equilibrium  on the  invariant set
$\cf(\supp(h_i))$      for     $i=1,      2$),      we     get      that
$\limess_{t\rightarrow  \infty }  \phi(t,  h_i)=g$ for  $i=1, 2$.   With
$f(t)  =  \phi(t,  h_1)  -  \phi(t,  h_2)$,  we  get  by  the  dominated
convergence theorem that  $\lim_{t\rightarrow \infty } \norm{f(t)}_p=0$,
and                       by~\eqref{eq:fi<fp}                       that
$\lim_{t\rightarrow   \infty   }   \norm{f(t)\ind{\{\gamma\geq
    m\}}}_\infty   =0$ for all $m>0$ and thus
$\lim_{t\rightarrow   \infty   }   \norm{ f(t)\, \gamma}_\infty   =0$ as
$\norm{f}_\infty \leq  1$.   Use   the
monotonicity        of        the         semi-flow        to        get
$\phi(t, h_1) \geq \phi(t+1, h) \geq \phi(t, h_2)$ and deduce
that~\eqref{eq:cv-unif-g>0} holds. 
\end{proof}

\section{SIS model with an external disease reservoir}

In  this section,  we  consider  the infinite-dimensional  inhomogeneous
SIS  model  with an  external  disease  reservoir, called  SIS$\kappa$
model,   presented   in  Section~\ref{sec:intro-SISk}.    The   function
$u=(u(t,x))_{t\in \R_+, x\in \Omega}$,  where $u(t,x)$ is the proportion
of  infected  population  among  the population  with  feature  $x$,  is
solution in $L^\infty $ of the ODE:
\begin{equation}\label{eq:gen_SIS_immigration}
\left\{\begin{array}{l}
u' = F_\kappa(u), \\ 
u(0) = h,
\end{array}\right.
\end{equation}
with initial condition $h\in L^\infty $ and:
\begin{equation}\label{eq:intuition_eq_immi}
F_\kappa(u)=  \varphi(u) (Tu+\kappa) - \gamma u,
\end{equation}
where    $\varphi$   is    a   continuous    function   on    $\R$   and
$\kappa\in    L^\infty    _+$. 
To   study    solutions   of~\eqref{eq:gen_SIS_immigration}    and   the
corresponding equilibria,  that is  functions $g  \in \Delta$  such that
$F_\kappa(g)    =    0$,    we    shall    use    the    formalism    of
Section~\ref{sec:equilibre} by adding  a sub-population corresponding to
the reservoir  with type $\r$.  Notice the case  $\varphi([0, 1])=\{0\}$
(which is  not possible under Assumption~\ref{assum:2})  is trivial, and
thus   we  shall   assume  there   exists  $a\in   (0,  1)$   such  that
$\varphi(a)>0$.

\medskip

We  set  $\Omega_\r=\Omega  \sqcup  \{\r\}$ (assuming  without  loss  of
generality that $\r\not\in \Omega$),  $\cg_\r = \sigma(\cg, \{\r\})$ and
$\mu_\r$ a measure  on $(\Omega_\r, \cg_\r)$ which coincides  with $\mu $
on $\Omega$  and with positive  finite weight  on $\r$.  For  a function
$f_\r$ defined on  $\Omega_\r$, we simply write $f$  for its restriction
to $\Omega$ (and  similarly, for a function $f$ defined  on $\Omega$, we
write $f_\r$ for a function  defined on $\Omega_\r$ which coincides with
$f$ on $\Omega$,  the value of $f_\r$ on $\r$  being given when needed).
We  simply write  $L^p_\r$  for  $L^p(\Omega_\r, \cg_\r,  \mu_\r)$,
where $p\in [1,  +\infty ]$.  We define the positive  operator $T_\r$ on
$L^\infty_\r$, as an  extension of $T$ on $\Omega_\r$,
by:
\[
  T_\r f_\r(x)= \ind{\Omega}(x) \, Tf(x) 
  + f_\r(\r)\,  \alpha_\r(x),
\]
with  $\alpha_\r\in (L^\infty  _\r) _+$  such that  $\alpha=\kappa/a$ on
$\Omega$,  with  $a\in  (0,  1)$, and  $\alpha_\r(\r)=b>0$. For $p \in (1,+\infty)$, we  define
similarly  the  operator  $\tp_\r$  on $L^p_\r$ based on the operator $\tp$ on $L^p$, see Lemma~\ref{lem:T}.   We  also  define  the
function  $\gamma_\r$  (which coincides  with  $\gamma$  on $\Omega$  by
definition)   such  that   $\gamma_\r(\r)=   b\varphi   (a) $ is assumed
to be positive (that is, $\varphi(a)>0$).    Let
$\Delta_\r=\{f_\r\in  (L^\infty _\r)_+\,  \colon\, 1-f_\r  \in (L^\infty
_\r)_+\}$ be the analogue of  $\Delta$ for $\Omega_\r$. It is elementary
to check the following result.
\begin{prop}[Solution to the SIS$\kappa$ model]
  \label{prop:sol-reservoir}
  Let  $(T, \gamma,  \varphi)$ satisfy  Assumption~\ref{assum:1}. Assume
  furthermore there exists  $a\in (0, 1)$ such  that $\varphi(a)>0$, and
  let       $\kappa\in       L_+^\infty       $.        A       function
  $(u(t,    x))_{t\in    \R_+,    x\in   \Omega}$    is    a    solution
  to~\eqref{eq:gen_SIS_immigration}  related  to the  SIS$\kappa$  model
  with  initial condition  $h\in \Delta$  if  and only  if the  function
  $(u_\r(t, x))_{t\in \R_+, x\in  \Omega_\r}$, where $u_\r(t, \r)=a$ for
  all $t\in \R_+$, is  a solution to~\eqref{eq:intro_gen_SIS} related to
  the  SIS  model  with  parameter  $(T_\r,  \gamma_\r,  \varphi)$  on
  $\Omega_\r$ and  with initial condition $h_\r\in  \Delta_\r$ such that
  $h_\r(\r)=a$.
\end{prop}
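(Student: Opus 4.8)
The plan is a direct verification that rests on how the data at the reservoir point $\r$ are rigged; the whole content is two short computations, after which the stated equivalence is immediate in both directions.

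First I would record that the reservoir decouples from $\Omega$ under $T_\r$: since $\r\notin\Omega$, the defining formula gives $[T_\r f_\r](\r)=f_\r(\r)\,\alpha_\r(\r)=b\,f_\r(\r)$, depending only on $f_\r(\r)$. Hence the $\r$-component of the equation $u_\r'=\varphi(u_\r)\,T_\r u_\r-\gamma_\r u_\r$ on $\Omega_\r$ is the autonomous scalar ODE
\begin{equation}
  \label{eq:r-scalar}
  \frac{\rd}{\rd t}\,u_\r(t,\r)=\big(\varphi(u_\r(t,\r))\,b-\gamma_\r(\r)\big)\,u_\r(t,\r),
\end{equation}
and the choice $\gamma_\r(\r)=b\,\varphi(a)$ is made exactly so that the constant $a$ is an equilibrium of~\eqref{eq:r-scalar}. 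Thus, for an initial value $h_\r(\r)=a$, holding $u_\r(t,\r)=a$ for all $t$ is consistent with the dynamics at $\r$.

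Second, I would inject this constant value into the $\Omega$-component. For $x\in\Omega$ one has $\ind{\Omega}(x)=1$ and, since $\alpha_\r=\kappa/a$ on $\Omega$, $[T_\r u_\r(t)](x)=[Tu(t)](x)+a\,\alpha_\r(x)=[Tu(t)](x)+\kappa(x)$, writing $u$ for the restriction of $u_\r$ to $\Omega$. Using $\gamma_\r=\gamma$ on $\Omega$, the $\Omega$-component of $\varphi(u_\r)\,T_\r u_\r-\gamma_\r u_\r$ therefore equals $\varphi(u)\big(Tu+\kappa\big)-\gamma u=F_\kappa(u)$. This is a pointwise identity in $(t,x)$, valid independently of whether $u$ solves anything.

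With these two computations the equivalence follows by splitting the extended equation into its $\Omega$- and $\r$-parts. In the direct implication, given a solution $u$ of~\eqref{eq:gen_SIS_immigration} with $h\in\Delta$, I would set $u_\r=u$ on $\Omega$ and $u_\r(t,\r)\equiv a$: the $\r$-equation holds because $a$ is an equilibrium of~\eqref{eq:r-scalar}, the $\Omega$-equation holds by the second computation, and $h_\r\in\Delta_\r$ with $h_\r(\r)=a\in(0,1)$. Conversely, a solution $u_\r$ of the $(T_\r,\gamma_\r,\varphi)$-SIS model with $u_\r(t,\r)=a$ restricts, again by the second computation, to a solution $u$ of~\eqref{eq:gen_SIS_immigration} with initial condition the restriction $h$ of $h_\r$, which lies in $\Delta$. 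There is no genuine obstacle here; the only point deserving a word is the constancy $u_\r(t,\r)=a$, which one may either build into the correspondence (as the statement does) or deduce from uniqueness for the scalar ODE~\eqref{eq:r-scalar}, whose right-hand side is locally Lipschitz by Assumption~\ref{assum:1}, forcing the trajectory started at the equilibrium $a$ to stay at $a$.
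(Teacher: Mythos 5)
Your verification is correct and is precisely the elementary check the paper has in mind (the paper omits the proof, calling it ``elementary to check''): the choice $\gamma_\r(\r)=b\varphi(a)$ makes the constant $a$ an equilibrium of the decoupled scalar dynamics at $\r$, and plugging $u_\r(t,\r)=a$ into the $\Omega$-component turns $\varphi(u_\r)T_\r u_\r-\gamma_\r u_\r$ into $F_\kappa(u)$ since $a\,\alpha_\r=\kappa$ on $\Omega$. Your closing remark that the constancy at $\r$ is built into the statement (or alternatively follows from uniqueness for the locally Lipschitz scalar ODE) is also accurate.
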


 We shall consider the supercritical atoms out of the individuals
infected by the reservoir:
\[
  \atomsc_\r=\{A \in \atomsc\, \colon\, A\cap
  \cf(\supp(\kappa))=\emptyset \text{ a.e.}\}.
\]
Based on  Theorems~\ref{th:bij_antichains_eq} and~\ref{th:cv_eq_max} for
the $(T_\r, \gamma_\r, \varphi)$ SIS  model we can give a representation
of the  equilibria of SIS$\kappa$  model, that  is, of the  solutions to
$F_\kappa(g)=0$  in  $\Delta$,  prove that the equilibria are
characterized by their support, and explicit  their  attraction  domain.   For  a
function  $h\in  L^\infty  _+$,  we   shall  denote  $\cc_{\r,  h}$  the
  antichain    given   by   the   maximal    elements   of
$\{B   \in   \atomsc_\r\,   \colon\,   B   \subset   \cf(\supp(h))
\text{   a.e.}\}$. Notice that Assumption~\ref{assum:2} implies that
$\varphi$ is positive on $[0, 1)$. 

\begin{cor}[Equilibria of the SIS$\kappa$ model]
  \label{cor:eq-reservoir}
  Let $(T,  \gamma, \varphi)$  satisfy Assumption~\ref{assum:2}
  and $\kappa\in L_+^\infty $.
  The set of equilibria and the set of antichains in $\atomsc_\r$ are in
  bijection through the equivalent relations:
\[
  \supp(g) = \cf(\cc)\cup \cf(\supp(\kappa)) \quad\text{a.e.}
  \quad  \Longleftrightarrow \quad
  \cc_{\r,g} =\cc,
\]
where $\cc\subset \atomsc_\r$ is an antichain and $g\in \Delta$ an
equilibrium, that is, $F_\kappa(g)=0$. 

Furthermore,   the  semi-flow   $(\phi(t,   h))_{t\in  \R_+}$   solution
of~\eqref{eq:gen_SIS_immigration} with  initial condition  $h\in \Delta$
is well  defined and it converges  a.e.\ to a limit,  say $g\in \Delta$;
and $g$ is an equilibrium and more precisely:
  \[
\limess_{t\rightarrow \infty } 
\phi(t, h) = g
\quad\text{and}\quad
\cc_{\r,g}=\cc_{\r, h}
.
\]
\end{cor}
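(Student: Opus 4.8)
The strategy is to transfer Theorems~\ref{th:bij_antichains_eq} and~\ref{th:cv_eq_max} from the enlarged SIS model $(T_\r, \gamma_\r, \varphi)$ on $\Omega_\r$ to the SIS$\kappa$ model via Proposition~\ref{prop:sol-reservoir}. First I would record the structure of $T_\r$. One checks elementarily, using $\mu_\r(\{\r\}) \in (0,\infty)$, $\alpha_\r \in (L^\infty_\r)_+$ and $\gamma_\r(\r) = b\varphi(a) > 0$, that $(T_\r, \gamma_\r, \varphi)$ satisfies Assumption~\ref{assum:2}. Since nothing from $\Omega$ is transmitted to $\r$ (indeed $(T_\r f_\r)(\r) = b\, f_\r(\r)$ depends only on $f_\r(\r)$), a set $A \subset \Omega$ is $T_\r$-invariant iff it is $T$-invariant; consequently the atoms of $T_\r$ are exactly the $T$-atoms inside $\Omega$ together with the singleton $\{\r\}$, and the futures of subsets of $\Omega$ agree for $T$ and $T_\r$. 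The reservoir atom has $R_0(\{\r\}) = b/\gamma_\r(\r) = 1/\varphi(a) > 1$ (as $\varphi$ is strictly decreasing with $\varphi(0)=1$ and $a > 0$), so $\{\r\}$ is supercritical; moreover $\cf_\r(\{\r\}) = \{\r\} \cup \cf(\supp(\kappa))$ since $\r$ transmits precisely to $\supp(\alpha_\r|_\Omega) = \supp(\kappa)$. Finally $\{\r\}$ is $\preccurlyeq$-maximal (it is a source), and an $\Omega$-atom $B$ is comparable to $\{\r\}$ iff $B \preccurlyeq \{\r\}$ iff $B \subset \cf(\supp(\kappa))$; hence $\atomsc_\r$ is precisely the set of supercritical $\Omega$-atoms $\preccurlyeq$-incomparable to $\{\r\}$.

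Next I would set up the correspondence between equilibria. By Proposition~\ref{prop:sol-reservoir}, $g$ is an equilibrium of the SIS$\kappa$ model iff $g_\r$ (with $g_\r(\r) = a$) is an equilibrium of the $(T_\r, \gamma_\r, \varphi)$ model. Evaluating the equilibrium equation at $\r$ gives $b\, g_\r(\r)\big(\varphi(g_\r(\r)) - \varphi(a)\big) = 0$, so any equilibrium of the enlarged model has $g_\r(\r) \in \{0, a\}$, with $g_\r(\r) = a$ exactly when $\{\r\} \subset \supp(g_\r)$. Because $\{\r\}$ is supercritical and $\preccurlyeq$-maximal, $\{\r\} \subset \supp(g_\r)$ is in turn equivalent to $\{\r\}$ belonging to the supercritical antichain $\cc_{g_\r}$ associated to $g_\r$ by Theorem~\ref{th:bij_antichains_eq}. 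Thus SIS$\kappa$ equilibria are in bijection with the supercritical antichains of $\Omega_\r$ containing $\{\r\}$, that is (removing the maximal element $\{\r\}$) with the antichains $\cc \subset \atomsc_\r$, via $\cc_{g_\r} = \cc \cup \{\r\}$. The key bookkeeping step, which I expect to be the main obstacle, is to verify that removing $\{\r\}$ gives exactly $\cc_{\r,g}$: using that no $B \in \atomsc_\r$ can be $\preccurlyeq$ an $\Omega$-atom lying below $\{\r\}$ (transitivity would force $B \preccurlyeq \{\r\}$), maximality among $\atomsc_\r$ coincides with maximality among all supercritical $\Omega$-atoms of $\supp(g)$, so $\cc_{g_\r} \setminus \{\r\} = \cc_{\r,g}$.

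With this dictionary the support formula follows: $\supp(g_\r) = \cf_\r(\cc_{g_\r}) = \cf(\cc) \cup \cf_\r(\{\r\}) = \cf(\cc) \cup \{\r\} \cup \cf(\supp(\kappa))$, and restricting to $\Omega$ gives $\supp(g) = \cf(\cc) \cup \cf(\supp(\kappa))$; the equivalence with $\cc_{\r,g} = \cc$ is then read off from $\cc_{g_\r} = \cc_{\r,g} \cup \{\r\}$ together with Lemma~\ref{lem:antichain_fut_eq} applied in $\Omega_\r$. Bijectivity is immediate: injectivity from that of $g_\r \mapsto \cc_{g_\r}$, and surjectivity by taking, for a given $\cc \subset \atomsc_\r$, the equilibrium $g_\r$ associated to the supercritical antichain $\cc \cup \{\r\}$ (which is indeed an antichain since the elements of $\cc$ avoid $\cf(\supp(\kappa))$), whose value at $\r$ is then forced to be $a$.

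For the convergence statement, fix $h \in \Delta$ and let $h_\r \in \Delta_\r$ with $h_\r(\r) = a$. By Proposition~\ref{prop:sol-reservoir} the SIS$\kappa$ semi-flow equals the restriction to $\Omega$ of the enlarged semi-flow $\phi_\r(\cdot, h_\r)$. Theorem~\ref{th:cv_eq_max} gives $\limess_{t\to\infty} \phi_\r(t, h_\r) = g_\r$, the maximal equilibrium of $\cf_\r(\supp(h_\r))$; since $h_\r(\r) = a$ we have $\supp(h_\r) = \supp(h) \cup \{\r\}$ and hence $\cf_\r(\supp(h_\r)) = \cf(\supp(h)) \cup \{\r\} \cup \cf(\supp(\kappa))$. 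The associated antichain $\cc_{g_\r}$ is the set of maximal supercritical atoms contained in this invariant set; it contains $\{\r\}$, forcing $g_\r(\r) = a$ so that $g = g_\r|_\Omega$ is an SIS$\kappa$ equilibrium, and its $\Omega$-part equals the maximal elements of $\{B \in \atomsc_\r : B \subset \cf(\supp(h))\} = \cc_{\r,h}$ by the same incomparability argument. Thus $\cc_{\r,g} = \cc_{g_\r}\setminus\{\r\} = \cc_{\r,h}$, and restricting the essential convergence to $\Omega$ yields $\limess_{t\to\infty}\phi(t,h) = g$, which completes the proof.
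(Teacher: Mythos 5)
Your proof is correct and follows essentially the same route as the paper's: transfer to the enlarged SIS model $(T_\r,\gamma_\r,\varphi)$ on $\Omega_\r$ via Proposition~\ref{prop:sol-reservoir}, identify the atoms, futures and supercritical structure of $T_\r$ (in particular that $\{\r\}$ is a supercritical atom, maximal for $\preccurlyeq$, with $\cf_\r(\{\r\})=\{\r\}\cup\cf(\supp(\kappa))$, and that antichains in $\atomsc_\r$ correspond to supercritical antichains of $\Omega_\r$ containing $\{\r\}$), and then apply Theorems~\ref{th:bij_antichains_eq} and~\ref{th:cv_eq_max}. The only difference is one of detail: you make explicit the bookkeeping that the paper compresses into its final sentence, namely the evaluation at $\r$ forcing $g_\r(\r)\in\{0,a\}$, and the transitivity/incomparability argument giving $\cc_{g_\r}=\cc_{\r,g}\cup\{\{\r\}\}$.
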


\begin{rem}
  \label{rem:reservoir}
We deduce the following  properties under the hypothesis of
Corollary~\ref{cor:eq-reservoir}. 
  \begin{enumerate}
   \item Notice that   $\atomsc_\r$ is empty if and only if 
      there exists a unique equilibrium $g\in \Delta$ for the
      SIS$\kappa$ model. In this case,
     we have $\supp(g) = \cf(\supp(\kappa))$ a.e.\
     and  $\limess_{t\rightarrow \infty } 
     \phi(t, h) = g$ for all $h\in \Delta$.
   \item If $\essinf \kappa>0$, then $\atomsc_\r$ is empty.
     \item If $g$ is an equilibrium  for the
      SIS$\kappa$ model, then $g$ is positive on $\cf(\supp(\kappa))$
      and thus on $\supp(\kappa)$.  
  \end{enumerate}
\end{rem}

\begin{proof}[Proof of Corollary~\ref{cor:eq-reservoir}]
If $A\in  \cg$ is  invariant for  $\tp$, then  $A$ seen  as an  element of
$\cg_\r$  is  also  invariant  for $\tp_\r$.  Furthermore,  the reservoir
$\{\r\}$ is an atom of $\mu_\r$ and, as $\Omega$ is $\tp_\r$-invariant, we
get that $\{\r\}$ is a  $\tp_\r$-atom.  We deduce that
a  set  $B\in   \cg_\r$  is  admissible  for  $\tp_\r$  if   and  only  if
$B \cap  \Omega$ is  admissible for  $\tp$.
In particular a set $A$ is an atom of $\tp_\r$ if and only if either
$A=\{\r\}$ or $A \subset \Omega$ and $A$ is an atom of $\tp$.
We  denote by  $\cf_\r(A)$ the
future of a set $A\subset \Omega_\r$ with respect to $\tp_\r$. Notice that
$\cf_\r(A)=\cf(A)$ for any $ A\subset \Omega$ and that the future of the
reservoir is:
\begin{equation}
   \label{eq:Fr(r)}
  \cf_\r(\{\r\})=\{\r\} \cup \cf(\supp(\kappa)).
\end{equation}

Recall the  basic reproduction number $R_0(A)$
of a measurable set for the SIS model $(T, \gamma, \varphi)$ 
given in Section~\ref{subsec:max_eq}. We simply write $R_\r(A)$ when
considering the  basic reproduction number of $A \in \cg_\r$ for the 
 SIS model $(T_\r, \gamma_\r, \varphi)$. Since $(T_\r)_A=T_A$ for
 $A\subset \Omega$ measurable, we deduce that 
$R_\r(A)=R_0(A)$ for any $ A\subset \Omega$ measurable.
We also have:
\[
  R_\r(\{\r\})=\frac{\alpha_\r(\r)}{\gamma_\r(\r)}=\frac{1}{\varphi(a)}\cdot
\]
Note that under  Assumption~\ref{assum:2}, we have $\varphi((0, 1))=(0,
1)$ and thus the atom $\{\r\}$ is super-critical for the  $(T_\r,
\gamma_\r, \varphi)$ SIS model.

\medskip

We  deduce from  Proposition~\ref{prop:sol-reservoir}.  that  a function
$g\in \Delta$ is an equilibrium for the SIS$\kappa$ model if and only if
$g_\r\in \Delta_\r$,  such that $g_\r(\r)>0$  is an equilibrium  for the
$(T_\r,  \gamma_\r, \varphi)$  SIS  model on  $\Omega_\r$.  Notice  that
necessarily $g_\r(\r)=a$.
The equilibria of the  $(T_\r,
\gamma_\r, \varphi)$ SIS model 
whose support contains the reservoir $\{\r\}$ are according to
Theorems~\ref{th:bij_antichains_eq} in bijection will all the
supercritical antichains containing the atom $\{\r\}$. Those
supercritical  antichains
are exactly the antichains of $\atomsc_\r$ with the atom $\{\r\}$ added
to them.
Then, use~\eqref{eq:Fr(r)} and Theorems~\ref{th:bij_antichains_eq}
and~\ref{th:cv_eq_max} to conclude.
\end{proof}

\bibliographystyle{abbrv}
\bibliography{biblio}

\begin{thebibliography}{10}

\bibitem{abramovich02}
Y.~A. Abramovich and C.~D. Aliprantis.
\newblock {\em An invitation to operator theory}, volume~50 of {\em Graduate
  Studies in Mathematics}.
\newblock American Mathematical Society, Providence, RI, 2002.

\bibitem{aliprantis06}
C.~D. Aliprantis and O.~Burkinshaw.
\newblock {\em Positive operators}.
\newblock Springer, Dordrecht, 2006.
\newblock Reprint of the 1985 original.

\bibitem{anselone}
P.~M. Anselone.
\newblock {\em Collectively compact operator approximation theory and
  applications to integral equations}.
\newblock Prentice-Hall, 1971.

\bibitem{arendt2013vector}
W.~Arendt, C.~J.~K. Batty, M.~Hieber, and F.~Neubrander.
\newblock {\em Vector-valued {L}aplace transforms and {C}auchy problems},
  volume~96 of {\em Monographs in Mathematics}.
\newblock Birkh\"{a}user Verlag, Basel, 2001.

\bibitem{arino09}
J.~Arino.
\newblock Diseases in metapopulations.
\newblock In {\em Modeling and dynamics of infectious diseases}, volume~11 of
  {\em Ser. Contemp. Appl. Math. CAM}, pages 64--122. Higher Ed. Press,
  Beijing, 2009.

\bibitem{ball99}
F.~Ball.
\newblock Stochastic and deterministic models for {SIS} epidemics among a
  population partitioned into households.
\newblock volume 156, pages 41--67. 1999.
\newblock Epidemiology, cellular automata, and evolution (Sofia, 1997).

\bibitem{bowman05}
C.~Bowman, A.~Gumel, P.~Van~den Driessche, J.~Wu, and H.~Zhu.
\newblock A mathematical model for assessing control strategies against west
  nile virus.
\newblock {\em Bulletin of mathematical biology}, 67(5):1107--1133, 2005.

\bibitem{brauer04}
F.~Brauer.
\newblock Backward bifurcations in simple vaccination models.
\newblock {\em J. Math. Anal. Appl.}, 298(2):418--431, 2004.

\bibitem{brauer19}
F.~Brauer, C.~Castillo-Chavez, and Z.~Feng.
\newblock {\em Mathematical models in epidemiology}, volume~69 of {\em Texts in
  Applied Mathematics}.
\newblock Springer, New York, 2019.
\newblock With a foreword by Simon Levin.

\bibitem{brauer01}
F.~Brauer and P.~van~den Driessche.
\newblock Models for transmission of disease with immigration of infectives.
\newblock {\em Math. Biosci.}, 171(2):143--154, 2001.

\bibitem{britton07}
T.~Britton, M.~K. Nordvik, and F.~Liljeros.
\newblock Modelling sexually transmitted infections: The effect of partnership
  activity and number of partners on.
\newblock {\em Theoretical Population Biology}, 72(3):389–399, Nov. 2007.

\bibitem{cantrell17}
R.~S. Cantrell, C.~Cosner, Y.~Lou, and S.~J. Schreiber.
\newblock Evolution of natal dispersal in spatially heterogenous environments.
\newblock {\em Math. Biosci.}, 283:136--144, 2017.

\bibitem{capasso78}
V.~Capasso and G.~Serio.
\newblock A generalization of the {K}ermack-{M}c{K}endrick deterministic
  epidemic model.
\newblock {\em Math. Biosci.}, 42(1-2):43--61, 1978.

\bibitem{davies07}
E.~B. Davies.
\newblock {\em Linear operators and their spectra}, volume 106 of {\em
  Cambridge Studies in Advanced Mathematics}.
\newblock Cambridge University Press, Cambridge, 2007.

\bibitem{deimling85}
K.~Deimling.
\newblock {\em Nonlinear functional analysis}.
\newblock Springer-Verlag, Berlin, 1985.

\bibitem{ddz21targeted}
J.-F. Delmas, D.~Dronnier, and P.-A. Zitt.
\newblock Targeted vaccination strategies for an infinite-dimensional {SIS}
  model.
\newblock {\em arXiv:2103.10330}, 2021.

\bibitem{ddz-sis}
J.-F. Delmas, D.~Dronnier, and P.-A. Zitt.
\newblock An infinite-dimensional metapopulation {SIS} model.
\newblock {\em J. Differential Equations}, 313:1--53, 2022.

\bibitem{ddz_vacc_crit}
J.-F. Delmas, D.~Dronnier, and P.-A. Zitt.
\newblock Vaccinating according to the maximal endemic equilibrium achieves
  herd immunity.
\newblock {\em arXiv:2211.15463}, 2022.

\bibitem{dlz}
J.-F. Delmas, K.~Lefki, and P.-A. Zitt.
\newblock {Atoms and associated spectral properties for positive operators on
  $L^p$}.
\newblock {\em arXiv:2310.15616}, 2023.

\bibitem{diekmann90}
O.~Diekmann, J.~A.~P. Heesterbeek, and J.~A.~J. Metz.
\newblock On the definition and the computation of the basic reproduction ratio
  {$R_0$} in models for infectious diseases in heterogeneous populations.
\newblock {\em J. Math. Biol.}, 28(4):365--382, 1990.

\bibitem{dunford88}
N.~Dunford and J.~T. Schwartz.
\newblock {\em Linear operators. {P}art {I}}.
\newblock Wiley Classics Library. John Wiley \& Sons, Inc., New York, 1988.
\newblock General theory, With the assistance of William G. Bade and Robert G.
  Bartle, Reprint of the 1958 original, A Wiley-Interscience Publication.

\bibitem{forsternagy89}
K.-H. F\"{o}rster and B.~Nagy.
\newblock On the {C}ollatz-{W}ielandt numbers and the local spectral radius of
  a nonnegative operator.
\newblock In {\em Proceedings of the {F}ourth {H}aifa {M}atrix {T}heory
  {C}onference ({H}aifa, 1988)}, volume 120, pages 193--205, 1989.

\bibitem{gerba09}
C.~P. Gerba.
\newblock {\em Environmentally Transmitted Pathogens}, page 445–484.
\newblock Elsevier, 2009.

\bibitem{gumel12}
A.~B. Gumel.
\newblock Causes of backward bifurcations in some epidemiological models.
\newblock {\em J. Math. Anal. Appl.}, 395(1):355--365, 2012.

\bibitem{guo08}
H.~Guo, M.~Y. Li, and Z.~Shuai.
\newblock A graph-theoretic approach to the method of global {L}yapunov
  functions.
\newblock {\em Proc. Amer. Math. Soc.}, 136(8):2793--2802, 2008.

\bibitem{heesterbeek05}
H.~Heesterbeek.
\newblock The law of mass-action in epidemiology: a historical perspective.
\newblock {\em Ecological paradigms lost: routes of theory change}, pages
  81--104, 2005.

\bibitem{heesterbeek96}
J.~A.~P. Heesterbeek and K.~Dietz.
\newblock The concept of {$R_0$} in epidemic theory.
\newblock {\em Statist. Neerlandica}, 50(1):89--110, 1996.

\bibitem{kermack32}
W.~O. Kermack and A.~G. McKendrick.
\newblock Contributions to the mathematical theory of epidemics. {II}. ---the
  problem of endemicity.
\newblock {\em Proc. R. Soc. Lond. A}, 138(834):55–83, Oct. 1932.

\bibitem{knipl14}
D.~H. Knipl and G.~R\"{o}st.
\newblock Large number of endemic equilibria for disease transmission models in
  patchy environment.
\newblock {\em Math. Biosci.}, 258:201--222, 2014.

\bibitem{kolokolnikov21}
T.~Kolokolnikov and D.~Iron.
\newblock Law of mass action and saturation in {SIR} model with application to
  coronavirus modelling.
\newblock {\em Infectious Disease Modelling}, 6:91--97, 2021.

\bibitem{konig86}
H.~K\"{o}nig.
\newblock {\em Eigenvalue distribution of compact operators}, volume~16 of {\em
  Operator Theory: Advances and Applications}.
\newblock Birkh\"{a}user Verlag, Basel, 1986.

\bibitem{lajmanovich76}
A.~Lajmanovich and J.~A. Yorke.
\newblock A deterministic model for gonorrhea in a nonhomogeneous population.
\newblock {\em Math. Biosci.}, 28(3-4):221--236, 1976.

\bibitem{lang95}
S.~Lang.
\newblock {\em Differential and {R}iemannian manifolds}, volume 160 of {\em
  Graduate Texts in Mathematics}.
\newblock Springer-Verlag, New York, third edition, 1995.

\bibitem{lanzas19}
C.~Lanzas, K.~Davies, S.~Erwin, and D.~Dawson.
\newblock On modelling environmentally transmitted pathogens.
\newblock {\em Interface Focus}, 10(1):20190056, Dec. 2019.

\bibitem{liu86}
W.~M. Liu, S.~A. Levin, and Y.~Iwasa.
\newblock Influence of nonlinear incidence rates upon the behavior of {SIRS}
  epidemiological models.
\newblock {\em J. Math. Biol.}, 23(2):187--204, 1986.

\bibitem{magal18}
P.~Magal, O.~Seydi, and G.~Webb.
\newblock Final size of a multi-group {SIR} epidemic model: irreducible and
  non-irreducible modes of transmission.
\newblock {\em Math. Biosci.}, 301:59--67, 2018.

\bibitem{marek70}
I.~Marek.
\newblock Frobenius theory of positive operators: {C}omparison theorems and
  applications.
\newblock {\em SIAM J. Appl. Math.}, 19:607--628, 1970.

\bibitem{mccallum01}
H.~McCallum, N.~Barlow, and J.~Hone.
\newblock How should pathogen transmission be modelled?
\newblock {\em Trends in ecology \& evolution}, 16(6):295--300, 2001.

\bibitem{mccormack07}
R.~K. McCormack and L.~J.~S. Allen.
\newblock Disease emergence in multi-host epidemic models.
\newblock {\em Mathematical Medicine and Biology: A Journal of the IMA},
  24(1):17–34, Mar. 2007.

\bibitem{mouquet02}
N.~Mouquet and M.~Loreau.
\newblock Coexistence in metacommunities: the regional similarity hypothesis.
\newblock {\em Am. Nat.}, 159(4):420–426, 2002.

\bibitem{newburgh1951varspectra}
J.~D. Newburgh.
\newblock The variation of spectra.
\newblock {\em Duke Math. J.}, 18:165--176, 1951.

\bibitem{reservoir}
G.~T. Nieddu, E.~Forgoston, and L.~Billings.
\newblock Characterizing outbreak vulnerability in a stochastic {SIS} model
  with an external disease reservoir.
\newblock {\em Journal of The Royal Society Interface}, 19(192), July 2022.

\bibitem{popkin20}
B.~M. Popkin, S.~Du, W.~D. Green, M.~A. Beck, T.~Algaith, C.~H. Herbst, R.~F.
  Alsukait, M.~Alluhidan, N.~Alazemi, and M.~Shekar.
\newblock Individuals with obesity and covid‐19: A global perspective on the
  epidemiology and biological relationships.
\newblock {\em Obesity Reviews}, 21(11), Aug. 2020.

\bibitem{rose21}
C.~Rose, A.~J. Medford, C.~F. Goldsmith, T.~Vegge, J.~S. Weitz, and A.~A.
  Peterson.
\newblock Heterogeneity in susceptibility dictates the order of epidemic
  models.
\newblock {\em J. Theoret. Biol.}, 528:Paper No. 110839, 8, 2021.

\bibitem{ross1}
R.~Ross.
\newblock An application of the theory of probabilities to the study of a
  priori pathometry. part i.
\newblock {\em Proceedings of the Royal Society of London. Series A, Containing
  Papers of a Mathematical and Physical Character}, 92(638):204--230, 1916.

\bibitem{royce20}
K.~Royce and F.~Fu.
\newblock Mathematically modeling spillovers of an emerging infectious zoonosis
  with an intermediate host.
\newblock {\em PLOS ONE}, 15(8):e0237780, Aug. 2020.

\bibitem{rudin87}
W.~Rudin.
\newblock {\em Real and complex analysis}.
\newblock McGraw-Hill Book Co., New York, third edition, 1987.

\bibitem{schaefer_74}
H.~H. Schaefer.
\newblock {\em Banach lattices and positive operators}.
\newblock Die Grundlehren der mathematischen Wissenschaften, Band 215.
  Springer-Verlag, New York-Heidelberg, 1974.

\bibitem{schep}
A.~R. Schep.
\newblock Kernel operators.
\newblock {\em Nederl. Akad. Wetensch. Indag. Math.}, 41(1):39--53, 1979.

\bibitem{schwartz_61}
J.~Schwartz.
\newblock Compact positive mappings in {L}ebesgue spaces.
\newblock {\em Comm. Pure Appl. Math.}, 14:693--705, 1961.

\bibitem{seifert22}
C.~Seifert, S.~Trostorff, and M.~Waurick.
\newblock {\em Evolutionary equations---{P}icard's theorem for partial
  differential equations, and applications}, volume 287 of {\em Operator
  Theory: Advances and Applications}.
\newblock Birkh\"{a}user/Springer, Cham, [2022] \copyright 2022.

\bibitem{thieme09}
H.~R. Thieme.
\newblock Spectral bound and reproduction number for infinite-dimensional
  population structure and time heterogeneity.
\newblock {\em SIAM J. Appl. Math.}, 70(1):188--211, 2009.

\bibitem{thieme11}
H.~R. Thieme.
\newblock Global stability of the endemic equilibrium in infinite dimension:
  {L}yapunov functions and positive operators.
\newblock {\em J. Differential Equations}, 250(9):3772--3801, 2011.

\bibitem{trauer19}
J.~M. Trauer, P.~J. Dodd, M.~G.~M. Gomes, G.~B. Gomez, R.~M. Houben, E.~S.
  McBryde, Y.~A. Melsew, N.~A. Menzies, N.~Arinaminpathy, S.~Shrestha, et~al.
\newblock The importance of heterogeneity to the epidemiology of tuberculosis.
\newblock {\em Clinical infectious diseases}, 69(1):159--166, 2019.

\bibitem{vanderwaal16}
K.~L. VanderWaal and V.~O. Ezenwa.
\newblock Heterogeneity in pathogen transmission: mechanisms and methodology.
\newblock {\em Functional Ecology}, 30(10):1606–1622, Mar. 2016.

\bibitem{waters16}
E.~K. Waters, A.~J. Hamilton, H.~S. Sidhu, L.~A. Sidhu, and M.~Dunbar.
\newblock Zoonotic transmission of waterborne disease: a mathematical model.
\newblock {\em Bull. Math. Biol.}, 78(1):169--183, 2016.

\bibitem{wilson45}
E.~B. Wilson and J.~Worcester.
\newblock The law of mass action in epidemiology.
\newblock {\em Proceedings of the National Academy of Sciences}, 31(1):24--34,
  1945.

\bibitem{yorke73}
J.~A. Yorke and W.~P. London.
\newblock Recurrent outbreaks of measles, chickenpox and mumps: Ii. systematic
  differences in contact rates and stochastic effects.
\newblock {\em American journal of epidemiology}, 98(6):469--482, 1973.

\end{thebibliography}

\section{A short reminder on integration, derivation and ODE in  Banach
  space}
\label{sec:bochner_int}

This section is devoted to the  definition and properties of the Bochner
integral, the  differentiation and differential equations in
Banach spaces. We consider  $(X, \norm{\cdot})$ a real Banach space.

\subsection{Integration in Banach spaces}

We give a  short summary on the Bochner integral,
and  refer to  \cite{arendt2013vector} and  \cite{seifert22} for  a more
detailed presentation.
We  consider the  Borel  $\sigma$-field  on $\R$  and  write
$\nu(\rd  t)=\rd  t$  for  the Lebesgue  measure. Let $I$  be an interval of  $\R$.  A function
$f\, \colon\, I \mapsto X$   is \emph{simple}  if $f= \sum_{k=1}^n  a_k \ind{A_k}$
  where $n\in \N$,  the $a_k$'s belong to $X$ and  the $A_k$'s are Borel
  subsets of  $I$ with finite  Lebesgue measure.  We define  its Bochner
  integral as:
\[
  \int _I f\, \rd \nu
  = \sum_{k=1}^n a_k \, \nu(A_k).
\]
Notice  the  integral  belongs  to  $X$  and  does  not  depend  on  the
representation    of   the    simple    function    $f$.   A    function
$f\,  \colon\,  I  \mapsto  X$  is  Bochner  measurable  (simply  called
measurable  in~\cite{arendt2013vector}) if  there exists  a sequence  of
simple functions $(f_n)_{n\in  \N}$, with $f_n\, \colon\,  I \mapsto X$,
such that $\nu$-a.e.\ $\lim_{n\rightarrow\infty } f_n =f$  (that is,
$\lim_{n  \rightarrow \infty  } \norm{f(t)  -f_n(t)}=0$ $\rd  t$-a.e.\ on
$  I$); it  is  furthermore  Bochner integrable  if  one  can find  such
approximating      sequence      $(f_n)_{n\in     \N}$      so      that
$\lim_{n  \rightarrow \infty  }  \int  _I \norm{f(t)  -  f_n(t)} \,  \rd
t=0$. In this case the \emph{Bochner integral} of $f$ is defined as:
\[
  \int _I f\, \rd \nu
  = \lim_{n\rightarrow \infty }  \int _I f_n\, \rd \nu
,
 \]
 where  the limit  holds in  the  Banach space.  The Bochner  integrable
 $ \int _I  f\, \rd \nu$ does not depend  on the approximating sequence
 $(f_n)_{n\in \N}$; we shall also denote it by $\int_I f(t)\, \rd t$. 
Thanks to~\cite[Corollary 1.1.2]{arendt2013vector} $X$-valued continuous
function are Bochner measurable and    a.e.\ limits of 
 Bochner measurable function  are Bochner measurable. 
 According to~\cite[Corollary
 1.1.2]{arendt2013vector}, a function $f\, \colon\, I \mapsto X$ is Bochner integrable if and only
 if it is Bochner measurable and $\norm{f} \, \colon\, I \mapsto \R_+$
 is integrable; in this case we have:
\begin{equation}
   \label{eq:Bocner-norm}
   \norm{\int _I f\, \rd \nu}\leq  
   \int _I \norm{f}\, \rd \nu.
 \end{equation} 
When $\bar I=[a,b]$ with $-\infty \leq a<b\leq +\infty$  and $f$ is
Bochner integrable on $I$, we simply 
write the Bochner integral as:
\[
  \int _I f \, \rd \nu=\int_a^b f \rd \nu.
\]
 The Bochner integral enjoy many properties as the usual Lebesgue
 integral, as such we shall use the dominated convergence, see
 \cite[Theorem 1.1.8]{arendt2013vector}, which we recall.

\begin{theo}[Dominated convergence theorem]\label{th:TCD}
  Let $I$ be a non-empty interval of  $\R$.  Let $(f_n)_{n \in \N}$ be a
  sequence  of   Bochner-integrable  functions  defined  on   $I$  which
  converges   $\nu$-a.e.\ to $f\,  \colon\, I \rightarrow  X$. Assume
  there        exists        a       Lebesgue-integrable        function
  $g \, \colon\, I \rightarrow \R_+$ such that $\norm{f_n} \leq g$ $\nu$-a.e.\
  for all $n \in \N$.  Then the function $f$ is Bochner-integrable and:
\[
  \int_I f \, \rd \nu = \lim_{n\rightarrow \infty } \int_I f_n\, \rd \nu
  \quad\text{in $X$.}
\]
\end{theo}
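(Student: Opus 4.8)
The plan is to reduce the vector-valued statement to the classical scalar dominated convergence theorem for real-valued functions, leaning on the two tools already recorded in this section: the norm estimate~\eqref{eq:Bocner-norm} and the Bochner integrability criterion of \cite[Corollary~1.1.2]{arendt2013vector}, namely that a function is Bochner integrable if and only if it is Bochner measurable and its norm is Lebesgue integrable.

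First I would establish that the limit $f$ is itself Bochner integrable. Each $f_n$ is Bochner integrable, hence Bochner measurable, and since $f$ is the $\nu$-a.e.\ limit of the $f_n$, it is Bochner measurable as well, because a.e.\ limits of Bochner measurable functions are Bochner measurable (again \cite[Corollary~1.1.2]{arendt2013vector}). The real-valued map $\norm{f}$ is then measurable, and passing to the limit in $\norm{f_n}\le g$ using the continuity of the norm gives $\norm{f}\le g$ $\nu$-a.e.; as $g$ is Lebesgue integrable, so is $\norm{f}$, and the integrability criterion yields that $f$ is Bochner integrable. In particular $\int_I f\,\rd\nu$ is well defined.

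For the convergence, I would apply~\eqref{eq:Bocner-norm} to the Bochner integrable functions $f-f_n$ to obtain
\[
\norm{\int_I f\,\rd\nu - \int_I f_n\,\rd\nu} = \norm{\int_I (f-f_n)\,\rd\nu} \le \int_I \norm{f-f_n}\,\rd\nu .
\]
The integrand $\norm{f-f_n}$ is a nonnegative measurable function that tends to $0$ $\nu$-a.e.\ (by continuity of the norm), and satisfies $\norm{f-f_n}\le \norm{f}+\norm{f_n}\le 2g$ $\nu$-a.e., with $2g$ Lebesgue integrable. The classical scalar dominated convergence theorem then forces $\int_I \norm{f-f_n}\,\rd\nu \to 0$, and the displayed bound gives $\int_I f_n\,\rd\nu \to \int_I f\,\rd\nu$ in $X$.

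The argument is essentially mechanical; the only step requiring genuine care is the Bochner measurability of the limit $f$, since this is what makes $\norm{f}$ a legitimate real-valued integrand and licenses both the integrability criterion and the reduction to the scalar theorem. Everything downstream is a direct transcription of the real-variable dominated convergence statement through the norm inequality~\eqref{eq:Bocner-norm}.
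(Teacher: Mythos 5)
Your proof is correct: it is the standard reduction to the scalar dominated convergence theorem, and every ingredient you use (Bochner measurability of a.e.\ limits, the integrability criterion, and the norm inequality~\eqref{eq:Bocner-norm}) is exactly one of the facts recorded in Section~\ref{sec:bochner_int}, so the argument is complete, including the one delicate step of establishing that the limit $f$ is Bochner measurable. Note that the paper itself gives no proof of this statement---it recalls it as a classical result, citing \cite[Theorem~1.1.8]{arendt2013vector}---so your argument is essentially the textbook proof behind that citation rather than a different route.
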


\subsection{Differential equations in Banach spaces}
\label{sec:differential}
We now consider the derivation of  functions on a Banach space.  Let $I$
be  an interval  of $\R$  with non-empty  interior.  We  say that  a
function 
$f  \,  \colon\,  I  \mapsto  X$ is  \emph{differentiable}  at a  point
$t   \in   I$    if   the   following   limit,    $f'(t)$,   exists   in
$(X, \norm{\cdot})$:
\[
 f'(t)= \lim_{\substack{s \rightarrow 0\\t+s\in I}} \frac{f(t+s) - f(t)}{s}\cdot
\]
Notice that  if $f$ is differentiable  at $t$, then it  is continuous at
$t$.  We say  that $f$ is differentiable on $I$  if it is differentiable
at  any point  of  $I$, and  that  $f$  belongs to  $\cc^1(I)$  if it  is
differentiable  on $I$  and  $f'$ is  continuous on  $I$.
We   have   the  following   fundamental   theorem   of  calculus,   see
\cite[Proposition    1.2.2]{arendt2013vector}     and    \cite[Corollary
3.1.7]{seifert22}.  Assume  $I=[a,b]$ with  $-\infty <a<b<+\infty  $ and
that $f\, \colon\, I \rightarrow X$ belongs to $\mathcal{C}^1(I)$, then $f'$
is Bochner-integrable on $I$ and we have:
\begin{equation}
   \label{eq:calculus}
f(b) -f(a)= \int_a^b f'\, \rd \nu.
\end{equation}

\medskip

We now recall  some results on differential equations  in Banach spaces.
Let  $F\, \colon\, X \rightarrow X$ be locally-Lipschitz, that is,  for
all $x \in X$, there exists $\eta > 0$ and $ C$ finite such that for all
$y \in X$, we have:
\[
  \norm{x-y} \leq \eta\quad  \Longrightarrow \quad  \norm{F(x) - F(y)} \leq C
  \norm{x-y}.
\]
The  Picard-Lindelöf  theorem, see~\cite[Corollaries~IV~1.6-8]{lang95}, 
 ensures  the   existence   of
 $(u,\tau)$, with $\tau\in (0,  +\infty ]$ and $u\in \cc^1([0, \tau))$ taking values in $X$,
 that  is a solution to  the Cauchy
problem:
\begin{equation}\label{eq:gen_ODE}
\left\{\begin{array}{l}
u' = F(u), \\ 
u(0) = x,
\end{array}\right.
\end{equation}
where the first equality in~\eqref{eq:gen_ODE}  holds in $[0, \tau)$ and
$x\in X$ is the so-called initial condition, and furthermore the solution $(u,\tau)$
is  unique and  maximal (that  is, if  $(u',\tau')$ is  another solution
to~\eqref{eq:gen_ODE},   then    $\tau'\leq   \tau$   and    $u'=u$   on
$[0, \tau')$).
We say the solution is \emph{global} 
if  $\tau=+\infty $.

\medskip

We    end   this    section    with   a    comparison   theorem.     Let
$(X,   \norm{\cdot},   \leq)$   be    a   real   Banach   lattice. Let
$D_1,  D_2   \subset  X$.   A  map   $F\,  \colon\,  X  \mapsto   X$  is
\emph{cooperative}    on     $D_1    \times    D_2$    if     for    any
$(x,y) \in D_1 \times D_2$ with $x \leq y$ and any $\nu \in X^\star_+$,
we have:
\begin{equation}
  \label{eq:coop}
\langle \nu, x-y \rangle = 0 \quad\Longrightarrow \quad \langle \nu,
F(x) - F(y) \rangle \leq 0. 
\end{equation}
We recall~\cite[Theorem 2.4]{ddz-sis}.

\begin{prop}[Comparison]\label{prop:comp_th}
  Assume that $X_+$ has non-empty interior. 
Let $F \,\colon\,  X \rightarrow X$ be  locally-Lipschitz, $D_1, D_2 \subset X$ and $\tau \in (0, + \infty]$.
Let $u \, \colon\,  [0, \tau) \rightarrow D_1$ and $v \, \colon\,  [0, \tau) \rightarrow D_2$ be two $\mathcal{C}^1$ maps.
Suppose that $F$ is cooperative on $D_1 \times X$ or on $X \times D_2$,
that  $u(0) \leq v(0)$, and that  $u'(t) - F(u(t)) \leq v'(t) -
F(v(t))$ for all $t\in [0, \tau)$.
Then, we have $u(t) \leq v(t)$ for all $t \in [0, \tau)$.
\end{prop}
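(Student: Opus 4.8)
The plan is to prove the conclusion directly by a first--exit--time argument, after perturbing the difference $v-u$ strictly into the interior of the cone. Set $w=v-u$; the hypotheses $u(0)\leq v(0)$ and $u'-F(u)\leq v'-F(v)$ become $w(0)\geq 0$ and
\[
w'(t)\geq F(v(t))-F(u(t))\qquad\text{for all }t\in[0,\tau),
\]
and the goal is $w(t)\geq 0$ for all $t$. Fix $T<\tau$; it suffices to work on $[0,T]$. Since $X_+$ has non-empty interior, fix $e\in\mathrm{int}(X_+)$ with $\norm{e}=1$ and a radius $\delta>0$ such that the ball $B(e,\delta)\subset X_+$; this gives the key estimate $\scal{\nu,e}\geq\delta\,\norm{\nu}$ for every $\nu\in X^\star_+$ (test $e-\delta z\in X_+$ against $\nu$ for $\norm{z}\leq 1$). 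For $\epsilon>0$ and a constant $\lambda>0$ to be chosen, set $w_\epsilon(t)=w(t)+\epsilon\,\expp{\lambda t}\,e$. I will show $w_\epsilon(t)\in\mathrm{int}(X_+)$ for all $t\in[0,T]$ and then let $\epsilon\to 0$, using that $X_+$ is closed.

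Next I would run the exit--time step. Since $w_\epsilon(0)=w(0)+\epsilon e$ is interior (a positive element plus an interior point), continuity makes $t_0:=\inf\{t\in[0,T]:w_\epsilon(t)\notin\mathrm{int}(X_+)\}$ positive, unless the set is empty and we are already done. Assuming $t_0$ exists, openness of $\mathrm{int}(X_+)$ forces $w_\epsilon(t_0)\in\partial X_+\cap X_+$. As $X_+$ is convex with non-empty interior, the supporting hyperplane theorem yields $\nu\in X^\star\setminus\{0\}$ supporting $X_+$ at $w_\epsilon(t_0)$; because $X_+$ is a cone this forces $\nu\in X^\star_+$ and $\scal{\nu,w_\epsilon(t_0)}=0$. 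Moreover $\scal{\nu,w_\epsilon(t)}>0$ for $t<t_0$ (positive functional at an interior point), so the scalar $C^1$ map $t\mapsto\scal{\nu,w_\epsilon(t)}$ decreases to $0$ at $t_0$ from the left, whence $\scal{\nu,w_\epsilon'(t_0)}\leq 0$.

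The final step contradicts this sign. Under cooperativity on $D_1\times X$, apply~\eqref{eq:coop} to $x=u(t_0)\in D_1$ and $y=v(t_0)+\epsilon\,\expp{\lambda t_0}e\in X$: then $y-x=w_\epsilon(t_0)\geq 0$ and $\scal{\nu,x-y}=0$, so cooperativity gives $\scal{\nu,F(x)-F(y)}\leq 0$, i.e.\ $\scal{\nu,F(y)-F(u(t_0))}\geq 0$. Splitting $F(v(t_0))-F(u(t_0))=\big(F(v(t_0))-F(y)\big)+\big(F(y)-F(u(t_0))\big)$ and bounding the first bracket by local Lipschitzness, $\norm{F(v(t_0))-F(y)}\leq C\,\norm{v(t_0)-y}=C\epsilon\,\expp{\lambda t_0}$ (with $C$ a Lipschitz constant valid on a fixed bounded neighborhood of the compact trajectories $u([0,T])\cup v([0,T])$, hence uniform in $t$ and in all small $\epsilon$), I obtain
\[
\scal{\nu,w_\epsilon'(t_0)}\geq\scal{\nu,F(v(t_0))-F(u(t_0))}+\epsilon\lambda\,\expp{\lambda t_0}\scal{\nu,e}\geq\epsilon\,\expp{\lambda t_0}\,\norm{\nu}\,(\lambda\delta-C).
\]
Choosing $\lambda>C/\delta$ makes the right-hand side strictly positive, contradicting $\scal{\nu,w_\epsilon'(t_0)}\leq 0$. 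Thus no $t_0$ exists, $w_\epsilon>0$ on $[0,T]$, and $\epsilon\to 0$ yields $w\geq 0$. The alternative hypothesis, cooperativity on $X\times D_2$, is handled identically by instead perturbing the first argument: take $x=u(t_0)-\epsilon\,\expp{\lambda t_0}e\in X$ and $y=v(t_0)\in D_2$.

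The main obstacle, and the whole reason for the perturbation, is that cooperativity only delivers information when the test functional annihilates the difference \emph{exactly}; at a naive touching time $w$ itself sits on $\partial X_+$ but its derivative need only satisfy a non-strict inequality, giving no contradiction. Pushing the trajectory strictly inside $\mathrm{int}(X_+)$ by $\epsilon\,\expp{\lambda t}e$ creates the required strict margin $\lambda\delta$, at the cost of evaluating $F$ at the shifted point; controlling that error uniformly is exactly where non-empty interior of $X_+$ (for both the supporting functional and the bound $\scal{\nu,e}\geq\delta\norm{\nu}$) and the local Lipschitz estimate combine, with $\lambda$ calibrated against $C/\delta$.
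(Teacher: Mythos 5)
Your proof is correct, but note that the paper does not actually prove Proposition~\ref{prop:comp_th}: it is recalled from \cite[Theorem~2.4]{ddz-sis}, so there is no internal argument to compare against. What you give is a correct, self-contained version of the classical comparison proof for quasimonotone (cooperative) systems in an ordered Banach space whose positive cone has non-empty interior: perturb the difference $w=v-u$ into the interior of $X_+$ by $\epsilon\,\expp{\lambda t}\,e$, take the first exit time $t_0$, produce a supporting functional $\nu\in X^\star_+\setminus\{0\}$ vanishing at $w_\epsilon(t_0)$, and play the cooperativity condition~\eqref{eq:coop} against the local Lipschitz error and the uniform margin $\scal{\nu,e}\geq\delta\norm{\nu}$, with the calibration $\lambda>C/\delta$ producing the sign contradiction; the treatment of the two alternative cooperativity hypotheses by shifting the appropriate argument of $F$ is also handled correctly. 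One point deserves sharper wording: in infinite dimension, local Lipschitzness does \emph{not} yield a Lipschitz constant on an arbitrary bounded neighborhood of the compact set $K=u([0,T])\cup v([0,T])$; what a finite covering of $K$ does yield --- and all your argument uses --- is a pair $(\eta,C)$ such that $F$ is $C$-Lipschitz on every ball $B(z,\eta)$ with $z\in K$, so the estimate $\norm{F(v(t_0))-F(y)}\leq C\epsilon\,\expp{\lambda t_0}$ is valid as soon as $\epsilon\,\expp{\lambda T}\leq\eta$. Since $(\eta,C)$ depend only on $K$ and $F$, then $\lambda$ only on $C/\delta$, and finally $\epsilon$ is sent to $0$ with $\lambda$ fixed, there is no circularity and the argument goes through as you wrote it.
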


 \section{Existence and regularity of the semi-flow for the SIS model}
\label{sec:exists_max_sol}

We prove Proposition~\ref{prop:exists_max_sol}  and
Lemma~\ref{lem:monot_eq_max_para} 
in this section assuming
that  $(T,  \gamma,  \varphi)$  satisfies  Assumption~\ref{assum:1}. 
The
results and proofs are very close to those in \cite{ddz-sis}. 

\subsection{Existence of the semi-flow}
\label{sec:flow}
Recall that Assumption~\ref{assum:1} holds with the function $F$ defined
in~\eqref{eq:def-F}  by $  F(h) = \varphi(h) Th - \gamma h$    for $h
\in L^\infty$. Recall also the set  $\Delta=\{f\in L^\infty \,  \colon\,
\zero\leq f\leq \un\}$. 

\begin{lem}[Regularity of $F$]
  \label{lem:F_loc_lip}
  Let $F$ be the function  from $L^\infty $ to
  $L^\infty $ defined by~\eqref{eq:def-F}. 
 \begin{enumerate}[(i)]
 \item \label{it:Lip-F}
   The function $F$ is  locally-Lipschitz on
     $(L^\infty, \norm{\cdot}_\infty) $.
 \item \label{it:Lip-F-p}
There exists a finite constant $C_p$ such that for $g,h\in \Delta$, we
have:
\[
  \norm{F(g)-F(h)}_{p} \leq  C_p\, 
  \norm{g-h}_{p}.
\]

   \item  \label{it:cont-F}  Let  $(h_n)_{n\in   \N}$  be  a  monotonous
     sequence   of    elements   of   $\Delta$.   Then    the   sequence
     $(F(h_n))_{n\in  \N}$ converges   a.e.\ to  $F(h)$,
     where $h$ is the  a.e.\ limit of $(h_n)_{n\in \N}$.
 \end{enumerate}
\end{lem}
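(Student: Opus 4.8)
The plan is to exploit the product structure of $F(h)=\varphi(h)\,Th-\gamma h$ and to handle its three constituents separately. The ingredients I will use are that $T$ is bounded on $L^\infty$ and satisfies $\norm{Tf}_p\le C_p\norm{f}_p$ by Assumption~\ref{assum:1}, that $\gamma\in L^\infty$, and that $\varphi$, being locally Lipschitz on $\R$, is bounded and Lipschitz on every compact interval, in particular on $[0,1]$. All three estimates will rest on the identity
\[
\varphi(g)\,Tg-\varphi(h)\,Th=\varphi(g)\,T(g-h)+\big(\varphi(g)-\varphi(h)\big)\,Th,
\]
which separates the linear action of $T$ on $g-h$ from the variation of the scalar factor $\varphi$. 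For Point~\ref{it:Lip-F}, I would fix $x\in L^\infty$ and work on the ball $\{\,h\colon \norm{h-x}_\infty\le\eta\,\}$, on which $\norm{h}_\infty\le R:=\norm{x}_\infty+\eta$. Estimating the identity above in the $L^\infty$ norm and adding $\norm{\gamma(g-h)}_\infty\le\norm{\gamma}_\infty\norm{g-h}_\infty$, I bound $\norm{\varphi(g)}_\infty$ by $\sup_{[-R,R]}|\varphi|$, the factor $\norm{\varphi(g)-\varphi(h)}_\infty$ by $L_R\norm{g-h}_\infty$ (with $L_R$ the Lipschitz constant of $\varphi$ on $[-R,R]$), and $\norm{T(g-h)}_\infty$, $\norm{Th}_\infty$ by $\norm{T}_{L^\infty}\norm{g-h}_\infty$ and $\norm{T}_{L^\infty}R$ respectively; this yields a Lipschitz constant depending only on $R$.

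For Point~\ref{it:Lip-F-p}, I would run the same decomposition but in the $L^p$ norm, using crucially that $g,h\in\Delta$ take their values in the fixed compact set $[0,1]$. The first term is controlled by $\big(\sup_{[0,1]}|\varphi|\big)\norm{T(g-h)}_p\le\big(\sup_{[0,1]}|\varphi|\big)C_p\norm{g-h}_p$ through~\eqref{eq:T-in-Linfty}. For the second I write $\norm{(\varphi(g)-\varphi(h))\,Th}_p\le\norm{Th}_\infty\,\norm{\varphi(g)-\varphi(h)}_p$, bound $\norm{Th}_\infty\le\norm{T}_{L^\infty}$ since $h\in\Delta$, and then---this is the key point---apply the Lipschitz bound of $\varphi$ on $[0,1]$ pointwise to obtain $\norm{\varphi(g)-\varphi(h)}_p\le L\norm{g-h}_p$, a genuine $L^p$ estimate rather than a reduction to the $L^\infty$ norm. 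Together with $\norm{\gamma(g-h)}_p\le\norm{\gamma}_\infty\norm{g-h}_p$, this produces a finite constant of the form $\big(\sup_{[0,1]}|\varphi|\big)C_p+\norm{T}_{L^\infty}L+\norm{\gamma}_\infty$, which is the asserted constant.

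For Point~\ref{it:cont-F}, write $h$ for the a.e.\ limit of the monotone sequence $(h_n)$ in $\Delta$. The convergences $\gamma h_n\to\gamma h$ and $\varphi(h_n)\to\varphi(h)$ a.e.\ are immediate, the latter by continuity of $\varphi$; the main obstacle is the a.e.\ convergence of the \emph{whole} sequence $Th_n\to Th$, since $L^p$ convergence alone would only give it along a subsequence. I would settle this using positivity of the transmission operator $T$: as $(h_n)$ is monotone and $T\ge 0$, the sequence $(Th_n)$ is monotone as well and therefore converges a.e.\ to some $g\in L^\infty$. To identify $g$, note that $h_n\to h$ in $L^p$ by dominated convergence (the $h_n$ are dominated by $\un\in L^p$ as $\mu$ is finite), so that $\norm{Th_n-Th}_p\le C_p\norm{h_n-h}_p\to0$ by~\eqref{eq:T-in-Linfty}; since the $Th_n$ are uniformly bounded in $L^\infty$ they also converge to $g$ in $L^p$, forcing $g=Th$ a.e. Hence $Th_n\to Th$ a.e., and, all factors being bounded, $F(h_n)=\varphi(h_n)\,Th_n-\gamma h_n$ converges a.e.\ to $\varphi(h)\,Th-\gamma h=F(h)$.
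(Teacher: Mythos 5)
Your proposal is correct and follows essentially the same route as the paper's proof: the same decomposition $\varphi(g)Tg-\varphi(h)Th=\varphi(g)T(g-h)+(\varphi(g)-\varphi(h))Th$ with local bounds on $\varphi$ for Point~(i), the pointwise Lipschitz bound $\norm{\varphi(g)-\varphi(h)}_p\leq L\norm{g-h}_p$ combined with~\eqref{eq:T-in-Linfty} for Point~(ii), and for Point~(iii) the positivity of $T$ to get a.e.\ convergence of the monotone sequence $(Th_n)$, identified with $Th$ via $L^p$ convergence. No gaps; the argument matches the paper's in both structure and the constants obtained.
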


\begin{proof}
Since $\varphi$ is locally Lipschitz, we denote by $K_r$ the
corresponding (finite) Lipschitz constant of $\varphi$ on $[-r, r]$ and
$M_r=\sup_{[-r, r]} |\varphi|\leq  |\varphi(0)| + rK_r$.

We prove Point~\ref{it:Lip-F}. 
Let $r>0$ and $u, v \in L^\infty $ with $\norm{u}_\infty \leq r$ and
$\norm{v}_\infty \leq r$.  We have:
\begin{align*}
  \norm{F(u) - F(v)}_\infty
  & = \norm{\varphi(u)Tu - \varphi(v) Tv - \gamma (u-v)}_{\infty} \\
  & \leq \norm{\varphi(u)}_\infty \norm{T(u-v)}_\infty
    +\norm{\varphi(u) -\varphi(v)}_\infty \, \norm{\ti v}_\infty
    + \norm{\gamma}_\infty \, \norm{u-v}_\infty \\
 & \leq (M_r \norm{T}_{L^\infty} + K_r r \norm{\ti}_{L^\infty}  +
   \norm{\gamma}_\infty ) \norm{u-v}_\infty   .
\end{align*}
This concludes the proof of Point~\ref{it:Lip-F}. 

\medskip

We prove Point~\ref{it:Lip-F-p} in a similar way.
Let $u, v \in \Delta$. We have:
\begin{align*}
  \norm{F(u) - F(v)}_{p}
  & = \norm{\varphi(u)Tu - \varphi(v) Tv - \gamma (u-v)}_{p} \\ 
  & \leq \norm{\varphi(u)}_\infty \norm{T(u-v)}_{p}
    +\norm{\varphi(u) -\varphi(v)}_{p} \, \norm{T v}_\infty
    + \norm{\gamma}_\infty \, \norm{u-v}_{\gamma^p,p} \\
  & \leq \left(M_1 \norm{\tp}_{L^p} + K_1 \norm{T}_{L^\infty}  + 
    \norm{\gamma}_\infty \right) \norm{u-v}_{p}   .
\end{align*}
This concludes the proof of Point~\ref{it:Lip-F-p}.

\medskip

For simplicity,  we assume  that $(h_n)_{n  \in \N}$  is non-decreasing.
Thus it  converges a.e.\ (that  is, in $L^0$) to  a limit, say  $h$, and
this limit belongs to $\Delta$.  Since $T$ is positive, we also get that
the  sequence  $(Th_n)_{n \in  \N}$  is  non-decreasing and  bounded  by
$T\un\in L^\infty  $, thus it converges  a.e.\ (that is, in  $L^0$) to a
limit,  say  $w\in  L^\infty  $.    On  the  other  hand,  by  dominated
convergence, we  also get that  $(h_n)_{n \in  \N}$ converges to  $h$ in
$L^p$, and  thus, as $\tp$ is  bounded on $L^p$,
we   get   that    $(\tp h_n)_{n   \in   \N}$   converges    to   $\tp h$   in
$L^p$. We thus deduce that  $w=\tp h= Th$. Then use that $\varphi$
is continuous,  to deduce that  $(F(h_n))_{n\in \N}$ converges  a.e.\ to
$F(h)$.  This gives Point~\ref{it:cont-F}.
\end{proof}

We now prove that $F$ is cooperative, see~\eqref{eq:coop},  
using that  $\varphi$  is  non-negative  on  $[0,  1]$.

\begin{lem}[$F$ is cooperative]\label{lem:F_coop}
  The map $F$ is cooperative  on $\Delta \times L^\infty$ and on
  $L^\infty \times \Delta$.
\end{lem}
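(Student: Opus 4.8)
The plan is to unwind the definition of cooperativity and reduce everything to a single sign estimate on a dual pairing. Fix a pair $(x,y)$ in the relevant product with $x\le y$, and $\nu\in (L^\infty)^\star_+$ such that $\scal{\nu,x-y}=0$; set $z=y-x\in L^\infty_+$, so that $\scal{\nu,z}=0$. The goal is to show $\scal{\nu,F(x)-F(y)}\le 0$. The two cases (cooperativity on $\Delta\times L^\infty$ and on $L^\infty\times\Delta$) are essentially symmetric, so I would treat the first in detail and then indicate the change of decomposition needed for the second.

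The key tool I would isolate first is a \emph{localization} observation: if $w\in L^\infty$ satisfies $|w|\le M z$ a.e.\ for some finite $M$, then $\scal{\nu,w}=0$. This follows from a sandwich argument, since $Mz\pm w\in L^\infty_+$ gives $\scal{\nu,Mz\pm w}\ge 0$, hence $|\scal{\nu,w}|\le M\scal{\nu,z}=0$. Morally this says $\nu$ only sees the values on $\{x=y\}$, and it is precisely what allows the terms that are \emph{Lipschitz-controlled by $z$} to be discarded.

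Next I would compute $F(x)-F(y)=\varphi(x)Tx-\varphi(y)Ty+\gamma z$ and, in the case $x\in\Delta$, use the decomposition $\varphi(x)Tx-\varphi(y)Ty=-\varphi(x)Tz+(\varphi(x)-\varphi(y))Ty$. The recovery term $\gamma z$ is bounded in absolute value by $\norm{\gamma}_\infty\, z$, hence is annihilated by $\nu$. For the difference term, since $x,y$ are bounded, say $\norm{x}_\infty,\norm{y}_\infty\le r$, local Lipschitzness of $\varphi$ on $[-r,r]$ together with $x\le y$ yields $|\varphi(x)-\varphi(y)|\le K_r\,z$ a.e., so $|(\varphi(x)-\varphi(y))Ty|\le K_r\norm{Ty}_\infty\, z$, and this term is annihilated by $\nu$ as well. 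There remains $-\varphi(x)Tz$: because $x\in\Delta$ takes values in $[0,1]$ where $\varphi\ge 0$, and $Tz\ge 0$ by positivity of $T$ and $z\ge 0$, we get $\varphi(x)Tz\ge 0$ and therefore $\scal{\nu,-\varphi(x)Tz}\le 0$. Summing the three contributions gives $\scal{\nu,F(x)-F(y)}\le 0$.

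For the second case $y\in\Delta$, I would instead write $\varphi(x)Tx-\varphi(y)Ty=-\varphi(y)Tz+(\varphi(x)-\varphi(y))Tx$, so that the surviving sign term is $-\varphi(y)Tz\le 0$, now using $\varphi(y)\ge 0$; the other two terms vanish exactly as before (with $Tx$ in place of $Ty$). The main point to get right---and essentially the only subtlety---is the localization observation together with the verification that $(\varphi(x)-\varphi(y))\,T(\cdot)$ is dominated by a constant multiple of $z$. I would stress that only the nonnegativity of $\varphi$ on $[0,1]$ (and not its monotonicity) is used, consistently with the fact that we are working under Assumption~\ref{assum:1}; the correct choice of decomposition in each case is what ensures that the one non-vanishing term carries a $\varphi$-factor of the argument that lies in $\Delta$.
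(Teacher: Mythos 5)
Your proof is correct and takes essentially the same route as the paper's: the same decomposition of $\varphi(x)Tx-\varphi(y)Ty$ in each of the two cases, the same sign argument combining $\varphi\geq 0$ on $[0,1]$ with the positivity of $T$, and the same elimination of the recovery and Lipschitz-controlled terms. Your localization observation is precisely the fact the paper cites from \cite[Lemma~2.6]{ddz-sis} (stated there as $\langle \nu, (u-v)h\rangle = 0$ for all $h\in L^\infty$), which you instead prove directly by the sandwich argument --- a minor, self-contained improvement rather than a different approach.
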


\begin{proof}
We first prove  that $F$ is cooperative on $\Delta \times L^{\infty}$.
Let $u\in \Delta$ and $v\in L^\infty$ with  $u \leq v$.
Let $\nu \in L^{\infty, \star}_+$ such that $\langle \nu, u-v \rangle = 0$. 
Since $v-u \geq 0$, we deduce that
for any $h\in L^\infty $:
\begin{equation}
  \label{eq:null_br}
  \langle
\nu,  (u-v)h \rangle = 0
\end{equation} (see \cite[Lemma 2.6]{ddz-sis} for a
proof in a very similar setting). 
Then, using \eqref{eq:null_br}
with $h=\gamma$, we get: 
\[
  \langle \nu, F(u) - F(v) \rangle
  = \langle \nu, \varphi(u) Tu - \varphi(v) Tv - \gamma (u-v)
  \rangle
  = \langle \nu, \varphi(u) Tu - \varphi(v) Tv 
  \rangle
  .\] 
For $s, t\in \R_+$, we set $\Phi(s,t)=(\varphi(s) -\varphi(t))/(s-t)$ if
$s\neq t$ and $\Phi(s,s)=0$. We have:
\[
  \varphi(u) Tu - \varphi(v) Tv = \varphi(u) T(u-v) +   (u-v) h
  \quad\text{with}\quad
  h=\Phi(u,v)\,  Tv.
\]
As $\varphi$ is locally-Lipschitz by Assumption~\ref{assum:1}
and as $u,v$ and $Tv$ belongs to $L^\infty $, we deduce that $h\in
L^\infty $. We deduce from~\eqref{eq:null_br} that:
\begin{equation}
   \label{eq:Fu-Fv}
  \langle \nu, F(u) - F(v) \rangle = \langle \nu, \varphi(u) T(u-v)
  \rangle .
\end{equation}
As we have $\varphi \geq 0$ on $[0, 1]$ by Assumption~\ref{assum:1} and
$u \leq v$, we have $\varphi(u) T(u-v) \leq 0$. Thus, as $\nu$ is a positive linear form on $L^\infty$, we have $\langle \nu, F(u) - F(v) \rangle \leq 0$.
Therefore the map $F$ is cooperative on $\Delta \times L^{\infty}$.

If $(u,v)\in L^{\infty} \times \Delta$
satisfy $u\leq v$, then using similar computations with $h=\Phi(v,u) T
u$, one get  instead of~\eqref{eq:Fu-Fv} that
 $ \langle \nu, F(u) - F(v) \rangle = \langle \nu, \varphi(v) T(u-v)
 \rangle $. Similar arguments yields then that
  $F$
is also cooperative on $L^{\infty} \times \Delta$.
\end{proof}

Mimicking  the proof  of \cite[Proposition  2.7 (i)]{ddz-sis}  (which in
particular  uses  that 
$\varphi(1)=0$),  we get  that any  solution of~\eqref{eq:intro_gen_SIS}
with an initial condition in $\Delta$ remains in $\Delta$.

\begin{lem}
  \label{lem:D-inv}
The domain $\Delta$ is forward invariant for the differential equation
$u'=F(u)$ in $L^\infty $. 
\end{lem}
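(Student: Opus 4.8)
The plan is to realize $\Delta$ as the intersection of the two one–sided constraints $\{\zero\leq u\}$ and $\{u\leq \un\}$, and to preserve each of them by comparing a solution with the constant functions $\zero$ and $\un$, invoking the comparison principle (Proposition~\ref{prop:comp_th}) together with the cooperativity established in Lemma~\ref{lem:F_coop}. First I would record the two elementary facts that make the constants usable. Since $T\zero=\zero$, we have $F(\zero)=\varphi(0)\,T\zero-\gamma\zero=\zero$, so the constant map $t\mapsto \zero$ is a (trivially $\cc^1$) solution. Since $\varphi(1)=0$ by Assumption~\ref{assum:1}, we have $\varphi(\un)=\zero$ and hence $F(\un)=\zero\cdot T\un-\gamma\un=-\gamma\un$; because $\gamma>0$ a.e., the constant map $t\mapsto\un$ is a supersolution, i.e.\ $(\un)'-F(\un)=\gamma\un\geq\zero$. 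This is exactly where $\varphi(1)=0$ enters. I would also note that $L^\infty_+$ has non-empty interior (the constant $\un$ is an interior point), so Proposition~\ref{prop:comp_th} is applicable.

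Let $w\colon[0,\tau)\to L^\infty$ be a solution with $w(0)=h\in\Delta$; it is $\cc^1$ by Picard--Lindel\"of and Lemma~\ref{lem:F_loc_lip}~\ref{it:Lip-F}. For the lower bound I would apply Proposition~\ref{prop:comp_th} with the lower map $u\equiv\zero$ taking values in $D_1=\Delta$ and the upper map $v=w$ taking values in $X=L^\infty$: cooperativity of $F$ on $\Delta\times L^\infty$ holds by Lemma~\ref{lem:F_coop}, the initial inequality $\zero\leq h$ holds, and $u'-F(u)=\zero=v'-F(v)$, whence $\zero\leq w(t)$ for all $t\in[0,\tau)$. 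For the upper bound I would apply the same proposition with the lower map $u=w$ (values in $X$) and the upper map $v\equiv\un$ (values in $D_2=\Delta$): cooperativity of $F$ on $L^\infty\times\Delta$ holds by Lemma~\ref{lem:F_coop}, the initial inequality $h\leq\un$ holds, and $u'-F(u)=\zero\leq\gamma\un=v'-F(v)$, whence $w(t)\leq\un$ for all $t\in[0,\tau)$.

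Combining the two inequalities gives $\zero\leq w(t)\leq\un$, that is $w(t)\in\Delta$, for every $t$ in the interval of existence, which is the claimed forward invariance. The argument is essentially routine once cooperativity is in hand; the only real points of care are (i) selecting the correct side of cooperativity in each comparison — the lower map must lie in $\Delta$ for the lower bound, and the upper map must lie in $\Delta$ for the upper bound, which is precisely why Lemma~\ref{lem:F_coop} asserts cooperativity on \emph{both} $\Delta\times L^\infty$ and $L^\infty\times\Delta$ — and (ii) the structural fact $\varphi(1)=0$, without which $\un$ would fail to be a supersolution and the upper barrier would collapse.
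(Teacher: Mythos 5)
Your proof is correct and is essentially the paper's own argument: the paper simply defers to \cite[Proposition~2.7~(i)]{ddz-sis}, which (as the paper signals by highlighting the role of $\varphi(1)=0$) is exactly this comparison of the solution against the constant barriers $\zero$ and $\un$, using the cooperativity of Lemma~\ref{lem:F_coop} together with Proposition~\ref{prop:comp_th}. Your attention to placing the $\Delta$-valued constant on the correct side of the cooperativity hypothesis in each of the two comparisons is precisely the point the cited argument relies on.
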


We then conclude on the existence of global  solutions in $\Delta$. 

\begin{lem}[Maximal solutions are global]\label{lem:global_sol}
  Any  maximal  solution  of  $u'=F(u)$  in  $L^\infty  $  with  initial
  condition $u(0)=h\in \Delta$ is global.
\end{lem}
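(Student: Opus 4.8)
The plan is to show that a maximal solution cannot blow up in finite time, so its maximal time of existence must be $+\infty$. Recall from the Picard--Lindel\"of theorem stated in Section~\ref{sec:differential} that a maximal solution either is global or else its time of existence $\tau$ is finite, in which case $\limsup_{t\to\tau^-}\norm{u(t)}_\infty=+\infty$ (otherwise one could extend the solution past $\tau$ by re-applying the local existence theorem at a point near $\tau$). So it suffices to produce a uniform-in-time bound on $\norm{u(t)}_\infty$ on $[0,\tau)$.

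First I would invoke Lemma~\ref{lem:D-inv}, which asserts that $\Delta$ is forward invariant for the equation $u'=F(u)$. Since the initial condition $h$ belongs to $\Delta$, the maximal solution $u(t)$ stays in $\Delta$ for all $t\in[0,\tau)$, that is $\zero\leq u(t)\leq \un$ a.e.\ for every such $t$. This immediately yields the uniform bound $\norm{u(t)}_\infty\leq 1$ on $[0,\tau)$, since every element of $\Delta$ has sup-norm at most $1$.

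With this bound in hand, the blow-up alternative is ruled out: $\limsup_{t\to\tau^-}\norm{u(t)}_\infty\leq 1<+\infty$, so $\tau$ cannot be finite. Hence $\tau=+\infty$ and the maximal solution is global, which is exactly the assertion of the lemma.

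The only subtlety — and it is genuinely mild here — is that the solution lives in the Banach space $L^\infty$ whose positive cone $L^\infty_+$ has non-empty interior, so the comparison/invariance machinery of Section~\ref{sec:differential} applies; this is precisely what makes Lemma~\ref{lem:D-inv} available. There is no real obstacle: the main point is simply recognizing that forward invariance of $\Delta$ provides the a priori bound that forbids finite-time blow-up. I would therefore keep the proof to one or two sentences, citing Lemma~\ref{lem:D-inv} for invariance and the Picard--Lindel\"of theorem for the no-blow-up criterion.
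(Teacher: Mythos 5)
Your proof is correct and follows essentially the same route as the paper: forward invariance of $\Delta$ (Lemma~\ref{lem:D-inv}) gives the a priori bound $\norm{u(t)}_\infty\leq 1$, which rules out finite-time breakdown, so the maximal solution is global.

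One caveat worth making explicit: the blow-up alternative you invoke (finite $\tau$ forces $\limsup_{t\to\tau^-}\norm{u(t)}_\infty=+\infty$) is not automatic in an infinite-dimensional Banach space for a map that is merely locally Lipschitz in the pointwise sense of Section~\ref{sec:differential}. To ``re-apply local existence at a point near $\tau$'' you need $u(t)$ to converge as $t\to\tau^-$, which requires $F$ to be bounded along the trajectory, i.e.\ bounded (or Lipschitz) on bounded sets, not just near each point. For this particular $F$ that is fine: the proof of Lemma~\ref{lem:F_loc_lip}~\ref{it:Lip-F} shows the Lipschitz constant depends only on the radius of the ball, so $F$ is Lipschitz, hence bounded, on $U=\{f\in L^\infty\,\colon\,\norm{f}_\infty<2\}\supset\Delta$. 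This is exactly the point the paper's proof makes explicit before citing Lang's continuation result; adding that one observation closes your argument completely.
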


\begin{proof}
  The                  bounded                 open                  set
  $U  =  \{f  \in  L^\infty  \, \colon\,  \norm{f}_\infty  <  2  \}$  of
  $L^\infty$ contains $\Delta$,  and the map $F$ is  Lipschitz  on $U$ by
  Lemma~\ref{lem:F_loc_lip}~\ref{it:Lip-F}.   As  the  set  $\Delta$  is
  forward     invariant     by    Lemma~\ref{lem:D-inv},     one     can
  apply~\cite[Corollary IV 1.8]{lang95} to  conclude that any maximal solution
  to $u'=F(u)$ with initial condition in $\Delta$ is global. 
\end{proof}

Under  Assumption~\ref{assum:1},   using  Picard-Lindelöf   theorem~\cite[Corollaries~IV~1.6-8]{lang95}  and
Lemma~\ref{lem:global_sol}, which ensure the existence and uniqueness of
maximal   solution    to   $u'=F(u)$ in $L^\infty $ with initial
condition in $\Delta$,    we   can   define    the   semi-flow
$\phi  \,  \colon\, \R_+  \times  \Delta \rightarrow  \Delta $,  where  the
$L^\infty $-valued  function $\phi(\cdot, h)=( \phi(t,  h))_{t\in \R_+}$
is   the  global   solution  to~\eqref{eq:intro_gen_SIS}   with  initial
condition $u_0=h\in  \Delta$.  Notice  that $\phi(\cdot, h)$  belongs to
$ \cc^1(\R_+)$ and statisfies the semi-group property:
\begin{equation}
   \label{eq:flow-gen}
  \phi(t+s, h)=\phi(t, \phi(s, h))
  \quad\text{for all}\quad
s,t \in \R_+ \quad\text{and}\quad
h\in \Delta.
\end{equation}

\subsection{Properties of the semi-flow}
We now establish the following properties of the semi-flow. 
We stress in the next lemmas that Assumption~\ref{assum:1} holds. 

\begin{lem}[Properties of the semi-flow]\label{prop:mono_sf}
Let $(T, \gamma, \varphi)$ satisfy Assumption~\ref{assum:1}. 

\begin{enumerate}[(i)]
\item  \label{prop:item:order_pre_flow}  If  $h_1 \leq  h_2$  belong  to
  $\Delta$,  then  we  have   $\phi(t,h_1)  \leq  \phi(t,h_2)$  for  all
  $t \in \R_+$.
\item  \label{prop:item:mono_F}  Let  $h \in  \Delta$.   The  function
  $t \mapsto \phi(t,h)$  from $\R^+$ to $L^\infty  $ is non-decreasing
  (resp.  non-increasing) if and only if we have $F(h) \geq \zero $ (resp.
  $F(h) \leq \zero $) in $L^\infty $.
  \item Let $t\in \R_+$. The function $h\mapsto \phi (t, h)$
    defined on $\Delta$ is Lipschitz with respect to
    $\norm{\cdot}_\infty $.
  \item   \label{prop:item:sf_cont}    Let   $t\in   \R_+$.    The   function
    $h  \mapsto \phi  (t, h)$  defined  on $\Delta$  is continuous  with
    respect to the a.e.\ convergence, and, more generally,
    if $(h_r)_{ r\in  \R_+}$ is a sequence of elements  of $\Delta$ such
    that  $h=\limess  _{r\rightarrow +\infty  }  h_r$  exists (and  thus
    belongs               to               $\Delta$),               then
    $\limess _{r\rightarrow +\infty } \phi(t, h_r)=\phi(t, h)$.
\end{enumerate}
\end{lem}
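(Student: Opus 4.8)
The plan is to treat the four properties in turn, relying on the comparison theorem (Proposition~\ref{prop:comp_th}), the cooperativity of $F$ (Lemma~\ref{lem:F_coop}), and the two Lipschitz estimates of Lemma~\ref{lem:F_loc_lip}. Throughout, recall that $L^\infty_+$ has non-empty interior (the constant $\un$ is an interior point), so Proposition~\ref{prop:comp_th} applies with $X=L^\infty$, and that $\Delta$ is forward invariant, so all semi-flows considered stay in $\Delta$.

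For Point~\ref{prop:item:order_pre_flow}, I would apply the comparison theorem to $u=\phi(\cdot,h_1)$ and $v=\phi(\cdot,h_2)$: both take values in $\Delta$, satisfy $u'(t)-F(u(t))=\zero=v'(t)-F(v(t))$ and $u(0)=h_1\leq h_2=v(0)$; since $F$ is cooperative on $\Delta\times L^\infty$, this gives $\phi(t,h_1)\leq\phi(t,h_2)$. For Point~\ref{prop:item:mono_F}, assume first $F(h)\geq\zero$ and compare the constant map $t\mapsto h$ (which satisfies $-F(h)\leq\zero$) with $\phi(\cdot,h)$, using cooperativity on $\Delta\times L^\infty$, to obtain $h\leq\phi(s,h)$ for all $s\geq0$; the semi-group property~\eqref{eq:flow-gen} together with Point~\ref{prop:item:order_pre_flow} then yields $\phi(t+s,h)=\phi(t,\phi(s,h))\geq\phi(t,h)$, i.e.\ monotonicity in time. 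The converse follows by differentiating at $t=0$: if $t\mapsto\phi(t,h)$ is non-decreasing, then $F(h)=\phi'(0,h)=\lim_{t\to0^+}(\phi(t,h)-h)/t\geq\zero$, using that $L^\infty_+$ is closed. The non-increasing case is symmetric, comparing $\phi(\cdot,h)$ with the constant $h$ and invoking cooperativity on $L^\infty\times\Delta$.

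For the third property (Lipschitz continuity in $\norm{\cdot}_\infty$), I would run a Gronwall argument. Since $\phi(\cdot,h)$ is $\cc^1$, the fundamental theorem of calculus~\eqref{eq:calculus} gives $\phi(t,h_1)-\phi(t,h_2)=(h_1-h_2)+\int_0^t\big(F(\phi(s,h_1))-F(\phi(s,h_2))\big)\,\rd s$. As solutions remain in the bounded set $\Delta$, on which $F$ admits a uniform Lipschitz constant $L$ by Lemma~\ref{lem:F_loc_lip}~\ref{it:Lip-F}, taking $\norm{\cdot}_\infty$ and using~\eqref{eq:Bocner-norm} followed by Gronwall's lemma yields $\norm{\phi(t,h_1)-\phi(t,h_2)}_\infty\leq\expp{Lt}\,\norm{h_1-h_2}_\infty$. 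Running the identical computation in $L^p$ (the inclusion $\iota$ being bounded, $\phi(\cdot,h)$ is $\cc^1$ as an $L^p$-valued map) with the constant $C_p$ of Lemma~\ref{lem:F_loc_lip}~\ref{it:Lip-F-p} gives $\norm{\phi(t,h_1)-\phi(t,h_2)}_p\leq\expp{C_p t}\,\norm{h_1-h_2}_p$; this $L^p$ estimate is the key tool for the last point.

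The main obstacle is Point~\ref{prop:item:sf_cont}, continuity with respect to essential convergence. I would first handle monotone families: if $(h_r)$ is non-decreasing with $h=\esssup_r h_r$, then Point~\ref{prop:item:order_pre_flow} makes $r\mapsto\phi(t,h_r)$ non-decreasing and bounded above by $\phi(t,h)$, hence essentially convergent to some $\ell\leq\phi(t,h)$; meanwhile dominated convergence (the measure is finite and $|h_r-h|\leq1$) gives $h_r\to h$ in $L^p$, so the $L^p$-Lipschitz estimate forces $\phi(t,h_r)\to\phi(t,h)$ in $L^p$, whence $\ell=\phi(t,h)$ by uniqueness of limits. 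For a general family with $h=\limess_{r\to\infty}h_r$, I would set $\underline{h}_r=\essinf_{s\geq r}h_s$ and $\bar{h}_r=\esssup_{s\geq r}h_s$, which are monotone elements of $\Delta$ (as $\Delta$ is stable under essential suprema and infima) converging essentially to $\limessinf_r h_r=h$ and $\limesssup_r h_r=h$ respectively. Point~\ref{prop:item:order_pre_flow} gives $\phi(t,\underline{h}_r)\leq\phi(t,h_r)\leq\phi(t,\bar{h}_r)$, and the monotone case shows both outer terms converge essentially to $\phi(t,h)$; squeezing the essential limit inferior and superior then yields $\limess_{r\to\infty}\phi(t,h_r)=\phi(t,h)$, the a.e.\ continuity statement being the special case of sequences indexed by $\N$.
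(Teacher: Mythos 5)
Your proof is correct, and its overall skeleton --- the comparison theorem plus cooperativity for Points~\ref{prop:item:order_pre_flow}--\ref{prop:item:mono_F}, Gronwall for the Lipschitz property, and a reduction of essential convergence to monotone families for Point~\ref{prop:item:sf_cont} --- matches the paper's, which defers the first three points to the proofs of Propositions~2.8, 2.10 and 2.11 of \cite{ddz-sis}. The genuine difference is in how the monotone case of Point~\ref{prop:item:sf_cont} is settled. The paper takes the a.e.\ limit $f_s$ of $\phi(s,h_n)$, invokes Lemma~\ref{lem:F_loc_lip}~\ref{it:cont-F} (a.e.\ continuity of $F$ along monotone sequences) together with the dominated convergence theorem for Bochner integrals (Theorem~\ref{th:TCD}) to pass to the limit inside the integral equation, and only then runs Gronwall in $L^p$ to force $f_s=\phi(s,h)$. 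You instead promote the third point to an $L^p$ estimate, $\norm{\phi(t,h_1)-\phi(t,h_2)}_p\leq \expp{C_p t}\,\norm{h_1-h_2}_p$ --- which is legitimate, since the bounded injection $\iota$ makes the flow $\cc^1$ as an $L^p$-valued map and Lemma~\ref{lem:F_loc_lip}~\ref{it:Lip-F-p} applies on $\Delta$ --- and then identify the a.e.\ limit with the $L^p$ limit by scalar dominated convergence and uniqueness of limits. This buys a lighter identification step: you never need the a.e.\ continuity of $F$ nor any passage to the limit in a vector-valued integral, whereas the paper's route does all the work inside the integral equation and never needs the $L^p$-Lipschitz continuity of the flow map itself. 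Your explicit sandwich $\essinf_{s\geq r}h_s\leq h_r\leq \esssup_{s\geq r}h_s$ reducing general essential convergence to the monotone case is exactly the reduction the paper leaves implicit by citing Proposition~2.11~(iii) of \cite{ddz-sis}, so it is a welcome addition rather than a deviation.
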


\begin{proof}
  For   all  the   Points   but~\ref{prop:item:sf_cont},   the  proof   mimic
  respectively the  proofs of Propositions  2.8, 2.10 and  2.11~(ii) from
  \cite{ddz-sis}.   Following  the  proof of  Proposition  2.11~(iii)  in
  \cite{ddz-sis},  we see  that to  get Point~\ref{prop:item:sf_cont},  it is
  enough to check the following claim:
   if $(h_n)_{n\in  \N}$ is a monotonous sequence of
  elements of $\Delta$, which thus  converges a.e.\ to a
  limit, say  $h\in \Delta$,  then $(\phi(t, h_n))_{n\in  \N}$ converges
  also a.e.\ to  $\phi(t,h)$ for all $t\geq 0$.

  \medskip
  
  For simplicity, we assume
  that  the   sequence  $(h_n)_{n\in   \N}$  is   non-decreasing.   From
  Point~\ref{prop:item:order_pre_flow},   we  get   that  the   sequence
  $(\phi(s, h_n))_{n\in  \N}$ is also non-decreasing  and thus converges a.e.\  to  a  limit   say  $f_s\in  \Delta$  for  all
  $s\in \R_+$.  We deduce from Lemma~\ref{lem:F_loc_lip}~\ref{it:cont-F}
  that  $(F(\phi(s, h_n)))_{n\in  \N}$  converges a.e.\
  towards $F(f_s)$. Since  $F$ is bounded on $\Delta$ (as  it is locally
  Lipschitz on $L^\infty $) we deduce that the convergence also holds in
  $L^p$.      Since      the     identity      map     from
  $(L^\infty,         \norm{\cdot}_\infty         )         $         to
  $(L^p, \norm{\cdot}_{ p  } )$ is continuous, we
  deduce  that   a  solution  to~\eqref{eq:intro_gen_SIS}  in
  $L^\infty  $   is  also  a  solution   to~\eqref{eq:intro_gen_SIS}  in
  $L^p$.  By  the  fundamental Theorem  of calculus,
  see~\eqref{eq:calculus},   we   get  that   for  all
  $s\geq 0$:
\[
  \phi(s,h_n) = h_n+ \int_0^s F(\phi(r, h_n))\,  \rd r
 \quad\text{and}\quad
  \phi(s,h) = h+ \int_0^s F(\phi(r, h))\,  \rd r
  \quad\text{hold in } L^p.
\]
By the dominated  comvergence Theorem~\ref{th:TCD} (with $X=L^p$ and
$g$ the constant function on $\R_+$ equal to 1), we
deduce that  $(f_s)_{s\in \R_+}$ is Bochner integrable on bounded
intervals of $\R_+$ and  for all $s\geq 0$:
\[
  f_s = h+ \int_0^s F(f_r)\,  \rd r
  \quad\text{holds in } L^p.
\]
We deduce from~\eqref{eq:Bocner-norm} and
Lemma~\ref{lem:F_loc_lip}~\ref{it:Lip-F-p} that for all $s\geq 0$:
\[
  \norm{f_s - \phi(s,h)}_{ p }
  \leq  C_p \int _0 ^s   \norm{f_r - \phi(r,h)}_{ p }\, \rd r. 
\]
Since $f_r$ and $\phi(r,h)$ belong to $\Delta$, we get that $\norm{f_r -
  \phi(r,h)}_{ p }\leq  2$, so that $r\mapsto  \norm{f_r -
  \phi(r,h)}_{ p }$ is locally $\nu$-integrable. 
We deduce from the Grönwall's inequality that $\norm{f_s -
  \phi(s,h)}_{ p }=0$ for all $s\geq 0$. This gives that
$f_s=\phi(s,h)$ for all $s\geq 0$, which proves the claim.
\end{proof}

Following \cite[Proposition  2.13]{ddz-sis}, we prove that  the limit of
the semi-flow is an equilibrium.

\begin{lem}[Limits of the semi-flow are equilibria]\label{lem:lim_eq}
Let $(T, \gamma, \varphi)$ satisfy Assumption~\ref{assum:1}. 
Let $h \in \Delta$. If $\limess_{t \rightarrow +\infty} \phi(t, h)$
exists, then it belongs to $\Delta$ and is an equilibrium. 
\end{lem}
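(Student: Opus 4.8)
The plan is to identify $g=\limess_{t\rightarrow+\infty}\phi(t,h)$ as a \emph{fixed point} of the semi-flow, i.e.\ $\phi(s,g)=g$ for all $s\ge 0$; once this is established, the fact that $g$ is an equilibrium is immediate. First I would check that $g\in\Delta$: by Proposition~\ref{prop:exists_max_sol}~\ref{prop:item:delta_inv} we have $\zero\le\phi(t,h)\le\un$ a.e.\ for every $t\in\R_+$, and these two-sided bounds pass to the essential upper and lower limits; since by hypothesis these limits coincide and equal $g$, we get $\zero\le g\le\un$ a.e., so $g\in\Delta$ and in particular $\phi(\cdot,g)$ is a well-defined $\cc^1(\R_+)$ semi-flow.

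The core step is to prove $\phi(s,g)=g$ for each fixed $s\ge 0$. I would apply the continuity of the semi-flow with respect to essential convergence, Proposition~\ref{prop:mono_sf}~\ref{prop:item:sf_cont}, to the family $h_r=\phi(r,h)$, which by hypothesis essentially converges to $g$ as $r\rightarrow+\infty$. This yields $\limess_{r\rightarrow+\infty}\phi\big(s,\phi(r,h)\big)=\phi(s,g)$. On the other hand, the semi-group property~\eqref{eq:flow-gen} gives $\phi\big(s,\phi(r,h)\big)=\phi(s+r,h)$, and the essential limit of this shifted family as $r\rightarrow+\infty$ is again $g$. Comparing the two evaluations of $\limess_{r\rightarrow+\infty}\phi(s+r,h)$ forces $\phi(s,g)=g$.

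To conclude, since $\phi(\cdot,g)$ is the constant map equal to $g$ and lies in $\cc^1(\R_+)$ with $\phi'(t,g)=F(\phi(t,g))=F(g)$ by the defining ODE~\eqref{eq:intro_gen_SIS}, differentiating a constant map gives $F(g)=\zero$, so $g$ is an equilibrium.

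I expect the only delicate point to be the claim that shifting the time argument does not alter the essential limit, namely $\limess_{r\rightarrow+\infty}\phi(s+r,h)=\limess_{t\rightarrow+\infty}\phi(t,h)=g$. This should be handled by unwinding the definition in Section~\ref{sec:leb}: writing $f_t=\phi(t,h)$ and $f_t^{*}=\esssup_{\sigma\ge t}f_\sigma$, the shifted family $r\mapsto f_{r+s}$ satisfies $\esssup_{\rho\ge r}f_{\rho+s}=\esssup_{\sigma\ge r+s}f_\sigma=f_{r+s}^{*}$, so the non-increasing envelopes of the two families differ only by a time shift and hence share the same essential limit; the same argument applies to $\limessinf$, giving equality of the essential limits. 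Everything else is a direct application of the already-established properties of the semi-flow.
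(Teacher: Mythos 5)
Your proof is correct and takes essentially the same route as the paper: both identify the essential limit $g$ as a fixed point of the semi-flow by combining the continuity with respect to essential convergence (Lemma~\ref{prop:mono_sf}~\ref{prop:item:sf_cont}) with the semi-group property~\eqref{eq:flow-gen}. The only cosmetic difference is the last step, where the paper invokes the monotonicity criterion Lemma~\ref{prop:mono_sf}~\ref{prop:item:mono_F} to get $F(g)=\zero$, while you differentiate the constant trajectory $\phi(\cdot,g)$ directly; both are immediate once $\phi(s,g)=g$ is established.
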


\begin{proof}
  Let $h^*=\limess_{t \rightarrow +\infty} \phi(t, h)$. 
By Lemma~\ref{prop:mono_sf}~\ref{prop:item:sf_cont} and
by~\eqref{eq:flow-gen}, we have for all $s \in \R_+$ that:
\[
  \phi(s, h^*)=\phi(s, \limess_{t\rightarrow +\infty } \phi(t, h))
  = \limess_{t\rightarrow +\infty } \phi(s,  \phi(t, h))=
  \limess_{t\rightarrow +\infty } \phi(t, h)= h^*.
\]
Then use Lemma~\ref{prop:mono_sf}~\ref{prop:item:mono_F} to get that
$F(h^*)=0$. 
\end{proof}

\subsection{Proof of Proposition~\ref{prop:exists_max_sol}  and
Lemma~\ref{lem:monot_eq_max_para}}

\begin{proof}[Proof of Proposition~\ref{prop:exists_max_sol}]
The solution to Equation~\eqref{eq:intro_gen_SIS}  in $L^\infty $ with
initial condition in $\Delta$ is
given by the semi-flow $\phi$, see Section~\ref{sec:flow} and
Lemma~\ref{lem:global_sol} therein. This gives
Point~\ref{prop:item:unique_sol}. Point~\ref{prop:item:delta_inv} is
Lemma~\ref{lem:D-inv}.

\medskip

Since $F(\un) = \varphi(1) T(\un) - \gamma = - \gamma \leq 0$ by
Assumption~\ref{assum:1}, we get  by
Lemma~\ref{prop:mono_sf}~\ref{prop:item:mono_F} that  the semi-flow $t
\mapsto \phi(t, \un)$ is non-increasing. This implies that
$g^*=\limess_{t\rightarrow +\infty } \phi(t, \un)$ exists. By
Lemma~\ref{lem:lim_eq}, we get that $g^*$ is an equilibrium. 
Let $h \in \Delta$ be an equilibrium.
We have $h \leq \un$, thus by
Lemma~\ref{prop:mono_sf}~\ref{prop:item:order_pre_flow} we have $h =
\phi(t,h) \leq \phi(t,\un)$ for all $t \geq 0$. Taking the essential
limit, we get  $h \leq g^*$. 
This  gives Point~\ref{prop:item:g*_lim}.
\end{proof}

\begin{proof}[Proof of Lemma~\ref{lem:monot_eq_max_para}]
  For $h \in L^\infty$ and $i=1,2$, let
  $F_i(h) = \varphi_i(h) T_i(h) - \gamma_i h$ and let $\phi_i$ be the
  semi-flow of Equation~\eqref{eq:intro_gen_SIS} with the parameters
  $(T_i, \gamma_i, \varphi_i)$.  By assumption, we have
  $F_1(g^*_2) \geq F_2(g^*_2) =  0$. Thus, by
  Lemma~\ref{prop:mono_sf}~\ref{prop:item:mono_F}, the semi-flow
  $t \mapsto \phi_1(t, g^*_2)$ is non-decreasing.  By
  Lemma~\ref{lem:lim_eq}, since the  essential limit 
  $g=\limess_{t\rightarrow +\infty } \phi_1(t, g^*_2)$ exists, it belongs to
  $\Delta$ and is an equilibrium (for the parameters
  $(T_1, \gamma_1, \varphi_1)$).  As $g^*_1$
  is the maximal equilibrium, we have $g^*_1 \geq g$, and thus 
  $g^*_1 \geq g^*_2$ as the semi-flow $t \mapsto \phi_1(t, g^*_2)$ is
  non-decreasing.
\end{proof}

\section{Proof of Proposition~\ref{th:DDZ_cv_sf}}\label{sec:DDZ_cv_sf}

\subsection{On the support of the semi-flow}

The following lemma is a generalization of \cite[Lemma 4.10]{ddz-sis}
where  $T$ is assumed to be  irreducible.

\begin{lem}[Support of the semi-flow]\label{lem:sf_pos}
  Let $(T, \gamma, \varphi)$  that satisfies Assumption~\ref{assum:1}
  with $\varphi(0)>0$. Let
  $h \in  \Delta$.  We have $\supp(\phi(t,  h))=\cf(\supp(h))$ a.e.\ for
  all $t > 0$.
\end{lem}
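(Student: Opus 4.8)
The plan is to prove the two inclusions $\supp(\phi(t,h))\subset\cf(\supp(h))$ and $\cf(\supp(h))\subset\supp(\phi(t,h))$ separately; the first is immediate and the second is the heart of the matter. Throughout write $u=\phi(\cdot,h)$ and $A=\cf(\supp(h))$, which is invariant. For the easy inclusion, since $\supp(h)\subset A$ we have $\un_{A^c}h=\zero$, so Lemma~\ref{lem:sf_rest}~\ref{lem:item:rest_coinv} gives $\un_{A^c}u(t)=\phi_{A^c}(t,\zero)=\zero$, because $\zero$ is a fixed point of $F_{A^c}$; hence $\supp(u(t))\subset A$ for all $t$. I will also record a positivity fact used repeatedly: for any $g\in L^\infty_+$ one has $\supp(Tg)=\supp(T\un_{\supp(g)})$, obtained by squeezing $\varepsilon\,\un_{\{g>\varepsilon\}}\le g\le\norm{g}_\infty\un_{\supp(g)}$, applying the positive continuous operator $\tp$ and letting $\varepsilon\to0$. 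Finally, for a positive operator the invariance of $A$ (i.e.\ $M_{A^c}TM_A=0$, see~\eqref{eq:inv_supp_plus_petit}) is equivalent to $\supp(T\un_A)\subset A$.

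First I would show that the support is non-decreasing in time. For $0\le s\le t$, compare $u$ (reparametrised from time $s$) with the subsolution $w(r)=\expp{-\gamma r}\,\phi(s,h)$. Since $w\in\Delta$ and $w'-F(w)=-\varphi(w)\,Tw\le\zero=u'-F(u)$ with matching initial data, the comparison Proposition~\ref{prop:comp_th} (cooperativity coming from Lemma~\ref{lem:F_coop}, on $L^\infty\times\Delta$) yields $\phi(t,h)\ge\expp{-\gamma(t-s)}\phi(s,h)$. As $\gamma$ is bounded, $\expp{-\gamma(t-s)}>0$ a.e., so $\supp(\phi(s,h))\subset\supp(\phi(t,h))$.

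Next comes the key spreading estimate, with a time step uniform in the initial condition. Multiplying the ODE by $\expp{\gamma t}$ and using the fundamental theorem of calculus~\eqref{eq:calculus} gives the Duhamel formula
\[
u(t)=\expp{-\gamma t}h+\int_0^t\expp{-\gamma(t-s)}\,\varphi(u(s))\,Tu(s)\,\rd s .
\]
Pick $\delta>0$ and $c_0>0$ with $\varphi\ge c_0$ on $[0,\delta]$ (possible since $\varphi(0)>0$ and $\varphi$ is continuous), set $M=\sup_{g\in\Delta}\norm{F(g)}_\infty<\infty$ and $t_0=\delta/M$. For any $g\in\Delta$ and $0<t\le t_0$, the bound $\norm{\phi(t,g)-g}_\infty\le Mt\le\delta$ (from~\eqref{eq:Bocner-norm}) forces $\phi(s,g)\in[0,\delta]$ on $\supp(g)^c$ for $s\le t$, hence $\varphi(\phi(s,g))\ge c_0$ there; combined with $T\phi(s,g)\ge\expp{-\norm{\gamma}_\infty s}Tg$ (from the previous paragraph), the Duhamel formula gives, on $\supp(g)^c$,
\[
\un_{\supp(g)^c}\,\phi(t,g)\ \ge\ c_0\,t\,\expp{-\norm{\gamma}_\infty t}\,\un_{\supp(g)^c}\,Tg .
\]
Therefore $\supp(\phi(t,g))\supset\supp(g)\cup\supp(Tg)=\supp(g)\cup\supp(T\un_{\supp(g)})$, with $t_0,c_0$ independent of $g$.

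To finish, set $B_0=\supp(h)$ and $B_{n+1}=B_n\cup\supp(T\un_{B_n})$; then $B_\infty=\bigcup_n B_n$ is invariant (since $\un_{B_n}\nearrow\un_{B_\infty}$ forces $\supp(T\un_{B_\infty})=\bigcup_n\supp(T\un_{B_n})\subset B_\infty$) and is the smallest invariant set containing $\supp(h)$ by an easy induction, so $B_\infty=A$. Now fix $t>0$ and $n\in\N$, choose an integer $m\ge\max(n,t/t_0)$, subdivide $[0,t]$ into $m$ steps of length $\tau=t/m\le t_0$, and iterate the spreading estimate along the semiflow via the semigroup property~\eqref{eq:flow-gen}: induction gives $\supp(\phi(k\tau,h))\supset B_k$, whence $\supp(\phi(t,h))\supset B_m\supset B_n$. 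Letting $n\to\infty$ yields $\supp(\phi(t,h))\supset A$, which together with the first inclusion proves the claim. The main obstacle is precisely this last point: naive iteration of the one-step spread only fills $B_n$ after time $nt_0$, which for a fixed $t$ could leave the support strictly inside the future; the resolution is the subdivision into arbitrarily many short steps, made possible by the uniformity of $t_0$, so that the support reaches the entire future $\cf(\supp(h))$ in every positive time.
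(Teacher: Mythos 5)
Your proof is correct, and it takes a genuinely different route from the paper's. The paper compares the semi-flow from below with the \emph{linear} evolution $u(t)=\expp{t(aT-\gamma)}f$, $f=\eta h/2$, where $a,\eta\in(0,1)$ are chosen so that $\varphi> a$ on $[0,\eta]$: on a short time interval this linear flow is a subsolution, so $u(t)\leq\phi(t,h)$ by Proposition~\ref{prop:comp_th}, and the identification $\supp(u(t))=\cf(\supp(h))$ for all $t>0$ is then \emph{quoted} from~\cite[Corollary~5.7]{dlz} (the analogous instantaneous-spreading statement for positive semigroups), the semi-flow property propagating the conclusion to all $t>0$. You never introduce the linear semigroup: instead you prove a one-step spreading estimate $\supp(\phi(t,g))\supset\supp(g)\cup\supp(T\un_{\supp(g)})$, valid for all $g\in\Delta$ and all $t\in(0,t_0]$ with $t_0$ \emph{uniform} in $g$ (via the Duhamel formula, the lower bound $\phi(s,g)\geq\expp{-\gamma s}g$ from the comparison theorem, and $\varphi\geq c_0$ near $0$); you then build the future explicitly as the increasing union $B_\infty=\bigcup_n B_n$ and reach it within an arbitrary time $t>0$ by subdividing $[0,t]$ into many steps of length at most $t_0$ — which is exactly the right fix for the obstacle you identify at the end. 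The two arguments share the same skeleton (cooperativity/comparison, plus Lemma~\ref{lem:sf_rest} for the easy inclusion), but yours is self-contained where the paper's delegates the spreading phenomenon to the linear theory of~\cite{dlz}; in effect your $B_n$-iteration re-proves that linear result at the nonlinear level (for $\expp{tQ}$ the same filtration of sets appears through the supports of the terms of the exponential series). The price is extra measure-theoretic bookkeeping: the facts $\supp(Tg)=\supp(T\un_{\supp(g)})$, $\supp(T\un_{B_\infty})=\bigcup_n\supp(T\un_{B_n})$, and the passage of monotone limits through $T$ all require the $L^p$-extension $\tp$ and dominated convergence, as you indicate, and these are indeed available under Assumption~\ref{assum:1} thanks to~\eqref{eq:T-in-Linfty} and the finiteness of $\mu$, so nothing is missing.
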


\begin{proof}
  Since $\varphi(0)>0$,  there exists
  $a,\eta\in (0,  1)$ such that  $a-\varphi(r)<0$ for all $r\in  [0, \eta]$.
  Notice  the   operator  $Q=aT  -\gamma+  \norm{\gamma}_\infty   $  on
  $L^\infty $  is positive and that  the invariant sets for  $Q$ and $T$
 are the same.
 Set $f=\eta h/2$  and $A=\supp(h)=\supp(f)$.  There
  exists $c > 0$ small enough such that for all $t \in [0,c]$:
\begin{equation}\label{proof:eq:exp(t||T||)}
\exp({at \norm{T}_{L^\infty} } )< 2.
\end{equation}
Set for  $t \geq 0$:
\begin{equation}\label{proof:eq:def:u(t)}
u(t) = \expp{t(a  T-\gamma)} f= \expp{- \norm{\gamma}_\infty  t} \,
\expp{tQ} f.
\end{equation}
By \cite[Corollary 5.7]{dlz}  applied to the operator $Q$,  we get that
$\supp(u(t))=\supp(u(t)\expp{\norm{\gamma}_\infty \,     t})=\cf(A)$    for    $t>0$.
Differentiating~\eqref{proof:eq:def:u(t)} leads to:
\begin{equation}\label{proof:eq:u'-Fu}
u'(t) - F(u(t)) = (a - \varphi(u(t))) T u(t).
\end{equation}
We deduce from~\eqref{proof:eq:exp(t||T||)}
and~\eqref{proof:eq:def:u(t)} that
$\norm{u(t)}_\infty <\eta$ for  $t\in [0, c]$, and thus, by definition
of $a$ and  $\eta$, that $ u'(t) - F(u(t))\leq 0$ for  $t\in [0, c]$. Then, since
$u(0)=f\leq  h$, 
Theorem~\ref{prop:comp_th} implies that $u(t)\leq \phi(t, h)$ 
for  $t \in [0,c]$, and thus $\cf(A)\subset \supp(\phi(t, h))$ for
$t\in (0, c]$. Then, use the semi-flow equation~\eqref{eq:flow-gen} to
propagate the result to all $t>0$.

We deduce from Lemma~\ref{lem:sf_rest}~\ref{lem:item:rest_inv} that
$\supp(\phi(t,h)) =\supp(\phi_{\cf(A)}(t,h)) \subset \cf(A)$ for all $t\geq
0$. This gives that  
$\supp(\phi(t,h)) = \cf(A)$ for all $t> 0$. 
\end{proof}

\subsection{Preliminary results on the spectral radius and bound}

We refer to \cite[Section 3]{thieme09} for results on the spectral bound
on    Banach     lattices    defined     by~\eqref{eq:def-s(T)}.

\begin{lem}[Spectral radius and spectral bound]
  \label{lem:rad_bound}
  Let $(T, \gamma, \varphi)$  that satisfy Assumption~\ref{assum:2}. Let
  $\delta :  \Omega \rightarrow  \R$ be  a measurable  positive function
  with  $\delta  \geq  \gamma$  and  $\essinf \delta  >  0$.   Then  the
  quantities $s(T - \delta)$, $s(\tp - \delta)$,
  $\rho(\tdi)-1$ and  $\rho(\tg{\delta})-1$   have
  the same sign.
\end{lem}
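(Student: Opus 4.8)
The plan is to connect the four quantities through two intermediate facts: first, that the two spectral radii coincide \emph{exactly}, $\rho(\tdi)=\rho(\tg{\delta})$; and second, that each spectral bound carries the sign of the corresponding $\rho(\cdot)-1$. Stringing together a sign-equivalence on $L^\infty$, the exact equality of the radii, and a sign-equivalence on $L^p$ then forces all four numbers to share their sign. So the proof splits into one compactness/consistency argument and two applications of the spectral-bound theory for resolvent-positive operators.

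For the equality of the radii I would mimic Lemma~\ref{lem:T}. Since $0<\gamma\leq\delta$ a.e., we have $\gamma/\delta\leq 1$ a.e., so Condition~\eqref{eq:T/gamma-in-Lp} from Assumption~\ref{assum:2} yields $\norm{T(f/\delta)}_\infty\leq C'_p\,\norm{(\gamma/\delta)f}_p\leq C'_p\,\norm{f}_p$ for $f\in L^\infty$. Hence $\tdi=T_{1/\delta}$ extends to a bounded map $\tt_{1/\delta}\colon L^p\to L^\infty$ with $\tdi=\tt_{1/\delta}\,\iota$ and $\tg{\delta}=\iota\,\tt_{1/\delta}$. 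By Lemma~\ref{lem:cpct-2} the operator $\tg{\delta}$ is compact on $L^p$, and $\tdi^{\,2}=\tt_{1/\delta}\,\tg{\delta}\,\iota$ is compact on $L^\infty$, so $\tdi$ is power compact. The two operators are consistent on the compatible pair $(L^\infty,L^p)$, so the spectrum argument used in Lemma~\ref{lem:T}~\ref{item:spec} (Davies, extended to power-compact operators) gives $\spec(\tdi)=\spec(\tg{\delta})$, and in particular $\rho(\tdi)=\rho(\tg{\delta})$.

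For the two sign-equivalences I would invoke the spectral-bound theory of \cite[Section~3]{thieme09}. On either $L^\infty$ or $L^p$, the operator $B=-M_\delta$ is resolvent positive with $s(B)=-\essinf\delta<0$ and positive inverse $(-B)^{-1}=M_{1/\delta}$ (bounded since $\essinf\delta>0$), while $A=T$ (resp.\ $A=\tp$) is positive; thus $T-\delta=A+B$ (resp.\ $\tp-\delta$) is resolvent positive and $-AB^{-1}=\tdi$ (resp.\ $\tg{\delta}$). The theorem then gives that $s(A+B)$ is negative, zero or positive according as $\rho(-AB^{-1})$ is less than, equal to, or greater than $1$; that is, $s(T-\delta)$ has the sign of $\rho(\tdi)-1$ and $s(\tp-\delta)$ has the sign of $\rho(\tg{\delta})-1$. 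The mechanism behind this is the factorization $M_\delta-T=(I-\tdi)\,M_\delta$ (valid as $M_{1/\delta}M_\delta=I$), which identifies $0\in\spec(T-\delta)$ with $1\in\spec(\tdi)$; combined with the fact that the spectral bound of a resolvent-positive operator is a dominant real spectral value and with the monotone dependence of $\rho(TM_{1/(\lambda+\delta)})$ on $\lambda$, this locates the threshold. With $\rho(\tdi)=\rho(\tg{\delta})$ from the previous step, the four quantities now all share a sign.

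The main obstacle I anticipate is the $L^\infty$ case: $L^\infty$ is an AM-space on which the multiplication semigroup generated by $-M_\delta$ need not be strongly continuous (in particular when $\delta$ is unbounded), so one cannot argue through a positive $C_0$-semigroup and must instead verify directly that $-M_\delta$ is resolvent positive with normal cone, as Thieme's framework requires, working with the resolvent $M_{1/(\lambda+\delta)}$ for $\lambda>-\essinf\delta$. A secondary delicate point is the equality (threshold) case: to conclude $s(T-\delta)=0$ \emph{exactly} when $\rho(\tdi)=1$ one needs the strict monotonicity of $\lambda\mapsto\rho(TM_{1/(\lambda+\delta)})$ where it is positive, which follows from Schwartz's decomposition (Theorem~\ref{theo:rappel}) together with the strict spectral-radius monotonicity on each irreducible atom; this is precisely what the resolvent-positive sign theorem of \cite{thieme09} packages for us, which is why I route the argument through that citation rather than redoing the $\lambda$-analysis by hand.
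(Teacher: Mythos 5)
Your proposal is correct and takes essentially the same route as the paper: the paper likewise first obtains $\spec(\tdi)=\spec(\tg{\delta})$ (hence equal radii) by applying Lemma~\ref{lem:T}~\ref{item:spec} with $\gamma$ replaced by $\delta$, and then gets the two sign equivalences by mimicking \cite[Proposition~4.1]{ddz-sis}, which is exactly the resolvent-positive spectral-bound theorem of \cite{thieme09} that you invoke with $A=T$ (resp.\ $A=\tp$) and $B=-M_\delta$, using that the cones are normal and reproducing and that $\tg{\delta}$ and $\tdi$ are respectively compact and power compact. Your re-derivation of the key bound from $\gamma/\delta\leq \un$ is, if anything, slightly more careful than the paper's one-line claim that $(T,\delta,\varphi)$ satisfies Assumption~\ref{assum:2}, since it also covers unbounded $\delta$.
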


\begin{proof}
  Notice     that    $(T,     \delta,    \varphi)$     also    satisfies
  Assumption~\ref{assum:2}.  By  Lemma~\ref{lem:T}~\ref{item:spec}  with
  $\gamma$      replaced     by      $\delta$,     we      have     that
  $\rho(\tg{\delta})=\rho(\tdi)$.   So  it  is   enough  to  prove  that
  $s(\tp -  \delta)$ and $\rho(\tg{\delta})-1$  have the same  sign, and
  that $s(T  - \delta)$ and  $\rho(\tdi)-1$ have the same  sign.  This
  can be done by  mimicking the proof of~\cite[Proposition 4.1]{ddz-sis}
  based on~\cite{thieme09}, noticing that the cone $L^p_+$ is normal and
  reproducing for $p\in [1, +\infty  ]$, and that, by Lemma~\ref{lem:T},
  the operators $\tg{\delta}$ and $\tdi$ 
are respectively 
 compact 
on  $L^p$ and   power compact  on $L^\infty $, and that the  linear
maps
$\tp   -  \delta$   and  $T   -  \delta$  are operators
respectively on 
$L^p$ and
$L^\infty $. 
\end{proof}

Adapting the proof of \cite[Proposition 4.2]{ddz-sis} on kernel operators, we provide
a weaker link between $\rho ( \tg{\gamma}) -1$ and $s(\tp -
\gamma)$ without the condition $\essinf \gamma > 0$. 

\begin{prop}[Positive spectral bound and Krein-Rutman theorem]\label{prop:rad_vpd}
Let $(T, \gamma, \varphi)$ that satisfy Assumption~\ref{assum:2}.
Then the following assertions are equivalent:
\begin{enumerate}[(i)]
\item \label{prop:item:bound}
  $s( T - \gamma) > 0$ or equivalently $s(\tp - \gamma) > 0$. 
	\item  \label{prop:item:R_0>1}
          $\rho (\tgi) > 1$ or equivalently  $\rho(\tgp)>1$.
	\item There exists $\lambda > 0$ and $w \in L^{\infty}_+
          \priv{\zero}$ such that we have $Tw - \gamma w = \lambda
          w$. \label{prop:item:exists_vpd} 
\end{enumerate}
\end{prop}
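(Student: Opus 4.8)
Throughout I will work with the positive compact operators $\tp_{1/(\gamma+\lambda)}=\tp\,M_{1/(\gamma+\lambda)}$ on $L^p$ for $\lambda\ge 0$ (with the convention $\tp_{1/(\gamma+0)}=\tgp$), which are genuine compositions for $\lambda>0$ since $1/(\gamma+\lambda)\in L^\infty$, and compact by Lemma~\ref{lem:T}~\ref{item:T}. I set
\[
  f(\lambda)=\rho\big(\tp_{1/(\gamma+\lambda)}\big),\qquad \lambda\ge 0,
\]
so that $f(0)=\rho(\tgp)=\rho(\tgi)$ by Lemma~\ref{lem:T}~\ref{item:spec}; this already gives the ``equivalently'' part of~\ref{prop:item:R_0>1}. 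Since $1/(\gamma+\lambda)$ is non-increasing in $\lambda$ and $\tp$ is positive, $\lambda\mapsto \tp_{1/(\gamma+\lambda)}$ is a non-increasing family of positive operators bounded above by $\tgp$, hence $f$ is non-increasing with $f(\lambda)\le\rho(\tgp)$ by~\eqref{eq:spec_rad_croissant}; moreover $\|\tp_{1/(\gamma+\lambda)}\|_{L^p}\le \|\tp\|_{L^p}/\lambda$ shows $f(\lambda)\to 0$ as $\lambda\to+\infty$. The plan is to prove $\ref{prop:item:bound}\Leftrightarrow\ref{prop:item:R_0>1}$, then $\ref{prop:item:R_0>1}\Rightarrow\ref{prop:item:exists_vpd}$, and finally $\ref{prop:item:exists_vpd}\Rightarrow\ref{prop:item:bound}$.

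\textbf{Continuity of $f$ and the equivalence $\ref{prop:item:bound}\Leftrightarrow\ref{prop:item:R_0>1}$.} First I claim $f$ is continuous on $[0,+\infty)$. Every operator $\tp_{1/(\gamma+\lambda)}$ is dominated by the compact operator $\tgp$, so all of their images of the unit ball lie in the solid hull of the relatively compact set $\tgp(B)$; since $L^p$ has order continuous norm, this solid hull is relatively compact (see~\cite{aliprantis06}), so the family $(\tp_{1/(\gamma+\lambda)})_{\lambda\ge 0}$ is collectively compact. For strong continuity, $\big\|\big(\tp_{1/(\gamma+\mu)}-\tp_{1/(\gamma+\lambda)}\big)x\big\|_p\to 0$ as $\mu\to\lambda$ for every $x\in L^p$ by dominated convergence at interior points, while at $\lambda=0$ the bound $\|Tg\|_\infty\le C'_p\|\gamma g\|_p$ from~\eqref{eq:T/gamma-in-Lp} applied to $g=x\,\lambda/\big(\gamma(\gamma+\lambda)\big)$ gives $\tp_{1/(\gamma+\lambda)}x\to\tgp x$ for $x\in L^\infty$, and then for all $x$ by the uniform bound $\|\tp_{1/(\gamma+\lambda)}\|\le\|\tgp\|$. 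Lemma~\ref{lem:coll-K} then yields continuity of $f$, in particular $\lim_{\lambda\downarrow 0}f(\lambda)=\rho(\tgp)$. On the other hand, for each $\lambda>0$ we have $\essinf(\gamma+\lambda)\ge\lambda>0$, so Lemma~\ref{lem:rad_bound} applied with $\delta=\gamma+\lambda$ shows that $f(\lambda)-1$ has the same sign as $s(T-\gamma)-\lambda$ and as $s(\tp-\gamma)-\lambda$. Combining: $\rho(\tgp)=\sup_{\lambda>0}f(\lambda)>1$ iff $f(\lambda)>1$ for some $\lambda>0$ iff $s(T-\gamma)>\lambda$ for some $\lambda>0$ iff $s(T-\gamma)>0$, and identically with $s(\tp-\gamma)$. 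This gives $\ref{prop:item:bound}\Leftrightarrow\ref{prop:item:R_0>1}$ together with the ``equivalently'' part of~\ref{prop:item:bound}.

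\textbf{Producing the eigenfunction: $\ref{prop:item:R_0>1}\Rightarrow\ref{prop:item:exists_vpd}$.} Assume $\rho(\tgp)=f(0)>1$. Since $f$ is continuous with $f(0)>1>0=\lim_{+\infty}f$, the intermediate value theorem gives $\lambda_0>0$ with $f(\lambda_0)=\rho(\tp_{1/(\gamma+\lambda_0)})=1$. Applying the Krein--Rutman theorem (Theorem~\ref{theo:rappel}~\ref{th:item:KR}) to the positive compact operator $\tp_{1/(\gamma+\lambda_0)}$ produces $v_0\in L^p_+\priv{\zero}$ with $\tp_{1/(\gamma+\lambda_0)}v_0=v_0$. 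Because $\tp=\iota\,\tt$ with $\tt\colon L^p\to L^\infty$ (see the proof of Lemma~\ref{lem:T}), the range of $\tp_{1/(\gamma+\lambda_0)}$ lies in $\iota(L^\infty)$, whence $v_0=\tp_{1/(\gamma+\lambda_0)}v_0\in L^\infty_+$. Now set $w=v_0/(\gamma+\lambda_0)$; the key point is that $\gamma+\lambda_0\ge\lambda_0>0$ is bounded away from $0$, so $w\in L^\infty_+\priv{\zero}$. Then, since $w\in L^\infty$,
\[
  Tw=T\!\left(\frac{v_0}{\gamma+\lambda_0}\right)=\tp_{1/(\gamma+\lambda_0)}v_0=v_0=(\gamma+\lambda_0)\,w,
\]
that is $Tw-\gamma w=\lambda_0 w$, which is~\ref{prop:item:exists_vpd}.

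\textbf{Closing the loop: $\ref{prop:item:exists_vpd}\Rightarrow\ref{prop:item:bound}$.} If $Tw-\gamma w=\lambda w$ with $\lambda>0$ and $w\in L^\infty_+\priv{\zero}$, then $w$ is an eigenfunction of the operator $T-M_\gamma$ on $L^\infty$ for the eigenvalue $\lambda$, so $\lambda\in\spec(T-M_\gamma)$ and $s(T-\gamma)\ge\Reel(\lambda)=\lambda>0$; since $w\in L^\infty\subset L^p$ and $T-M_\gamma$, $\tp-M_\gamma$ are consistent, the same relation gives $s(\tp-\gamma)\ge\lambda>0$. This establishes~\ref{prop:item:bound} and completes the cycle $\ref{prop:item:exists_vpd}\Rightarrow\ref{prop:item:bound}\Leftrightarrow\ref{prop:item:R_0>1}\Rightarrow\ref{prop:item:exists_vpd}$, proving the three assertions equivalent.

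I expect the genuinely delicate step to be the continuity of $f$ at $\lambda=0$, i.e.\ $\lim_{\lambda\downarrow0}\rho(\tp_{1/(\gamma+\lambda)})=\rho(\tgp)$: when $\essinf\gamma=0$ the convergence $\tp_{1/(\gamma+\lambda)}\to\tgp$ holds only strongly and \emph{not} in operator norm, so one cannot simply invoke norm-continuity of the spectral radius. This is exactly where the collective-compactness machinery (Lemma~\ref{lem:coll-K}) combined with the relative compactness of the solid hull in the order-continuous lattice $L^p$ is essential. By contrast, once $\lambda_0>0$ is secured, the final construction is easy precisely because $\gamma+\lambda_0$ is bounded below, which is what places the eigenfunction $w$ in $L^\infty$ rather than merely in $L^0$.
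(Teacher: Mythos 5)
Your overall route is the paper's own: the shifted operators $V_\lambda=\tp M_{1/(\gamma+\lambda)}$, the sign comparison via Lemma~\ref{lem:rad_bound}, continuity of $\lambda\mapsto\rho(V_\lambda)$ via collective compactness and Lemma~\ref{lem:coll-K}, the intermediate value theorem plus Krein--Rutman plus the $L^\infty$ bootstrap for \ref{prop:item:R_0>1}$\Rightarrow$\ref{prop:item:exists_vpd}, and the immediate implication \ref{prop:item:exists_vpd}$\Rightarrow$\ref{prop:item:bound}. There is, however, one genuine gap, and it sits exactly at the step you yourself flag as delicate: your justification of collective compactness. You argue that each $V_\lambda$ is dominated by the compact operator $\tgp$, hence $\bigcup_\lambda V_\lambda(B)$ lies in the solid hull of $\tgp(B)$, and that ``since $L^p$ has order continuous norm, this solid hull is relatively compact.'' That principle is false. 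In $L^p([0,1])$ the solid hull of the singleton $\{\un\}$ is the order interval $[-\un,\un]$, which contains the Rademacher functions $r_n$ and satisfies $\norm{r_n-r_m}_p^p=2^{p-1}$ for $n\neq m$, so it is not relatively norm compact; order continuity of the norm yields weak compactness of order intervals, not norm compactness. The operator-level version of your claim fails for the same reason: with $Tf=\mu(f)\,\un$ and $S_nf=\mu(f)\,r_n^+$ on $L^p([0,1])$ one has $0\leq S_n\leq T$ with $T$ compact, each $S_n$ is compact (consistent with Lemma~\ref{lem:hp_2_cpct}), yet $S_n\un=r_n^+$ shows $\bigcup_n S_n(B)$ is not relatively compact. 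So domination by a single compact operator does not give collective compactness, and the hypothesis of Lemma~\ref{lem:coll-K} is not established; without it, both the continuity of $f$ at $0$ (your direct \ref{prop:item:R_0>1}$\Rightarrow$\ref{prop:item:bound}) and the continuity on $(0,\infty)$ needed for the IVT step in \ref{prop:item:R_0>1}$\Rightarrow$\ref{prop:item:exists_vpd} are unsupported.

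The gap is repairable, and the repair is exactly what the paper does: collective compactness comes not from domination but from the multiplicative structure of the family. For $f$ in the unit ball $B$ of $L^p$ and $\lambda\geq 0$, write $f/(\gamma+\lambda)=g/\gamma$ with $g=f\gamma/(\gamma+\lambda)$; since $|g|\leq|f|$ we have $g\in B$, so $V_\lambda f=\tg{\gamma}\,g$ and therefore $\bigcup_{\lambda\geq 0}V_\lambda(B)\subset\tg{\gamma}(B)$, which is relatively compact because $\tg{\gamma}$ is compact by Lemma~\ref{lem:T}~\ref{item:tg}. (Equivalently, $V_\lambda=\tg{\gamma}\,M_{\gamma/(\gamma+\lambda)}$ and $M_{\gamma/(\gamma+\lambda)}(B)\subset B$.) One further minor point: at $\lambda=0$ you apply~\eqref{eq:T/gamma-in-Lp} to $g=x\lambda/\big(\gamma(\gamma+\lambda)\big)$, which need not belong to $L^\infty$ when $\essinf\gamma=0$; this is harmless because the inequality passes to the density extension, i.e.\ $\norm{\tt_{1/\gamma}h}_\infty\leq C'_p\norm{h}_p$ for all $h\in L^p$, but it should be said. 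With these two repairs the remainder of your proof --- strong continuity by dominated convergence, Lemma~\ref{lem:coll-K}, the IVT, Krein--Rutman, the regularity bootstrap $v_0\in L^\infty$ via the factorization $\tp=\iota\,\tt$, and the closing step \ref{prop:item:exists_vpd}$\Rightarrow$\ref{prop:item:bound} --- is correct and coincides with the paper's argument.
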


\begin{proof}
  Recall Assumption~\ref{assum:2} holds and $p\in (1, +\infty )$.  Since
  $s(A-(\gamma+\varepsilon))=s(A     -\gamma)      -\varepsilon$     for
  $\varepsilon\in \R$  and $A$  equal to  $T$ or  $\tp$, we  deduce from
  Lemma~\ref{lem:rad_bound}     that    the     two    conditions     in
  Point~\ref{prop:item:bound}  are  equivalent.   We  also  deduce  from
  Lemma~\ref{lem:T}~\ref{item:spec}   that   the   two   conditions   in
  Point~\ref{prop:item:R_0>1} are equivalent. So, we shall only consider
  the second ones.  It is immediate that Point~\ref{prop:item:exists_vpd}
  implies  Point~\ref{prop:item:bound}  as  $L^\infty \subset  L^p$  and
  $T$ and $\tp$ coincide on $L^\infty $.

\medskip

We       assume        Point~\ref{prop:item:bound}       and       prove
Point~\ref{prop:item:R_0>1}.    For   any   $a\in \R_+$,   we   denote
$\psi(a)  =  \rho(V_a)$  with $V_a=\tg{(\gamma+a)}$.   Notice  that
$V_a= \tp M_{1/(\gamma+a)}$ for $a>0$. By
Assumption~\ref{assum:2},  the operator  $V_a$ on $L^p$ is positive
 and that $V_a \geq V_b$ for $0\leq a\leq b$.  Thus the
map $\psi$ is non-increasing on $\R_+$ by~\eqref{eq:spec_rad_croissant}.
By Point~\ref{prop:item:bound}, there exists $\varepsilon > 0$ such that
$s(\tp   -  (\gamma   +  \varepsilon))   =s(\tp   -  \gamma)
-\varepsilon>  0$,   therefore  we  have  $\psi(\varepsilon)   >  1$  by
Lemma~\ref{lem:rad_bound} applied  to $\delta  = \gamma  + \varepsilon$.
We   thus   get   $\psi(0)   =  \rho(\tg{\gamma})   >   1$,   that   is
Point~\ref{prop:item:R_0>1}.

\medskip

We     assume     Point~\ref{prop:item:R_0>1}    and     prove     Point
\ref{prop:item:exists_vpd}.  By   Point~\ref{prop:item:R_0>1},  we  have
$\psi(0)    >    1$.     As    for    all   $a    >    0$,    we    have
$\psi(a) \leq  \norm{V_a}_{L^p} \leq a^{-1}\, \norm{\tp}_{L^p}$. 
  We  deduce  that 
$\lim_{a\rightarrow \infty }\psi(a) =0$.

We now prove that $\psi$ is continuous on $\R_+$.
Let $B$ denote the unit ball
of $L^p$. Notice that $M_{1/(\gamma+a)}(B) \subset M_{1/\gamma}
(B)$ for $a\in \R_+$ and thus $\bigcup_{a\in \R_+}
V_a(B)=V_0(B)=\tg{\gamma} (B)$ is relatively compact in $L^p$, and thus the family
$(V_a)_{a\in \R_+}$ is collectively compact.
Thanks to Lemma~\ref{lem:coll-K},  the continuity of $\psi$ holds if 
$  \lim_{|a-b| \rightarrow 0} \norm{(V_a -V_b)f}_p=0$ 
for any $f\in
L^p$. This is indeed the case as, for $f\in L^p$, we have:
\[
  \norm{(V_a -V_b)f}_p=\norm{ \tg{\gamma} \left(\frac{(b-a)\gamma}{(\gamma+a)
    (\gamma+b)} f\right)}_p
\leq \norm{ \tg{\gamma}}_{L^p}  \norm{ \frac{(b-a)\gamma}{(\gamma+a)
    (\gamma+b)} f}_p, 
\]
and the right members goes to $0$ as $|a-b|$ goes to $0$ 
using  $|b-a|\gamma/(\gamma+a)
(\gamma+b) \leq  1$ and  dominated convergence. In conclusion, the
function  $\psi$ is continuous on $\R_+$. 

Since
$\psi(0) > 1$ and 
$\lim_{a\rightarrow \infty }\psi(a) =0$, we deduce from  the continuity of $\psi$, 
that 
there exists  $\lambda > 0$ such that $\psi(\lambda) = 1$.  Thus by
the Krein-Rutman Theorem~\ref{theo:rappel}~\ref{thm:KR}  applied to the positive compact
operator $V_\lambda $ on $L^p$, there exists
$v \in L^p_+ \priv{\zero}$ such that 
$V_\lambda v = v$. Thanks to~\eqref{eq:T/gamma-in-Lp}, we have 
$\norm{v}_\infty =\norm{V_\lambda v}_\infty\leq
\norm{T(v/\gamma)}_\infty \leq C'_p \, \norm{v}_p$, 
we deduce that $v$
belongs also to $L^\infty $. 
Setting 
$w = v/(\gamma + \lambda)\in L^\infty _+ \priv{\zero}$, we get that 
$\tp w - \gamma w = \lambda w$.  As $\tp$ and $T$ coincide on $L^\infty $, we get 
Point~\ref{prop:item:exists_vpd}.
\end{proof}

\subsection{Proof of Proposition~\ref{th:DDZ_cv_sf}~\ref{th:item:R0<1}}
Let $g$ be  a non-zero equilibrium. 
By
Assumption~\ref{assum:2}, $\varphi(g) < 1$ on $\supp(g)$ (as $\varphi(r)
<1$ for $r\in (0, 1]$).
Recall
 the operator $S=\tg{\gamma}$ is compact on $L^p$,
 see Lemma~\ref{lem:T}~\ref{item:tg}. Since $g\in \Delta$ is an
 equilibrium, we obtain that:
\[
S (\gamma  g)  =  \frac{\gamma   g}{\varphi(g)}> \gamma g
  \quad\text{on}\quad \supp(g).
\]
We deduce from 
Lemma~\ref{lem:rad>1}~\ref{lem:item:superstrict}, with $\lambda=1$, 
and~\eqref{eq:spec_rad_croissant} that
$R_0 =\rho(S)\geq    \rho   \left(S_{\supp(g)}   \right)
>1$.   In other  words, $R_0\leq  1$ implies
that $\zero$ is the only equilibrium. The last part of Point~\ref{th:item:R0<1}
is a consequence of
Proposition~\ref{prop:exists_max_sol}~\ref{prop:item:g*_lim} and the
monotonicity of the semi-flow from
Lemma~\ref{prop:mono_sf}~\ref{prop:item:order_pre_flow}. 

\subsection{Proof of Proposition~\ref{th:DDZ_cv_sf}~\ref{th:item:R0>1}}
We  assume  that  we  have   $R_0  >  1$.   Similarly  to~\cite[Section
4.4]{ddz-sis},  we  will prove  that  there  exists a  non-zero  initial
condition  $w  \in \Delta$  such  that  the  semi-flow $\phi(.,  w)$  is
non-decreasing.  As $R_0 > 1$,  there exists $a \in (0, 1)$ such
that  $\rho \left( a\tg{\gamma} \right) >  1$.  Thus, by
Proposition~\ref{prop:rad_vpd} (with  $ (a T, \gamma,  \varphi)$), there
exists $\lambda >  0$ and $w \in L^\infty_+ \priv{\zero}$  such that
$a Tw -  \gamma w = \lambda w$.  By  Assumption~\ref{assum:2}, the map
$\varphi$  is  continuous  with  $\varphi(0) =  1$;  thus  there  exists
$\eta  \in  (0,1)$  such  that  for   all  $r  \in  [0,\eta]$,  we  have
$\varphi(r)  \geq  a$.   Without  loss of  generality,  we  assume  that
$\norm{w}_\infty \leq \eta$, and thus $w \in \Delta$.  We deduce that:
\[
  F(w) = \varphi(w) Tw - \gamma  w \geq aTw - \gamma w
  =          \lambda          w           \geq          0.
\]
By   Lemma~\ref{prop:mono_sf}~\ref{prop:item:mono_F},    the   semi-flow
$t  \mapsto  \phi(t,  w)$  is  thus non-decreasing  on  $\R_+$  and  its
essential   limit,  say   $g$,   exists and belongs to $\Delta$.  It   is   an  equilibrium   by
Lemma~\ref{lem:lim_eq}. Let $g^*$ denote  the maximal equilibrium. As we
have $g^*\geq  g \geq w$  and $\mu(\supp(w)) >  0$, we   deduce that
$\mu(\supp(g^*)) > 0$.

\subsection{Proof of Proposition~\ref{th:DDZ_cv_sf}~\ref{th:item:R0>1-irr}}

We now assume  that $T = T_A$  with $A$ an irreducible  set.  Notice the
set $A$ is invariant and thus admissible; and it has positive measure as
$R_0>0$.  It is  thus a  (non-zero) atom  by~\cite[Theorem~1]{dlz}.  Let
$g$  be  a non-zero  equilibrium.    We  have  $\supp(g)  \subset  A$
by~\eqref{eq:gen_equilibre}.    Since  $\supp(g)$   is  invariant,   see
Lemma~\ref{lem:eq_eigenf}~\ref{lem:item:supp_inv}, and  $A$ is  an atom,
we  deduce  that  $\supp(g)=A$.  Then   use  that  the  support  of  $g$
characterizes $g$, see Corollary~\ref{cor:supp-equi=} to deduce that $g$
is the only non-zero equilibrium.

 \subsection{Proof of Proposition~\ref{th:DDZ_cv_sf}~\ref{th:item:R0>1-irr-lim}}
 By Point~\ref{th:item:R0>1-irr}, we have $\supp(g^*) = A$.
 On   $A^c$,  we   have   $\phi(t,h)'=  -\gamma   \phi(t,h)$,  so   that
 $\limess_{t\rightarrow \infty } \phi(t,  h) \ind{A^c}=\zero$.  So it is
 enough to prove  the result when $\supp(h)\subset  \supp(g^*) = A$. As $\supp(h)$ is a non-empty set included in the invariant and irreducible set $A$, its future is equal to $A$. Thus, by
 Lemma~\ref{lem:sf_pos} and considering $\phi(1, h)$ instead of $h$, one
 can assume without loss of generality that $\supp(h) = \supp(g^*) = A$.

 For    $\varepsilon\in   (0,    1]$,   we    consider   the    operator
 $U_\varepsilon=\varphi(\varepsilon)      M_{\{h\geq      \varepsilon\}}
 \tg{\gamma}$     on     $L^p(\mu)$,    and     set     $U_0=M_{\{h>0\}}
 \tg{\gamma}$.    Let $B$ be  the unit ball in
 $L^p$.       Since      $\tg{\gamma}$     is      compact      and
 $\lim_{\varepsilon  \rightarrow 0}\varphi(  \varepsilon) \ind{\{h  \geq
   \varepsilon\}}   =    \ind{\{h>   0\}}$   a.e.,   we    deduce   that
 $\bigcup  _{\varepsilon\in  [0,  1]}  U_\varepsilon(B)$  is  relatively
 compact.        Thus        the       family        of        operators
 $(U_\varepsilon)_{\varepsilon\in [0,1]}$ is collectively compact.
 By dominated convergence, we also get that
$\lim_{\varepsilon\rightarrow 0} \norm{(U_\varepsilon -U_0) f}_p=0$. We
deduce from Lemma~\ref{lem:coll-K} that the map $\varepsilon \mapsto \rho(U_\varepsilon)$ is
continuous at $0$. Thus, there exists $\varepsilon \in (0,1)$ such that  $\rho (U_\varepsilon) > 1$.
Notice that $( \varphi(\varepsilon) \ind{\{h \geq
   \varepsilon\} }T, \gamma, \varphi)$ satisfies Assumption~\ref{assum:2}.
   By Proposition~\ref{prop:rad_vpd} with $T$ replaced by
   $\varphi(\varepsilon)  \ind{\{h \geq
   \varepsilon\}} T$,  there exists $\lambda > 0$ and $w \in
   L^\infty_+ \priv{\zero}$ such that:
\begin{equation}
  \label{eq:wA}
  \varphi(\varepsilon)\ind{\{h \geq \varepsilon\}} T w - \gamma w= \lambda w.
\end{equation}
Without loss of generality, we can assume that $\norm{w}_\infty \leq
\varepsilon$. We also have:
\begin{equation}
  \label{eq:wA=0}
  \supp(w) \subset \{h\geq  \varepsilon\} \subset A.
\end{equation}
Using~\eqref{eq:wA}
we get that:
\begin{equation}
   \label{eq:F(w)>0}
F(w)=\varphi(w) Tw - \gamma w\geq  \varphi(\varepsilon)  Tw - \gamma w
\geq \lambda w\geq 0.
 \end{equation}
 We deduce that the map $t \mapsto \phi(t,w)$ is non-decreasing by
 Lemma~\ref{prop:mono_sf}~\ref{prop:item:mono_F} and, as $g^*$
 is the only non-zero equilibrium,  that $\limess_{t\rightarrow \infty }
 \phi(t, w)=g^*$ by Lemma~\ref{lem:lim_eq}. 
 As the semi-flow is monotone by
 Lemma~\ref{prop:mono_sf}~\ref{prop:item:order_pre_flow}, we deduce that
 $\phi(t, w) \leq 
\phi(t, h)\leq  \phi(t,\un)$ for all $t\in \R_+$. Then use
Proposition~\ref{prop:exists_max_sol}~\ref{prop:item:g*_lim} to conclude
that  $\limess_{t\rightarrow \infty }
 \phi(t, h)=g^*$.

\section{Proof of Theorem~\ref{th:cv_eq_max}}\label{sec:sf_cv}

Let $(T, \gamma, \varphi)$ that satisfy Assumption~\ref{assum:2}.
We keep notations from Section~\ref{subsec:carac_eq}; so 
$\cc_A$ is   the
supercritical antichain given by the maximal elements of the
supercritical atoms included in the  set $A$.

\begin{proof}[Proof of Theorem~\ref{th:cv_eq_max}]
  Let $A = \cf(\supp(h))$ and $g^*_A$  be the maximal equilibrium on $A$
  (notice  that $A=\emptyset$  if  $h=\zero$). Since  $A$ is  invariant,
  $g^*_A$        is         also        an         equilibrium        by
  Lemma~\ref{cor:inv-equi}~\ref{it:equiA-equi}.
  
We will, as in the proof of Proposition~\ref{th:DDZ_cv_sf}~\ref{th:item:R0>1-irr-lim}, prove the existence of $w \in \Delta$ with $w \leq h$ such that the semi-flow $(\phi(t, w))_{t\in \R_+}$ is non-decreasing and converges essentially to $g^*_A$.
By Lemma~\ref{lem:sf_pos}  and considering $\phi(1, h)$  instead of $h$,
one can  assume without loss of  generality that $\supp(h) =A$.
If $\cc_A$ is empty, we get that $g^*_A=\zero$ and 
by Lemma~\ref{lem:sf_rest}~\ref{lem:item:CV_sf_unA}
and~\ref{lem:item:rest_inv}, we have $ \limess_{t\rightarrow+\infty }
\phi(t, \un_A) = \zero$, and by monotonicity of the semi-flow that
$\limess _{t\rightarrow+\infty }
\phi(t, h) =\zero$, which proves Theorem~\ref{th:cv_eq_max} in this
case.

\medskip

We now assume that $\cc_A$ is not empty. 
Let $B\in \cc_A$. By considering $T_B$ instead of $T$, mimicking 
the proof of Proposition~\ref{th:DDZ_cv_sf}~\ref{th:item:R0>1-irr-lim},
see~\eqref{eq:wA=0} and~\eqref{eq:F(w)>0}, we deduce that 
there exists a function $w_B\in \Delta$ such that $\supp(w_B)\subset B$,
$w_B\leq  h$, $F(w_B)\geq 0$ and $\limess_{t\rightarrow \infty }
\phi_B(t,w_B)= g^*_B$. 
Since $\cc_A$ is an antichain of atoms, we deduce that $\cf(\supp(w_B))\cap 
\supp(w_{B'})\subset \cf(B) \cap B'=\emptyset$ for all $B', B\in \cc_A$ such that $B\neq B'$. Set
$w=\sum_{B\in \cc_A} w_B\leq h$. We have:
\[
  F(w)
  =\sum_{B\in \cc_A} F(w_B) + \ind{\cf(B)} \left( \varphi(w)- \varphi(w_B)\right) T w_B
  =\sum_{B\in \cc_A} F(w_B)\geq 0,
\]
where for  the first  equality we  used that  $\cf(B)$ is  invariant and
$\supp(w_B)  \subset   B\subset  \cf(B)$,   and  for  the   second  that
$\varphi(w)-\varphi(w_B)=0$ on $\cf(B)$ for $B\in \cc_A$.  Arguing as in
the end of the proof  of Proposition~\ref{th:DDZ_cv_sf}~\ref{th:item:R0>1-irr-lim},
we deduce that the semi-flow $(\phi(t,w))_{t\in \R_+}$ is non-decreasing
and that  $\limess_{t\rightarrow \infty  } \phi(t, w)=g\in  \Delta$ with
$g$  an equilibrium.

\medskip

We now prove the equality $\cc_A = \cc_g = \cc_{g^*_A}$.
Since $w\leq  g$,
we deduce that $\cc_A \subset \cc_g$. 
Since $\supp(w) \subset A$ and $A$ is invariant,  we have   $\supp(g)
\subset A$ by Lemma~\ref{lem:sf_pos}. In particular, $g$ is an
equilibrium of $A$ by Lemma~\ref{cor:inv-equi}~\ref{it:equi-equiA}, thus
we get $g \leq g^*_A$ and deduce that  $\cc_g \subset \cc_{g^*_A}\subset
\cc_A$ as $\supp(g^*_A) \subset A$.
This proves the claim and that  $g=g^*_A$ by Corollary~\ref{cor:supp-equi=}. 

By monotonicity of the semi-flow, we have that $ \phi(t,w) \leq
\phi(t,h) \leq \phi(t, \un_A)$ for all $t \geq 0$.
The left term converges essentially to $g=g^*_A$, and the  right term to
$g^*_A$ by Lemma~\ref{lem:sf_rest}. This gives
 $\limess_{t\rightarrow \infty  } \phi(t,h)=g^*_A$. 
This ends the proof. 
\end{proof}

\end{document}